\def\a{\alpha}
\def\b{\beta}
\def\t{\theta}
\def\d{\delta}
\def\s{\sigma}
\def\t{\theta}
\def\l{\lambda}
\def\k{\kappa}
\def\o{\omega}
\def\newremark#1{\@ifnextchar[{\@orem{#1}}{\@nrem{#1}}}
\def\@nrem#1#2{%
    \@ifnextchar[{\@xnrem{#1}{#2}}{\@ynrem{#1}{#2}}}
\def\@xnrem#1#2[#3]{\expandafter\@ifdefinable\csname #1\endcsname
    {\@definecounter{#1}\@addtoreset{#1}{#3}%
        \expandafter\xdef\csname the#1\endcsname{\expandafter\noexpand
            \csname the#3\endcsname \@remcountersep \@remcounter{#1}}%
        \global\@namedef{#1}{\@rem{#1}{#2}}\global\@namedef{end#1}{\@endremark}}}
\def\@ynrem#1#2{\expandafter\@ifdefinable\csname #1\endcsname
    {\@definecounter{#1}%
        \expandafter\xdef\csname the#1\endcsname{\@remcounter{#1}}%
        \global\@namedef{#1}{\@rem{#1}{#2}}\global\@namedef{end#1}{\@endremark}}}
\def\@orem#1[#2]#3{\expandafter\@ifdefinable\csname #1\endcsname
    {\global\@namedef{the#1}{\@nameuse{the#2}}%
        \global\@namedef{#1}{\@rem{#2}{#3}}%
        \global\@namedef{end#1}{\@endremark}}}
\def\@rem#1#2{\refstepcounter
    {#1}\@ifnextchar[{\@yrem{#1}{#2}}{\@xrem{#1}{#2}}}
\def\@xrem#1#2{\@beginremark{#2}{\csname the#1\endcsname}\ignorespaces}
\def\@yrem#1#2[#3]{\@opargbeginremark{#2}{\csname
        the#1\endcsname}{#3}\ignorespaces}
\def\@remcounter#1{\noexpand\arabic{#1}}
\def\@remcountersep{.}
\def\@beginremark#1#2{\rm \trivlist \item[\hskip \labelsep{\bf #1\ #2}]}
\def\@opargbeginremark#1#2#3{\rm \trivlist
    \item[\hskip \labelsep{\bf #1\ #2\ (#3)}]}
\def\@endremark{\endtrivlist}
\newcommand{\biindice}[3]%
{
    
    \begin{array}[t]{c}
        #1\\
        {\scriptstyle #2}\\
        {\scriptstyle #3}
    \end{array}
    
}
\def\a{\alpha}
\def\b{\beta}
\def\t{\theta}
\def\d{\delta}
\def\s{\sigma}
\def\t{\theta}
\def\l{\lambda}
\def\k{\kappa}
\def\o{\omega}
\def\mc{\mathcal}
\def\mf{\mathfrak}
\def\ua{\uparrow}
\def\da{\downarrow}
\newcommand{\R}{\mathbb R}
\newcommand{\N}{\mathbb N}
\numberwithin{equation}{section}
\theoremstyle{definition}
\theoremstyle{plain}
\newtheorem{theorem}{Theorem}[section]
\newtheorem{proposition}{Proposition}[section]
\newtheorem{lemma}{Lemma}[section]
\newtheorem{corollary}{Corollary}[section]
\newtheorem{remark}{Remark}[section]
\newtheorem{example}{Example}[section]
\def\stretchx{\Bumpeq{\!\!\!\!\!\!\!\!{\longrightarrow}}}
\title{\vskip-2.5cm
{Rich dynamics in planar systems with heterogeneous nonnegative weights}
\thanks{This paper has been written under the auspices of the Ministry of
Science, Technology and Universities of Spain, under Research
Grants PGC2018-097104-B-I00 and PID2021-123343NB-I00, and of the IMI of Complutense
University. } }
\author{
\sc Juli\'an L\'opez-G\' omez
\\
\small Universidad Complutense de Madrid
\\
\small Instituto de Matem\'{a}tica Interdisciplinar (IMI)
\\
\small Departamento de An\'alisis Matem\'atico y Matem\'atica
Aplicada
\\
\small  Plaza de las Ciencias 3, 28040   Madrid, Spain
\\
\small E-mail: {\tt   julian@mat.ucm.es}
\medskip
\\
\sc Eduardo Mu\~{n}oz-Hern\'andez
\\
\small Universidad Complutense de Madrid
\\
\small Instituto de Matem\'{a}tica Interdisciplinar (IMI)
\\
\small Departamento de An\'alisis Matem\'atico y Matem\'atica
Aplicada
\\
\small  Plaza de las Ciencias 3, 28040   Madrid, Spain
\\
\small E-mail: {\tt eduardmu@ucm.es }
\medskip
\\
\sc Fabio Zanolin
\\
\small Universit\`{a} degli Studi di Udine
\\
\small Dipartimento di Scienze Matematiche, Informatiche e Fisiche
\\
\small  Via delle Scienze 2016, 33100 Udine, Italy
\\
\small E-mail: {\tt fabio.zanolin@uniud.it } }
\date{\today}
\numberwithin{equation}{section}
\begin{document}

\maketitle

{

\begin{abstract}
This paper studies the global structure of the set of nodal solutions of a generalized
Sturm--Liouville boundary value problem associated to the quasilinear  equation
$$
-(\phi(u'))'= \lambda u + a(t)g(u), \quad \lambda\in {\mathbb R},
$$
where $a(t)$ is non-negative with some positive humps separated away by
intervals of degeneracy where $a\equiv 0$. When $\phi(s)=s$ this equation includes
a generalized prototype of a classical model going back to Moore and Nehari \cite{MN-1959}, 1959.
This is the first paper where the general case when $\l\in\R$ has been addressed when $a\gneq 0$.
The semilinear case with $a\lneq 0$ has been recently treated by L\'{o}pez-G\'{o}mez and Rabinowitz
\cite{LGR-2015,LGR-2017,LGR-2020}.
\\
\\
{\it 2010 Mathematics Subject Classification:} 34B08, 34B24, 35C15.
\\
\\
{\it Keywords and Phrases}. Nonlinear differential equations, planar systems,
degenerate weights, Moore--Nehari equation, nodal solutions, global bifurcation theory,
a priori bounds, Poincar\'{e} maps.
\end{abstract}

}

%\markright{\today}

\section{Introduction}\label{section-1}

In this paper we investigate the nodal behavior of the solutions of the quasilinear differential equation
\begin{equation}
\label{i.1}
-(\phi(u'))'= \lambda u + a(t)g(u), \quad \lambda\in {\mathbb R},
\end{equation}
where:
\begin{itemize}
\item $\phi:J_1\to J_2$ is an increasing homeomorphism defined
on the open interval $J_1$, with $-\infty\leq \inf J_1 < 0 <\sup
J_1\leq +\infty$, onto the open interval $J_2$, with $-\infty\leq
\inf J_2 < 0 <\sup J_2\leq +\infty$, and such that $\phi(0)=0$, as discussed by Bereanu and Mawhin \cite{BeMa-2007} and Man\'{a}sevich and Mawhin  \cite{MaMa-1998};
\item $g:{\mathbb R}\to {\mathbb R}$ is a continuous function
with $g(0)=0$ and $g(s)s > 0$ for  $s\not=0$;
\item $a: I=[0,L]\to {\mathbb R}^+:=[0,+\infty)$ satisfies $a\in
L^1(I)$.
\end{itemize}
Our main assumption concerning the weight function $a(t)$ is that
the interval $I$ splits into finitely many adjacent
subintervals $I^+_i$, $I^0_i$, such that $a\gneqq 0$ on $I^+_i$ and
$a\equiv 0$ on $I^0_i$. More precisely, we consider
$I^+_i= [t_i,s_i]$ and $I^0_i= (s_i,t_{i+1})$, where
\begin{equation}
\label{i.2}
   0 = t_0 < s_0 < t_1 < \dots < t_i < s_i < \dots < s_{m-1} < t_m < s_m = L.
\end{equation}
Thus, we assume to have $m+1$ intervals where the weight function $a(t)$ is
nontrivial, separated by $m$ intervals where $a\equiv 0$. Our main aim is to prove the existence of multiple solutions to equation \eqref{i.1} with a prescribed, arbitrarily large, number of oscillations
in the intervals $I^+_i$ and a \lq\lq linear\rq\rq\ behavior in the intervals
$I^0_i$. The equation \eqref{i.1} is complemented with either Dirichlet, Neumann, or mixed boundary conditions in $[0,L]$.
In the past decades a great deal of research has been devoted
to the so-called nonlinear problems with sign
indefinite weight, in which the domain of the function $a(t)$ splits into some intervals
where $a>0$ and $a<0,$ respectively (see, for instance, \cite{LGTZ-2014} and the references therein
for an investigation on the multiplicity result in the case of the boundary condition $u(0)=u(L)=M$).
The equations studied in the present paper exhibit some peculiar dynamical features which
look analogous to the sign indefinite case when $\lambda <0$ and are completely different for
$\lambda \geq 0$; in particular, our aim is to
enlighten the role that the intervals of degeneracy of the weight function have in producing
new multiplicity results.
\par
This analysis is motivated by some recent researches in semilinear  boundary
value problems with nonnegative weights that are \lq\lq degenerate\rq\rq \ in some
parts of their domains. The interest in these problems goes back to the pioneering work of Moore and Nehari
\cite{MN-1959}, 1959, concerning the Dirichlet boundary value problem
\begin{equation}
\label{i.3}
\left\{ \begin{array}{l} -u'' = a(t)u^{2n+1} \quad \hbox{in} \;\;[0,L], \\ u(0)=u(L)=0, \end{array}\right.
\end{equation}
where $n\geq 1$ is an integer, $L=\a+1$ for some $\a>1$, $a(t)\equiv 1$ for $t\in [0,1]\cup
[\alpha,\alpha+1]$ and $a(t)\equiv 0$ in $(1,\a)$. Under these assumptions, Moore and Nehari \cite{MN-1959} proved the existence of three positive solutions for some values of $\alpha>1$. It turns out that non-negative weight functions possessing zones of degeneracy in
their domains are interesting also in wide classes of reaction-diffusion equations arising in Population
Dynamics where the coefficients governing the interaction between
the species may vanish in some regions of their domains. The readers are sent to L\'{o}pez-G\'{o}mez and Sabina \cite{LGS-1995}, the authors's papers
\cite{LGMH-2020,LGMHZ-2020,LGMHZ-2021} and the monograph \cite{LG15}, as well as to the lists of references therein, for some paradigmatic examples where the weight functions vanish either in the spatial or the temporal variables.
\par
More recently, the existence of nodal solutions for the symmetric variant of the Moore--Nehari problem
\begin{equation}
\label{i.4}
\left\{ \begin{array}{l} -u'' = a(t)|u|^{p-1}u \quad \hbox{in} \;\;[0,L], \\ u(0)=u(L)=0, \end{array}\right.
\end{equation}
has been accomplished in Gritsans and Sadyrbaev \cite{GS-2015}
for  $p>1$, referred to as the superlinear case here, as well as
in Kajikiya \cite{Ka-2021} for $0<p<1$, usually refereed to as the sublinear case.
In these works, the weight function $a(t)$ is defined on the symmetric interval
$[-1,1]$ and it is a positive constant in a neighborhood of $-1$ and $1$, while $a=0$ in the
central interval. Solutions with an arbitrarily  large number of zeroes
in the intervals where $a>0$ are shown to exist according to the length of the interval $a^{-1}(0)$.
\par
A different line of research had been already introduced  by  Rabinowitz \cite{Rab-71jde,Rab-74mm} in the early seventies, where the nonlinear eigenvalue problem
\begin{equation}
\label{i.5}
\left\{ \begin{array}{l} -u'' = \l u - a(t)|u|^{p-1}u   \quad \hbox{in} \;\;[0,L], \\ u(0)=u(L)=0, \end{array}\right.
\end{equation}
was analyzed adopting the point of view of global bifurcation theory, settled in the foundational
papers of Rabinowitz \cite{Rab-71,Rab-73}.
According to \cite{Rab-71jde,Rab-74mm}, in \eqref{i.5}, $a(t)>0$ for all $t\in [0,L]$, and $p>1$.
The degenerate problem where the interior of $a^{-1}(0)$ is nonempty in \eqref{i.5} has been analyzed
(for the case $p>1$), much more recently, by L\'{o}pez-G\'{o}mez and Rabinowitz \cite{LGR-2015,LGR-2017,LGR-2020}, where it has been established that the structure of the set of nodal solutions can be far more intricate than in the classical
case when $a(t)$ is everywhere positive (see the numerical experiments of Molina-Meyer in \cite{LGMMR-2017}).
\par
The first goal of this paper is adapting the perspective of global bifurcation theory to analyze the structure of the set of non-trivial solutions, positive, negative and nodal, of the semilinear boundary value problem
\begin{equation}
\label{i.6}
\left\{ \begin{array}{l} -u'' = \l u + a(t)|u|^{p-1}u   \quad \hbox{in} \;\;[0,L], \\ u(0)=u(L)=0, \end{array}\right.
\end{equation}
in the general case when $a\in\mc{C}[t_i,s_i]$ satisfies $a(t)>0$ for all $t\in [t_i,s_i]$ and $i\in\{0,...,m\}$, while $a=0$ in $I^0_j$ for all $j\in\{0,...,m\}$. Thus, there are $m+1$ intervals where the weight function $a(t)$ is positive, separated away by $m$ intervals where $a\equiv 0$. In particular, $a(t)$ is far from being constant on the intervals $I_i^+=[t_i,s_i]$, $i\in\{0,...,m\}$. The existence of a priori bounds for the problem \eqref{i.6} when $p>0$, $p\neq 1$, can be derived with the blowing-up techniques of Amann and L\'{o}pez-G\'{o}mez \cite{ALG-98}, going back to Gidas and Spruck \cite{GS-81}. Our main findings can be packaged into the next result, where we are denoting $\s_n:=\left(\frac{n\pi}{L}\right)^2$ for all integer $n\geq 1$. Note that, by the oddity of the nonlinearity of \eqref{i.6} its solutions arise by pairs: $(\l,u)$ and $(\l,-u)$. Moreover, any non-zero solution, $(\l,u)$ with $u\neq 0$, has finitely many interior zeroes, or nodes, in $(0,L)$, and any zero, $\tau\in [0,L]$, must be simple, i.e., $u'(\tau)\neq 0$. In particular, either $u'(0)>0$, or $u'(0)<0$, and, similarly, $u'(L)\neq 0$.

\begin{theorem}
\label{th1.1}
Suppose $p>0$, $p\neq 1$. Then, for every integer $n\geq 1$, the problem \eqref{i.6} has a solution with
$n-1$ interior nodes if, and only if, $\l < \s_n$. In such case, \eqref{i.6} has, at least, two solutions of this type: One with $u'(0)>0$ plus the opposite. Consequently:
\begin{itemize}
\item For every $\l <\s_1$ and any integer $\kappa\geq 1$, \eqref{i.6} possesses, at least, two solutions with $\kappa-1$ zeroes in $(0,L)$.
\item For every pair of integers $\kappa\geq n\geq 1$ and $\l \in [\s_n,\s_{n+1})$, \eqref{i.6} has, at least, two solutions with $\kappa$ zeroes in $(0,L)$.
\end{itemize}
\end{theorem}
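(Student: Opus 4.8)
The plan is to establish the three assertions separately: the necessity of $\l<\s_n$, the existence of a solution with $n-1$ interior nodes whenever $\l<\s_n$, and finally the two displayed consequences. It is convenient to record at the outset that, since the nonlinearity $s\mapsto \l s+a(t)|s|^{p-1}s$ is odd, $(\l,-u)$ solves \eqref{i.6} whenever $(\l,u)$ does, carries the same number of interior nodes, and reverses the sign of $u'(0)$; hence solutions occur in $\pm$ pairs and the clause \lq\lq at least two'' is automatic once a single solution is produced. For the \emph{necessity}, suppose $u$ solves \eqref{i.6} with exactly $n-1$ interior nodes. As every zero is simple, the two boundary zeros together with the $n-1$ interior ones split $[0,L]$ into $n$ nodal intervals $(\a_i,\b_i)$, on each of which $u$ keeps a constant sign and solves the linear equation $-u''=q(t)\,u$ with $q(t):=\l+a(t)|u(t)|^{p-1}\ge \l$. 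On such an interval $u$ is a one-signed solution vanishing at both endpoints, so $0$ is the principal Dirichlet eigenvalue of $-\tfrac{d^2}{dt^2}-q(t)$ on $(\a_i,\b_i)$. By monotonicity of the principal eigenvalue with respect to the potential, replacing $q$ by the smaller constant $\l$ raises it, whence $0\le \bigl(\tfrac{\pi}{\b_i-\a_i}\bigr)^2-\l$, i.e.\ $\b_i-\a_i\le \pi/\sqrt{\l}$ when $\l>0$. Summing over $i$ yields $L\le n\pi/\sqrt{\l}$, that is $\l\le\s_n$; and since $a\gneqq0$ forces $q>\l$ on a set of positive measure inside at least one nodal interval, the corresponding inequality is strict and $\l<\s_n$. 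For $\l\le0$ the bound is trivial since $\s_n>0$.

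For the \emph{existence} I would argue by global bifurcation. Writing \eqref{i.6} as the fixed point equation $u=\l\,\mc{K}u+\mc{K}\!\left(a|u|^{p-1}u\right)$ in $E:=\{u\in\mc{C}^1[0,L]:u(0)=u(L)=0\}$, with $\mc{K}$ the compact Dirichlet inverse of $-\tfrac{d^2}{dt^2}$, the trivial branch $u\equiv0$ solves the problem for every $\l$, and the linearization at $u=0$ is $-u''=\l u$, whose eigenvalues $\s_n$ are simple with eigenfunctions $\sin(n\pi t/L)$ having exactly $n-1$ interior nodes. Hence the Rabinowitz global bifurcation theorem \cite{Rab-71} provides, from each $(\s_n,0)$, an unbounded continuum $\mathscr{C}_n$ of nontrivial solutions, which near the bifurcation point splits into two subcontinua $\mathscr{C}_n^{\pm}$ distinguished by the sign of $u'(0)$. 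Because zeros of nontrivial solutions are simple, the number of interior nodes is locally constant along $\mathscr{C}_n$ and therefore equals $n-1$ throughout; in particular $\mathscr{C}_n$ cannot return to the trivial branch at any $\s_m$ with $m\neq n$, so the global alternative forces $\mathscr{C}_n$ to be unbounded. By the necessity already proved, every point of $\mathscr{C}_n$ has $\l<\s_n$.

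It remains to turn this unboundedness into the exact range $(-\infty,\s_n)$, for which I would invoke a priori bounds: for $\l$ in any compact subset of $(-\infty,\s_n)$, the solutions carrying $n-1$ nodes are bounded in $\mc{C}^1[0,L]$ --- in the superlinear range $p>1$ through the blow-up technique of Amann and L\'opez-G\'omez \cite{ALG-98} (after Gidas and Spruck \cite{GS-81}), and in the sublinear range $0<p<1$ directly, by testing the equation with $u$ and using the Poincar\'e inequality. With these bounds, the unbounded connected set $\mathscr{C}_n\subset\{\l<\s_n\}$ cannot escape at any finite $\l$, so its blow-up can only occur as $\l\to-\infty$; being connected, its projection onto the $\l$-axis is an interval, unbounded below and accumulating at $\s_n$, hence exactly $(-\infty,\s_n)$. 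This yields, for every $\l<\s_n$, a solution with $n-1$ interior nodes together with its reflection. The two bullet points then follow by specialisation: taking $n=\k$ shows that $\l<\s_1\le\s_\k$ gives solutions with $\k-1$ nodes for every $\k\ge1$; and taking $n=\k+1$, for $\l\in[\s_n,\s_{n+1})$ and $\k\ge n$ one has $\l<\s_{n+1}\le\s_{\k+1}$, giving solutions with $\k$ interior zeros.

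The step I expect to be the crux is precisely the a priori bound in the superlinear case: adapting the Gidas--Spruck / Amann--L\'opez-G\'omez blow-up to the present degenerate weight requires excluding nontrivial limit profiles on the half-line and on $\R$, and one must simultaneously control the oscillations of $u$ inside the degeneracy intervals $I^0_j$, where \eqref{i.6} reduces to the linear oscillator $-u''=\l u$ and, for $\l>0$, could a priori create extra nodes. Reconciling this with the preservation of the nodal count $n-1$ along $\mathscr{C}_n$ is the delicate point on which the whole argument hinges.
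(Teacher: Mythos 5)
Your necessity argument and your treatment of the superlinear range are sound. The nodal-interval version of the eigenvalue comparison is a valid, slightly more hands-on alternative to the paper's Lemma \ref{le2.3} (which applies monotonicity of the $n$-th Dirichlet eigenvalue of $-D^2-a(t)|u|^{p-1}$ on all of $(0,L)$ at once), and for $p>1$ your scheme --- Rabinowitz bifurcation from $(\s_n,0)$, preservation of the nodal count along the continuum, Amann--L\'opez-G\'omez blow-up bounds, and the connectedness argument forcing the $\l$-projection to be $(-\infty,\s_n)$ --- is essentially the proof given in Section 2 (Theorem \ref{th2.1}, Lemmas \ref{le2.4}, \ref{le2.5}, Theorem \ref{global}).

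The genuine gap is the sublinear case $0<p<1$, where your existence argument fails at its very first step. For $0<p<1$ the term $a(t)|u|^{p-1}u$ is \emph{not} $o(\|u\|)$ as $\|u\|\to 0$; on the contrary, it dominates the linear term $\l u$ near $u=0$, so the operator $\mathfrak{N}(\l,u)=-\mc{K}(a(t)|u|^{p-1}u)$ does not satisfy the smallness hypothesis of Rabinowitz's bifurcation-from-zero theorem \cite{Rab-71}, and there is no ``linearization at $u=0$'' to speak of. Worse, the conclusion you want is false: $(\s_n,0)$ is not a bifurcation point of \eqref{i.6} in this range. Indeed, the rescaling argument of Lemma \ref{le2.5}(ii) shows that nontrivial solutions are bounded \emph{away from zero} uniformly for $\l$ in compact subsets of $(-\infty,\s_\kappa)$ (the limiting profile would be a one-signed entire solution of $-\ddot w=a(x_\infty)|w|^{p-1}w$, impossible since the origin of this sublinear equation is a center), and Proposition \ref{pr2.1}(b) exhibits the actual behavior: solutions with $n-1$ nodes shrink to zero only as $\l\downarrow-\infty$ and blow up as $\l\uparrow\s_n$. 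The correct route, which the paper follows, is bifurcation \emph{from infinity} at $(\s_n,\pm\infty)$ via \cite[Th.~1.6]{Rab-73}, with Lemma \ref{le2.5}(ii) playing the role of the a priori bound (a lower bound, not an upper one), and Lemma \ref{le2.4}(ii) identifying $\s_n$ as the only possible blow-up value of $\l$. For the same reason, your proposed a priori \emph{upper} bound for $0<p<1$ (``testing with $u$ and Poincar\'e'') is both unprovable by that computation once $\l\geq\s_1$ (the coefficient $\s_1-\l$ has the wrong sign) and, near $\l=\s_n$, false along the relevant continuum, precisely because that continuum emanates from infinity there.
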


Figure \ref{glob-bif} shows the minimal global bifurcation diagram of \eqref{i.6} in both cases: $p>1$ and $p\in (0,1)$. When $p>1$, the nodal solutions with $n-1$ interior nodes bifurcate  from $u=0$ at $\l=\s_n$. When $p\in (0,1)$, they do bifurcate from infinity at $\l=\s_n$. In both cases, these bifurcations are subcritical. It turns out that, in  the special case
when $a\equiv \mu>0$, by using some standard phase-portrait techniques,  it is easily seen that
\eqref{i.6} admits, at most, two nodal solutions with $n-1$ interior nodes for each integer $n\geq 1$.
Naturally, as illustrated by the example of Moore and Nehari \cite{MN-1959}
and the more recent works \cite{GS-2015, Ka-2021}, the number of nodal
solutions with a fixed number of nodes for a particular value of $\l$
can vary with the value of $m$ and the length of the intervals $I^+_i$ and $I^0_i$ of $a(t)$.
Thus, besides the components plotted in Figure \ref{glob-bif}, \eqref{i.6}
might have one, or several, additional components that have not been plotted in
these (minimalist) global bifurcation diagrams.
\par
Although some available global bifurcation theorems for $\mc{C}^1$-Fredholm operators can be invoked to get some global results concerning the existence of non-trivial solutions with a prescribed nodal behavior for the quasilinear boundary value problem
\begin{equation}
\label{i.7}
\left\{ \begin{array}{l} -(\phi(u'))' = \l u + a(t)|u|^{p-1}u   \quad \hbox{in} \;\;[0,L], \\ u(0)=u(L)=0, \end{array}\right.
\end{equation}
it turns out that these results inherit, essentially,  a local character, as in many circumstances the classical solutions of \eqref{i.7} can develop singularities as the solutions separate away from $u=0$ becoming somewhere larger.
Indeed, by making the special, but important,  choice
\begin{equation}
\label{i.8}
  \phi(s):=\frac{s}{\sqrt{1+s^2}},\qquad s\in (-\infty,+\infty),
\end{equation}
with $p>1$, one can apply the global bifurcation theorem of L\'{o}pez-G\'{o}mez and Mora-Corral  \cite{LGMC-2005} and L\'{o}pez-G\'{o}mez \cite{LG-2016} to show the existence of a component of solutions with $n-1$ interior zeroes bifurcating from $u=0$ at $\l=\s_n$ for every integer $n\geq 1$. Alternatively, one can adapt the global bifurcation theorem of L\'{o}pez-G\'{o}mez and Omari \cite{LGO-2019}, which provides us with a global component of bounded variation positive solutions. However, it is folklore that the classical solutions of \eqref{i.7}, with the choice \eqref{i.8}, can develop singularities when they separate away from zero (see, eg., Cano-Casanova et al. \cite{CLGT-2012} and
L\'{o}pez-G\'{o}mez and Omari \cite{LGO-2020}). Thus, this methodology does not seem the most appropriate to get multiplicity results of classical nodal solutions of \eqref{i.7} without some additional, very serious,
effort to show the regularity of its non-zero solutions. Therefore, to accomplish our second goal in this paper, which consists in studying the generalized quasilinear problem \eqref{i.7},
we have preferred here to adopt a different approach
based on planar shooting type methods.
This allows us to study in a rather compact way problems involving the $p$-Laplacian, the mean-curvature operator, or the Minkowski operator (see, e.g.,  L\'{o}pez-G\'{o}mez and Omari \cite{LGO-2022}, Bereanu and Mawhin \cite{BeMa-2007,BeMa-2009}, and the references therein).
\par
Precisely, to study the quasilinear equation \eqref{i.1} we will analyze the dynamical system associated with the underlying system
\begin{equation}
\label{i.9}
\begin{cases}
x'= h(y),\\
y' = -\lambda x - a(t)g(x),
\end{cases}
\end{equation}
where
$$
   h(y):= \phi^{-1}(y).
$$
By the definition of $\phi$, $h$ is a continuous and monotone
increasing function, defined on the open interval
$J_2:=(\varrho_{-},\varrho_{+})$, with $-\infty \leq \varrho_{-} <
0 < \varrho_{+} \leq +\infty$, and such that $h(0)=0$.  Solutions
of \eqref{i.9} are intended in the Carath\'{e}odory sense and are interpreted
as solutions of \eqref{i.1} as well. Namely, $u(\cdot)$ is a solution
\eqref{i.1} if it is a $C^1$-function with $\phi(u')$ absolutely continuous
and \eqref{i.1} is satisfied for almost every  $t\in [0,L]$. In our applications
the weight function $a(t)$ will be chosen as a piecewise constant function. Thus,
the solutions $(x,y)$ of system \eqref{i.9} have  $x$ of class $C^1$, while $y$ is
continuously differentiable except at the points where $a(t)$ is discontinuous.
\par
The analysis of \eqref{i.9} is of huge interest on its own, regardless whether or not it is related to an equation like \eqref{i.1}. Indeed, examples of \eqref{i.9} (with $\lambda=0$) arise, rather naturally,
in dealing with some important classes of Lotka--Volterra predator--prey systems where seasonal interaction effects play an important role (see the recent paper of the authors \cite{LGMHZ-2021b}). Actually, analyzing the dynamics of \eqref{i.9} allows us to deal
with a broad class of boundary value problems of generalized Sturm--Liouville type related to
the quasilinear equation \eqref{i.1}. These problems extend considerably the Dirichlet problem
\eqref{i.7}. Our results can be applied, similarly, to deal with the periodic problem,
by assuming $a: {\mathbb R}\to {\mathbb R}$ a $T$-periodic function
with a certain number of intervals where $a(t)$ is positive separated by intervals of degeneracy in $[0,T].$
\par
The plan of this paper is the following.  In Section 2 we analyze the semilinear problem \eqref{i.6}
in the general case when $a(t)$ is piecewise continuous. In such case, our model can be viewed as a sort of generalized Moore--Nehari prototype.
The main findings of this section have been already packaged in Theorem \ref{th1.1}.
According to it, for every $\l\in\R$, the problem \eqref{i.6} admits infinitely many oscillatory solutions.
We will come back to consider this problem again in Section 7, where it will be used as the simplest prototype model for testing the abstract results developed in Sections 3--6. Naturally, the abstract results will provide us with finitely many nodal solutions under the appropriate circumstances, because, at least when dealing with the mean curvature operator, sufficiently large nodal solutions will not provide us with classical solutions anymore. In some sense, these highly oscillatory solutions are lost when $-D^2$ is inter-exchanged by a quasilinear operator. Adopting this perspective the abstract results of this paper for the dynamical system
\eqref{i.9} are rather satisfactory.
\par
In Section 3 we fix the class of weight functions considered in Sections 3--7. For simplicity and following also previous works in this area, like those of Moore and Nehari \cite{MN-1959},  Kajikiya \cite{Ka-2021} and Cubillos et al. \cite{CLGT-2022}, we confine ourselves to deal with stepwise function coefficients, by taking $a(t)\equiv \mu_i>0$ if $t_i\leq t\leq s_i$ for all
$i\in\{0,...,m\}$. From the method of the proofs, it will become apparent that our results
are stable with respect to sufficiently small perturbations of the coefficients in the $L^1$-norm on $[0,L]$. For such choices  of $a(t)$ it is natural to describe the dynamics associated with the system \eqref{i.9}
as the superposition of those associated to the fully nonlinear system
\begin{equation}
\label{i.10}
x'= h(y), \quad
y' = -\lambda x - \mu_ig(x)
\end{equation}
coupled with the linear one in the second component
\begin{equation}
\label{i.11}
x'= h(y), \quad
y' = -\lambda x.
\end{equation}
For both of these systems we analyze their most significant  arc-deformation properties related to the corresponding Poincar\'{e} maps in Section 4.
Recall that the Poincar\'{e} map associated with system
\eqref{i.9}, on a time-interval $[\tau_0,\tau_1]$, is the application which maps
a point $p_0\in\R^2$ to the point $p_1=(\hat{x}(\tau_1),\hat{y}(\tau_1))$, where
$(\hat{x}(\cdot),\hat{y}(\cdot))$ is the unique solution of \eqref{i.9} with
initial value $p_0$ at the time $\tau_0$. The uniqueness of the solution for the associated Cauchy
problems on the given intervals is implicitly assumed. By the continuous dependence of the solutions
with respect to the initial data, the Poincar\'{e} map is a homeomorphism on its domain.
For our choice of the weight function, leading to a sequence of autonomous systems, each Poincar\'{e}
map on an interval $[t_i,s_i]$ (resp. $[s_i,t_{i+1}]$) is equivalent to the map on $[0,\tau_i]$ with
$\tau_i:=s_i-t_i$ (resp. $[0,\varsigma_i]$ with $\varsigma_i:=t_{i+1}-s_i$). Thus, we can denote by
$\Phi_i$ and $\Psi_i$ the Poincar\'{e} maps associated with
\eqref{i.10} and \eqref{i.11} on the intervals of length $\tau_i$ and $\varsigma_i$, respectively.
Since we study \eqref{i.9} for every $\lambda\in\R$, the system \eqref{i.11} exhibits different
dynamics as $\lambda$ varies. Precisely, the dynamics consists of a shift parallel to the $x$-axis when $\lambda=0$,
like in Moore and Nehari \cite{MN-1959}, a saddle point at the origin
when $\lambda<0$, and a center at the origin when $\lambda>0$.  For this reason,
we have to study  throughout this paper these three cases separately.
We conclude Section \ref{section-2} by analyzing how acts the Poincar\'{e} map $\Psi_i$ according to the sign of $\l$.
\par
As already commented above, Section \ref{sec-non} analyzes the effect of the transformation of certain arcs through the Poincar\'{e} maps $\Phi_i$,  $i\in\{0,\ldots, m\}$.   Similarly, Section 5 focuses attention into the arc-transformation properties of $\Psi_i$ and, hence,  $\Psi_i\circ \Phi_i$. These results are pivotal to get our main abstract results, delivered in Section 6, concerning  the multiplicity of nodal solutions for the generalized Sturm--Liouville problem associated to \eqref{i.1}. Essentially, in Section 6 we have established that, for every $(n_0, n_1, ..., n_m)\in \N^{m+1}$,  the system \eqref{i.9}
possesses a solution $(x,y)$ such that $x(t)$ has $n_i$ nodes in the interval $[t_i,s_i]$ for every $i\in\{0,...,m\}$. Naturally, the number of zeroes of $x(t)$ in the vanishing intervals of $a(t)$, $[s_i, t_i+1]$, $i\in\{1,...,m\}$, where the system is linear in $x$, depends on the sign of $\l$. The most delicate part the abstract analysis in Section 6 is  getting the appropriate twist conditions of the autonomous flow on each of the intervals $[t_i,s_i]$,  as well as their balances with respect to the lengths of the vanishing intervals of $a(t)$, measured by  $\varsigma_i:=t_{i+1}-s_i$, which should be sufficiently large to get a maximal number of nodal solutions. Finally, in Section \ref{section-4}, we illustrate the applicability of
these results to the generalized Moore--Nehari model that we have studied previously in Section 2. This is
a good model for testing the flexibility of the abstract results of Section 6, as it admits infinitely many oscillatory solutions for every $\l\in\R$.
\par
It should be emphasized that incorporating the parameter $\l$ to the previous studies on the Moore--Nehari model adds a number of different phenomenologies to the underlying theory that have not been previously documented. In particular, we have found that, for  $\lambda <0$ and $t_{i+1}-s_i$ sufficiently large, we
have more solutions than a priori expected, presumably by the higher complexity of the Markov-type transition diagram sketched in Figure  \ref{fig-04}, while, for $\lambda >0$, in order to get nodal solutions, one must impose an additional  compatibility condition requiring an upper bound for the lengths of the vanishing intervals of $a(t)$, the $\varsigma_i$'s. Some numerical examples establishing the compatibility of these restrictions will be delivered in Section 7. As a matter of fact, by simply having a glance at the bifurcation diagrams of our semilinear Moore--Nehari model in Section 2, it is easily realized that the larger is $\l>0$ the lower is the complexity of the solution set of \eqref{i.6}. As Kajikiya \cite{Ka-2021,Ka-2022} has used phase-portrait techniques in the special case when $\l=0$,
our results seem to be new even for the simplest semilinear prototype model \eqref{i.6}.

\section{A pivotal paradigmatic semilinear problem}

This section analyzes the structure of the set of solutions of the semilinear problem
\begin{equation}
\label{ii.1}
\begin{cases}
-u''= \lambda u + a(t)|u|^{p-1}u\quad\hbox{in}\;\;(0,L),
\\[3pt]
u(0)=u(L)=0,
\end{cases}
\end{equation}
where $L>0$,  $p>0$ with $p\neq1$, $a\gneq 0$, and $\l\in\mathbb{R}$ is regarded as a bifurcation parameter. When $p=1$, \eqref{ii.1} becomes a linear problem. In such case, if we denote by
$$
  \s(-D^2-a(t))=\Big\{\s_n[-D^2-a(t)]\;:\;n\geq 1\Big\}
$$
the spectrum of $-D^2-a(t)$ in $(0,L)$ under Dirichlet boundary conditions, then, the solution set of
\eqref{ii.1} consists of the trivial solutions, $(\l,0)$ with $\l\in\R$, plus a numerable union of straight lines,
$$
  \bigcup_{n\geq 1}\Big\{\left(\s_n[-D^2-a(t)],u\right)\;:\;u\in \hbox{Ker\,}\Big(-D^2-a(t)-\s_n[-D^2-a(t)]\Big)\Big\},
$$
which has been sketched in Figure \ref{fig-deg}.

\begin{figure}[h!]
    \centering
    \includegraphics[scale=0.65]{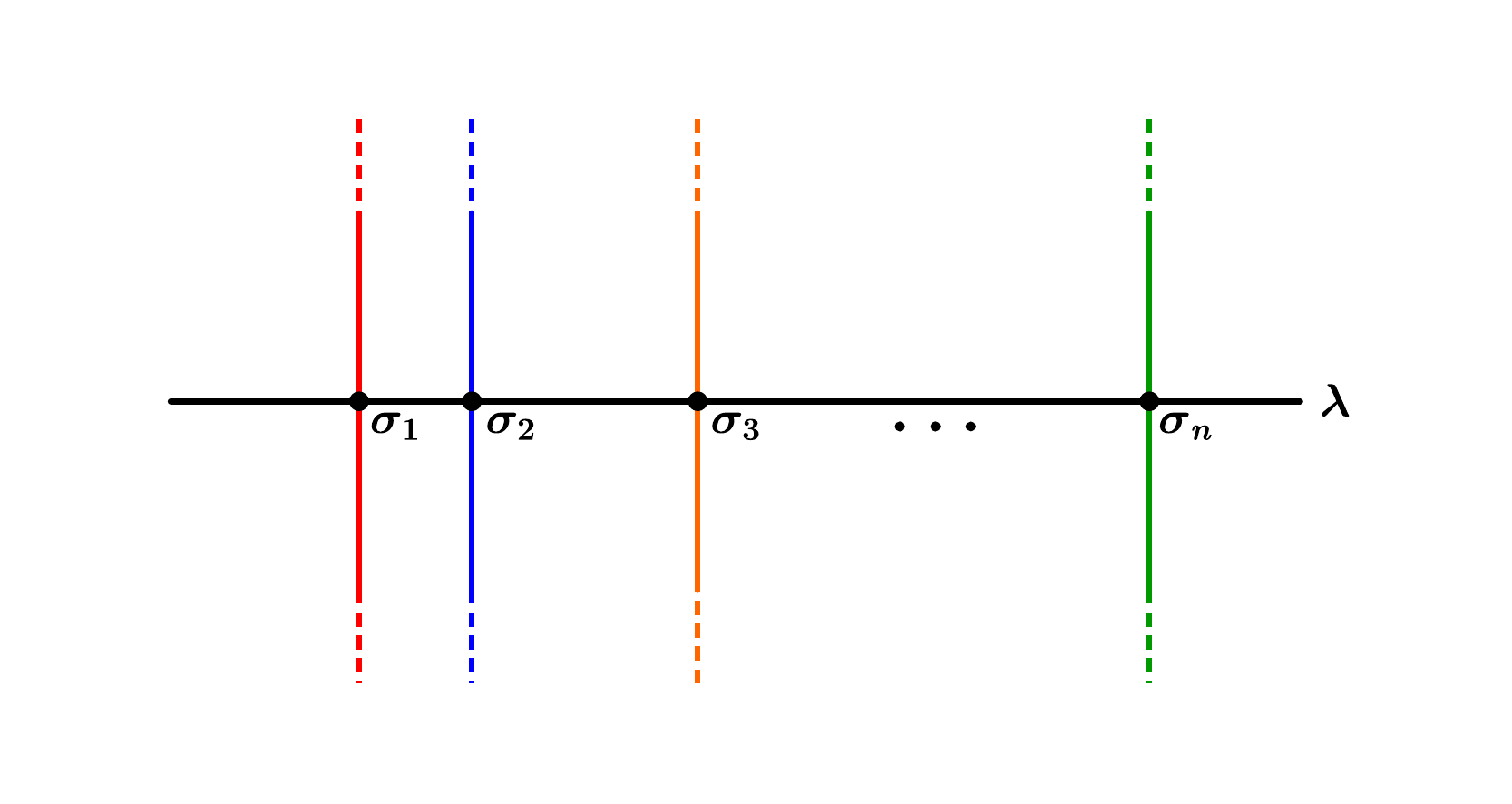}
    \caption{\tiny{\small{Bifurcation diagram for $p=1$,
   where $\s_n\equiv \s_n[-D^2-a(t)]$.}}}
    \label{fig-deg}
\end{figure}

In Figure \ref{fig-deg}, we are plotting the parameter $\l$ in abscissas versus the values
\begin{equation}
\label{ii.2}
M_+:=\max_{t\in[0,L]}|u(t)|, \quad\hbox{or}\quad M_-:=-\max_{t\in[0,L]}|u(t)|,
\end{equation}
in ordinates, according to whether $u'(0)>0$, or $u'(0)<0$, respectively, as in the remaining bifurcation
diagrams of this paper.   Since
$$
  \lim_{n\to \infty}\s_n[-D^2-a(t)]=\infty,
$$
these eigenvalues are positive, except for, at most, finitely many among them.
Note that, in the special case when $a(t)\equiv \mu>0$ is a positive constant, then
$$
  \s_n[-D^2-a(t)]=\s_n[-D^2]-\mu=\left( \frac{n\pi}{L}\right)^2-\mu \quad
  \hbox{for all integer}\;\; n\geq 1.
$$
Throughout this paper, we will simply denote
$$
  \s_n:=\s_n[-D^2]=\left( \frac{n\pi}{L}\right)^2,\qquad n\geq 1.
$$
In Figure \ref{fig-deg}, all the bifurcations from $u=0$ are vertical.
\par
When $p>1$, the nonlinearity has a superlinear growth at zero and at infinity, as in
Moore and Nehari and L\'{o}pez-G\'{o}mez and Rabinowitz \cite{MN-1959,LGR-2020}, while if $p\in (0,1)$,  then it inherits a sublinear growth at zero and at infinity, as in Kajikiya \cite{Ka-2021}.
\par
The first part of this section analyzes, very briefly, the special case when  $a(t)$ is a positive constant, $\mu>0$, for all $t\in[0,L]$. Then, we will study  \eqref{ii.1} for a general class of piece-wise continuous weight functions $a(t)$ that remain bounded away from zero in the intervals where it is positive.
\par
An important feature of the problem \eqref{ii.1} is the fact that, since the nonlinearity is an odd function of $u$, the solutions arise by pairs: $(\l,u)$ and $(\l,-u)$, which facilitates the construction of the set
of solutions of the problem. Another pivotal feature of \eqref{ii.1} is that, for every solution $(\l,u)$ with $u\neq 0$ and any $z\in [0,L]$ such that $u(z)=0$, necessarily $u'(z)\neq 0$. Thus, the zeroes of any non trivial solution are simple and isolated. Therefore, any non-trivial solution of \eqref{ii.1} has, at most, finitely many zeroes in $(0,L)$.

\subsection{The special case when $a\equiv \mu>0$ in $(0,L)$}

In such case,  the planar system associated to the differential equation of \eqref{ii.1} becomes
\begin{equation}
\label{ii.3}
(\mathcal{N}_{\l})\quad\left\{
\begin{array}{ll}
x'&=y,\\
y'&=-\l x-\mu|x|^{p-1}x.
\end{array}
\right.
\end{equation}
When $p\in (0,1)$,  $|x|^{p-1}x$ admits a continuous extension up to $x=0$ and the uniqueness of a solution for the associated Cauchy problem follows from Rebelo \cite[Th. 1]{Re-2000}; the existence being a direct consequence of the Peano's theory. When $p>1$ the nonlinearity is Lipschitz continuous and these results are a direct consequence from the Cauchy--Lipschitz theorem.
\par
Our construction of the set of solutions of \eqref{ii.3} is based on the several phase portraits of the model and hence, it relies on the period function associated to the periodic orbits of \eqref{ii.3} surrounding the origin. By the oddity of $y$ and $|x|^{p-1}x$, the necessary  time to cross each quadrant in the phase-plane is the same. Thus, it is the quarter of the period. Let denote by $\mathcal{H}(x,y)$ the associated Hamiltonian
\[
\mathcal{H}(x,y)=\frac{1}{2}y^2+\frac{\l}{2}x^2+\frac{\mu}{p+1}|x|^{p+1}.
\]
Then, for the appropriate range of $x_+>0$, the periodic orbit of \eqref{ii.3}, $(x(t),y(t))$, $t\in\R$, through the point $(x_+,0)$  satisfies
\begin{equation}
\label{ii.4}
\mc{H}(x(t),y(t))=\frac{1}{2}y^2(t)+\frac{\l}{2}x^2(t)+\frac{\mu}{p+1}|x(t)|^{p+1}
=\frac{\l}{2}x_+^2+\frac{\mu}{p+1}x_+^{p+1}
\end{equation}
for all $t\in\R$. Therefore, after some straightforward manipulations, it is apparent that the period of the orbit trough $(x_+,0)$ can be expressed in the form
\begin{equation}
\label{ii.5}
\mathcal{T}(x_+) \equiv \mc{T}_\l(x_+) := 4\int_0^1\frac{ds}{\sqrt{\l(1-s^2)+\frac{2\mu}{p+1}x_+^{p-1}(1-s^{p+1})}}.
\end{equation}
The precise range of $x_+$'s for which $\mathcal{T}(x_+)$ is well defined,  as well as the precise behavior of the map $\mathcal{T}$ depends on $\l$ and the energy level
$\mc{H}=c$ of the Hamiltonian. Indeed,
when $\l\geq 0$, the origin, which is the unique equilibrium of  \eqref{ii.3}, is a global nonlinear center
and \eqref{ii.5} makes sense for all $x_+>0$. However, when $\l<0$, then  the situation is a bit more subtle, as \eqref{ii.3} possesses two additional equilibria at the points
\begin{equation}
\label{ii.6}
  \o_\pm := \pm \left( \frac{-\lambda}{\mu}\right)^{\frac{1}{p-1}},
\end{equation}
and the global structure of the phase portrait depends on the size of $p>0$. Suppose $p>1$. Then,
the origin is a saddle point whose stable and unstable manifolds consist of two homoclinic connections
surrounding $\o_\pm$, with zero energy level, passing through $(\pm x^*,0)$, where we are setting
\begin{equation}
\label{ii.7}
  x^*:= \left( \frac{-\lambda(p+1)}{2\mu}\right)^{\frac{1}{p-1}}.
\end{equation}
In this case, the integral curves ${\mathcal H}(x,y) = c$ with $c>0$ are periodic orbits surrounding
the origin with period given by \eqref{ii.5} for all $x_+>x^*$, while,
for every $\mc{H}(\o_\pm,0)<c<0$, $\mc{H}^{-1}(c)$
consists of two periodic orbits surrounding each of the equilibria $(\o_\pm,0)$ in the phase portrait.
Instead, when $p\in (0,1)$, the origin is a local center whose periodic orbits fill
the region enclosed by two heteroclinic connections linking the equilibria  $(\o_\pm,0)$ at the energy level
\begin{equation}
\label{ii.8}
 c^* := {\mathcal H}(\o_\pm,0) = \left( \frac{-\lambda}{\mu}\right)^{\frac{2}{p-1}}
\frac{\lambda (p-1)}{2(p+1)}.
\end{equation}
Thus, for every $x_+\in (0,\o_+)$, $\mc{H}(x_+,0)<c^*$ and the solution through $(x_+,0)$ is a periodic orbit surrounding $(0,0)$ whose period is given by \eqref{ii.5}.
\par
Based on \eqref{ii.5} and on some well known properties of planar phase-portraits, the period map $\mc{T}$ behaves like sketched in
Table \ref{tab1} according to the sign of $\l$ and the size of $p$. Table \ref{tab1} provides us with the
monotonicity properties of $\mc{T}$ and its limiting behavior at the ends of its interval of definition.
\vspace{0.2cm}

\begin{table}[h!]
    \begin{center}
        \begin{tabular}{| c | l | l | l |}
%\hline \multicolumn{4}{|c|}{ \hbox{Behavior of the period map } ${\mc T}(x_+)$ } \\ \hline
 \hline  $\l$ &  behavior of $x_+$ & case $p>1$ & case  $0<p<1$ \\ \hline
            \multirow{3}{*}{$\l\geq 0$}
            & $x_+$\;\hbox{increases} & ${\mc T}(x_+)$\;\hbox{decreases} & ${\mc T}(x_+)$\;\hbox{increases}
            \\ \cline{2-4}
            & $x_+\downarrow 0$ & ${\mc T}(x_+)\uparrow  2\pi/\sqrt{\l}$ & ${\mc T}(x_+)\downarrow 0$
            \\ \cline{2-4}
            & $x_+\uparrow +\infty$ & ${\mc T}(x_+)\downarrow 0$ & ${\mc T}(x_+)\uparrow 2\pi/\sqrt{\l}$
            \\ \cline{2-4} \hline \multirow{3}{*}{$\l<0$}
            & $x_+$\;\hbox{increases} & ${\mc T}(x_+)$\;\hbox{decreases} & ${\mc T}(x_+)$\;\hbox{increases}
            \\   \cline{2-4}
            & $x_+\downarrow x^*$ & ${\mc T}(x_+)\uparrow +\infty$ & \\ \cline{2-4}
            & $x_+\downarrow 0$ &  & ${\mc T}(x_+)\downarrow 0$
            \\ \cline{2-4}
            & $x_+\uparrow +\infty$ & ${\mc T}(x_+)\downarrow 0$ & \\ \cline{2-4}
            & $x_+\uparrow \o_+$ &  & ${\mc T}(x_+)\uparrow +\infty$
            \\ \hline
        \end{tabular}
\caption{Behavior of the period map $\mc{T}(x_+)$}
\label{tab1}
\end{center}
\end{table}

By the results collected in Table \ref{tab1},  the period map $\mc{T}$ has the graph plotted in Figure \ref{periods} according to the sign of $\l$ and the size of $p$. In Table \ref{tab1}
and in Figure \ref{periods}, we agree that
$$
  \frac{2\pi}{\sqrt{\l}}\equiv +\infty\qquad \hbox{if}\;\; \l=0.
$$
Based on these properties, the set of solutions of \eqref{ii.1} can be easily constructed in the special case when $a(t)$ is a positive constant $\mu>0$. Subsequently, we use the notation \eqref{ii.2} and take into account that if $(u,u')$ is a periodic solution of \eqref{ii.3} with energy
$$
c=\frac{\l}{2}x_+^2+\frac{\mu}{p+1}x_+^{p+1}, \qquad x_+>0,
$$
then $M_+=x_+\equiv x_+(c)$. The following result holds.

\begin{figure}[h!]
    \centering
    \includegraphics[scale=0.6]{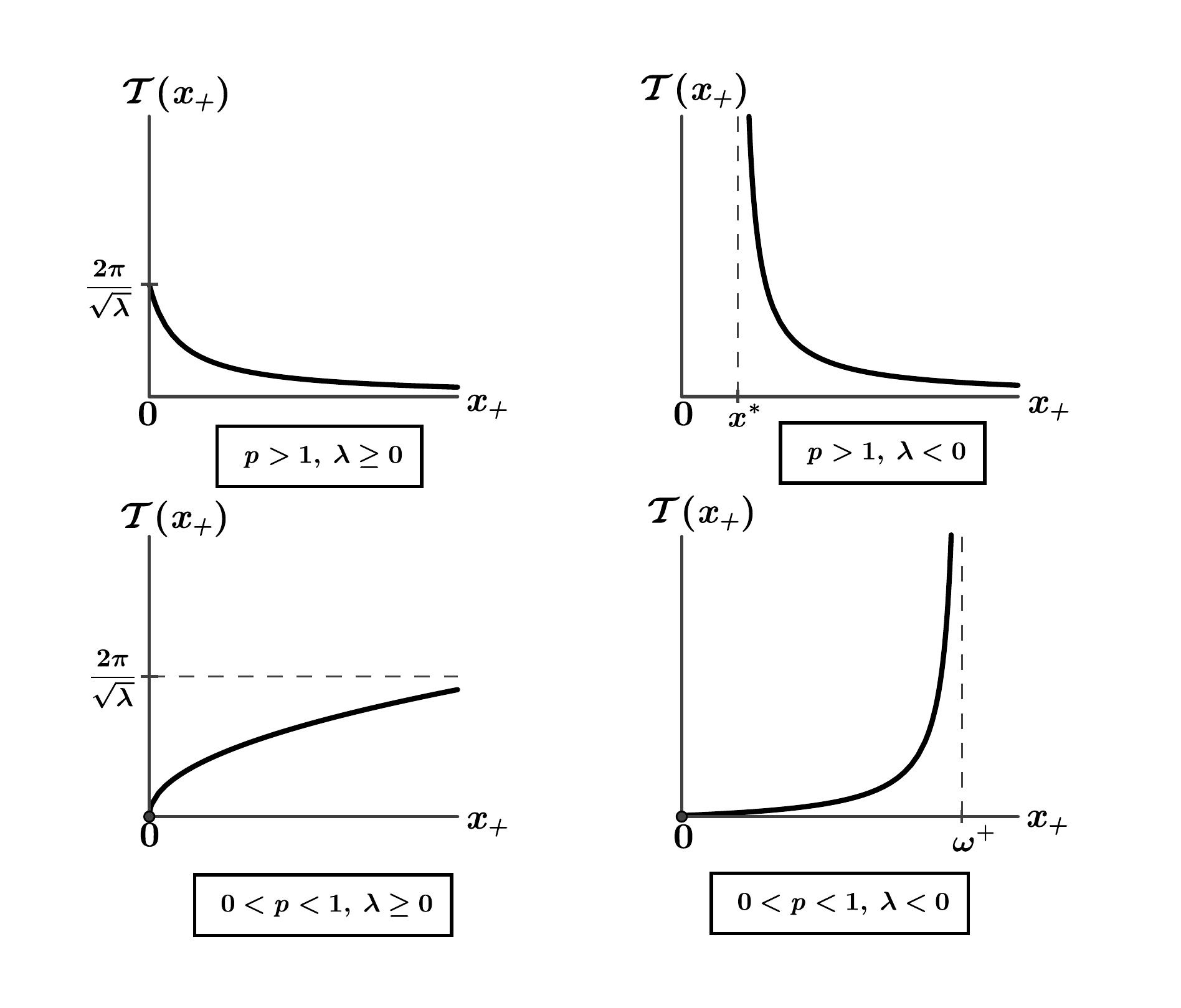}
    \caption{Plots of $\mc{T}$ according to Table \ref{tab1}.}
    \label{periods}
\end{figure}

\begin{proposition}
    \label{pr2.1}
    Suppose $a(t)\equiv\mu>0$ in $[0,L]$. Then, for every $p>0$, with $p\neq1$, and any integer $n\geq 1$, the problem \eqref{ii.1} admits a (unique) solution $(\l,u)$ with $u'(0)>0$ and $n-1$ interior zeros in $(0,L)$ if, and only if,
\begin{equation}
    \label{ii.9}
    \l<\sigma_{n}:=\left(\frac{n\pi}{L}\right)^2.
\end{equation}
In addition, when $n\geq 2$ the nodes are equidistant, i.e., they are located at the points
$$
  t_i:=\frac{i}{n}L,\qquad i\in\{1,...,n-1\}.
$$
Moreover, if, for every $\l<\s_n$, we denote by $u_{n-1,\l}$ the unique solution of \eqref{ii.1} with $n-1$ interior nodes such that $u'(0)>0$, then the map $(-\infty,\s_n)\to \mc{C}[0,L]$ defined by $\l\mapsto u_{n-1,\l}$ is continuous. Thus,
\begin{equation}
\label{ii.10}
  M_+(\l)\equiv M_+(n,\l):=\max_{t\in[0,L]}|u_{n-1,\l}(t)|,\qquad \l<\s_n,
\end{equation}
is also continuous and, actually, independent of $n\geq 1$. Furthermore, it satisfies:
\begin{enumerate}
\item[{\rm (a)}] In case $p>1$, $M_+(\l)$ decreases with respect to $\l<\s_n$, $M_+(\l)>x^*$ if $\l<0$, and
$$
   \lim_{\l \ua \s_n^-} M_+(\l)=0, \qquad \lim_{\l\da -\infty}M_+(\l)=+\infty.
$$
\item[{\rm (b)}] In case $p\in (0,1)$, $M_+(\l)$ increases with respect to $\l<\s_n$, $M_+(\l)<\o_+$ if $\l<0$, and
$$
   \lim_{\l \ua \s_n^-} M_+(\l)=+\infty, \quad \lim_{\l\da -\infty}M_+(\l)=0.
$$
\end{enumerate}
\end{proposition}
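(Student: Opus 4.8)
The plan is to reduce \eqref{ii.1} with $a\equiv\mu$ to a single scalar equation for the amplitude of a periodic orbit of \eqref{ii.3} and then to read off every assertion from the period analysis recorded in Table \ref{tab1}. Since here $\phi=\mathrm{id}$, a solution $(\l,u)$ of \eqref{ii.1} with $u'(0)>0$ is precisely the first component of the trajectory of \eqref{ii.3} issuing at $t=0$ from a point $(0,y_0)$ on the positive $y$-axis, with $y_0=u'(0)>0$; a zero of $u$ is a crossing of the $y$-axis, and the conditions $u(0)=u(L)=0$ force this trajectory to be an arc of a closed orbit encircling the origin. By the oddity of the field already exploited in \eqref{ii.4}--\eqref{ii.5}, consecutive $y$-axis crossings are separated by exactly one half-period, so a solution with $n-1$ interior zeros in $(0,L)$ is an orbit run for $n$ half-periods in total time $L$. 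Writing $x_+$ for the amplitude of that orbit, so that $x_+=M_+$ by \eqref{ii.10} and the evenness of $\mc{H}$, this gives the single characterizing identity
\[
\mc{T}_\l(x_+)=\frac{2L}{n}.
\]

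For the iff claim I would locate $2L/n$ in the range of $\mc{T}_\l$. For fixed $\l$, Table \ref{tab1} shows that $\mc{T}_\l$ is strictly monotone on its interval of definition with the stated endpoint limits; hence its range is the open interval with those endpoints, namely $(0,2\pi/\sqrt{\l})$ if $\l\geq0$ (with $2\pi/\sqrt{0}:=+\infty$) and $(0,+\infty)$ if $\l<0$, in all four regimes. Thus the identity is solvable if and only if $\l<0$, or $\l\geq0$ with $2L/n<2\pi/\sqrt{\l}$; both conditions collapse to $\l<\s_n=(n\pi/L)^2$. Strict monotonicity makes the root $x_+=x_+(\l)$ unique, and the orbit of amplitude $x_+$ together with $u'(0)>0$ pins down $u$ uniquely, proving existence and uniqueness. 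When $n\geq2$ the equidistance of the nodes is immediate: each of the $n$ node-to-node intervals carries one half-period $\tfrac12\mc{T}_\l(x_+)=L/n$, so the interior zeros sit at $t_i=iL/n$.

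The regularity and qualitative claims then follow from the map $\l\mapsto x_+(\l)$. Its continuity comes from strict monotonicity of $\mc{T}_\l$ in $x_+$ together with the joint continuity of $\mc{T}_\l(x_+)$ in $(\l,x_+)$ visible in \eqref{ii.5}; composing with the continuous dependence of the orbit of \eqref{ii.3} on its energy and on $\l$ yields continuity of $\l\mapsto u_{n-1,\l}$ into $\mc{C}[0,L]$, and hence of $M_+=x_+$. For monotonicity I would apply the implicit function theorem to $\mc{T}_\l(x_+)-2L/n=0$ to get $x_+'(\l)=-\partial_\l\mc{T}_\l/\partial_{x_+}\mc{T}_\l$; from \eqref{ii.5} increasing $\l$ enlarges the radicand pointwise on $(0,1)$, so $\partial_\l\mc{T}_\l<0$, while $\partial_{x_+}\mc{T}_\l<0$ for $p>1$ and $>0$ for $0<p<1$ (exactly the $x_+$-monotonicity in Table \ref{tab1}), giving $M_+$ decreasing for $p>1$ and increasing for $0<p<1$. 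The boundary behaviour is read the same way: as $\l\ua\s_n^-$ the right end $2\pi/\sqrt{\l}$ of the range tends to $2L/n$, pushing $x_+$ to the end of its interval where $\mc{T}_\l$ attains that value ($x_+\da0$ for $p>1$, $x_+\ua+\infty$ for $0<p<1$), while $\l\da-\infty$ drives $x_+$ to the opposite end; the bounds $M_+>x^*$ for $p>1$ and $M_+<\o_+$ for $0<p<1$ when $\l<0$ are just the admissible amplitude ranges $(x^*,+\infty)$ and $(0,\o_+)$ identified after \eqref{ii.7}--\eqref{ii.8}. The independence of $n$ is the observation that, the above being valid verbatim for each $n$, the qualitative profile of $M_+$ described in (a)--(b) does not depend on $n$.

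The step I expect to be most delicate is the uniform control of the improper integral \eqref{ii.5} at its endpoints: justifying the sign of $\partial_\l\mc{T}_\l$ and, above all, the limiting values of $x_+(\l)$ as $\l\ua\s_n^-$ and $\l\da-\infty$ requires a monotone/dominated convergence argument at the (possibly singular) endpoint $s=1$ and, when $\l<0$, near the amplitude thresholds $x^*$ and $\o_+$, where the radicand degenerates. Establishing these limits cleanly --- rather than the comparatively soft monotonicity and uniqueness --- is the real analytic content underlying Table \ref{tab1}.
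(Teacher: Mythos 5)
Your proposal is correct and follows essentially the same route as the paper: reduction to the period identity $\mc{T}_\l(x_+)=2L/n$, the range and strict monotonicity of $\mc{T}_\l$ from Table \ref{tab1} for the if-and-only-if condition and uniqueness, the half-period/reflection structure for the equidistant nodes, and monotonicity of $\mc{T}$ in both $\l$ and $x_+$ (plus the amplitude bounds $x^*$ and $\o_+$) for the behavior of $M_+(\l)$. The only cosmetic difference is that you invoke the implicit function theorem for the monotonicity of $M_+$, whereas the paper uses the equivalent two-variable comparison argument, which sidesteps differentiating the singular integral \eqref{ii.5}.
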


\begin{proof}
Note that a solution $(u,u')$ of \eqref{ii.3} passing through $(x_+,0)$ has $n-1$ interior zeroes if, and only if,  $\mc{T}(x_+)=\frac{2L}{n}$. Moreover, thanks to Table \ref{tab1}, we have that
\begin{equation*}
    \left(0,\tfrac{2\pi}{\sqrt{\l}}\right)= \mc{T}((0,+\infty))\;\,\hbox{if}\;\;\l\geq 0,\qquad \left(0,+\infty\right)=     \left\{
    \begin{array}{ll}
    \mc{T}((x^*,+\infty))&\;\,\hbox{if}\;\;\l< 0,\;p>1,
    \\
    \mc{T}((0,\o_+))&\;\,\hbox{if}\;\;\l< 0,\;p\in(0,1).
    \end{array}     \right.
\end{equation*}
Thus, a solution with $n-1$ zeros exists if, and only if, $\frac{2L}{n}<\frac{2\pi}{\sqrt{\l}}$,
which is equivalent to \eqref{ii.9}. The uniqueness follows from the fact that, for any fixed $\l<\sigma_{n}$, the period $\mc{T}(x_+)$ is monotone with respect to $x_+$. The distribution of the zeroes is a direct consequence of the symmetries of the problem, because the solution with $u'(0)>0$
and $n-1$ interior nodes in $(0,L)$ can be easily constructed from the positive solution in $(0,\frac{L}{n})$, by recursive translation and reflection. This explains also why the values of $M_+$ in \eqref{ii.10} are independent of $n\geq 1$.
\par
The continuity of the map $\l\mapsto u_{n-1,\l}$ is a byproduct of the uniqueness, by the continuous dependence of the solutions with respect to the parameter $\l$.
\par
Suppose  $p>1$. Then, thanks to \eqref{ii.5}, for any fixed $\l<\s_n$, $\mc{T}_\l(x_+)$ is decreasing with respect to $x_+$. Moreover, fixing $x_+>x^*$, $\mc{T}_\l(x_+)$ is decreasing with respect to $\l$. Thus, for any given $\l_1<\l_2<\s_n$ and $c_1, c_2>0$,
$\mc{T}_{\l_1}(x_+(c_1))=\mc{T}_{\l_2}(x_+(c_2))$ implies that $x_+(c_1)>x_+(c_2)$. Since $x_+(c)=M_+$, it becomes apparent that $M_+$ is decreasing in $(-\infty,\s_{n})$. Moreover, since
$\lim_{\l\ua \s_n^-}\frac{2\pi}{\sqrt{\l}}=\frac{2L}{n}$, it is easily seen that
$$
   \lim_{\l\ua \s_n^-}M_+(\l)=0,
$$
and that, for every $\l<0$, $0<x^*<M_+(\l)$. This inequality implies, in particular, that $M_{+}(\l)\ua +\infty$ if $\l\da -\infty$, which concludes the proof of Part (a).
\par
Suppose $p\in (0,1)$.  Then, $\mc{T}_\l(x_+)$ is increasing with respect to $x_+$ for each fixed $\l$.
Moreover, for every (fixed) $x_+<\o_+$, $\mc{T}_\l(x_+)$ is decreasing with respect to $\l$. Thus, as soon as $\l_1<\l_2<\s_{n}$ and $c_1,c_2>0$, the identity $\mc{T}_{\l_1}(x_+(c_1))=\mc{T}_{\l_2}(x_+(c_2))$ implies that $x_+(c_1)<x_+(c_2)$. Hence, $M_+(\l)$ is increasing with respect to $\l\in(-\infty,\s_{n})$. Moreover, since $\lim_{\l\ua\s_n^-}\frac{2\pi}{\sqrt{\l}}=\frac{2L}{n}$, we have that
$\lim_{\l\ua\s_n^-}M_+(\l)=+\infty$. Also, for every $\l<0$, it is easily seen that $0<M_+<\o_+$, which implies that $M_{+}(\l)\rightarrow 0^{+}$ if $\l\da -\infty$ and ends the proof of the theorem.
\end{proof}

Figure \ref{fig-bif} shows the global bifurcation diagrams of the non-zero solutions of
\eqref{ii.1} for the special case when $a(t)\equiv \mu>0$ in $[0,L]$. The case $p>1$ has been plotted above, and the case $p\in (0,1)$ below. Precisely, we are plotting the value of the parameter
$\l$, in abscissas, versus the value of $M_+$ if $u'(0)>0$, or $M_-$ if $u'(0)<0$.
As the solutions do arise by pairs, $(\l,\pm u)$, the two bifurcations diagrams are symmetric
with respect to the horizontal axis.
\par
Suppose $p>1$. Then, by Proposition \ref{pr2.1}, for every integer $n\geq 1$, the set of solutions with $n-1$ zeroes in $(0,L)$ consists of a continuous curve bifurcating from $u=0$ subcritically at $\l=\s_n$. Moreover, the upper curve increases up to $+\infty$ as $\l \da -\infty$. Each point on the superior curves represents a single solution, $(\l,u)$, of \eqref{ii.1} with $u'(0)>0$, while the opposite solutions, $(\l,-u)$, fill the  reflected inferior curves of the bifurcation diagram, providing us with all the solutions $(\l,w)$ such that $w'(0)<0$.
\par
Suppose $p\in (0,1)$. Then, by Proposition \ref{pr2.1}, for every integer $n\geq 1$,
the set of solutions with $n-1$ zeroes in $(0,L)$ consists of two continuous curves bifurcating from $\pm\infty$ subcritically at $\l=\s_n$. The upper curve increases from zero, while the inferior one is decreasing. As in the previous case, the solutions of  the form $(\l,u)$ with $u'(0)>0$ fill the superior curves, while the inferior ones consist of the solutions of the form $(\l,-u)$.
\par
In Figure \ref{fig-bif}, the curves of positive solutions have been plotted
by using red color, the solutions with one interior node fill the blue curves, the solutions with two interior nodes fill the orange curves, and the solutions with $n-1$ interior nodes have been plotted
using green color.

\begin{figure}[h!]
    \centering
    \includegraphics[scale=0.6]{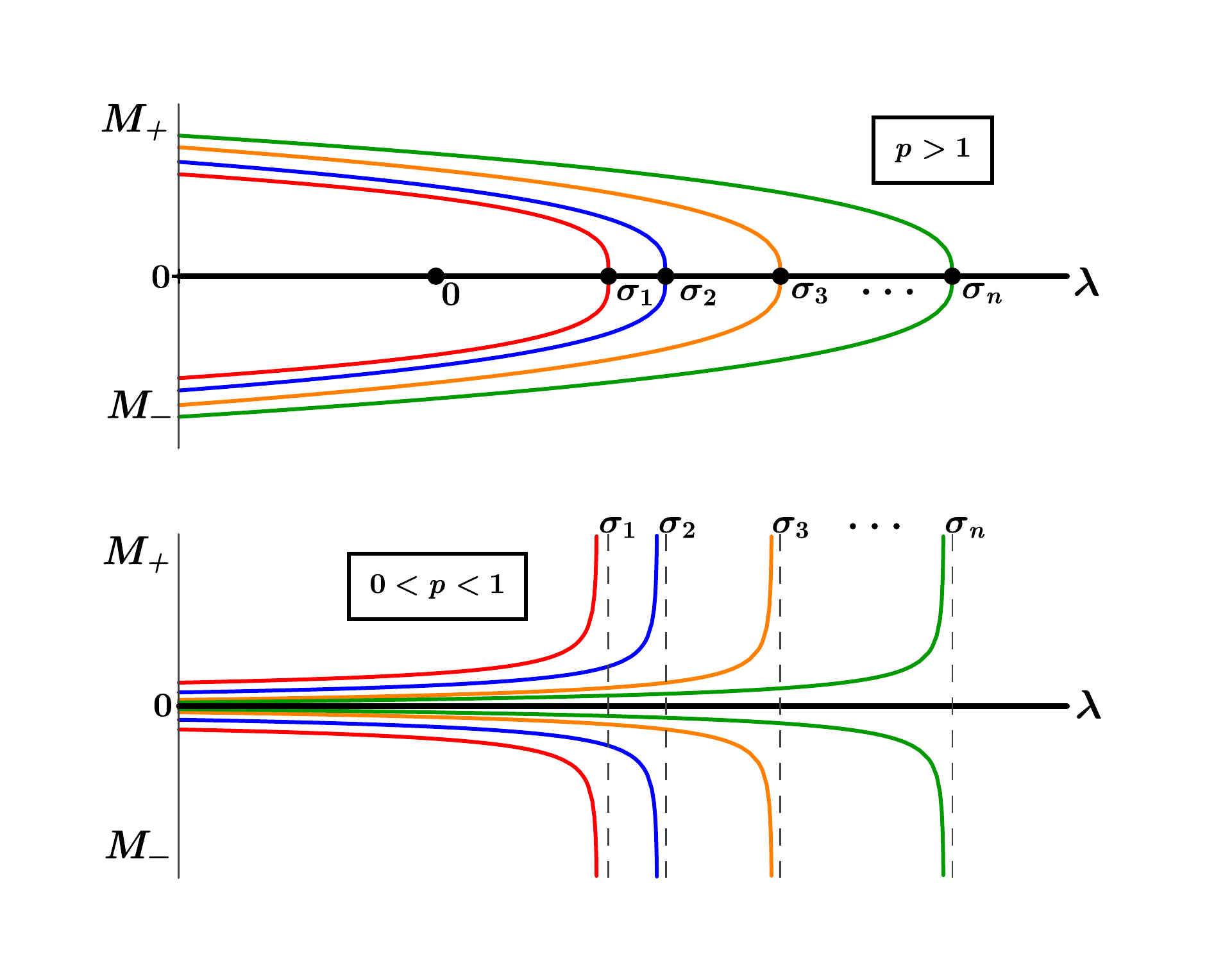}
    \caption{Bifurcation diagrams for $a\equiv\mu>0$}
    \label{fig-bif}
\end{figure}

\par
Since the continuous curves plotted in Figure \ref{fig-bif} consist of solutions with a prescribed number of zeroes, two different curves cannot meet. Actually, even projecting the solutions over $M_{\pm}$, the intersections are excluded by the construction of the nodal solutions from the positive solutions on smaller intervals.
\par
According to Figure \ref{fig-bif}, for every $\l<\s_1$ and any integer $n\geq 1$, the problem \eqref{ii.1}
has a unique solution $(\l,u)$ with $u'(0)>0$ having $n-1$ zeroes in $(0,L)$. Similarly,
for every $\l \in [\s_1,\s_2)$ and any integer $n\geq 2$, \eqref{ii.1} has a unique solution $(\l,u)$ with $u'(0)>0$ having $n-1$ zeroes in $(0,L)$, while it does not admit a positive solution. More generally, for every
integer $\kappa \geq 1$, $\l \in [\s_\kappa,\s_{\kappa+1})$ and $n\geq \kappa+1$, \eqref{ii.1} possesses a unique
solution $(\l,u)$ with $u'(0)>0$ having $n-1$ zeroes in $(0,L)$, while it does not admit any solution
with $\ell$ zeroes in $(0,L)$ if $\ell \leq \kappa-1$.
\par
The bifurcation diagrams of Figure \ref{fig-bif} should be compared with those of Figure 2 in Cubillos et al. \cite{CLGT-2022}, for the special case when $a(t)$ is a negative constant and $p>1$, which are the classical bifurcation diagrams of Rabinowitz \cite{Rab-71jde,Rab-74mm}.

\subsection{A generalized Moore--Nehari model}

In this section we consider a piecewise continuous weight function $a(t)\gneq0$ defined in  $[0,L]$,
which is divided into finitely many disjoint intervals where, alternately, either $a$ is positive (not necessarily constant) or $a\equiv0$. Precisely, we assume that $[0,L]$ splits out into
$$
    I^+_i= [t_i,s_i] \quad \text{and } \; I^0_j= (s_j,t_{j+1}),\quad i\in\{0,....,m\},\;\; j\in\{0,...,m-1\},
$$
where
\begin{equation}
\label{ii.11}
   0 = t_0 < s_0 < t_1 < \dots < t_i < s_i < \dots < s_{m-1} < t_m < s_m = L,
\end{equation}
in such a way that $a\in\mc{C}[t_i,s_i]$ satisfies $a(t)>0$ for all $t\in [t_i,s_i]$
and $i\in\{0,...,m\}$, while $a\equiv 0$ in $I^0_j$ for all $j\in\{0,...,m\}$.
Thus, there are $m+1$ intervals where the weight function $a(t)$ is positive,
separated away by $m$ intervals where $a\equiv 0$. Our class of weight functions extends considerably the one introduced by Moore and Nehari \cite{MN-1959}, later considered by Kajikiya \cite{Ka-2021,Ka-2022}, where $\l=0$ and, for some $\mu\in(0,\frac{L}{2})$,
$$
  a= 1-\chi_{(\frac{L}{2}-\mu,\frac{L}{2}+\mu)},
$$
where $\chi_J$ stands for the characteristic function of the interval $J\subset [0,L]$, i.e.,
$a(t)=0$ if $|t-\frac{L}{2}|<\mu$, while $a(t)=1$ if $|t-\frac{L}{2}|\geq \mu$.
\par
Essentially, the results of this section  establish that the global bifurcation diagrams already found
for the autonomous problem when $a\equiv \mu>0$, collected in Figure \ref{fig-bif}, are \lq \lq preserved\rq\rq\,  for the general class of weight functions considered in this section, though for some values of
$\l\in\R$ and some integer $n\geq 1$, the problem might have an arbitrarily large number of solutions with $n-1$ interior nodes (see Cubillos et al. \cite{CLGT-2022} for some examples in this direction when
$a\lneq 0$). The main technical tool to get these results is bifurcation theory
 with the a priori bounds of Amann and L\'{o}pez-G\'{o}mez \cite{ALG-98}.
As it will become apparent from the proofs in this section, the general bifurcation approach
combined with \cite{ALG-98}, applies to a broader class of weight functions than the ones
in which the intervals of positivity and degeneracy follow the split of \eqref{ii.11}. Indeed, the
location of the intervals of positivity is not important; our only  assumption here is that
$\min a(t)>0$ in each interval where $a(t)>0.$ In any case,
it is convenient to adopt \eqref{ii.11}
in order to have a comparison with the results in the other sections as well as with respect to
other cited articles.
\par
Subsequently, we adapt the analysis of Fencl and L\'{o}pez-G\'{o}mez \cite{FLG-2022} to our setting here. The next result is Lemma 2.1 of \cite{FLG-2022}. As usual, $W^{k,p}_0(0,L)$ denotes the closed subspace of the Sobolev space $W^{k,p}(0,L)$ consisting of the functions $u\in W^{k,p}(0,L)$ such that $u(0)=0=u(L)$. Similarly, $C^k_0[0,L]$ stands for the set of functions of class $\mc{C}^k[0,L]$ vanishing at $0$ and $L$.
\par

\begin{lemma}
\label{le2.1}
For every $f\in L^2(0,L)$, the function
\begin{equation*}
    u(t)\equiv Kf(t) :=\int_0^t(s-t)f(s)\,ds-\frac{t}{L}\int_0^L(s-L)f(s)\,ds
\end{equation*}
provides us with the unique solution of
\begin{equation*}
    \left\{
    \begin{array}{ll}
    -u''=f\quad\hbox{in}\;\,(0,L),\\
    u(0)=u(L)=0,
    \end{array}
    \right.
\end{equation*}
in $W^{2,2}(0,L)$. Moreover, the solution operator $K:L^2(0,L)\rightarrow W^{2,2}_0(0,L)$ is linear and continuous. Thus, when considering the compact embedding $\iota:W^{2,2}_0(0,L)\hookrightarrow C^1_0[0,L]$, the composition
\begin{equation*}
    \mc{K}:=\iota\circ K|_{C^1_0[0,L]}:C^1_0[0,L]\rightarrow C^1_0[0,L]
\end{equation*}
is a linear, continuous and compact operator.
\end{lemma}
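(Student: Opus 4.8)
The plan is to verify the lemma by direct computation, since the candidate solution is given by an explicit formula. Writing $u=Kf$ in the expanded form
$$ u(t)=\int_0^t s f(s)\,ds-t\int_0^t f(s)\,ds-\frac{t}{L}\,C,\qquad C:=\int_0^L(s-L)f(s)\,ds, $$
I would differentiate via the Leibniz rule. Because $f\in L^2(0,L)\subset L^1(0,L)$, the boundary terms $t f(t)$ arising from the first two integrals cancel, so that $u'(t)=-\int_0^t f(s)\,ds-C/L$; this is absolutely continuous, and differentiating once more gives $u''=-f$ a.e. Hence $-u''=f$ in $(0,L)$ and, since $u,u',u''\in L^2(0,L)$, also $u\in W^{2,2}(0,L)$. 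Evaluating the explicit formula at the endpoints yields $u(0)=0$ and $u(L)=C-C=0$, so in fact $u\in W^{2,2}_0(0,L)$. Uniqueness is then immediate: any two $W^{2,2}$-solutions differ by a function $w$ with $w''=0$ and $w(0)=w(L)=0$, which forces $w$ to be affine and hence $w\equiv 0$.

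Next I would establish that $K\colon L^2(0,L)\to W^{2,2}_0(0,L)$ is linear and continuous. Linearity is clear from the formula. For continuity, the identity $u''=-f$ gives $\|u''\|_{L^2}=\|f\|_{L^2}$ at once, while the representation of $u'$ together with the Cauchy--Schwarz estimate $\|f\|_{L^1}\le\sqrt{L}\,\|f\|_{L^2}$ bounds $\|u'\|_\infty$, and hence $\|u'\|_{L^2}$; since $u(0)=0$, a further integration bounds $\|u\|_\infty$ and $\|u\|_{L^2}$. Collecting these estimates produces $\|Kf\|_{W^{2,2}}\le c(L)\,\|f\|_{L^2}$ for a constant depending only on $L$, which is the desired boundedness.

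Finally, for the compactness of $\mc{K}=\iota\circ K|_{C^1_0[0,L]}$ I would argue by composition. Since $C^1_0[0,L]$ embeds continuously into $L^2(0,L)$, the restriction $K|_{C^1_0[0,L]}\colon C^1_0[0,L]\to W^{2,2}_0(0,L)$ is continuous by the previous step, and composing it with the embedding $\iota\colon W^{2,2}_0(0,L)\hookrightarrow C^1_0[0,L]$ yields $\mc{K}$; linearity and continuity of $\mc{K}$ come along the way, and compactness follows because the composition of a bounded operator with a compact one is compact. The only genuinely non-elementary ingredient is therefore the compactness of $\iota$, which is the step I would treat most carefully: in one space dimension $W^{2,2}$ embeds into $C^1$ by Morrey's theorem, and the embedding is compact because on a bounded set of $W^{2,2}_0(0,L)$ the families $\{u\}$ and $\{u'\}$ are uniformly bounded in the sup norm while $u''\in L^2$ makes $\{u'\}$ equicontinuous, so that Arzel\`{a}--Ascoli applies. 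The remaining computations are routine verification.
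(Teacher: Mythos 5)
Your proof is correct, and all of its steps check out: the boundary terms in the Leibniz differentiation do cancel, giving $u'(t)=-\int_0^t f(s)\,ds - C/L$ (an absolutely continuous function), whence $u''=-f$ a.e.\ and $u\in W^{2,2}(0,L)$; the boundary values vanish; uniqueness follows since a $W^{2,2}$-function with vanishing second derivative is affine; the estimate $\|Kf\|_{W^{2,2}}\le c(L)\|f\|_{L^2}$ follows from Cauchy--Schwarz as you indicate; and your Arzel\`{a}--Ascoli argument for the compactness of $\iota$ (uniform bounds on $u$, $u'$ plus the H\"{o}lder-type equicontinuity $|u'(t)-u'(s)|\le \|u''\|_{L^2}|t-s|^{1/2}$) is the standard and correct one. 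It is worth pointing out, however, that the paper itself does not prove this statement at all: it is quoted verbatim as Lemma 2.1 of Fencl and L\'{o}pez-G\'{o}mez \cite{FLG-2022}, so the ``paper's proof'' is a citation. Your direct verification is therefore genuinely different in character: it makes the lemma self-contained using only elementary tools (Leibniz rule, Cauchy--Schwarz, Morrey embedding, Arzel\`{a}--Ascoli), at the cost of spelling out routine detail that the authors chose to outsource; the citation route keeps the exposition short but leaves the reader to consult the reference. One small refinement you could add for full rigor is to justify the a.e.\ differentiation when $f$ is merely $L^2$: each term in your expansion of $u$ is a product of absolutely continuous functions, so $u$ is absolutely continuous and the Lebesgue differentiation theorem legitimizes the pointwise computation; since the resulting candidate for $u'$ is continuous, $u$ is in fact $C^1$ and the argument closes.
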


Thanks to Lemma \ref{le2.1}, the problem \eqref{ii.1} can be expressed as a fixed point equation for a compact operator. Indeed, a function $u$ piecewise $\mc{C}^2$ in $[0,L]$ solves \eqref{ii.1} if, and only if,
$$
u=\mc{K}(\l u+a(t)|u|^{p-1}u).
$$
Thus,  the solutions of \eqref{ii.1} are the zeroes of the nonlinear operator
\begin{equation}
\label{ii.12}
\begin{array}{rcl}
\mathfrak{F}:\mathbb{R}\times C^1_0[0,L]&\longrightarrow & C^1_0[0,L]\\[5pt]
(\l,u)&\longmapsto & \mathfrak{F}(\l,u):=u-\mc{K}(\l u+a(t)|u|^{p-1}u),
\end{array}
\end{equation}
which can be decomposed in the form
$$
    \mathfrak{F}(\l,u)=\mathfrak{L}(\l)u+\mathfrak{N}(\l,u),
$$
where
\begin{equation}
\label{ii.13}
\mathfrak{L}(\l)u:=u-\l\mathcal{K}u\quad\hbox{and}\quad
\mathfrak{N}(\l,u):=-\mathcal{K}(a(t)|u|^{p-1}u).
\end{equation}
By \eqref{ii.13}, $\mf{L}(\l)$ is a compact perturbation of the identity $I$ in $C^1_0[0,L]$ and, hence,  a Fredholm operator of index $0$.
Moreover,
\begin{equation*}
\mf{N}(\l,u)=o(\|u\|)\quad\hbox{as}\quad
\left\{
\begin{array}{ll}
\|u\|\rightarrow 0&\quad\hbox{if}\;p>1,\\
\|u\|\rightarrow +\infty&\quad\hbox{if}\;0<p<1.
\end{array}
\right.
\end{equation*}
Thus, we can apply the abstract theory of \cite[Ch. 6]{LG01} for $p>1$, going back to
Rabinowitz \cite{Rab-71}, as well as Theorem 1.6 of Rabinowitz \cite{Rab-73} for $0<p<1$.

\par
The generalized spectrum, $\Sigma(\mf{L})$, of the Fredholm curve $\mf{L}(\l)$ is the set of $\l\in\R$ satisfying $u=\l\mc{K}u$ for some $u\neq0$,  $u\in C^1_0[0,L]$. It is folklore that
$$
\Sigma(\mf{L})=\left\{\s_n=\left(\frac{n\pi}{L}\right)^2\,:\,n\geq1\right\}.
$$
As in the general case when $p>1$, the nonlinearity is not of class $\mc{C}^2$, unless $p>2$,
the local theorem of Crandall and Rabinowitz \cite{Rab-71b} cannot be applied to show the existence of a
smooth curve of nodal solutions with $n-1$ interior nodes bifurcating from $u=0$ at $\l=\s_n$. Consequently, to establish the existence of a component of nodal solutions bifurcating from $u=0$ at $\l=\s_n$, we must find out the Leray--Schauder degree $\mathrm{Deg\,}(\mf{L}(\l),B_R)$ for every $\l\in\R\setminus\Sigma(\mf{L}(\l))$, where $B_R$ denotes the ball of radius $R>0$ centered at the origin of $C^1_0[0,L]$. The degree can be computed through the Schauder's formula
\[
{\rm Deg}(\mf{L}(\l),B_R)=(-1)^{m(\mf{L}(\l))},
\]
where $m(\mf{L}(\l))$ is the sum of the algebraic multiplicities of the negative eigenvalues of $\mf{L}(\l)$. According to the Schauder formula, the next result holds.

\begin{lemma}
\label{le2.2}
Setting $\s_0\equiv-\infty$, it turns out that, for every integer $n\geq 0$,
    \begin{equation}
    \label{ii.14}
    \mathrm{Deg}(\mf{L}(\l),B_R)=\left\{
    \begin{array}{ll}
    1&\quad\hbox{if}\;\,\l\in(\s_{2n},\s_{2n+1}),\\
    -1&\quad\hbox{if}\;\,\l\in(\s_{2n+1},\s_{2n+2}).
    \end{array}
    \right.
    \end{equation}
\end{lemma}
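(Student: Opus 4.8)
The plan is to evaluate the exponent $m(\mf{L}(\l))$ appearing in the Schauder formula and to show that it equals the number of elements of the generalized spectrum $\Sigma(\mf{L})$ lying strictly below $\l$, so that its parity flips precisely as $\l$ crosses each $\s_n$. Since $\mf{L}(\l)=I-\l\mc{K}$ is a compact perturbation of the identity and $\mc{K}$ is diagonalized by the Dirichlet eigenfunctions $\v_n(t):=\sin(n\pi t/L)$, for which $\mc{K}\v_n=\s_n^{-1}\v_n$, the eigenvalues of $\mf{L}(\l)$ are exactly the numbers $1-\l/\s_n$, $n\geq 1$ (note $\mc{K}$ is injective, so $0$ is not an eigenvalue, and the values $1-\l/\s_n$ accumulate at $1>0$, whence only finitely many are negative). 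An eigenvalue $1-\l/\s_n$ is negative if, and only if, $\s_n<\l$; thus the negative eigenvalues of $\mf{L}(\l)$ are indexed precisely by those $n\geq 1$ with $\s_n<\l$.

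First I would confirm that each such eigenvalue is algebraically simple, so that $m(\mf{L}(\l))$ reduces to the cardinality of $\{n\geq 1:\s_n<\l\}$. The quickest route is to recall that $\mc{K}$ is the inverse of the one-dimensional Dirichlet Laplacian $-D^2$, which is self-adjoint in $L^2(0,L)$ with the simple eigenvalues $\s_n$ and eigenfunctions $\v_n$; self-adjointness rules out Jordan chains, so $\ker(\s_n^{-1}I-\mc{K})^k=\ker(\s_n^{-1}I-\mc{K})=\langle\v_n\rangle$ for every $k\geq 1$. Equivalently, one checks directly that the resonant boundary value problem $-w''-\s_n w=\v_n$, $w(0)=w(L)=0$, has no solution by testing against $\v_n$ and invoking the Fredholm alternative; this excludes a generalized eigenvector and pins the algebraic multiplicity of each $\s_n$ at $1$. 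As all the generalized eigenspaces are spanned by smooth functions, this computation is unaffected by working in $C^1_0[0,L]$ rather than in $L^2(0,L)$.

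With algebraic simplicity in hand, the count is immediate. Since $(\s_n)_{n\geq 1}$ is strictly increasing, for $\l\in(\s_{2n},\s_{2n+1})$ exactly the values $\s_1,\dots,\s_{2n}$ lie below $\l$, whence $m(\mf{L}(\l))=2n$ is even and the Schauder formula gives degree $(-1)^{2n}=1$; for $\l\in(\s_{2n+1},\s_{2n+2})$ exactly $\s_1,\dots,\s_{2n+1}$ lie below $\l$, so $m(\mf{L}(\l))=2n+1$ is odd and the degree equals $-1$. The convention $\s_0\equiv-\infty$ absorbs the base case $n=0$: for $\l\in(-\infty,\s_1)$ no element of $\Sigma(\mf{L})$ lies below $\l$, so $m(\mf{L}(\l))=0$ and the degree is $1$. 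This establishes \eqref{ii.14}.

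The only genuine point requiring care is the semisimplicity of $\mc{K}$ at each $\s_n^{-1}$, i.e. that the geometric and algebraic multiplicities coincide so that no Jordan block inflates the count; everything else is parity bookkeeping. I expect this to be routine once the self-adjointness of $-D^2$ (or, alternatively, the Fredholm-alternative computation for the resonant problem) is invoked.
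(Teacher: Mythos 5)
Your proposal is correct and follows essentially the same route as the paper's proof: Schauder's formula, the identification of the eigenvalues of $\mf{L}(\l)$ as $\mu_n=1-\l/\s_n$, their algebraic simplicity, and a parity count of the negative ones. The only minor differences are that the paper proves simplicity by precisely the Fredholm-alternative computation you offer as an alternative (showing $\psi_n\notin R[I-\l\mc{K}-\mu_n I]$ by testing the resonant problem against $\psi_n$), after first using homotopy invariance of the degree to reduce to $\l>0$, whereas your primary argument invokes the self-adjointness of $K$ on $L^2(0,L)$ and handles every $\l$ directly.
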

\begin{proof}
To determine $m(\mf{L}(\l))$, we should construct the set of $\mu\in\R$ such that, for some $u\in C^1_0[0,L]\setminus\{0\}$,
\begin{equation}
\label{ii.15}
  \mf{L}(\l)u=u-\l\mc{K}u=\mu u.
\end{equation}
Setting $\s_0:=-\infty$, for every $n\geq 1$, $\mathfrak{L}(\l)$ is invertible for $\l\in(\s_{n-1},\s_{n})$. Thus, by the homotopy invariance of the degree,
$$
    d_{n}:=\mathrm{Deg}(\mf{L}(\l),B_R)\;\; \hbox{is constant for all}\;\;\l\in(\s_{n-1},\s_n).
$$
In particular, $\mathrm{Deg}(\mf{L}(\l),B_R)$ is constant in $(-\infty,\s_1)$. Thus, to determine $d_1$ we can assume, without lost of generality, that $\l>0$. Then, by \eqref{ii.15}, $\mu\neq 1$ and hence, the equation \eqref{ii.15} can be equivalently written as
$$
    \left\{ \begin{array}{ll} -u''=\frac{\l}{1-\mu}u & \quad\hbox{in}\;\;[0,L],\\[2pt]
  u(0)=u(L)=0. & \end{array}\right.
$$
Thus, the set of classical eigenvalues of $\mf{L}(\l)$ is
\begin{equation*}
    \sigma(\mf{L}(\l)):=\left\{\mu_n:=1-\frac{\l}{\sigma_n}\;:\;n\geq 1 \right\}.
\end{equation*}
The $\mu_n$'s are algebraic simple eigenvalues of $\mf{L}(\l)$ because, for every $n\geq1$, setting $\psi_n(t):=\sin(\frac{n\pi}{L}t)$, $t\in [0,L]$, we have that
\begin{equation*}
    N[I-\l\mc{K}-\mu_nI]=N[\mf{L}(\s_n)]={\rm span}[\psi_n], \qquad \psi_n\notin R[I-\l\mc{K}-\mu_nI].
\end{equation*}
Indeed, if there exists $u\in C^1_0[0,L]$ such that
\[
    \psi_n=(1-\mu_n)u-\l\mc{K}u\Longleftrightarrow (1-\mu_n)u=\l\mc{K}u+\psi_n\in C^2_0[0,L],
\]
differentiating twice with respect to $t$, it follows from the definition of $\mu_n$ that
\begin{align*}
    -(1-\mu_n)u''=\l u-\psi_n''=\l u+\s_n\psi_n\Longleftrightarrow -u''-\s_nu=\frac{\s_n^2}{\l}\psi_n.
\end{align*}
Thus, multiplying by $\psi_n$ and integrating in $(0,L)$ yields
\[
    0<\frac{\s_n^2}{\l}\int_0^L\psi_n^2=\int_0^L(-u''-\s_nu)\psi_n=0,
\]
which is impossible. Therefore,  $m(\l)$ equals the number of negative eigenvalues of $\mf{L}(\l)$. Consequently, since for every $n\geq 1$ and any given  $\s_n<\l<\s_{n+1}$ we have that
$$
    \mu_1=1-\frac{\l}{\s_1}<\cdots<\mu_n=1-\frac{\l}{\s_n}<0<
    \mu_{n+1}=1-\frac{\l}{\s_{n+1}}<\cdots,
$$
the identity \eqref{ii.14} holds.
\end{proof}

Let  $\mathcal{S}$ denote  the set of non-trivial solutions of \eqref{ii.1}, i.e.,
\begin{equation*}
\mc{S}\equiv \{(\l,u)\in\mf{F}^{-1}(0):u\neq0\}\cup
\left\{
\begin{array}{ll}
\{(\s_n,0)\;:\;n\geq 1\}&\quad\hbox{if}\;\,p>1,\\[5pt]
\{(\s_n,\pm\infty)\;:\;n\geq 1\}&\quad\hbox{if}\;\,0<p<1,
\end{array}
\right.
\end{equation*}
where $\mf{F}$ is given by \eqref{ii.12}. In the case $0<p<1$ we are identifying, from a projective perspective, $(\s_n,+\infty)$ and $(\s_n,-\infty)$ to emphasize that solutions losing their bounds at $\l=\s_n$ belong also to $\mc{S}$. With this convention in mind, the next result holds.
By a component we mean a closed and connected subset maximal for the inclusion.

\begin{theorem}
\label{th2.1}
For every $n\geq1$, $\mc{S}$ possesses a component, $\mf{C}_n$, such that
\begin{equation*}
    \left\{
    \begin{array}{ll}
    (\s_n,0)\in\mf{C}_n&\quad\hbox{if}\;\,p>1,\\[5pt]
    (\s_n,\pm\infty)\in\mf{C}_n&\quad\hbox{if}\;\,0<p<1,
    \end{array}
    \right.
\end{equation*}
consisting of solutions  $(\l,u)$ with  $n-1$ interior zeroes.  Thus,
\begin{equation}
    \label{ii.16}
    \mf{C}_n\cap\mf{C}_m=\emptyset\quad\hbox{if}\;\, n\neq m.
\end{equation}
Moreover, since $\mf{F}$ is odd in $u$, for every $n\geq1$,
\begin{equation*}
    \mf{C}_n=\left\{
    \begin{array}{ll}
    \mf{C}^+_n\cup\mf{C}^-_n\cup\{(\s_n,0)\}&\quad\hbox{if}\;\,p>1,\\[5pt]
    \mf{C}^+_n\cup\mf{C}^-_n\cup\{(\s_n,\pm\infty)\}&\quad\hbox{if}\;\,0<p<1,
    \end{array}
    \right.
\end{equation*}
where
\begin{equation*}
    \mf{C}^\pm_n:=\{(\l,u)\in\mf{C}_n\;:\;\pm u'(0)>0\}.
\end{equation*}
Note that $\mf{C}^-_n:=\{(\l,-u)\;:\;(\l,u)\in\mf{C}_n^+\}$.
\end{theorem}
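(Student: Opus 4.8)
The plan is to invoke the global bifurcation theory of Rabinowitz in its topological-degree formulation, feeding in the degree jump already recorded in Lemma \ref{le2.2}, and then to superimpose the classical nodal-invariance argument in order to identify the number of interior zeroes carried by each emanating continuum. By \eqref{ii.12}--\eqref{ii.13}, the solutions of \eqref{ii.1} are the zeroes of $\mf{F}(\l,u)=\mf{L}(\l)u+\mf{N}(\l,u)$, where $\mf{L}(\l)$ is a Fredholm operator of index $0$ (a compact perturbation of the identity) and $\mf{N}(\l,\cdot)$ is compact and of higher order, namely $o(\|u\|)$ as $\|u\|\to0$ if $p>1$, and as $\|u\|\to+\infty$ if $0<p<1$. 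Since each $\s_n\in\Sigma(\mf{L})$ is an algebraically simple generalized eigenvalue, the integer $m(\mf{L}(\l))$ increases by exactly one as $\l$ crosses $\s_n$, so by \eqref{ii.14} the Leray--Schauder degree $\mathrm{Deg}(\mf{L}(\l),B_R)$ changes sign there. This sign change is precisely the topological criterion for bifurcation: for $p>1$ I would apply the global bifurcation theorem of \cite[Ch.~6]{LG01} (after Rabinowitz \cite{Rab-71}) to obtain, for each $n\geq1$, a component $\mf{C}_n$ of $\mc{S}$ with $(\s_n,0)\in\mf{C}_n$; for $0<p<1$ I would instead apply the bifurcation-from-infinity theorem, Theorem 1.6 of \cite{Rab-73}, which yields a component with $(\s_n,\pm\infty)\in\mf{C}_n$.

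The core of the argument is showing that every nontrivial $(\l,u)\in\mf{C}_n$ has exactly $n-1$ interior zeroes. First, as noted before the statement, the uniqueness of the Cauchy problem for \eqref{ii.3} (valid for $p>1$ by Lipschitz regularity and for $0<p<1$ by \cite[Th.~1]{Re-2000}) forces every zero of a nontrivial solution to be simple; consequently the number of interior zeroes and the sign of $u'(0)$ are locally constant along any arc of nontrivial solutions, since a zero can be neither created nor destroyed without producing a double zero, hence the trivial solution. Thus the set of nontrivial solutions of \eqref{ii.1} splits as a disjoint union, over the node count $k\geq0$ and the sign $\pm$, of relatively open-and-closed pieces. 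Near the bifurcation locus the solutions are asymptotically proportional to the eigenfunction $\psi_n(t)=\sin(\tfrac{n\pi}{L}t)$ — by the local analysis of \cite[Ch.~6]{LG01} when $p>1$, and by the corresponding analysis at infinity when $0<p<1$ — and $\psi_n$ has exactly $n-1$ simple interior zeroes. Hence the portion of $\mf{C}_n$ close to the bifurcation point, and therefore all of $\mf{C}_n$, consists of solutions with precisely $n-1$ interior nodes. The disjointness \eqref{ii.16} is then immediate, since $\mf{C}_n$ and $\mf{C}_m$ carry different node counts when $n\neq m$.

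The odd decomposition follows from the symmetry $\mf{F}(\l,-u)=-\mf{F}(\l,u)$, which maps $\mc{S}$ onto itself and each $\mf{C}_n$ onto itself. Because $u'(0)\neq0$ for every nontrivial solution, the sign of $u'(0)$ is constant on each connected set of nontrivial solutions; removing the bifurcation locus therefore partitions $\mf{C}_n$ into the two pieces $\mf{C}_n^\pm=\{(\l,u)\in\mf{C}_n:\pm u'(0)>0\}$, and the involution $u\mapsto-u$ identifies $\mf{C}_n^-$ with $\{(\l,-u):(\l,u)\in\mf{C}_n^+\}$, as claimed.

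The delicate step will be the nodal-invariance argument in the bifurcation-from-infinity regime $0<p<1$. Controlling the continuum as it approaches $(\s_n,\pm\infty)$ — in particular, certifying that the solutions near infinity genuinely inherit the $(n-1)$-node pattern of $\psi_n$ rather than developing extra oscillations — requires the a priori bounds of Amann and L\'{o}pez-G\'{o}mez \cite{ALG-98}, obtained through blow-up, to preclude loss of compactness and to pin down the limiting profile after rescaling. Once these a priori estimates are in force, the invariance of the node count propagates along $\mf{C}_n$ exactly as in the classical case, and the argument closes.
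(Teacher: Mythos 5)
Your proposal follows essentially the same route as the paper's proof: the degree jump recorded in Lemma \ref{le2.2} feeds the global bifurcation theorem of \cite[Ch.~6]{LG01} when $p>1$ and Rabinowitz's \cite[Th.~1.6]{Rab-73} when $0<p<1$; the local characterization of solutions near the bifurcation locus as $s[\psi_n+o(1)]$, combined with the simplicity of zeroes (which makes the node count locally constant on the set of nontrivial solutions), propagates the $(n-1)$-node pattern along the whole component and gives \eqref{ii.16}; and the oddness of $\mf{F}$ yields the decomposition into $\mf{C}_n^\pm$ — the paper cites \cite[Pr.~6.4.2]{LG01} for this last step, where you argue it directly, and both are fine.

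One correction to your final paragraph: certifying that solutions near $(\s_n,\pm\infty)$ inherit the $(n-1)$-node pattern when $0<p<1$ does \emph{not} require the blow-up a priori bounds of \cite{ALG-98}, nor would those bounds deliver it. That certification is precisely the content of \cite[Le.~1.3]{Rab-73}, proved via the inversion $v=u/\|u\|^2$ (which converts bifurcation from infinity into bifurcation from zero) together with the compactness of $\mc{K}$: normalized solutions converge in $C^1_0[0,L]$ to $\psi_n$, whose zeroes are simple, so nearby solutions have the same nodal structure. This is the tool you already invoked, correctly, in your second paragraph as ``the corresponding analysis at infinity.'' The bounds of \cite{ALG-98} give uniform estimates for $\l$ ranging in compact subintervals of $(-\infty,\s_\kappa)$; in the paper they enter only later, in Lemma \ref{le2.5}, to establish the global behavior \eqref{ii.28} of the components — they play no role in Theorem \ref{th2.1} itself.
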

\begin{proof}
The existence of the component $\mf{C}_n$ in the case $p>1$  follows from Lemma \ref{le2.2} and \cite[Th. .2.1]{LG01}. Once proven the existence of $\mf{C}_n$, by \cite[Le. 6.4.1]{LG01}, we have that, for every $(\l,u)\in \mc{S}\setminus\{(\s_n,0)\}\cap B_{\d}(\s_n,0)$ with sufficiently small $\d>0$,
\begin{equation}
\label{ii.17}
    \l=\s_n+o(1)\;\; \hbox{and}\;\; u(s)=s[\psi_n+o(1)]\;\; \hbox{as}\;\,s\rightarrow0.
\end{equation}
Thus, the solutions of $\mf{C}_n$ in a neighborhood of $(\s_n,0)$ have $n-1$ interior zeroes.
Since the zeroes of the solutions of \eqref{ii.1} are simple, they vary continuously in $\R\times C^1_0[0,L]$. Therefore,  $\mf{C}_n\setminus\{(\s_n,0)\}$ consists of solutions with $n-1$ interior nodes. This implies also \eqref{ii.16}. Finally,  \cite[Pr. 6.4.2]{LG01} guarantees the existence of the subcomponents $\mf{C}_n^{\pm}$.
\par
Similarly, the result for $p\in (0,1)$  is a consequence of Rabinowitz \cite[Le. 1.3]{Rab-73} and the first part of \cite[Th. 1.6]{Rab-73}.
\end{proof}

\begin{remark}
\label{re2.1}
    \rm In the proof of Lemma \ref{le2.2} we have actually shown that the transversality condition of Crandall and Rabinowitz \cite{Rab-71b} holds for $\mf{L}(\s_n)=I-\s_n\mc{K}$ and $\mf{L}_1:=\mf{L}'(\s_n)=-\mc{K}$. Hence, in order to apply \cite[Th. 1.7]{Rab-71b}, it suffices that the operator $\mf{F}$ defined in \eqref{ii.12} be of class $\mc{C}^r$ with $r\geq2$. This holds provided $p>2$. In such case, $\mf{C}_n$ is a curve of class $\mc{C}^{r-1}$ in a neighborhood of $(\s_n,0)$ (see \cite[Sect. 2.2]{LG01} if necessary).
\end{remark}

Next, we will ascertain the global behavior of the components $\mf{C}_n$ for $n\geq1$. Essentially, we will show that $\mf{C}_n$ behaves much like the curve bifurcating from $(\s_n,0)$ in Figure \ref{fig-bif}, though in the general case covered in this section the uniqueness of the nodal solution with $u'(0)>0$ might fail.
Our main result will follow after a series of technical lemmas. The next one shows that \eqref{ii.1} cannot admit a solution $(\l,u)$ with $n-1$ interior zeroes if
$\l\geq \s_n$.

\begin{lemma}
\label{le2.3}
Suppose  $p>1$, or $p\in (0,1)$, and  \eqref{ii.1} has a solution with $n-1$ interior zeroes, $(\l,u)$. Then, $\l<\s_n$.
\end{lemma}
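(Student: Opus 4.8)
The plan is to prove the contrapositive statement by a Sturm-comparison argument, exploiting the sign condition $g(s)s>0$ (here realized through $a(t)|u|^{p-1}u$) together with the eigenvalue characterization of $\s_n$. Suppose $(\l,u)$ is a nontrivial solution of \eqref{ii.1} with exactly $n-1$ interior zeroes in $(0,L)$. Since $u(0)=u(L)=0$ and there are $n-1$ interior nodes, the function $u$ has exactly $n+1$ zeroes in $[0,L]$, which partition $[0,L]$ into $n$ subintervals on each of which $u$ has constant sign. The key observation is that $u$ satisfies
\[
-u'' = \l u + a(t)|u|^{p-1}u = \Big(\l + a(t)|u|^{p-1}\Big)u \quad \text{in } (0,L),
\]
so $u$ is an eigenfunction-like object for the linear operator $-D^2 - q(t)$ with coefficient $q(t):=\l + a(t)|u|^{p-1}\ge \l$, where the inequality is strict wherever $a(t)>0$ and $u(t)\neq 0$.

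First I would set up the comparison with the pure Dirichlet eigenvalue problem for $-D^2$ on $(0,L)$, whose $n$-th eigenfunction $\psi_n(t)=\sin(\tfrac{n\pi}{L}t)$ has exactly $n-1$ interior zeroes and eigenvalue $\s_n=(n\pi/L)^2$. The comparison principle I intend to invoke is the standard Sturm oscillation theorem: if $u$ solves $-u''=q(t)u$ with $u(0)=0$ and $u$ has $n-1$ interior zeroes before reaching its zero at $L$, then $q$ cannot everywhere dominate (in the relevant averaged sense) the coefficient $\s_n$ that produces exactly $n-1$ interior zeroes for the constant-coefficient problem. More precisely, I would test the equation against $\psi_n$ and integrate. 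Multiplying $-u''=q(t)u$ by $\psi_n$, multiplying $-\psi_n''=\s_n\psi_n$ by $u$, subtracting and integrating over $(0,L)$, the boundary terms vanish because both functions satisfy the Dirichlet conditions, yielding the Picone-type identity
\[
\int_0^L \big(q(t)-\s_n\big)\,u(t)\,\psi_n(t)\,dt = 0.
\]
Since $q(t)-\s_n = (\l-\s_n) + a(t)|u|^{p-1}$, this gives
\[
(\s_n-\l)\int_0^L u\,\psi_n\,dt = \int_0^L a(t)|u(t)|^{p-1}u(t)\,\psi_n(t)\,dt,
\]
and the strategy is to show the right-hand side and the factor $\int_0^L u\psi_n$ force $\l<\s_n$.

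The hard part, and the main obstacle, is controlling the signs of the integrals $\int_0^L u\psi_n\,dt$ and $\int_0^L a|u|^{p-1}u\,\psi_n\,dt$, because a priori neither $u$ nor the product $u\psi_n$ need have a fixed sign across all of $(0,L)$ when $n\geq 2$. To handle this I would argue nodewise: since $u$ and $\psi_n$ each have exactly $n-1$ interior zeroes and both vanish at the endpoints, a continuity-and-counting argument (tracking how the zeroes of $u$ interlace those of $\psi_n$) shows the zeroes can be matched so that $u$ and $\psi_n$ have the same sign pattern on each of the $n$ sign-intervals after a suitable global sign normalization of $u$; consequently $u\psi_n\geq 0$ and $a|u|^{p-1}u\,\psi_n\geq 0$ throughout. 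With $a\gneq 0$ and $u\not\equiv 0$, the right-hand side is strictly positive, while $\int_0^L u\psi_n\,dt>0$, forcing $\s_n-\l>0$, i.e. $\l<\s_n$. The delicate point requiring care is justifying the matched sign pattern rigorously; I expect to invoke the simplicity of the zeroes (established earlier in the excerpt, since every zero $\tau$ satisfies $u'(\tau)\neq 0$) together with a Sturm-separation argument to guarantee the interlacing is exact, ruling out the degenerate possibility that a sign-interval of $u$ straddles a node of $\psi_n$. This completes the argument in both the superlinear case $p>1$ and the sublinear case $p\in(0,1)$, since the sign structure of $a(t)|u|^{p-1}u$ is identical in both regimes.
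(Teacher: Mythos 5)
Your reduction of the equation to $-u''=\bigl(\lambda+a(t)|u|^{p-1}\bigr)u$ and the Green-type identity
\begin{equation*}
(\sigma_n-\lambda)\int_0^L u\,\psi_n\,dt=\int_0^L a(t)|u|^{p-1}u\,\psi_n\,dt
\end{equation*}
are both correct, but the step on which everything rests --- that $u$ and $\psi_n$ can be sign-normalized so that $u\psi_n\geq 0$ on $(0,L)$ because both have exactly $n-1$ interior zeroes --- is false, not merely unjustified. Equality of the zero \emph{counts} does not force the zero \emph{sets} to coincide, and $u\psi_n\geq 0$ throughout $(0,L)$ holds only if they do coincide. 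Take $n=2$: if the unique interior zero $z_1$ of $u$ satisfies $z_1<L/2$, then on $(z_1,L/2)$ the function $u$ has already switched sign while $\psi_2$ has not, so $u\psi_2<0$ there. For nonconstant $a$ this is the generic situation: the equidistribution of nodes at $jL/n$ in Proposition \ref{pr2.1} is a symmetry property special to constant weights, so the ``degenerate possibility'' you hope to exclude is in fact the rule. No interlacing argument can rescue this: Sturm \emph{separation} applies to solutions of one and the same equation, whereas $u$ and $\psi_n$ solve different ones, and once $u\psi_n$ changes sign neither integral in your identity has a determined sign, so the identity yields no information about $\sigma_n-\lambda$.

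The paper's proof sidesteps this entirely by staying at the level of eigenvalue theory: writing $q(t):=a(t)|u|^{p-1}\gneq 0$, the solution $u$ is a Dirichlet eigenfunction of the linear operator $-D^2-q(t)$ with $n-1$ interior zeroes, hence $\lambda=\sigma_n[-D^2-q]$ by the uniqueness of the eigenvalue whose eigenfunctions have a prescribed number of nodes, and then strict monotonicity of eigenvalues with respect to the potential gives $\lambda=\sigma_n[-D^2-q]<\sigma_n[-D^2]=\sigma_n$. If you want to keep a hands-on comparison argument instead of citing eigenvalue monotonicity, the correct repair is to localize: assume $\lambda\geq\sigma_n$, so that $\lambda+a|u|^{p-1}\geq\sigma_n$, and apply the Wronskian (Picone) identity on \emph{each} nodal interval $\bigl(\tfrac{iL}{n},\tfrac{(i+1)L}{n}\bigr)$ of $\psi_n$ separately; Sturm comparison then forces $u$ to vanish inside each such interval unless $u$ is proportional to $\psi_n$ there with $a\equiv 0$ on that interval, and a short counting argument over the $n$ intervals produces at least $n$ interior zeroes of $u$ (or $a\equiv 0$ on all of $(0,L)$), contradicting the hypothesis in either case.
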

\begin{proof}
Let $(\l,u)$ be a solution with $n-1$ zeroes in $(0,L)$. Then,
$$
  \left\{ \begin{array}{l} \left(-D^2-a(t)|u|^{p-1}\right)u=\l u \quad \hbox{in}\;\; [0,L],\\
  u(0)=u(L)=0, \end{array} \right.
$$
and hence, $u$ is an eigenfunction with $n-1$ zeroes associated with the eigenvalue $\l$ of the
differential operator $-D^2-a(t)|u|^{p-1}$ under homogeneous Dirichlet boundary conditions. Consequently, by the uniqueness of these eigenvalues,
$$
    \l = \s_n\left[ -D^2-a(t)|u|^{p-1}\right]
$$
and, thanks to the monotonicity with respect to the potential (see Butazzo et al. \cite{BGH-98}), we find that
$$
  \l = \s_n\left[ -D^2-a(t)|u|^{p-1}\right]<\s_n[-D^2]=\s_n.
$$
This ends the proof.
\end{proof}

As a byproduct of Lemma \ref{le2.3}, $\mc{P}_\l (\mf{C}_n)\subset (-\infty,\s_1]$ for all $p>0$, where $\mc{P}_\l$ stands for the $\l$-projection operator, $\mc{P}_\l(\l,u):=\l$.
\par
The following lemma establishes that $(\s_n,0)$ is the unique bifurcation point of $\mf{C}_n$ from $u=0$ if $p>1$, and that  $(\s_n,\pm\infty)$ is the unique bifurcation point from infinity of $\mf{C}_n$ if $p\in (0,1)$.  As in \eqref{ii.2} for the constant case, we will denote
$$
   M_{\pm}(\l,u):=\pm\max_{t\in[0,L]}|u(t)|
$$
for every solution, $(\l,u)$, of \eqref{ii.1}.

\begin{lemma}
\label{le2.4}
Let $\mf{C}_\kappa$ be the component of nodal solutions of \eqref{ii.1} with $\kappa-1$ interior nodes
 whose existence is guaranteed by Theorem \ref{th2.1}. Then,
\begin{enumerate}
\item[\rm (i)] Whenever $p>1$, for every sequence of solutions of \eqref{ii.1}, $\{(\l_n,u_n)\}_{n\geq 1}$, such that
$\lim_{n\to \infty}\l_n=\l_*\in\R$, one has that
\begin{equation}
        \label{ii.18}
        \lim_{n\to \infty} M_{+}(\l_n,u_n) = 0
\end{equation}
if, and only if, $\l_*=\sigma_{\kappa}$ for some $\kappa\geq 1$.

\item[{\rm (ii)}] Similarly, when $p\in (0,1)$, for every sequence of solutions of \eqref{ii.1}, $\{(\l_n,u_n)\}_{n\geq 1}$, such that
$\lim_{n\to \infty}\l_n=\l_*\in\R$, one has that
\begin{equation}
\label{ii.19}
        \lim_{n\to\infty} M_{+}(\l_n,u_n)= +\infty
\end{equation}
if, and only if, $\l_*=\s_\kappa$ for some $\kappa\geq 1$.
\end{enumerate}
\end{lemma}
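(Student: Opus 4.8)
The plan is to prove each equivalence by splitting it into its two implications and handling the superlinear case $p>1$ and the sublinear case $p\in(0,1)$ in parallel, since they are dual to one another (bifurcation from $u=0$ versus bifurcation from $u=\pm\infty$). The necessity (``only if'') rests on the compactness of $\mc{K}$, whereas the sufficiency (``if'') is where Lemma \ref{le2.3} and the a priori bounds of Amann and L\'opez-G\'omez \cite{ALG-98} enter. A recurring device is that, since every zero of a non-trivial solution of \eqref{ii.1} is simple, the number of interior nodes is locally constant and, in particular, is preserved when a sequence of solutions converges in $C^1_0[0,L]$ to a non-zero limit.

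First I would settle the necessity. In case (i), assume $M_+(\l_n,u_n)\to 0$; by the equation this forces $\|u_n\|_{C^1_0}\to 0$, so I set $v_n:=u_n/\|u_n\|_{C^1_0}$ and rewrite \eqref{ii.12} as $v_n=\mc{K}\big(\l_n v_n+a(t)|u_n|^{p-1}v_n\big)$. Because $p>1$, the nonlinear contribution is controlled by $\|a\|_\infty\|u_n\|_\infty^{p-1}\to 0$, so the compactness of $\mc{K}$ extracts a subsequence with $v_n\to v$ in $C^1_0[0,L]$, $\|v\|_{C^1_0}=1$, and $-v''=\l_* v$, $v(0)=v(L)=0$. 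Hence $\l_*\in\Sigma(\mf{L})$ and $v$ is proportional to some $\psi_j$; but $v$ carries the $\kappa-1$ simple nodes of the $v_n$, which pins $j=\kappa$ and $\l_*=\s_\kappa$. Case (ii) is the mirror image: normalising instead by $w_n:=u_n/\|u_n\|_\infty$, the nonlinear term is now bounded by $\|a\|_\infty\|u_n\|_\infty^{\,p-1}\to 0$ precisely because $p<1$ and $\|u_n\|_\infty\to+\infty$, and the identical compactness argument yields a non-zero eigenfunction of $-D^2$ with $\kappa-1$ nodes, whence again $\l_*=\s_\kappa$.

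Next I would prove the sufficiency by contradiction, assuming $\l_n\to\s_\kappa$ together with $M_+\not\to 0$ in (i), respectively $M_+\not\to+\infty$ in (ii). After passing to a subsequence, $M_+(\l_n,u_n)\to c$ for some $c\in(0,+\infty]$ in case (i) and some $c\in[0,+\infty)$ in case (ii). In case (i), the superlinear a priori upper bound of \cite{ALG-98}, applicable since $\{\l_n\}\cup\{\s_\kappa\}$ is compact, discards $c=+\infty$; then the $u_n$ are bounded in $C^1_0[0,L]$, so compactness of $\mc{K}$ provides a subsequence $u_n\to u_*$ with $\|u_*\|_\infty=c>0$, solving $-u_*''=\s_\kappa u_*+a(t)|u_*|^{p-1}u_*$ under Dirichlet conditions. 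Since $u_*\ne 0$ inherits the $\kappa-1$ nodes of the $u_n$, Lemma \ref{le2.3} yields $\s_\kappa<\s_\kappa$, a contradiction, so $M_+\to 0$. In case (ii), the sublinear a priori lower bound --- i.e. the absence of small solutions with a fixed nodal count for $\l$ in a compact set, consistent with $\lim_{\l\da-\infty}M_+=0$ in Proposition \ref{pr2.1}(b) --- discards $c=0$; for $c\in(0,+\infty)$ the same limiting procedure produces a non-zero solution at $\l=\s_\kappa$ with $\kappa-1$ nodes, once more contradicting Lemma \ref{le2.3}, so $M_+\to+\infty$.

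The hard part will be the two degenerate endpoints in the sufficiency. Ruling out a finite positive limit amplitude is immediate from the strict monotonicity of $\s_\kappa[-D^2-a(t)|u|^{p-1}]$ with respect to the potential embodied in Lemma \ref{le2.3}; the genuine work is excluding $c=+\infty$ for $p>1$ and $c=0$ for $p\in(0,1)$. These are exactly the two faces of the a priori estimates of \cite{ALG-98}: the superlinear upper bound forbids blow-up at finite $\l$, while the sublinear lower bound forbids collapse onto $u=0$ at the eigenvalues, and it is their combination with Lemma \ref{le2.3} that closes the argument in each regime.
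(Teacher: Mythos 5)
Your proof is correct, and its ``only if'' half is essentially the paper's own argument: you normalize the solutions (by the $C^1_0$-norm; the paper divides by $M_+(\l_n,u_n)=\|u_n\|_\infty$, an immaterial difference), observe that the normalized nonlinear term is $O(\|u_n\|_\infty^{p-1})\to 0$ in either regime, use compactness of $\mc{K}$ to pass to a nontrivial $C^1_0$-limit solving $-\psi''=\l_*\psi$ with $\psi(0)=\psi(L)=0$, and read off $\l_*=\s_\kappa$ from the nodal count. Where you genuinely diverge is the ``if'' half. The paper treats the converse as a pure existence statement and dispatches it in one line from Theorem \ref{th2.1}: $\mf{C}_\kappa$ bifurcates from $(\s_\kappa,0)$ if $p>1$ and from $(\s_\kappa,\pm\infty)$ if $p\in(0,1)$, so sequences with the required behavior exist. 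You instead prove the stronger, universal converse --- every sequence of solutions with $\kappa-1$ interior nodes and $\l_n\to\s_\kappa$ must satisfy \eqref{ii.18}, resp.\ \eqref{ii.19} --- by contradiction, combining a priori bounds with Lemma \ref{le2.3}; this is sound, since a $C^1_0$-limit of positive finite amplitude inherits exactly $\kappa-1$ nodes (zeros of nontrivial solutions are simple, so nodes can neither coalesce nor be created in the limit), whence Lemma \ref{le2.3} yields the absurdity $\s_\kappa<\s_\kappa$. Two caveats on your appeal to \cite{ALG-98}, though. First, the bounds you invoke are not the literal ones of that reference: the weight here degenerates on the intervals $I^0_j$, and excluding blow-up there (for $p>1$, $\l_n\geq 0$), as well as the sublinear lower bound, is exactly the content of the paper's Lemma \ref{le2.5}, which is proved \emph{after} the present lemma; your argument therefore front-loads that material. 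Second, Lemma \ref{le2.5} is stated only for compact $J\subset(-\infty,\s_\kappa)$, so you also need the (true, but unstated) observation that the blow-up/collapse analysis uses only boundedness of $\{\l_n\}$ and hence persists as $\l_n\to\s_\kappa$. Finally, your matched-$\kappa$ reading is what makes the ``if'' direction true at all: for sequences whose nodal count is unrelated to $\kappa$ it is false (solutions with one interior node and $\l_n\to\s_1$ stay bounded away from $0$), and the paper's weaker existence reading sidesteps this issue entirely, which is what each approach buys --- yours gives the sharper asymptotic description of $\mf{C}_\kappa$ near $\l=\s_\kappa$ at the cost of the a priori-bound machinery, the paper's gives the bare bifurcation statement for free.
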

\begin{proof}
Let $\{(\l_n,u_n)\}_{n\geq1}$ be a sequence of solutions in $\mf{C}_k$ such that $\lim_{n\to\infty}\l_n=\l_*\in\R$ and
\begin{equation}
    \label{ii.20}
    \lim_{n\to\infty}M_+(\l_n,u_n) =    \left\{
    \begin{array}{ll}
    0 &\quad\hbox{if}\;\,p>1,\\[5pt]
    +\infty&\quad\hbox{if}\;\,0<p<1.
    \end{array}
    \right.
\end{equation}
Then, dividing by $M_+(\l_n,u_n)$ the differential equation satisfied by $(\l_n,u_n)$ and inverting $-D^2$, we find that, for every $n\geq 1$,
\begin{equation}
    \label{ii.21}
\frac{u_n}{M_+(\l_n,u_n)}=\mc{K}\left(\l_n\frac{u_n}{M_+(\l_n,u_n)}+
a(t)\frac{|u_n|^{p-1}u_n}{M_+(\l_n,u_n)}\right).
\end{equation}
Since $\lim_{n\to \infty}\l_n=\l_*\in\R$, by \eqref{ii.20}, the sequence
$$
   \l_n\frac{u_n}{M_+(\l_n,u_n)}+
a(t)\frac{|u_n|^{p-1}u_n}{M_+(\l_n,u_n)},\qquad n\geq 1,
$$
is bounded. Thus, since $\mc{K}$ is compact, there exists a convergent subsequence
$$
    \lim_{m\to\infty}\frac{u_{n_m}}{M_+(\l_{n_m},u_{n_m})}=\psi
$$
to some function $\psi$ with $\kappa-1$ interior zeroes such that $\|\psi\|_\infty=1$. Moreover,
 letting $m\to \infty$ in
 \begin{equation*}
\frac{u_{n_m}}{M_+(\l_{n_m},u_{n_m})}=\mc{K}\left(\l_{n_m}\frac{u_{n_m}}{M_+(\l_{n_m},u_{n_m})}+
a(t)\frac{|u_{n_m}|^{p-1}u_{n_m}}{M_+(\l_{n_m},u_{n_m})}\right)
\end{equation*}
it becomes apparent that
\begin{equation*}
    \left\{
    \begin{array}{ll}
    &-\psi''=\l_*\psi,\\
    &\psi(0)=0=\psi(L).
    \end{array}
    \right.
\end{equation*}
Therefore,  $\l_*=\s_k=\left(\frac{k\pi}{L}\right)^2$. The converses  follow straight ahead from
Theorem \ref{th2.1}.
\end{proof}

The next lemma provides us with a priori bounds in both cases, $p>1$ and $p\in (0,1)$, ensuring that the solutions of \eqref{ii.1} never blow up if $p>1$, and that they cannot emanate from $u=0$ if $p\in (0,1)$. These a priori bounds follow from Amann and L\'{o}pez-G\'{o}mez \cite[Le. 4.2]{ALG-98} for the case $p>1$, and from a counterpart of that result for the case $p\in (0,1)$.

\begin{lemma}
\label{le2.5}
Let $\mf{C}_\kappa$ be the component of nodal solutions of \eqref{ii.1} with $\kappa-1$ interior nodes    whose existence is guaranteed by Theorem \ref{th2.1}. Then, for every compact subinterval
$J\subset (-\infty,\s_\kappa)$, there exist $C=C(J)>0$ and $\o=\o(J)>0$ such that
\begin{enumerate}
\item[{\rm (i)}] For every $(\l,u)\in\mf{C}_\kappa$ with  $\l\in J$,
$M(\l,u)\leq C$ if $p>1$.

\item[\rm (ii)] For every $(\l,u)\in\mf{C}_\kappa$ with $\l\in J$,
$M(\l,u)\geq \o$ if $p\in(0,1)$.
\end{enumerate}
\end{lemma}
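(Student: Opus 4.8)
The plan is to establish a priori bounds on the component $\mf{C}_\kappa$ over each compact subinterval $J\subset(-\infty,\s_\kappa)$ by invoking the blow-up machinery of Amann and L\'opez-G\'omez \cite{ALG-98} for the superlinear case $p>1$, and a dual argument for the sublinear case $p\in(0,1)$. The two assertions are in a precise sense mirror images of one another: in (i) we must rule out that $M(\l,u)\to+\infty$ for a sequence of solutions with $\l\in J$, whereas in (ii) we must rule out that $M(\l,u)\to 0$ for such a sequence. In both cases the strategy is a normalization--rescaling--limit argument, so I would organize the proof around a single contradiction scheme and then specialize.

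For part (i), suppose to the contrary that there is a sequence $(\l_n,u_n)\in\mf{C}_\kappa$ with $\l_n\in J$ but $M(\l_n,u_n)\to+\infty$. Passing to a subsequence, $\l_n\to\l_*\in J\subset(-\infty,\s_\kappa)$. The key point is that $J$ is bounded away from the bifurcation value $\s_\kappa$, so Lemma \ref{le2.4}(i) forbids $M_+(\l_n,u_n)\to 0$ and, correspondingly, its contrapositive/dual form will be used to control the normalized limit. Here I would apply the Gidas--Spruck type rescaling exactly as in \cite[Le. 4.2]{ALG-98}: choose a point $z_n\in[0,L]$ where $|u_n|$ attains its maximum $M_n:=M(\l_n,u_n)$, set $v_n(\cdot):=u_n(z_n+M_n^{-(p-1)/2}\,\cdot)/M_n$, and observe that $v_n$ solves a rescaled equation in which the linear term $\l_n u_n$ becomes negligible (it carries a factor $M_n^{-(p-1)}\to 0$ since $p>1$) while the superlinear term $a(t)|u_n|^{p-1}u_n$ survives because the weight is bounded below by a positive constant on each interval of positivity, by our standing assumption $\min a>0$ there. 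In the limit one obtains a nontrivial bounded entire solution of a limiting autonomous superlinear problem $-v''=\bar a\,|v|^{p-1}v$ on $\R$ (or a half-line), which is ruled out by the nonexistence (Liouville) theorem of Gidas and Spruck, exactly as exploited in \cite{ALG-98}. This contradiction yields the uniform bound $M(\l,u)\le C(J)$.

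For part (ii), the argument is the sublinear counterpart. Suppose $M(\l_n,u_n)\to 0$ with $\l_n\to\l_*\in J$. By Lemma \ref{le2.4}(ii) this cannot force $\l_*=\s_\kappa$ since $J$ avoids $\s_\kappa$, but I need a quantitative lower bound rather than a mere qualitative alternative. The natural route is to renormalize $w_n:=u_n/M(\l_n,u_n)$, which has $\|w_n\|_\infty=1$ and satisfies, after inverting $-D^2$ via $\mc{K}$ as in \eqref{ii.21},
\begin{equation*}
w_n=\mc{K}\Big(\l_n w_n+a(t)\,M(\l_n,u_n)^{p-1}\,|w_n|^{p-1}w_n\Big).
\end{equation*}
Since $p\in(0,1)$, the exponent $M_n^{p-1}\to+\infty$ as $M_n\to 0$, so now the \emph{sublinear} term dominates and a Gidas--Spruck/blow-up rescaling in the reciprocal variable produces, in the limit, a nontrivial entire solution of a sublinear autonomous problem whose nonexistence is the dual Liouville statement referenced as the counterpart of \cite[Le. 4.2]{ALG-98}. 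Concretely I would rescale by the maximum as above but with the sublinear scaling exponent, extract a $\mc{C}^1_0$-convergent subsequence by the compactness of $\mc{K}$, and derive the contradiction. The resulting uniform positivity $M(\l,u)\ge\o(J)$ on $\mf{C}_\kappa\cap(J\times C^1_0[0,L])$ is what part (ii) asserts.

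The main obstacle, and the step deserving the most care, is the blow-up/rescaling limit and the verification that the rescaled weights converge to a positive constant so that the limiting Liouville theorem applies. Because $a(t)$ is only piecewise continuous and bounded below by a positive constant on each $I_i^+=[t_i,s_i]$ but vanishes identically on the degeneracy intervals $I_j^0$, I must ensure that the sequence of blow-up points $z_n$ stays (up to subsequence) either in the interior of some fixed interval of positivity, where $a$ is bounded away from zero and the interior Liouville theorem on $\R$ applies, or accumulates at a boundary point where a half-space Liouville theorem is needed. This localization of the concentration points, together with showing that the contribution of the vanishing intervals is harmless, is the delicate analytic heart of the argument; everything else (the normalization, the use of $\mc{K}$, and the compactness extraction) is routine and parallels \eqref{ii.21} and the structure already set up for Lemma \ref{le2.4}.
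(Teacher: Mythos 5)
Your core strategy coincides with the paper's own proof: assume the bound fails along a sequence $(\l_n,u_n)\in\mf{C}_\kappa$ with $\l_n\to\l_*\in J$, rescale about maximum points by $\varrho_n:=M_n^{-(p-1)/2}$ exactly as in \cite[Le. 4.2]{ALG-98}, and contradict the existence of the positive limit profile. Two remarks on the mechanics. First, the paper does not need the Gidas--Spruck Liouville theorem as such: in one dimension the limit equation $-\ddot w = a(x_\infty)|w|^{p-1}w$ with $a(x_\infty)>0$ has a center at the origin, so every nontrivial solution oscillates infinitely often, which already contradicts the positivity of $w$. Second, the same single rescaling handles both $p>1$ and $p\in(0,1)$, since $\varrho_n\to 0$ in either case and $\varrho_n^{2/(p-1)}u_n = u_n/M_n$; your separate ``reciprocal variable'' treatment of (ii) is therefore unnecessary, and as written it is also shaky: the normalization $w_n=u_n/M_n$ \emph{without} the accompanying spatial rescaling produces the unbounded coefficient $M_n^{p-1}\to\infty$, and compactness of $\mc{K}$ on $[0,L]$ is not the right tool once the rescaled domains expand to $\R$ (one needs local compactness and a diagonal argument, as in \cite{ALG-98}).

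The genuine gap is the step you explicitly defer as ``the delicate analytic heart'': the localization of the concentration points relative to the degeneracy set. If the maxima $x_n$ lie in a subinterval of $a^{-1}(0)$, the rescaled limit equation is $-\ddot w=0$, which has plenty of positive entire solutions, and no contradiction arises; the whole argument collapses unless blow-up (for $p>1$) can be pinned to the intervals where $a>0$ or to their endpoints. This is precisely where the paper does its problem-specific work, via three short but indispensable observations: (a) if $\l_n<0$, any interior maximum satisfies $0\leq -u_n''(x_n)=\l_n u_n(x_n)+a(x_n)|u_n(x_n)|^{p-1}u_n(x_n)$, which forces $a(x_n)>0$, cf.\ \eqref{ii.26}; (b) if $\l_n\geq 0$ and $p\in(0,1)$, the contradiction hypothesis $M(\l_n,u_n)\to 0$ is global, hence the maxima over each positivity interval also tend to $0$ and the rescaling can simply be run there; (c) if $\l_n\geq 0$, $p>1$, and the $u_n$ remain bounded on all positivity intervals while blowing up on some $[s_i,t_{i+1}]\subset a^{-1}(0)$, then on that interval the equation degenerates to the linear one $-u_n''=\l_n u_n$, and one checks that the solutions must then become unbounded near one of the endpoints $s_i,t_{i+1}$, which abut intervals of positivity, so the (half-line) version of the rescaling argument applies there. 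Without (a)--(c), or a substitute for them, your proof of (i) is incomplete; your part (ii) is rescued by the elementary observation (b), but you never make it.
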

\begin{proof}
Assume that there exists a sequence  $\{(\l_n,u_n)\}_{n\geq1}\subset\mf{C}_\kappa^+$ such that in some interval $(\a,\b)$ where $a(t)>0$ there is a further sequence $\{x_n\}_{n\geq1}\subset(\a,\b)$ such that
\begin{equation*}
    \lim_{n\to+\infty}x_n=x_\infty\in (\a,\b),\qquad
    \lim_{n\to\infty}\l_n=\l_\infty\in(-\infty,\s_\kappa),
\end{equation*}
and
\begin{equation*}
    M(\l_n,u_n)=u_n(x_n):=\max_{t\in[\a,\b]}u_n(t)\overset{n\uparrow\infty}\longrightarrow
    \left\{
    \begin{array}{ll}
    +\infty&\quad\hbox{if}\;\,p>1,\\
    0&\quad\hbox{if}\;\,0<p<1.
    \end{array}
    \right.
\end{equation*}
Since $u_n(x_n)>0$, there exists $r_n>0$ such that
$$
  J_n:=(x_n-r_n,x_n+r_n)\subset (\a,\b)
$$
and $u_n(t)>0$ for all $t\in J_n$.  Then, setting $\varrho_n:=M_n^{-\frac{p-1}{2}}$, $n\geq 1$, it is apparent that, for every $p>0$, $p\neq1$,
\begin{equation}
\label{ii.22}
    \lim_{n\to+\infty}\varrho_n=0.
\end{equation}
Now, for every $t\in J_n$, by performing the change of variables
\begin{equation}
\label{ii.23}
    y:=\frac{t-x_n}{\varrho_n},\qquad\quad v_n(y):=\varrho_n^{\frac{2}{p-1}}u_n(t),\qquad n\geq 1,
\end{equation}
it becomes apparent that, for every $n\geq 1$,
\begin{equation}
\label{ii.24}
    -\ddot{v}_n(y)=\l_n\varrho_n^2v_n(y)+a(x_n+\varrho_n y)|v_n(y)|^{p-1}v_n(y).
\end{equation}
By \eqref{ii.23} and the definitions of $M_n$ and $\varrho_n$, it is easily seen that, for every $n\geq1$,
\begin{equation*}
    |v_n(y)|\leq 1\quad\hbox{and}\quad|v_n(0)|= 1.
\end{equation*}
Thus,  $v_n\not\equiv0$ is bounded for all $n\geq 1$. Consequently, by \eqref{ii.22} and arguing as in \cite[Le. 4.2]{ALG-98}, along some subsequence, relabeled by $n\geq 1$, we have that $v_n$ converges weakly to some positive function $w$ such that
\begin{equation}
\label{ii.25}
    -\ddot{w}(y)=a(x_\infty) |w(y)|^{p-1}w(y),\qquad w(0)=1.
\end{equation}
Since $a(x_\infty)>0$, the origin of \eqref{ii.25} is a center. Thus, all non-trivial solutions have
infinitely many zeroes in $\R$, contradicting the fact that $w$ is a non-trivial positive solution defined in $\R$. Adapting this argument, one can also get a contradiction when $x_\infty\in\{\a,\b\}$. In such case,
\eqref{ii.25} is not defined in $\R$ but only in $(-\infty,0]$, or $[0,\infty)$. This argument works out because we are assuming that $a(t)>0$ for all $t\in[\a,\b]$. The general case when $a(t)$ is continuous
in $[0,L]$ will be treated in a forthcoming paper.
\par
This concludes the proof of the theorem if the maximum of the $u_n$'s is attained in some of the finite intervals of $a^{-1}((0,+\infty))$, which is always the case if $\l_n<0$. Indeed, if $u_n$ reaches its maximum at $x_n\in (0,L)$, then
\begin{equation}
\label{ii.26}
    0\leq -u''_n(x_n)=\l_n u_n(x_n)+a(x_n)|u_n(x_n)|^{p-1}u_n(x_n).
\end{equation}
Thus, $a(x_n)>0$ if $\l_n<0$.
\par
To complete the proof, suppose that  $\l_n\geq 0$ and that the maximum of $u_n$ is attained in some
subinterval of $a^{-1}(0)$. If $0<p<1$ and $\lim_{n\to\infty} M_+(\l_n,u_n)=0$, then on each subinterval of $a^{-1}((0,+\infty))$ we also have that
$$
   \lim_{n\to \infty}\max_{t\in [t_i,s_i]}|u_n(t)|=0
$$
and the result holds very easily by adapting the argument already given before.
\par
Finally, suppose that
$p>1$ and that the $u_n$'s are bounded above in the intervals where $a>0$. Then, along some subsequence, labeled again by $n$, we have that
$$
   \lim_{n\to \infty}\max_{t\in [s_i,t_{i+1}]}|u_n(t)|=\infty
$$
for some $i\in\{0,...,m-1\}$. Thus, since
$$
  -u_n''(t)=\l_n u_n(t),
$$
with $\l_n\geq 0$, it is easily seen that also in a neighborhood of some point $x_n\in \{t_i, s_i:0\leq i\leq m\}$ the solutions go to infinity. According to the first part of the proof, the result holds true.  This ends the proof.
\end{proof}

\begin{remark}\label{rem-2.2}
\rm
In the special case when the weight function $a(t)$ is piecewise constant, the proof of Lemma \ref{le2.5} can be simplified very substantially. Indeed, assume the existence of some sequence $(\l_n,u_n)\in \mf{C}_\kappa^+$, $n\geq 1$, such that
$$
  \lim_{n\to\infty}\l_n = \l_*<\s_\kappa
$$
and
\begin{equation*}
    \lim_{n\to\infty}M_+(\l_n,u_n)= \left\{ \begin{array}{ll}
    +\infty&\quad\hbox{if}\;\,p>1,\\[5pt]
    0&\quad\hbox{if}\;\,0<p<1.  \end{array}
    \right.
\end{equation*}
Let  $[t_i,s_i]$ be any interval of length $\ell=s_i-t_i>0$ where $a$ is a constant $\mu>0$. By  \eqref{ii.5}, the period of the orbit through $(M_+(\l_n,u_n),0)$ in $[t_i,s_i]$ is given, for every $n\geq 1$, by
\begin{equation}
\label{ii.27}
\mathcal{T}(M_+(\l_n,u_n)) = 4\int_0^1\frac{ds}{\sqrt{\l_n(1-s^2)+\frac{2\mu}{p+1}M_+^{p-1}(\l_n,u_n)(1-s^{p+1})}}.
\end{equation}
Since $(\l_n,u_n)\in \mf{C}_\kappa$, for every $n\geq 1$, we have that
$$
\mathcal{T}(M_+(\l_n,u_n))>\frac{2\ell}{\kappa+1}.
$$
Indeed, if $\mathcal{T}(M_+(\l_n,u_n))\leq\frac{2\ell}{\kappa+1}$ for some $n\geq1$, then $(\l_n,u_n)$ must be a solution with more than $\kappa$ nodes and, hence,  $(\lambda_n,u_n)\notin\mathfrak{C}_{\kappa}$. However, letting $n\to \infty$ in \eqref{ii.27} yields to
$$
\lim_{n\to \infty} \mathcal{T}(M_+(\l_n,u_n)) = 0<\frac{2\ell}{\kappa+1},
$$
which is a contradiction.
\end{remark}

\par
According to Lemmas \ref{le2.4} and \ref{le2.5}, for every integer $\kappa\geq 1$, it follows from the global alternatives of Rabinowitz (see \cite[Th. 1.3]{Rab-71} and \cite[Th. 1.6]{Rab-73} for $p>1$ and $0<p<1$, respectively) that the component $\mathfrak{C}_\kappa$ must be unbounded in $\R\times \mc{C}^1_0[0,L]$. Thus, thanks to Lemma \ref{le2.3}, it becomes apparent that
\begin{equation}
\label{ii.28}
\mc{P}_{\l}\left(\mf{C}_\kappa\setminus\{(\s_\kappa,0)\}\right)=(-\infty,\s_\kappa).
\end{equation}
Note that $(\s_\kappa,0)\in\mf{C}_\kappa$ if $p>1$, while $(\s_\kappa,0)\notin\mf{C}_\kappa$ if $p\in (0,1)$. The next results collects these findings in a  compact way.

\begin{theorem}
\label{global}
For every integer $\kappa\geq1$, the component $\mf{C}_\kappa$ of nodal solutions with $\kappa-1$ interior nodes satisfies \eqref{ii.28}. In particular, it is unbounded in $\R\times \mc{C}_0^1[0,L]$. Moreover, $\mf{C}_\kappa$
bifurcates from $(\s_\kappa,0)$ if $p>1$, and from $(\s_\kappa,\pm\infty)$ if $p\in (0,1)$, and it satisfies Lemmas \ref{le2.4} and \ref{le2.5}. Furthermore, for every $(\l,u)\in\mf{C}_\kappa$ with $\l<0$,
we have that
\begin{equation}
\label{ii.29}
    M_{+}(\l,u)\geq \left(\frac{-\lambda}{\|a\|_\infty}\right)^{\frac{1}{p-1}}\quad \hbox{if}\;\;p>1,
\end{equation}
while
\begin{equation}
\label{ii.30}
    M_{+}(\l,u)\leq \left(\frac{-\lambda}{\|a\|_\infty}\right)^{\frac{1}{p-1}}\quad \hbox{if}\;\;p\in (0,1).
\end{equation}
\end{theorem}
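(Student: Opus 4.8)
The plan is to observe that the first three assertions of the statement have already been secured in the paragraph preceding it, so that only the two a priori bounds \eqref{ii.29} and \eqref{ii.30} genuinely remain to be proven. Indeed, Lemmas \ref{le2.4} and \ref{le2.5} prevent the component $\mf{C}_\kappa$ from approaching $u=0$ (when $p>1$) or blowing up (when $p\in(0,1)$) over any compact $\l$-interval contained in $(-\infty,\s_\kappa)$; hence the global alternative of Rabinowitz forces $\mf{C}_\kappa$ to be unbounded, and Lemma \ref{le2.3}, which confines the $\l$-projection of $\mf{C}_\kappa$ to $(-\infty,\s_\kappa)$, then yields \eqref{ii.28} together with the asserted location of the bifurcation point. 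I would simply recall these facts and concentrate the argument on the estimates.

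For the bounds, the key idea is to evaluate the equation at a point where $|u|$ is largest. Fix $(\l,u)\in\mf{C}_\kappa$ with $\l<0$ and choose $t_*\in(0,L)$ realizing $M_+=M_+(\l,u)=\max_{[0,L]}|u|$; since $u(0)=u(L)=0$ and $u\neq0$, such an interior point exists and $M_+>0$. By the oddness of the problem we may assume $u(t_*)=M_+$. Because $t_*$ is an interior maximum of $u$, the second-order condition gives $0\leq -u''(t_*)$, and substituting into the differential equation of \eqref{ii.1} leads to
$$
0\leq -u''(t_*)=\l u(t_*)+a(t_*)|u(t_*)|^{p-1}u(t_*)=\l M_++a(t_*)M_+^{\,p}.
$$
Dividing by $M_+>0$ and bounding $a(t_*)\leq\|a\|_\infty$ produces the master inequality $-\l\leq\|a\|_\infty M_+^{\,p-1}$, that is, $\frac{-\l}{\|a\|_\infty}\leq M_+^{\,p-1}$.

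From this single inequality both bounds follow by raising to the power $1/(p-1)$, with due attention to the sign of the exponent: when $p>1$ the map $x\mapsto x^{1/(p-1)}$ is increasing, so the inequality is preserved and we obtain \eqref{ii.29}, whereas when $p\in(0,1)$ that map is decreasing, the inequality reverses, and we obtain \eqref{ii.30}. I do not expect a substantial difficulty here; the only point demanding care is the validity of the second-order test $u''(t_*)\leq0$ in the general piecewise-continuous framework, where $a$ and hence $u''$ may jump. This is handled exactly as in \eqref{ii.26}: when $\l<0$ a positive maximum of $u$ cannot lie in a degeneracy interval, since there $-u''=\l u$ forces convexity of a positive $u$, so $t_*$ belongs to some interval where $a$ is continuous and $u$ is genuinely $\mc{C}^2$, whence the classical test applies (a one-sided version at an endpoint $s_i$ suffices in the borderline case).
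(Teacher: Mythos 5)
Your proposal is correct and follows essentially the same route as the paper's own proof: the first assertions are dispatched by citing the preceding discussion, and the two bounds are obtained by evaluating the equation at an interior point where $|u|$ is maximal, using $0\leq -u''$ there to get $\l + a(x_0)M_+^{p-1}(\l,u)\geq 0$, and then raising to the power $1/(p-1)$ with attention to its sign. Your extra remarks (that the maximum cannot lie in a degeneracy interval when $\l<0$, and the reduction to $u(t_*)=M_+$ by oddness) only make explicit details the paper leaves implicit.
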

\begin{proof}
The first assertions have been already proven. Suppose $\l<0$ and $(\l,u)\in\mf{C}_\kappa^+$ attains its maximum, which is positive, at $x_0\in(0,L)$. Then, since
$$
    0\leq -u''(x_0)=\l u(x_0)+a(x_0)|u(x_0)|^{p-1}u(x_0)=\left( \l +a(x_0)|u(x_0)|^{p-1}\right) u(x_0),
$$
we find that
$$
  \l +a(x_0)|u(x_0)|^{p-1} =\l +a(x_0)M_+^{p-1}(\l,u)\geq 0.
$$
Therefore, $a(x_0)>0$ and
\begin{equation*}
    M_+^{p-1}(\l,u)=u^{p-1}(x_0)\geq \frac{-\lambda}{a(x_0)}.
\end{equation*}
From this estimate, \eqref{ii.29} and \eqref{ii.30} follow readily.
\end{proof}

\begin{figure}[h!]
    \centering
    \includegraphics[scale=0.6]{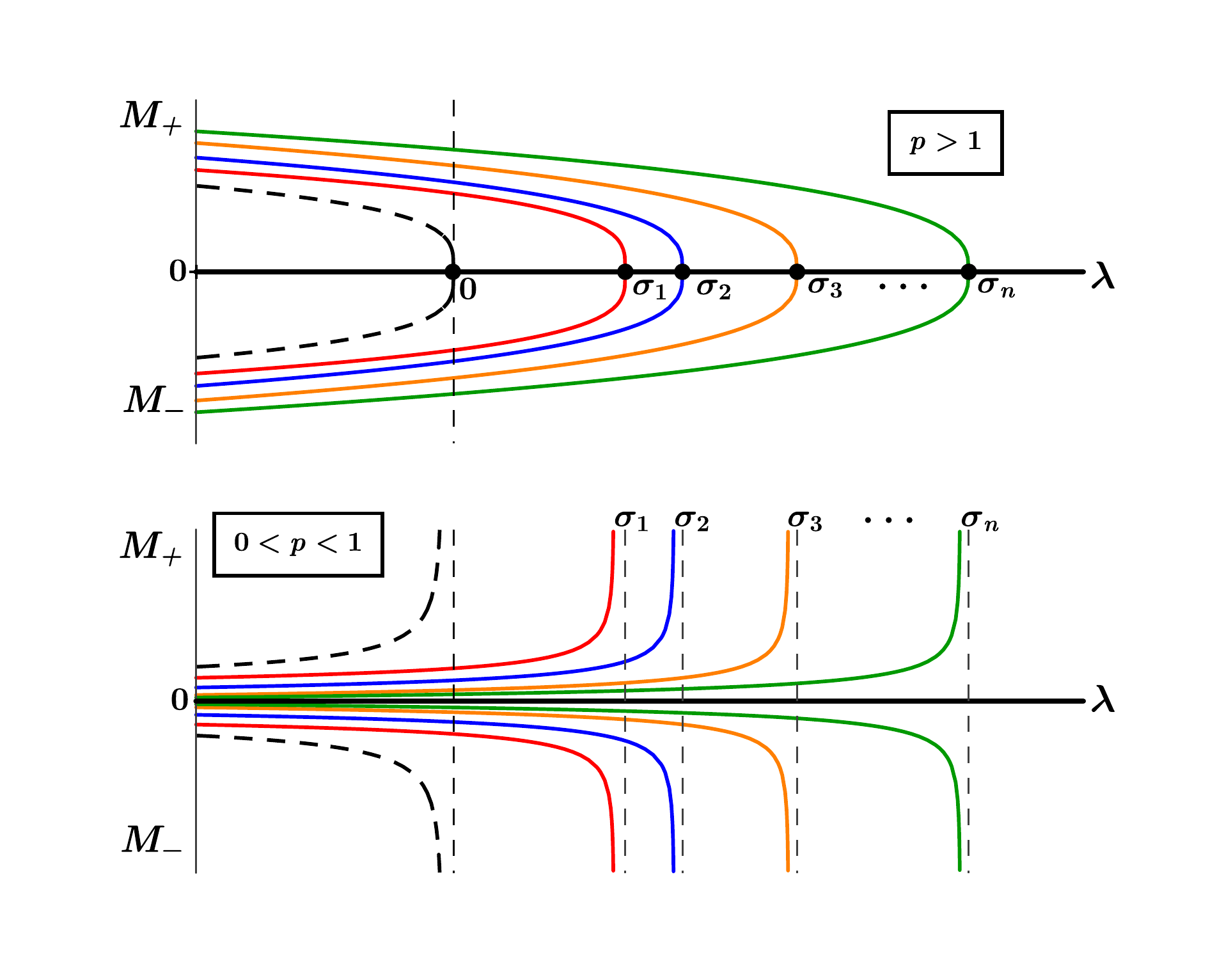}
    \caption{Two minimal global  bifurcation diagrams}
    \label{glob-bif}
\end{figure}

In Figure \ref{glob-bif} we have plotted the \emph{minimal} global bifurcation diagram of nodal solutions
of \eqref{ii.1} for the generalized Moore--Nehari model analyzed in this section. It is minimal in the sense, that, besides the components $\mf{C}_\kappa$, $\kappa\geq 1$, represented in it, the model might
possess some additional components, separated away from the $\mf{C}_\kappa$'s, that
has not been plotted on it, because its existence cannot be guaranteed unless some additional work is done.
Actually, the results of the next sections suggest the existence of
a more complicated bifurcation diagram for a piecewise constant weight satisfying \eqref{ii.11}. Indeed,
at least for $n$ sufficiently large we can provide several solutions with a fixed number of zeros
which are differently distributed in the intervals $I^+_i$ and $I^0_i$.
Moreover, the existence of an arbitrarily large number of turning
points along some of the components $\mf{C}_\kappa$, $\kappa\geq 1$,
cannot be excluded unless $a\equiv \mu>0$ in $[0,L]$.
Indeed, for the classical choice of Moore and Nehari \cite{MN-1959}, there are some negative ranges of the parameters where \eqref{ii.1} has three positive solutions. And,  at least when $a(t)<0$ for all $t\in [0,L]$, one can construct examples of weight functions, as close as wanted to some constant, for which \eqref{ii.1} has an arbitrarily large number of solutions with one interior node, distributed in finitely many isolas (see Cubillos et al. \cite{CLGT-2022}). Whether or not this multiplicity result holds also when $a\gneq 0$ remains an open problem yet.
\par
More precisely, in Figure \ref{glob-bif}, for any given solution $(\l,u)$,  we are plotting
$M_+(\l,u)$ versus $\l$ if $u'(0)>0$, while $\l$ is plotted versus $M_-(\l,u)$ if $u'(0)<0$.
Naturally, the \lq\lq fictitious\rq\rq \, intersections between the components $\mf{C}_\kappa$'s cannot be excluded to occur, neither the existence of turning points along the components $\mf{C}_\kappa$'s. However, as far as concerns the minimal structure of those global bifurcation diagrams, Figure \ref{glob-bif} preserves the main topological features of the global bifurcation diagrams already plotted in Figure \ref{fig-bif} for the special case when $a(t)\equiv \mu>0$ is a positive constant.  The dashed black curves in Figure \ref{glob-bif} represent the limiting curves of the estimates \eqref{ii.29} and \eqref{ii.30}.

\section{Some preliminaries for the quasilinear prototype}\label{section-2}

Throughout the rest of this paper we will focus attention into the quasilinear equation
\begin{equation}
\label{iii.1}
-(\phi(u'))'= \lambda u + a(t)g(u), \quad \lambda\in {\mathbb R},
\end{equation}
already introduced in Section 1, where $a(t)$ is a piece-wise constant
function in the class of \eqref{i.2}. Thus,
\begin{equation}
\label{iii.2}
a(t)\equiv \mu_i > 0, \;\; \forall \, t\in [t_i,s_i],\quad i\in\{0,...,m\}.
\end{equation}
So, $a(t)$ is a stepwise function with constant values $\mu_i>0$ on the $I^+_i$-intervals.
In the special case when $\phi(s)=s$ for all $s\in\R$, \eqref{iii.1}
is a semilinear equation of the form \eqref{ii.1}. Thus, \eqref{iii.1} can be regarded as a quasilinear generalized prototype of the generalized Moore--Nehari equation \eqref{ii.1}.
\par
Our study of \eqref{iii.1} in this paper will be performed by analyzing the dynamical system associated with the associated first order system
\begin{equation}
\label{iii.3}
\begin{cases}
x'= h(y),\\
y' = -\lambda x - a(t)g(x),
\end{cases}
\end{equation}
where $h(y):= \phi^{-1}(y)$, whose general properties were fixed in Section 1.
Under assumption \eqref{iii.2}, the system \eqref{iii.3} can be seen
as a \textit{switched system}, i.e., a system made of a superposition of
a fully nonlinear system,
$$
\begin{cases}
x'= h(y),\\
y' = - f_{i,\lambda}(x),
\end{cases}
\leqno{({\mathcal N}_{i,\lambda})}
$$
where, for every $i\in \{0,...,m\}$,
\begin{equation}
\label{iii.5}
  f_{i,\lambda}(x):= \lambda x + \mu_i g(x),\quad \lambda, x \in \R,
\end{equation}
together with a linear system in the $x$-variable
$$
\begin{cases}
x'= h(y),\\
y' = - \lambda x.
\end{cases}
\leqno{({\mathcal L}_{\lambda})}
$$
Since both $({\mathcal N}_{i,\lambda})$  and $({\mathcal L}_{\lambda})$
are autonomous systems, we can equivalently assume that
$({\mathcal N}_{i,\lambda})$ acts on a time-interval $[0,\tau_i]$
with $\tau_i= s_i-t_i$ and its effect is followed by that of the
system $({\mathcal L}_{\lambda})$ acting on a time-interval $[0,\varsigma_i]$
with $\varsigma_i= t_{i+1}-s_i$. In this framework, if we denote by $\Phi_i$
the Poincar\'{e} map associated with the system $({\mathcal N}_{i,\lambda})$
on the interval $[0,\tau_i]$ and by $\Psi_i$ that one associated with system $({\mathcal L}_{\lambda})$
on the interval $[0,\varsigma_i]$, we have that the Poincar\'{e} map of \eqref{iii.3} in $[0,L]$
splits as
$$
   {\mathscr P}_L:=\Phi_m\circ\Psi_{m-1}\circ\Phi_{m-1}\circ\dots\circ \Psi_{0}\circ \Phi_{0}.
$$
To make sure that the Poincar\'{e} map is well defined, we impose the uniqueness
of the solutions for the initial value problems associated with $({\mathcal N}_{i,\lambda})$
and $({\mathcal L}_{\lambda})$.  Since we are dealing with planar Hamiltonian systems, the
uniqueness holds under very general conditions of $h$ and $g$.  If some extra assumptions would be required, we will highlight them when needed. Actually, we can directly apply a uniqueness result from Rebelo \cite{Re-2000} if $\l>0$, and the same result applies when $\lambda=0$ for the system $({\mathcal N}_{i,\lambda})$,
while for the Cauchy problem associated to $({\mathcal L}_{\lambda})$ the uniqueness holds from an elementary direct argument. The situation might be more subtle  when $\lambda < 0$, where we need to prevent the
possibility that nontrivial solutions reach the origin in finite time. For instance, this could
happen for the system $({\mathcal L}_{\lambda})$ for $\lambda = -1$ and $h(y)=|y|^q {\rm sgn}(y)$
with $0 <q <1$.  In the case $\lambda <0$ the uniqueness can be inferred via the
strong maximum principle for $\phi$-Laplacian operators (see Pucci and Serrin  \cite[Th. 1.1.1]{PuSe-2007}).
\par
In this section, we are going to describe some general properties of the systems
$({\mathcal N}_{i,\lambda})$ and $({\mathcal L}_{\lambda})$. Throughout the rest of this paper,
we will consider the primitives
$$
   H(y):=\int_{0}^y h(s)\,ds, \qquad
F_{i,\lambda}(x):=\int_{0}^x f_{i,\lambda}(s)\,ds= \frac{\lambda}{2} x^2 + \mu_i G(x),
$$
where
$$
  G(x):=\int_{0}^x g(s)\,ds,
$$
and we define
$$
   H(\varrho_\pm)_=\lim_{y\to\varrho_\pm} H(y),
$$
where $(\varrho_{-},\varrho_{+})$ is the interval of definition of $h$.
Note that $H(\varrho_{\pm})=+\infty$ if $\varrho_{\pm}=\pm\infty$.

\subsection{A first glance at the system $({\mathcal N}_{i,\lambda})$}
\label{subsec-non}

Observe that $({\mathcal N}_{i,\lambda})$ is an Hamiltonian system with
$$
   {\mathcal H}(x,y)={\mathcal H}_{i,\lambda}(x,y):=
H(y) + F_{i,\lambda}(x),
\quad \text{for }\; (x,y)\in {\mathbb R}\times (\varrho_{-},\varrho_{+}).
$$
Hamiltonian functions \lq\lq with separable variables\rq\rq\,  of this form
have been already studied in Schaaf \cite{SC-1985,SC-1990}, Cima et al. \cite{CGM-2000}, and, more recently, in Villadelprat and Zhang \cite{VZ-2020}, where the reader is sent for some recent studies in this area.
Given $c\in\mathbb{R}$ such that
\begin{equation}
\label{iii.5}
0 <c < H^*:=\min\{H(\varrho_{-}), H(\varrho_{+})\},
\end{equation}
assume that there are constants $x_{\pm}(c)$,  with $x_-(c) < 0 < x_+(c)$, such that
\begin{equation}
\label{iii.6}
\begin{split}
& F_{i,\lambda}(x_-(c)) = F_{i,\lambda}(x_+(c)) =c,\\
& F_{i,\lambda}(x) < c \;\; \hbox{for all}\;\; x\in (x_-(c),x_+(c)),\\
& F'_{i,\lambda}(x_-(c))< 0 <F'_{i,\lambda}(x_+(c))
\end{split}
\end{equation}
Thus, the level line ${\mathcal H}(x,y) = c$ is a simple closed curve around the origin
intersecting  the $x$-axis at the points $(x_{\pm}(c),0)$ and the $y$-axis at the points $(0,y_{\pm}(c))$,
 where $y_-(c) < 0 < y_+(c)$ and $H(y_{\pm}(c))= c$.  Since $\mathcal{H}^{-1}(c)$ does not contain
equilibrium points of the system $({\mathcal N}_{i,\lambda})$,  it is a periodic orbit running clockwise.
The next result shows that the existence of a $c\in(0,H^*)$ for which \eqref{iii.6} holds is ensured by a more general condition.

\begin{lemma}
\label{le3.1}
Assume that $F_{i,\lambda}(x^{\pm})>0$ for some $x^-<0<x^+$. Then, there exists an interval $[c_1,c_2]\subset (0,H^*)$ such that, for every  $c\in[c_1,c_2]$, there are $x_{\pm}(c)$ satisfying \eqref{iii.6}.
\end{lemma}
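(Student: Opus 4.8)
The plan is to realize the pair $(x_-(c),x_+(c))$ as the connected component of the sublevel set $\{x\in\mathbb{R}:F_{i,\lambda}(x)<c\}$ containing the origin, and then to choose the level $c$ so that $F_{i,\lambda}$ crosses $c$ transversally at both endpoints. Throughout, recall that $F_{i,\lambda}$ is of class $C^1$ with $F_{i,\lambda}(0)=0$ and $F_{i,\lambda}'=f_{i,\lambda}$. First I would fix the admissible range of levels by setting $c^*:=\min\{F_{i,\lambda}(x^-),F_{i,\lambda}(x^+),H^*\}$, which is strictly positive thanks to the hypothesis $F_{i,\lambda}(x^\pm)>0$. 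For each $c\in(0,c^*)$, let $(\alpha(c),\beta(c))$ denote the connected component of $\{F_{i,\lambda}<c\}$ containing $0$; it is nonempty since $F_{i,\lambda}(0)=0<c$. Because $F_{i,\lambda}(x^+)>c$, the point $x^+$ lies outside $\{F_{i,\lambda}<c\}$, so $\beta(c)<x^+$, and symmetrically $x^-<\alpha(c)$. Hence the component is bounded, $x^-<\alpha(c)<0<\beta(c)<x^+$, and by continuity together with maximality $F_{i,\lambda}(\alpha(c))=F_{i,\lambda}(\beta(c))=c$ while $F_{i,\lambda}<c$ on $(\alpha(c),\beta(c))$. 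This already provides the first two lines of \eqref{iii.6}, and a one-sided difference quotient at each endpoint yields the weak inequalities $F_{i,\lambda}'(\alpha(c))\le 0\le F_{i,\lambda}'(\beta(c))$.

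The crux is to upgrade these to the strict transversality $F_{i,\lambda}'(\alpha(c))<0<F_{i,\lambda}'(\beta(c))$ for \emph{all} $c$ in a whole subinterval, rather than merely for almost every $c$. To start, I would note that since $F_{i,\lambda}$ is $C^1$ its critical values on $(x^-,x^+)$ form a set of measure zero (one-dimensional Sard), so regular values are dense in $(0,c^*)$; I fix one such value $c_1$. At a regular value every preimage is a regular point, whence $F_{i,\lambda}'(\beta(c_1))\ne 0$ and $F_{i,\lambda}'(\alpha(c_1))\ne 0$, and combining this with the weak signs above forces $F_{i,\lambda}'(\beta(c_1))>0$ and $F_{i,\lambda}'(\alpha(c_1))<0$.

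Finally I would propagate transversality to an interval of levels. As $F_{i,\lambda}'(\beta(c_1))>0$, continuity of $f_{i,\lambda}$ furnishes $\varepsilon>0$ with $F_{i,\lambda}'>0$ on $[\beta(c_1),\beta(c_1)+\varepsilon]\subset(0,x^+)$ and $F_{i,\lambda}(\beta(c_1)+\varepsilon)<c^*$; on this little interval $F_{i,\lambda}$ increases strictly from $c_1$. Thus for every $c\in[c_1,F_{i,\lambda}(\beta(c_1)+\varepsilon)]$ there is a unique point $x_+(c)$ there with $F_{i,\lambda}(x_+(c))=c$ and $F_{i,\lambda}'(x_+(c))>0$; since $F_{i,\lambda}<c$ on $(\alpha(c_1),x_+(c))$ and $F_{i,\lambda}>c$ just beyond $x_+(c)$, this $x_+(c)$ is exactly the right endpoint $\beta(c)$ of the component around $0$. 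The symmetric construction on the left supplies a companion upper bound for $c$, and taking $c_2$ to be the smaller of the two values (so that $c_1<c_2<c^*\le H^*$) produces an interval $[c_1,c_2]\subset(0,H^*)$ on which all three conditions in \eqref{iii.6} hold. The main obstacle is precisely this last step: transversality at the endpoints is only a generic property of the level $c$, and it is the strict monotonicity of $F_{i,\lambda}$ near a transversal crossing that converts the mere density of good levels into a genuine subinterval.
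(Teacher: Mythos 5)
Your proof is correct and follows essentially the same route as the paper: intermediate-value/sublevel-set arguments for the first two relations of \eqref{iii.6}, Sard's theorem for a $C^1$ function to produce a regular value where the strict transversality $F'_{i,\lambda}(x_-(c))<0<F'_{i,\lambda}(x_+(c))$ holds, and then continuity to extend to a whole interval of levels. The only difference is that you spell out the final propagation step (via strict monotonicity of $F_{i,\lambda}$ near the transversal crossings), which the paper asserts tersely from the $C^1$ regularity.
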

\begin{proof}
Since $F_{i,\lambda}(0)=0$ and $F_{i,\lambda}$ is continuous, for every $c\in(0,\min\{H^*,F_{i,\l}(x^{\pm})\})$, there exist $x_{\pm}(c)$ satisfying the first two relations of \eqref{iii.6}. Moreover, since $F_{i,\lambda}$ is of class $C^1$, by applying Sard's theorem, the existence of a regular value $\tilde{c}$ of $F_{i,\lambda}$ in $(0,\min\{H^*,F_{i,\l}(x^{\pm})\})$ holds. Thus, there exist $x_{\pm}(\tilde{c})$ satisfying \eqref{iii.6}. Finally, since $F_{i,\lambda}$ is $C^1$, there exists a closed neighborhood $[c_1,c_2]$ of $\tilde{c}$ where \eqref{iii.6} holds.
\end{proof}

To determine the period of the orbit $\mathcal{H}^{-1}(c)$,  we can sum up the
times needed to cross each quadrant. First, from the relation
$H(y) + F_{i,\lambda}(x)= c$,  since $x'=h(y)$, we find  that
\begin{equation*}
x'=\left\{
\begin{array}{ll}
(h\circ H_{+}^{-1})(c-F_{i,\lambda}(x))\;\;&\text{if}\; x>0,\\[3pt]
(h\circ H_{-}^{-1})(c-F_{i,\lambda}(x))\;\;&\text{if}\; x<0,
\end{array}
\right.
\end{equation*}
where $H_{+}^{-1}$ and $H_{-}^{-1}$ are the inverse functions of $H$
restricted to $[0,\varrho_{+})$ and $(\varrho_{-},0]$, respectively. From this, we
obtain that the necessary time to move from $(0,y_+(c))$ to $(x_+(c),0)$ in the first quadrant
is given by
$$
{\mathcal T}_{i,I}(c):=\int_{0}^{x_+(c)} \frac{dx}{(h\circ H_{+}^{-1})(c-F_{i,\lambda}(x))}.
$$
Similarly, the times to cross the fourth, third and second quadrants, can be expressed as
$$
  {\mathcal T}_{i,IV}(c):=\int_{0}^{x_+(c)} \frac{dx}{-(h\circ H_{-}^{-1})(c-F_{i,\lambda}(x))},
$$
$$
   {\mathcal T}_{i,III}(c):=\int_{x_-(c)}^{0}\frac{dx}{-(h\circ H_{-}^{-1})(c-F_{i,\lambda}(x))}
$$
and
$$
  {\mathcal T}_{i,II}(c):=\int_{x_-(c)}^{0}  \frac{dx}{(h\circ H_{+}^{-1})(c-F_{i,\lambda}(x))},
$$
respectively. Obviously, the period of the closed orbit
corresponding to the level line ${\mathcal H} =c$ is given by
$$
   {\mathcal T}_{i}(c)= {\mathcal T}_{i,I}(c)+ {\mathcal T}_{i,IV}(c) + {\mathcal T}_{i,III}(c)
+{\mathcal T}_{i,II}(c).
$$
Observe that
\begin{align*}
& {\mathcal T}_{i,I}(c) = {\mathcal T}_{i,IV}(c)\;\; \text{and }\;\;
{\mathcal T}_{i,II}(c) = {\mathcal T}_{i,III}(c)\quad \text{if $h$ is odd},\\
& {\mathcal T}_{i,I}(c) = {\mathcal T}_{i,II}(c)\;\; \text{and }\;\;
{\mathcal T}_{i,III}(c) = {\mathcal T}_{i,IV}(c)\quad \text{if $g$ is odd},
\end{align*}
so that all the quarter-laps times coincide with ${\mathcal T}_{i,I}(c)$
if both $h$ and $g$ are odd.
The formulas for the times ${\mathcal T}_{i,j}(c)$ have been already given in
\cite[Pr. 2.4]{CGM-2000}.
\par
Observe that in the case $\lambda\geq0$ there is always an interval $[c_1,c_2]\subset(0,H^*)$ for which
\eqref{iii.6} holds on $x_{\pm}(c)$ for every $c\in[c_1,c_2]$. The case $\lambda<0$ is more delicate. First, as the existence of such an interval is not a priori ensured,  the existence of $x^\pm$ in the statement of Lemma \ref{le3.1} should be imposed when working in the interval $[t_i,s_i]$ with the problem $(\mathcal{N}_{i,\lambda})$. Second, the geometric structure of the orbits can change completely depending on the sublinear/superlinear behavior of $g(x)$ near zero and/or infinity, as one can see, by analyzing the function
$$
   f_{i,\lambda}(x) = \lambda x +\mu_i |x|^{p-1}x
$$
for $p>1$ or $0 < p <1$.

\subsection{A first glance at the system $({\mathcal L}_{\lambda})$}

In this section, we evaluate the necessary time to move along the trajectories
of the $x$-linear system $({\mathcal L}_{\lambda})$.
The trajectories lie on the level lines of the Hamiltonian
function
$$
{\mathcal E}(x,y)={\mathcal E}_{\lambda}(x,y):= H(y) + \frac{\lambda}{2} x^2,
\quad \text{for }\; (x,y)\in {\mathbb R}\times (\varrho_{-},\varrho_{+}).
$$
The  dynamics changes substantially according to the sign of the parameter $\lambda$. Thus,
we will consider three different cases.

\begin{itemize}
\item{} \textit{The case  $\lambda < 0$.}
\end{itemize}

\noindent Then, the origin is a saddle point whose stable and unstable
manifolds lie on the level line of zero-energy, given by $2H(y) = -\lambda x^2$. The level lines
with positive energy correspond to trajectories in the upper half-plane
moving from the second towards the first quadrant, and to trajectories in the lower half-plane
moving from the fourth towards the third quadrant. Let us consider now the energy level lines passing through the points $(0,y_+)$ and $(0,y_-)$ with $\varrho_{-}<y_-<0< y_+ <\varrho_{+}$,
namely
$$
   H(y) = H(y_{\pm}) - \frac{\lambda}{2} x^2.
$$
Then, either
$$
   x' = (h\circ H_{+}^{-1})\left(H(y_+) - \frac{\lambda}{2} x^2\right),\quad\hbox{or}
   \quad x' = (h\circ H_{-}^{-1})\left(H(y_-) - \frac{\lambda}{2} x^2\right).
$$
Thus, for any given $\ell<r$, the transition times from the straight line
$x=\ell$ to the  straight line $x=r$, with $y>0$, and from the  straight line
$x=r$ to the  straight line $x=\ell$, with $y<0$,  are given by
\begin{equation}
\label{iii.7}
{\mathcal S}^{\lambda}(\ell,r;(0,y_+)):=\int_{\ell}^{r}
\frac{dx}{(h\circ H_{+}^{-1})\left(H(y_+) - \frac{\lambda}{2} x^2\right)}
\end{equation}
and
\begin{equation}
\label{iii.8}
{\mathcal S}^{\lambda}(r,\ell;(0,y_-)):=\int_{\ell}^{r}
\frac{dx}{-(h\circ H_{-}^{-1})\left(H(y_-)- \frac{\lambda}{2} x^2\right)},
\end{equation}
respectively. In order to have the arc of trajectory well defined between these two straight lines,
it is required that
$$
  \max\left\{H(y_{\pm}) - \frac{\lambda}{2} \ell^2,H(y_{\pm}) - \frac{\lambda}{2} r^2\right\} < H(\varrho_{\pm}).
$$
The level lines with negative energy correspond to trajectories on the right half-plane
moving from the fourth towards the first quadrant, and to trajectories on the left half-plane
moving from the second towards  the third quadrant.
Figure \ref{fig-02b} shows the deformation of some regions under the flow associated
with $({\mathcal L}_{\lambda})$ for $\lambda <0$.

\begin{figure}[htbp]
\centering
\includegraphics[scale=0.3]{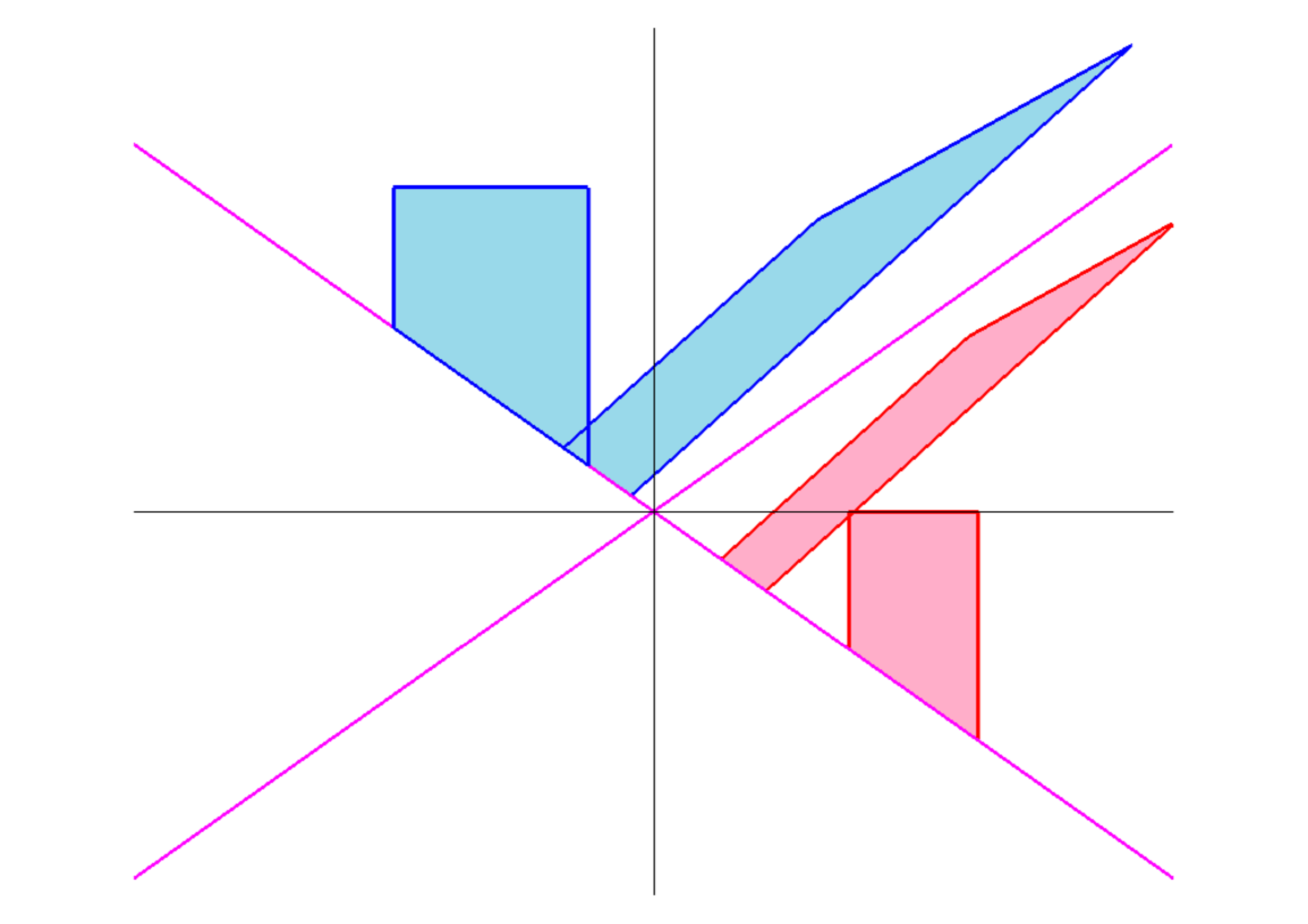}
\caption{The action of the Poicar\'{e} map.} \label{fig-02b}
\end{figure}

Precisely, Figure \ref{fig-02b} shows the deformation of two trapezoidal regions
under the action of the Poincar\'{e} map associated with $({\mathcal L}_{\lambda})$.
For this particular example we have chosen $h(y)=y$ and $\lambda=-1/2$, for
a time interval of length $\varsigma=1.5$. The two trapezoids are originally supported on each of the
two components of the stable manifold through the origin, which is the straight
line $y=-\sqrt{|\lambda|}x$. As an effect of the Poincar\'{e} map, the segments on the components of
the stable manifold are shifted towards the origin, while the remaining points of the trapezoids
move towards the unstable manifold $y=\sqrt{|\lambda|}x$.
\par
Next, we will consider the energy level lines passing through the points
$(x_{\pm},0)$, with $x_-<0<x_+$, namely
$$
   H(y) = \frac{\lambda}{2} (x_{\pm}^2-x^2).
$$
Then, either
$$
   x' = (h\circ H_{\pm}^{-1})\left(\frac{\lambda}{2} (x_+^2-x^2)\right),\quad\hbox{or}\quad x' = (h\circ H_{\pm}^{-1})\left(\frac{\lambda}{2} (x_-^2-x^2)\right),
$$
and hence, for any given $r_1, r_2>x_+$ and $\ell_1, \ell_2<x_-$, the transition time between the straight lines $x=r_1$, with $y<0$,  and $x=r_2$, with $y>0$, along the respective energy level lines, is given by
\begin{equation}
\label{iii.9}
{\mathcal S}^{\lambda}(r_1,r_2;(x_+,0)):=\!\!\int_{x_+}^{r_2}
\frac{dx}{(h\circ H_{+}^{-1})\left(\frac{\lambda}{2} (x_+^2-x^2)\right)}
\!+\! \int_{x_+}^{r_1}
\frac{dx}{-(h\circ H_{-}^{-1})\left(\frac{\lambda}{2} (x_+^2-x^2)\right)}.
\end{equation}
Similarly, the transition time between the straight line $x=\ell_1$, with $y>0$, and  $x=\ell_2$, with $y>0$,  is given through
\begin{equation}
\label{iii.10}
{\mathcal S}^{\lambda}(\ell_1,\ell_2;(x_-,0)):=\!\!\int_{\ell_1}^{x_-}
\frac{dx}{(h\circ H_{+}^{-1})\left(\frac{\lambda}{2} (x_-^2-x^2)\right)}\!+\!
\int_{\ell_2}^{x_-}
\frac{dx}{-(h\circ H_{-}^{-1})\left(\frac{\lambda}{2} (x_-^2-x^2)\right)}.
\end{equation}
The next compatibility condition
$$
  \max\left\{\frac{\lambda}{2} (x_+^2-r_j^2),\frac{\lambda}{2} (x_-^2-\ell_j^2)\right\}< H^*,
  \quad \text{for }\; j=1,2,
$$
makes sure that the arc of trajectory between each of these two lines is contained in the strip
${\mathbb R}\times (\varrho_{-},\varrho_{+})$.

\begin{itemize}
\item{} \textit{The case $\lambda = 0$.}
\end{itemize}

\noindent In this case, the system $({\mathcal L}_{0})$ reduces to $x' = h(y)$, $y'=0$. Thus,
the $x$-axis consists of equilibria. Moreover,
the motion has a constant speed, $h(y)$, on parallel lines to the $x$-axis
from left to the right, or the right to the left, according to whether $y>0$, or $y<0$.
Figure \ref{fig-02} shows the deformation of some rectangles by the action of the flow
induced by  $({\mathcal L}_{0})$.

\begin{figure}[htbp]
\centering
\includegraphics[scale=0.3]{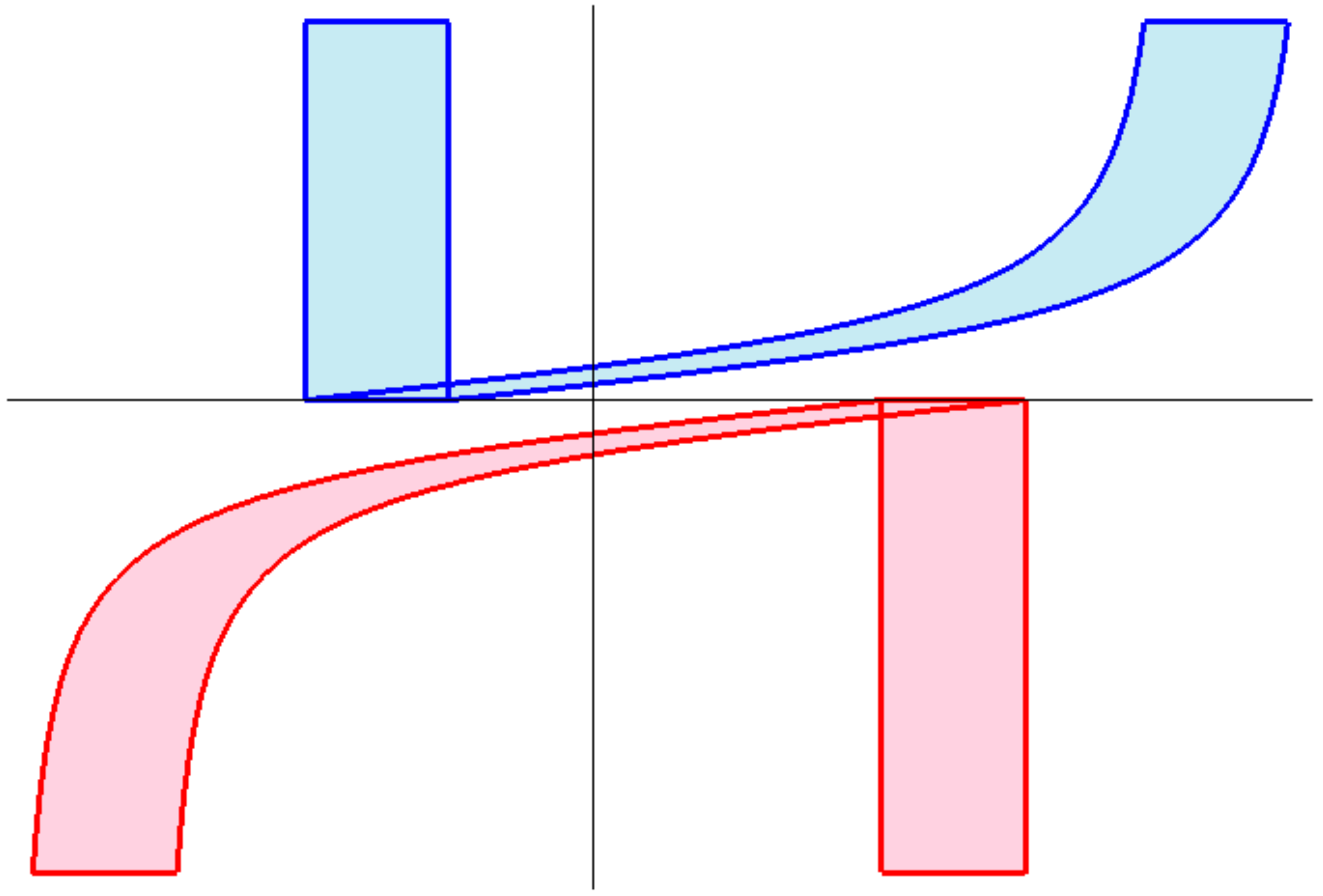}
\caption{The action of the flow induced by  $({\mathcal L}_{0})$.} \label{fig-02}
\end{figure}

Figure \ref{fig-02} shows the deformation of two rectangles
under the action of the Poincar\'{e} map associated with $({\mathcal L}_{0})$.
For this particular example we have chosen  $h(y)=y/\sqrt{1+y^2}$, which comes
from inverting the relativistic operator
$\frac{d}{dt}\bigl(\frac{x'}{\sqrt{1-(x')^2}}\bigr)$, for
a time interval of length $\varsigma=6$. The rectangles are $[-2,-1]\times[0,4]$ and $[2,3]\times[-5,0]$.
\par
Let us consider the orbits passing through the points
$(0,y_+)$ and $(0,y_-)$ with $\varrho_{-}<y_-<0< y_+ <\varrho_{+}$. From $x'=h(y_{\pm})$ we find that, for any given $\ell<r$, the transition times from the straight line
$x=\ell$ to the straight line $x=r$ with $y>0$, and from $x=r$ to $x=\ell$ with $y<0$, are given by
$$
  {\mathcal S}^{0}(\ell,r;(0,y_+)):=\frac{r-\ell}{h(y_+)} \quad\text{and }\quad
{\mathcal S}^{0}(r,\ell;(0,y_-)):=\frac{r-\ell}{-h(y_-)},
$$
respectively.

\begin{itemize}
\item{} \textit{The case  $\lambda > 0$.}
\end{itemize}

\noindent This situation is reminiscent of the case already analyzed for the system
$({\mathcal N}_{\lambda})$. Indeed, the origin is a center because, for every $c\in (0,H^*)$, the integral curve ${\mathcal E}_{\lambda}=c$ is a periodic orbit surrounding the origin that is symmetric with respect to the $y$-axis. This curve intersects the $x$-axis at the points $(x_{\pm}(c),0)$ with
$x_{\pm}(c)= \pm(2c/\lambda)^{1/2}$, and the $y$-axis at  $(0,H^{-1}_+(c))$ and $(0,H^{-1}_-(c))$.
Thus, by adapting the formulas for the times ${\mathcal T}_{i,j}$, $j\in\{I,II,III,IV\}$,  needed to cross each quadrant, after a simple change of variable, we find that
$$
    {\mathcal S}^{\lambda}_{I}(c):=(2c/\lambda)^{1/2}\int_{0}^{1}
\frac{dx}{(h\circ H_{+}^{-1})(c(1-x^2))}
$$
is the crossing time of the first quadrant, and
$$
  {\mathcal S}^{\lambda}_{IV}(c):=(2c/\lambda)^{1/2}\int_{0}^{1}
   \frac{dx}{-(h\circ H_{-}^{-1})(c(1-x^2))}
$$
is the crossing time of the fourth quadrant. By the symmetry with respect to the $y$-axis,
the crossing times of the third and second quadrants are given by
$$
   {\mathcal S}^{\lambda}_{II}(c)= {\mathcal S}^{\lambda}_{I}(c)
\quad \text{ and } \; \; {\mathcal S}^{\lambda}_{III}(c)= {\mathcal S}^{\lambda}_{IV}(c),
$$
respectively.

\section{Transformation of arcs in $({\mathcal N}_{i,\lambda})$}
\label{sec-non}
This  section analyzes the effect of the transformation of certain arcs through the Poincar\'{e} map
$\Phi_i$,  $i\in\{0,\ldots, m\}$.  To describe the movement of those arcs, we need to introduce some regions and a few notations. As these regions are always defined in the domain
${\mathbb R}\times (\varrho_-,\varrho_+)$, throughout this section \eqref{iii.5} is required to be satisfied.
\par
Let $0 < c_{1,i} < c_ {2,i}$ be such that, for every $c\in [c_{1,i},c_{2,i}]$, the condition
\eqref{iii.6} is satisfied. Then, the annulus
$$
   {\mathcal A}_{i}:= \{(x,y): c_{1,i}\leq {\mathcal H}(x,y)\leq c_{2,i}\}
$$
consists of the union of the periodic orbits ${\mathcal H}(x,y)=c$ of the system $({\mathcal N}_{i,\lambda})$ for $c\in [c_{1,i},c_{2,i}]$. As these orbits surround the origin,  also ${\mathcal A}_{i}$ is \lq\lq centered \rq\rq \ at the origin. Next, we split ${\mathcal A}_{i}$ into four rectangular
regions, ${\mathcal A}_{i,j}$, with $j\in\{I,II,III,IV\}$, where the index $j$ corresponds to the
intersection of ${\mathcal A}_{i}$ with the closed $j$-quadrant. For each of these
regions, we select an orientation, by fixing two opposite sides, ${\mathcal A}_{i,j}^-$, that we denote as follows:
\begin{align*}
  & \mathcal A^{\rm energy}_{i,j}:= ({\mathcal A}_{i,j},{\mathcal A}_{i,j}^-), \quad \hbox{where}
  \;\; {\mathcal A}_{i,j}^-:={\mathcal A}_{i,j}\cap \left(\{{\mathcal H}=c_{1,i}\}\cup \{{\mathcal H}=c_{2,i}\}\right),\\ &
  \mathcal A^{\rm axis}_{i,j}:= ({\mathcal A}_{i,j},{\mathcal A}_{i,j}^-),\quad \hbox{where} \;\;
  {\mathcal A}_{i,j}^- :={\mathcal A}_{i,j}\cap \bigl(\{xy=0\}\bigr).
\end{align*}
For each of the
four regions represented on each picture of Figure \ref{fig-01} we have
drawn also an arc connecting the two opposite sides of
${\mathcal A}_{i,j}^-$.

\begin{figure}[htbp]
\centering
\includegraphics[scale=0.35]{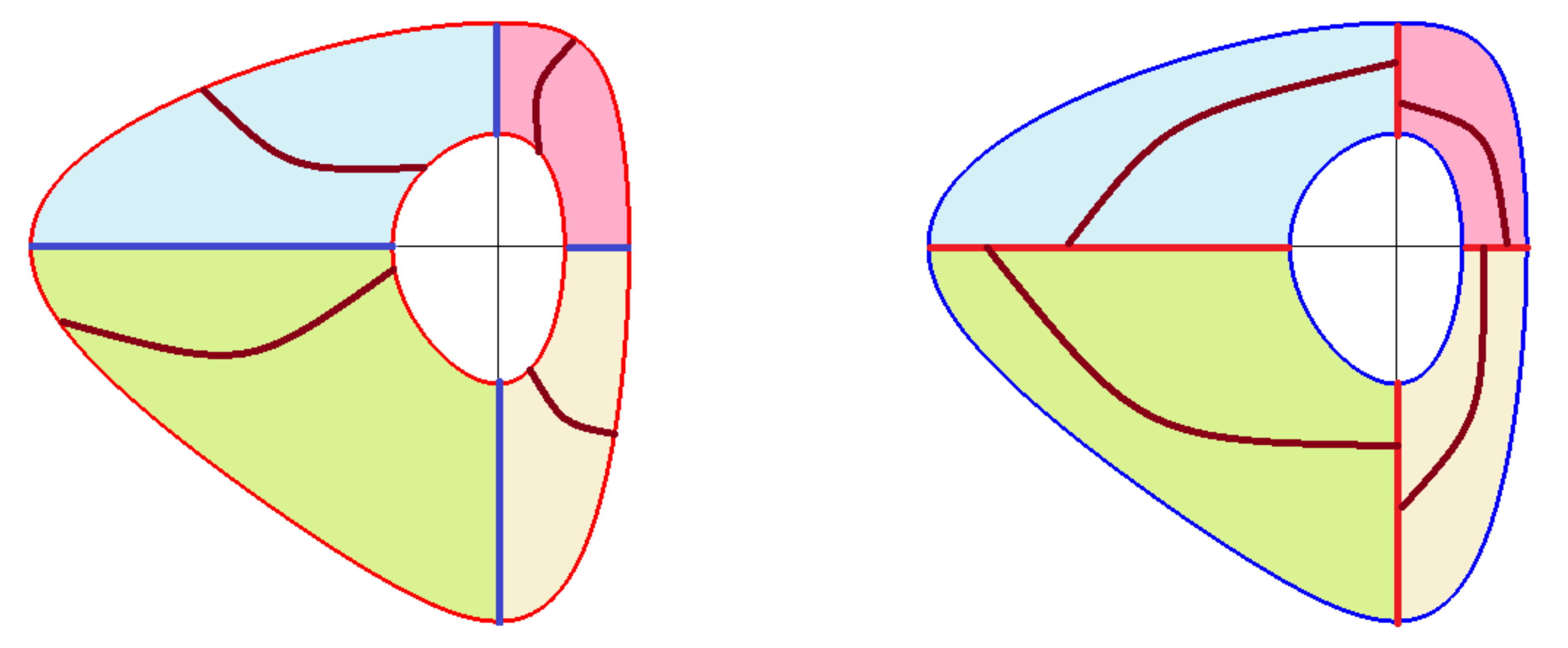}
\caption{The oriented regions
$\mathcal A^{\rm energy}_{i,j}$ (left) and
$\mathcal A^{\rm axis}_{i,j}$ (right). }
\label{fig-01}
\end{figure}

To generate Figure \ref{fig-01}, we  we have taken $h(y)=\log(\varrho_{+}/(\varrho_{+}-y))$, with $\varrho_{+}=4$,
which is an increasing homeomorphism $(\varrho_{-},\varrho_{+})\to {\mathbb R}$, for
$\varrho_{-}=-\infty$, with $h(0)=0$, $g(x)=-1+ e^x$, and $f_{\lambda}(x) := \lambda x + \mu g(x)$ with $\lambda=-1/20$ and $\mu=4/5$. For this particular choice of $h$, $g$ and $\lambda$, the
integral curves of ${\mathcal H}$ are closed trajectories around the origin contained
in the domain ${\mathbb R}\times(\varrho_{-},\varrho_{+})$ only for sufficiently small $c>0$.
\par
Throughout the rest of this paper, given two oriented rectangular
regions,  $({\mathcal R},{\mathcal R}^-)$ and $({\mathcal Q},{\mathcal Q}^-)$, and a homeomorphism $\eta$ between these two \emph{rectangles}, by setting
$$
    \eta: ({\mathcal R},{\mathcal R}^-)\overset{\mathfrak{m}}\stretchx ({\mathcal Q},{\mathcal Q}^-)
\quad (\mathfrak{m}\geq 1)
$$
we mean that there are $\mathfrak{m}$ compact disjoint sets $K_1,\ldots, K_{\mathfrak{m}}$
contained in ${\mathcal R}$ such that any arc $\gamma$ contained in
${\mathcal R}$ and linking the two opposite sides of ${\mathcal R}^-$
contains $\mathfrak{m}$ sub-arcs $\gamma_j\subset K_j$, $j\in\{1,...,\mf{m}\}$,
such that, for every $j\in\{1,...,\mf{m}\}$, $\eta(\gamma_j)$ is an arc contained in ${\mathcal Q}$
connecting the two opposite sides of ${\mathcal Q}^-$.  Thus, $\mathfrak{m}$
can be regarded as  a sort of \textit{crossing number}, as discussed by Kennedy and Yorke \cite{KeYo-2001}. When $\mathfrak{m}=1$, we omit it and simply set $\stretchx$.
\par
Once guaranteed that the annulus ${\mathcal A}_i$ is properly defined as a subset of ${\mathbb R}\times (\varrho_{-},\varrho_{+})$, the Poincar\'{e} map $\Phi_i$ of $({\mathcal N}_{i,\lambda})$ induces an homeomorphism of the annulus onto itself. Now, we will impose a suitable twist condition on the
boundaries of ${\mathcal A}_i$ in order to prove that an arc connecting
the inner with the outer boundaries of the annulus stretches into a spiral like curve
winding a certain number of times across the annulus itself, in order to prove
a property of the form
$$
  \mathcal A^{\rm energy}_{i,j'}\overset{\mathfrak{m}}\stretchx\,
\mathcal A^{\rm axis}_{i,j''}
$$
for a suitable choice of
$\mf{m}$, $j'$ and $j''$.  This is the main result of this section. Precisely, the following result holds. Recall that
the Poincar\'{e} map $\Phi_i$ acts on a time-interval of length $\tau_i=s_i-t_i$ for system
$({\mathcal N}_{i,\lambda})$.

\begin{proposition}
\label{pr4.1}
Given the cyclic clockwise order $IV \prec III \prec II \prec I\prec IV$ and $d, e\in \{c_{1,i},c_{2,i}\}$ with $d\not=e$, assume that,  for every $\k\in \{IV,III,II,I\}$, and, for some non-negative integers
$0\leq\alpha_{i} < \beta_{i}$, the following twist condition is satisfied
\begin{equation}
\label{iv.1}
\begin{split}
& {\mathcal T}_{i,\k+1}(d) + {\mathcal T}_{i,\k+2}(d) + \alpha_{i} {\mathcal T}_{i}(d) > \tau_i,\\
& {\mathcal T}_{i,\k}(e) + {\mathcal T}_{i,\k+1}(e)+\beta_{i} {\mathcal T}_{i}(e) <\tau_i.
\end{split}
\end{equation}
Then,
\begin{equation}
\label{iv.2}
\Phi_i:\mathcal A^{\rm energy}_{i,\k}\overset{\beta_i-\alpha_i}\stretchx\,
\mathcal A^{\rm axis}_{i,\k-1}
\cup \mathcal A^{\rm axis}_{i,\k+1}.
\end{equation}
Furthermore, fixing $\kappa=I\;\hbox{or}\;III$, for every $j=\{1,2,\ldots,2(\beta_i-\alpha_i)\}$, there is at least one solution of $(\mathcal{N}_{i,\l})$ starting at $\mathcal A_{i,\k}$ and having exactly
\begin{itemize}
\item[{\rm (i)}] $2\alpha_i+j$ zeroes in the $x$-component in the interval $(0,\tau_i)$, and
\item[{\rm (ii)}] $2\alpha_i+j+1$ zeroes in the $y$-component in $[0,\tau_i]$.
\end{itemize}
\end{proposition}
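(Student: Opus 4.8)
The plan is to prove the stretching relation \eqref{iv.2} by the method of \emph{stretching along paths}, in the spirit of Kennedy and Yorke \cite{KeYo-2001} and of the planar techniques of Section \ref{subsec-non}, and then to read off the nodal counts (i)--(ii) by turning crossings of the coordinate axes into zeroes of the $x$- and $y$-components. Throughout I would use that, by Lemma \ref{le3.1} and the integral formulas for the quarter-lap times recorded just after it, for every $c\in[c_{1,i},c_{2,i}]$ the level line $\mathcal{H}^{-1}(c)$ is a clockwise periodic orbit of $(\mathcal{N}_{i,\lambda})$ whose quadrant-crossing times $\mathcal{T}_{i,I}(c),\dots,\mathcal{T}_{i,IV}(c)$ and whose period $\mathcal{T}_i(c)=\mathcal{T}_{i,I}(c)+\mathcal{T}_{i,II}(c)+\mathcal{T}_{i,III}(c)+\mathcal{T}_{i,IV}(c)$ depend continuously on $c$, so that $\mathcal{A}_i$ is foliated by these orbits.

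First I would fix $\kappa$ and take an arbitrary arc $\gamma\subset\mathcal{A}_{i,\kappa}$ joining the two components of $\mathcal{A}_{i,\kappa}^-=\mathcal{A}_{i,\kappa}\cap(\{\mathcal{H}=c_{1,i}\}\cup\{\mathcal{H}=c_{2,i}\})$. Since $\mathcal{H}\circ\gamma$ is continuous and attains the values $c_{1,i}$ and $c_{2,i}$ at the endpoints of $\gamma$, the arc meets every periodic orbit of the annulus, and it may be reparametrized so that its energy covers all of $[c_{1,i},c_{2,i}]$. Because each orbit is invariant under $\Phi_i$, which merely advances the clockwise phase by the fixed time $\tau_i$, the image $\Phi_i(\gamma)$ is again a curve joining $\{\mathcal{H}=c_{1,i}\}$ to $\{\mathcal{H}=c_{2,i}\}$, but sheared into a spiral whose winding across $\mathcal{A}_i$ is dictated by the number of revolutions $\tau_i/\mathcal{T}_i(c)$ performed at each energy level $c$.

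The core of the argument is to turn the twist inequalities \eqref{iv.1} into a count of these revolutions. Evaluated at the two boundary energies $d\neq e\in\{c_{1,i},c_{2,i}\}$, the first inequality of \eqref{iv.1} prevents the orbit of energy $d$ from completing $\alpha_i$ revolutions followed by the crossing of quadrants $\kappa+1$ and $\kappa+2$ within the available time $\tau_i$, so that its image winds at most about $\alpha_i$ times; dually, the second inequality forces the orbit of energy $e$ to have already completed $\beta_i$ revolutions followed by the crossing of quadrants $\kappa$ and $\kappa+1$ before time $\tau_i$, so that its image winds at least about $\beta_i$ times. By the continuity of $c\mapsto(\mathcal{T}_{i,j}(c),\mathcal{T}_i(c))$ and an intermediate-value argument as $c$ runs from $d$ to $e$, the image of $\gamma$ must then cross each of the two axis-oriented target regions $\mathcal{A}_{i,\kappa-1}^{\rm axis}$ and $\mathcal{A}_{i,\kappa+1}^{\rm axis}$ --- which lie half a revolution apart and are hence met alternately --- exactly $\beta_i-\alpha_i$ times apiece. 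Isolating the maximal energy-sub-intervals on which $\Phi_i$ carries a sub-arc of $\gamma$ properly across a target region, from one of its axis-sides to the opposite one, provides the pairwise disjoint compact sets underlying \eqref{iv.2}, while counting these crossings over the two adjacent quadrants $\kappa-1$ and $\kappa+1$ produces the $2(\beta_i-\alpha_i)$ distinct solutions recorded in (i)--(ii). I expect the delicate point to be exactly this bookkeeping: matching the mixed quantities $\mathcal{T}_{i,\kappa+1}(d)+\mathcal{T}_{i,\kappa+2}(d)$ and $\mathcal{T}_{i,\kappa}(e)+\mathcal{T}_{i,\kappa+1}(e)$ appearing in \eqref{iv.1} to the precise integer range of the winding number, checking that every intermediate sub-arc crosses its target properly rather than merely touching it, and doing so uniformly in the two possible assignments of $\{d,e\}$ to $\{c_{1,i},c_{2,i}\}$ (i.e. whether the inner or the outer orbit is the faster one, which is governed by the monotonicity of $c\mapsto\mathcal{T}_i(c)$).

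Finally, for the nodal counts I would convert axis crossings into zeroes: a zero of the $x$-component occurs exactly when the orbit meets the $y$-axis $\{x=0\}$, and a zero of the $y$-component exactly when it meets the $x$-axis $\{y=0\}$, all of them simple since the orbits are transverse to the axes away from the origin. Taking $\kappa\in\{I,III\}$, so that the first axis met by the clockwise flow is the $x$-axis, the solution associated with the $j$-th target crossing performs $\alpha_i$ complete revolutions, contributing $2\alpha_i$ crossings of each axis, followed by a partial winding through $2j+1$ further quadrant boundaries, which adds $j$ crossings of the $y$-axis and $j+1$ of the $x$-axis. Hence this solution has $2\alpha_i+j$ zeroes of the $x$-component and $2\alpha_i+j+1$ zeroes of the $y$-component; since its endpoints lie in the interior of quadrants (never on an axis), the distinction between the open interval $(0,\tau_i)$ for $x$ and the closed interval $[0,\tau_i]$ for $y$ does not alter these counts, and (i)--(ii) follow.
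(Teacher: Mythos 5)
Your overall strategy coincides with the paper's: measure the clockwise rotation of solutions through an angular coordinate, use the two inequalities of \eqref{iv.1} at the boundary energies to bound the rotation of the images of the two endpoints of the arc from above (roughly $\alpha_i$ turns) and from below (roughly $\beta_i$ turns), apply an intermediate-value argument to capture every intermediate angular sector, and finally convert crossings of the $y$-axis and of the $x$-axis into zeroes of the $x$- and $y$-components; your final bookkeeping for (i)--(ii) agrees with Lemma \ref{le4.1}.

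However, two steps as written do not hold up. First, you claim the arc \lq\lq may be reparametrized so that its energy covers all of $[c_{1,i},c_{2,i}]$\rq\rq\ and you then run the intermediate-value argument \lq\lq as $c$ runs from $d$ to $e$\rq\rq. An arbitrary arc joining the two components of $\mathcal{A}_{i,\kappa}^{-}$ need not be monotone in energy, so no such reparametrization exists, and the twist inequalities give information only at the two energies $d,e$, not at intermediate ones. The correct variable for the intermediate-value theorem is the arc parameter: writing $\gamma:[0,1]\to\mathcal{A}_{i,\kappa}$ with $\mathcal{H}(\gamma(0))=c_{1,i}$ and $\mathcal{H}(\gamma(1))=c_{2,i}$, the paper shows that the continuous function $s\mapsto\vartheta(\tau_i,s)$ (angle of the solution at time $\tau_i$) satisfies $\vartheta(\tau_i,0)<\tfrac{3}{2}\pi+2\alpha_i\pi$ and $\vartheta(\tau_i,1)>\pi+2\beta_i\pi$, and applies the IVT in $s$; no control of the energies along $\gamma$ is needed. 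Second, and more seriously, the relation $\overset{\mathfrak{m}}\stretchx$ requires $\mathfrak{m}$ \emph{fixed} pairwise disjoint compact sets $K_1,\dots,K_{\mathfrak{m}}\subset\mathcal{A}_{i,\kappa}$, chosen once and for all, such that \emph{every} admissible arc contains sub-arcs inside each $K_j$ stretching across the target. Your sets --- \lq\lq maximal energy-sub-intervals on which $\Phi_i$ carries a sub-arc of $\gamma$ properly across\rq\rq\ --- depend on the arc $\gamma$, so they do not verify the definition. The paper's fix is to define the $K_j$'s intrinsically as preimages of angular windows under $z\mapsto\theta(\tau_i;z)$, e.g.
$K_{j,II}=\bigl\{z\in\mathcal{A}_{i,I}:\theta(\tau_i;z)\in\bigl[\tfrac{3}{2}\pi+2(\alpha_i+j-1)\pi,\,2(\alpha_i+j)\pi\bigr]\bigr\}$,
which are compact, disjoint, arc-independent, and which every admissible arc must thread by the IVT argument above. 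A minor further point: the stretching property requires \emph{at least} $\beta_i-\alpha_i$ crossings of each target region; your \lq\lq exactly\rq\rq\ is neither needed nor true in general. With these repairs your argument becomes the paper's proof.
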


The rest of this section is devoted to the proof of Proposition \ref{pr4.1}, which is divided into two parts. The first part shows \eqref{iv.2}. The second one holds from a technical lemma where
the nodal behavior of the solutions starting at $\mathcal{A}_{i,\k}$ for $\k\in\{I,III\}$ is described. This provides us with the  multiplicity result of the zeroes in the $x$ and $y$ components
stated in the proposition. A similar result holds for solutions starting at $\mathcal{A}_{i,\k}$ with $\k\in\{II,IV\}$. However, as it will become apparent in later sections,  we are just interested in the nodal behavior of the solutions starting at $\mathcal{A}_{i,I}$ or $\mathcal{A}_{i,III}$.

\begin{proof}
Suppose \eqref{iv.1}. We will detail the proof of \eqref{iv.2}  in the case $\kappa=I$, as in the
remaining cases follows easily by symmetry. Just to fix ideas, suppose that
$d= c_{1,i}$ and $e= c_{2,i}$ (the other case is analogous).
\par
Using a system of polar coordinates with the angle
counted in the clockwise sense starting from the positive $y$-axis, we can associate to the solutions of
$(\mathcal{N}_{i,\l})$ an angular component $\theta(t;z)$, with $t\in[0,\tau_i]$ and $z\in {\mathcal A}_{i,I}$ representing the angular coordinate of the solution at time $t$, starting
at the point $z$ at time $0$.  The map $\theta$ is continuous, and $0 \leq  \theta(0,z) \leq \pi/2$, for all $z\in {\mathcal A}_{i,I}$.
\par
Let $\gamma$ be a continuous arc of curve contained in ${\mathcal A}_{i,I}$ linking the two sides of
${\mathcal A}_{i,I}^-$ which are the part of the level lines ${\mathcal H}=c_{1,i}$ and
${\mathcal H}=c_{2,i}$ in the first quadrant. The arc $\gamma$
is the homeomorphic image of the compact interval $[0,1]$ through a homeomorphism,
still denoted by $\gamma:[0,1]\to {\mathcal A}_{i,I}$,  such that $\gamma(0)=z_0\in
\{{\mathcal H}=c_{1,i}\}$ and $\gamma(1)=z_1\in \{{\mathcal H}=c_{2,i}\}$. Without loss of generality,
we can assume that $c_{1,i}<{\mathcal H}(\gamma(s))<c_{2,i}$ for all $s\in (0,1)$.
Now, consider the image of $\gamma=\gamma([0,1])$ through the Poincar\'{e} map $\Phi_i$. Then,
$$
    \vartheta(t,s):= \theta(t;\gamma(s)), \quad t\in[0,\tau_i],\;\; s\in [0,1],
$$
represents the angular coordinate of the solution of $({\mathcal N}_{i,\lambda})$ at time $t$, starting
at $\gamma(s)$ (at time $0$).  The map $\vartheta$ is continuous and satisfies
$0 \leq  \vartheta(0,s) \leq \pi/2$ for all $s\in [0,1]$. Moreover, for each nonnegative integer $k$, as soon as $c=\mathcal{H}(\gamma(s))$, we have that
\begin{equation}
\label{iv.3}
{\mathcal T}_{i,IV}(c) + {\mathcal T}_{i,III}(c) + k{\mathcal T}_{i}(c)>t\;
\Longrightarrow\; \vartheta(t,s) < \frac{3}{2}\pi + 2k\pi,
\end{equation}
\begin{equation}
\label{iv.4}
{\mathcal T}_{i,I}(c) + {\mathcal T}_{i,IV}(c) + k{\mathcal T}_{i}(c)<t\;
\Longrightarrow\; \vartheta(t,s) > \pi + 2k\pi.
\end{equation}
Thus, from \eqref{iv.3} and the first condition of \eqref{iv.1}, we find that
$$
\vartheta(\tau_i,0) < \frac{3}{2}\pi +  2\alpha_{i}\pi.
$$
On the other hand, from \eqref{iv.4} and the second condition of  \eqref{iv.1}, it becomes apparent
that
$$
   \vartheta(\tau_i,1) > \pi +  2\beta_{i}\pi.
$$
Thus, the continuous map
$[0,1]\ni s\mapsto \vartheta(\tau_i,s)$ covers the whole compact interval
$$
  \left[\tfrac{3}{2}\pi + 2\alpha_{i}\pi,\pi+2\beta_{i}\pi\right].
$$
Hence, there are $2(\beta_{i}-\alpha_{i})$ pairwise disjoint
compact subintervals of $[0,1]$ such that this map covers, for every $k\in\{\alpha_{i},\dots, \beta_{i}-1\}$,  the intervals
$$
  \left[\tfrac{3}{2}\pi + 2k\pi, 2(k+1)\pi\right]\quad\text{and}\quad
  \left[\tfrac{1}{2} \pi +2(k+1)\pi,\pi+ 2(k+1)\pi\right].
$$
Now, we introduce the sets
\begin{equation}
\label{iv.5}
\begin{split}
& K_{j,II}:= \left\{z\in {\mathcal A}_{i,I}: \, \theta(\tau_i;z)\in \left[\tfrac{3}{2}\pi +
2(\alpha_{i}+j-1)\pi,2(\alpha_{i}+j)\pi\right]\right\},\\[3pt]
& K_{j,IV}:= \left\{z\in {\mathcal A}_{i,I}: \, \theta(\tau_i;z)\in \left[\tfrac{1}{2}\pi +
2(\alpha_{i}+j)\pi,\pi+2(\alpha_{i}+j)\pi\right]\right\},
\end{split}
\end{equation}
for $j= 1, \dots,\beta_{i}-\alpha_{i},$ which are compact, nonempty and pairwise
disjoint. By a simple continuity argument, given an arbitrary arc $\gamma$
in ${\mathcal A}_{i,I}$ connecting the two opposite sides of $\mathcal A^{\rm energy}_{i,I}$,
it is easily seen that, for every $\k\in\{II,IV\}$, there are $\beta_i-\alpha_i$ sub-arcs $\gamma_{j,\k}\subset K_{j,\k}$ such that $\Phi_i(\gamma_{j,\k})$
is an arc contained in ${\mathcal A}_{i,\k}$ linking the two sides $y=0$ and  $x=0$ of
$\mathcal A^{\rm axis}_{i,\k}$. This proves that
$$\Phi_i:\mathcal A^{\rm energy}_{i,I}\overset{\beta_i-\alpha_i}\stretchx\,
\mathcal A^{\rm axis}_{i,II}
\cup \mathcal A^{\rm axis}_{i,IV}$$ according to our definition. The multiplicity results of the zeroes
of the solutions stated in the second part of the proposition holds as a direct consequence of the next technical lemma, which completes the proof of the proposition.
\end{proof}

\begin{lemma}
\label{le4.1}
For every $\k\in\{I,III\}$, and any arc $\gamma\subset\mathcal{A}_{i,\k}^{\rm energy}$ joining the opposite sides of $\mathcal{A}_{i,\k}^{-}$, the solutions of $({\mathcal N}_{i,\lambda})$
with initial value
on any sub-arc $\gamma_{j,\tilde{\k}}\subset K_{j,\tilde{\k}}$ of $\gamma$ have exactly, for every $j=1,2,\ldots,\beta_i-\alpha_i$,
\begin{equation*}
\left.
\begin{array}{ll}
2(\alpha_i+j)-1&\;\hbox{zeroes in the x-component in}\;\;(0,\tau_i)\\
2(\alpha_i+j)&\;\hbox{zeroes in the y-component in}\;\;[0,\tau_i]
\end{array}
\right\}
\;\hbox{if}\;\;\tilde{\k}=\k-1
\end{equation*}
and
\begin{equation*}
\left.
\begin{array}{ll}
2(\alpha_i+j)&\;\hbox{zeroes in the x-component in}\;\;(0,\tau_i)\\
2(\alpha_i+j)+1&\;\hbox{zeroes in the y-component in}\;\;[0,\tau_i]
\end{array}
\right\}
\;\hbox{if}\;\;\tilde{\k}=\k+1.
\end{equation*}
\end{lemma}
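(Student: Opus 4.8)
The plan is to read off the number of zeroes of $x$ and of $y$ directly from the geometry of the periodic orbit $\mc{H}^{-1}(c)$ carrying the solution, exploiting that, by \eqref{iii.6}, this orbit is a simple closed curve around the origin meeting each of the four semi-axes exactly once per period $\mc{T}_i(c)$. I will argue the case $\k=I$ in detail; the case $\k=III$ is entirely analogous, starting the angular count from the third quadrant. So I would fix $z\in\gamma_{j,\tilde\k}\subset\mc{A}_{i,I}$ and set $c:=\mc{H}(z)\in[c_{1,i},c_{2,i}]$, so that the trajectory through $z$ runs clockwise along $\mc{H}^{-1}(c)$. Since $x(t)=0$ exactly when the orbit meets the $y$-axis and $y(t)=0$ exactly when it meets the $x$-axis, and since the four semi-axis crossing points split the orbit into four arcs each lying in a single open quadrant (where $x$ and $y$ keep a constant sign), the zeroes of $x$ and of $y$ are precisely the semi-axis crossing instants. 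Moreover they occur in the fixed clockwise cyclic order: starting in the first quadrant the orbit meets successively the positive $x$-axis (a zero of $y$), the negative $y$-axis (a zero of $x$), the negative $x$-axis (a zero of $y$) and the positive $y$-axis (a zero of $x$), and then repeats; thus along the trajectory the zeroes of $y$ and of $x$ strictly interlace, the first being a zero of $y$.

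To count them I would carry along the continuous clockwise angle $\theta(t;z)$ from the positive $y$-axis used in the proof of Proposition \ref{pr4.1}. While the orbit lies in an open quadrant, its lifted angle $\theta$ is confined to the matching open sector $\left(\tfrac{m\pi}{2},\tfrac{(m+1)\pi}{2}\right)$, so $\theta(t;z)$ attains each value of the grid $\tfrac{\pi}{2}\Z$ exactly at the corresponding semi-axis crossing and passes it exactly once. Consequently, for $z$ in the open first quadrant, where $\theta(0;z)\in(0,\tfrac{\pi}{2})$, the number of zeroes of $y$ in $[0,\tau_i]$ equals $\#\{m\ge 0:\ \tfrac{\pi}{2}+m\pi\le\theta(\tau_i;z)\}$, while the number of zeroes of $x$ in $(0,\tau_i)$ equals $\#\{m\ge 1:\ m\pi<\theta(\tau_i;z)\}$. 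This is precisely where the asymmetry between the closed interval $[0,\tau_i]$ for $y$ and the open interval $(0,\tau_i)$ for $x$ is used: it makes both counts insensitive to the boundary situations in which $\theta(\tau_i;z)$ lands exactly on a semi-axis.

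It then remains to substitute the angular ranges recorded in \eqref{iv.5}. For $\tilde\k=\k+1$ (that is, $\tilde\k=IV$ when $\k=I$) one has $\theta(\tau_i;z)\in\left[\tfrac{\pi}{2}+2(\alpha_i+j)\pi,\ \pi+2(\alpha_i+j)\pi\right]$, and the two displayed counts give $2(\alpha_i+j)+1$ zeroes of $y$ and $2(\alpha_i+j)$ zeroes of $x$; for $\tilde\k=\k-1$ (that is, $\tilde\k=II$) one has $\theta(\tau_i;z)\in\left[\tfrac{3}{2}\pi+2(\alpha_i+j-1)\pi,\ 2(\alpha_i+j)\pi\right]$, which yields $2(\alpha_i+j)$ zeroes of $y$ and $2(\alpha_i+j)-1$ zeroes of $x$, exactly as claimed. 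Carrying out these elementary enumerations for $m$ is routine and I would not belabor it.

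I expect the only genuine difficulty to be justifying robustness of the count when $\l<0$. In that regime the angular speed $\dot\theta=(yh(y)+xf_{i,\l}(x))/(x^2+y^2)$ need not be positive pointwise, since $xf_{i,\l}(x)=\l x^2+\mu_i xg(x)$ may be negative, so $\theta(t;z)$ can fail to be monotone \emph{inside} a quadrant. The remedy, which I would make precise, is exactly the topological observation of the first paragraph: by \eqref{iii.6} the orbit passes through each semi-axis point only once per period and is trapped in an open quadrant between consecutive crossings, so the continuous lift of $\theta$ can never recross a grid value $\tfrac{m\pi}{2}$; hence the crossing count, and therefore the zero count, is unaffected by any local wiggling of $\theta$. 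A minor, routine point is that one works with $z$ in the \emph{open} first quadrant, so that $\theta(0;z)\in(0,\tfrac{\pi}{2})$; the starting points lying on the bounding semi-axes are immaterial for the existence conclusion drawn in Proposition \ref{pr4.1}.
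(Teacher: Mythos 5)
Your proposal is correct and follows essentially the same route as the paper: both arguments track the continuous clockwise angular lift $\vartheta$ introduced in the proof of Proposition \ref{pr4.1}, read off the zeroes of $x$ and $y$ as crossings of the grid $\tfrac{\pi}{2}\mathbb{Z}$, and then substitute the angular ranges \eqref{iv.5} defining $K_{j,\tilde{\k}}$ to obtain the stated counts. The only difference is one of explicitness: you spell out the counting formulas and justify their robustness when $\l<0$ (where $\dot\vartheta$ need not be pointwise positive) via the once-per-lap transversal crossings of the semi-axes guaranteed by \eqref{iii.6}, a point the paper's shorter proof uses implicitly.
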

\begin{proof}
We focus our attention on case  $\k=I$, since the proof for $\k=III$ is completely analogous. By the definition of $\vartheta(t,s)$, we have that  $0\leq \vartheta(0,s)\leq \frac{\pi}{2}$ for all $s\in [0,1]$.
Moreover, if the arc $\gamma_{j,II}\subset K_{j,II}$ is parameterized on a subinterval
$[s^0_{j,II},s^1_{j,II}]$ of $[0,1]$, then
$$
\tfrac{3}{2} \pi +
2(\alpha_{i}+j-1)\pi \leq \vartheta(\tau_i,s) \leq 2(\alpha_{i}+j)\pi,
$$
with
$$
\vartheta(\tau_i,s^0_{j,II})=\tfrac{3}{2}  \pi +
2(\alpha_{i}+j-1)\pi,\qquad \vartheta(\tau_i,s^1_{j,II})=2(\alpha_{i}+j)\pi.
$$
Thus, for every  $z\in \gamma_j$, the solution $(x(t;z),y(t;z))$ satisfies:
\begin{itemize}
\item  $x(t;z)$ has exactly $2(\alpha_{i}+j)-1$ zeroes in the interval $(0,\tau_i)$,
\item  $y(t,z)$, as well as $x'(t;z)$, has exactly $2(\alpha_{i}+j)$ zeroes in the interval $[0,\tau_i]$.
\end{itemize}
Similarly, if the arc $\gamma_{j,IV}\subset K_{j,IV}$ is parameterized on a subinterval
$[s^0_{j,IV},s^1_{j,IV}]$ of $[0,1]$,
$$
\tfrac{1}{2} \pi +
2(\alpha_{i}+j)\pi \leq \vartheta(\tau_i,s) \leq \pi+2(\alpha_{i}+j)\pi,
$$
with
$$
   \vartheta(\tau_i,s^0_{j,IV})=\tfrac{1}{2}\pi +
   2(\alpha_{i}+j)\pi,\qquad \vartheta(\tau_i,s^1_{j,IV})=\pi+2(\alpha_{i}+j)\pi.
$$
Therefore, for every  $z\in \gamma_j$, the solution $(x(t;z),y(t;z))$ satisfies:
\begin{itemize}
\item $x(t,z)$ has exactly $2(\alpha_{i}+j)$ zeroes in the interval $(0,\tau_i)$,
\item $y(t,z)$, as well as $x'$, has exactly $2(\alpha_{i}+j)+1$ zeroes in the interval $[0,\tau_i]$.
\end{itemize}
This end the proof.
\end{proof}

Adapting the proof of Proposition \ref{pr4.1}, the next result holds.  It will be invoked at the final stages of our analysis for the associated boundary value problem in $[0,L]$.

\begin{corollary}
\label{co4.1}
Under the same assumptions of Proposition \ref{pr4.1}, assume that, for every  $\kappa\in \{IV,III,II,I\}$, the stronger twist condition
\begin{equation}
\label{iv.6}
\begin{split}
& {\mathcal T}_{i,\k+1}(d) + \alpha_{i} {\mathcal T}_{i}(d) > \tau_i, \\
& {\mathcal T}_{i,\k}(e) + {\mathcal T}_{i,\k+1}(e)+\beta_{i} {\mathcal T}_{i}(e) <\tau_i,
\end{split}
\end{equation}
holds. Then, for every $\kappa'\in \{IV,III,II,I\}$,
\begin{equation}\label{eq-st2-unified-strong}
\Phi_i:\mathcal A^{\rm energy}_{i,\k}\overset{\beta_i-\alpha_i}\stretchx\,\mathcal A^{\rm axis}_{i,\k'}.
\end{equation}
\end{corollary}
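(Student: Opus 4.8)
The plan is to re-run the spiralling angular argument from the proof of Proposition~\ref{pr4.1}, using the \emph{stronger} first inequality in \eqref{iv.6} to produce a sharper bound on the terminal angle at the inner boundary; this lengthens the swept angular interval so that it wraps exactly $\beta_i-\alpha_i$ full revolutions, and hence reaches \emph{every} target region $\mathcal A^{\rm axis}_{i,\k'}$ rather than only the two neighbours $\mathcal A^{\rm axis}_{i,\k-1}\cup\mathcal A^{\rm axis}_{i,\k+1}$ of \eqref{iv.2}. I would fix $\k=I$ (the other three starting quadrants follow verbatim by the clockwise rotational symmetry already invoked in Proposition~\ref{pr4.1}, each using its own instance of the hypothesis \eqref{iv.6}) and keep the same polar coordinates counted clockwise from the positive $y$-axis, together with the continuous terminal-angle map $s\mapsto \vartheta(\tau_i,s)=\theta(\tau_i;\gamma(s))$ attached to an arc $\gamma\subset\mathcal A^{\rm energy}_{i,I}$ joining the two energy sides, with $\gamma(0)\in\{\mathcal H=c_{1,i}\}$, $\gamma(1)\in\{\mathcal H=c_{2,i}\}$, and $d=c_{1,i}$, $e=c_{2,i}$.

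The one new ingredient is a sharpening of \eqref{iv.3}. The minimal time for a trajectory issuing from the first quadrant to reach the negative $y$-axis after $k$ revolutions is $k\,{\mathcal T}_i(c)+{\mathcal T}_{i,IV}(c)$, attained from the far ($x$-axis) edge of the first quadrant; so, by exactly the reasoning behind \eqref{iv.4}, one obtains
\[
{\mathcal T}_{i,IV}(c)+k\,{\mathcal T}_i(c)>t\;\Longrightarrow\;\vartheta(t,s)<\pi+2k\pi .
\]
Applying this with $c=d$, $k=\alpha_i$, $t=\tau_i$ and the first line of \eqref{iv.6} gives $\vartheta(\tau_i,0)<\pi+2\alpha_i\pi$, strictly tighter than the bound $\tfrac32\pi+2\alpha_i\pi$ used in Proposition~\ref{pr4.1}. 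Since the second line of \eqref{iv.6} is identical to the second line of \eqref{iv.1}, estimate \eqref{iv.4} still yields $\vartheta(\tau_i,1)>\pi+2\beta_i\pi$. By continuity and the intermediate value theorem, $s\mapsto\vartheta(\tau_i,s)$ then covers the whole interval $[\pi+2\alpha_i\pi,\,\pi+2\beta_i\pi]$, of angular length exactly $2(\beta_i-\alpha_i)\pi$ and with both endpoints sitting on the negative $y$-axis, i.e.\ on a quadrant boundary.

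I would then convert this covering into crossings exactly as in Proposition~\ref{pr4.1}. An angular window of length $\beta_i-\alpha_i$ full turns, anchored at a quadrant boundary, contains for every quadrant $\k'$ precisely $\beta_i-\alpha_i$ translated copies of its quarter-range. For the $j$-th such copy I define, in analogy with the sets $K_{j,II},K_{j,IV}$ of Proposition~\ref{pr4.1}, the predetermined compact set $K_{j,\k'}:=\{z\in\mathcal A_{i,I}:\theta(\tau_i;z)\in[\text{$j$-th copy of the }\k'\text{-range}]\}$; these $\beta_i-\alpha_i$ sets are pairwise disjoint. On the corresponding maximal subinterval of $[0,1]$ the map $\vartheta(\tau_i,\cdot)$ joins the two axis-values bounding that quarter-range, so it yields a sub-arc $\gamma_j\subset K_{j,\k'}$ whose image $\Phi_i(\gamma_j)$ lies in $\mathcal A_{i,\k'}$ (energy is conserved, so $\Phi_i$ preserves $\mathcal A_i$ orbit-wise) and links the two axis-sides of $\mathcal A^{\rm axis}_{i,\k'}$. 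This is precisely \eqref{eq-st2-unified-strong}.

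The hard part will be the boundary bookkeeping in the crossing count. Because the swept interval both begins and ends exactly on a quadrant boundary, I must verify that each target region $\mathcal A^{\rm axis}_{i,\k'}$ receives the full $\beta_i-\alpha_i$ crossings and that the quarter-ranges truncated at the two endpoints neither add nor drop a crossing. It is exactly this counting that forces the tight bound $\vartheta(\tau_i,0)<\pi+2\alpha_i\pi$ in place of $\tfrac32\pi+2\alpha_i\pi$, and that explains why dropping the term ${\mathcal T}_{i,\k+2}(d)$ when passing from \eqref{iv.1} to \eqref{iv.6} is the precise strengthening needed to upgrade the target from the two-region union to every single region $\mathcal A^{\rm axis}_{i,\k'}$.
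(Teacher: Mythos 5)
Your proposal is correct and is essentially the paper's own argument: the paper justifies Corollary \ref{co4.1} only by the remark that it follows from ``adapting the proof of Proposition \ref{pr4.1}'', and your sharpened estimate $\vartheta(\tau_i,0)<\pi+2\alpha_i\pi$ (replacing $\tfrac{3}{2}\pi+2\alpha_i\pi$, obtained from the minimal-transit-time reasoning of \eqref{iv.3} applied to the first line of \eqref{iv.6}), together with the unchanged bound $\vartheta(\tau_i,1)>\pi+2\beta_i\pi$, is exactly that adaptation. The resulting angular window $[\pi+2\alpha_i\pi,\,\pi+2\beta_i\pi]$ has length $2(\beta_i-\alpha_i)\pi$ and is anchored at quadrant boundaries, so it contains precisely $\beta_i-\alpha_i$ full copies of each quadrant's quarter-range, and your compact sets $K_{j,\k'}$ then give \eqref{eq-st2-unified-strong} exactly as in \eqref{iv.5}.
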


\begin{remark}
\label{re4.1}
\rm By slightly changing the twist conditions of  Proposition \ref{pr4.1}, we can get some different results  for the Poincar\'e map $\Phi_i$. For instance, imposing the twist condition
$$
{\mathcal T}_{i,IV}(d) + {\mathcal T}_{i,III}(d) + \alpha_{i} {\mathcal T}_{i}(d) > \tau_i,\quad \beta_{i} {\mathcal T}_{i}(e) <\tau_i,
$$
it follows that
$$
\Phi_i:\mathcal A^{\rm energy}_{i,I}\overset{\beta_i-\alpha_i}\stretchx\,
\mathcal A^{\rm axis}_{i,II}.
$$
However, in our application, it suffices  to apply Proposition \ref{pr4.1}.
\end{remark}

\section{Transformation of arcs in $(\mathcal{L}_{\l})$}
\label{sec-lin}
This section considers the Poincar\'{e} map $\Psi_i$ associated with the
$x$-linear system $({\mathcal L}_{\lambda})$ in order to link the dynamics in ${\mathcal A}_i$
and ${\mathcal A}_{i+1}$. The map $\Psi_i$  moves the points along the level lines of the Hamiltonian
${\mathcal E}(x,y)$. As the dynamics vary with the sign of $\lambda$,
we will discuss each of these cases separately.

\subsection{The case $\l<0$}

Throughout this section, for every $i\in\{0,...,m\}$, we denote by $y_+(c_{\kappa,i})>0$, $\kappa=1, 2$, the unique positive value of $y$ for which $\mathcal{H}(0,y_+(c_{\kappa,i}))=c_{\kappa,i}$, $\kappa=1, 2$, and set
\begin{equation}
\label{v.1}
y_{+,i}:= \min\{y_+(c_{1,i}),y_+(c_{1,i+1})\}>0,\qquad i\in\{0,...,m-1\}.
\end{equation}
Then, we consider the energy level line  $\mathcal{E}(x,y)=\mathcal{E}(0,y_{+,i})$ and its intersections with the outer boundaries of ${\mathcal A}_{i,II}$
and ${\mathcal A}_{i+1,I}$, whose abscissas are
\begin{align*}
   x_{\ell,i} & :=G^{-1}_{-}( \mu_i^{-1}[H(y_+(c_{2,i})) - H(y_{+,i})])<0, \\
   x_{r,i+1} & :=G^{-1}_{+}( \mu_{i+1}^{-1}[H(y_+(c_{2,i+1})) - H(y_{+,i}) ])>0,
\end{align*}
respectively, where $G^{-1}_-$ and $G^{-1}_+$ are the inverse functions of $G$ restricted to $(-\infty,0)$ and $(0,+\infty)$, respectively. Then,  by \eqref{iii.7}, the transition time between the straight lines
$x=x_{\ell,i}$ and $x=x_{r,i+1}$ along $\mathcal{E}(x,y)=\mathcal{E}(0,y_{+,i})$  is given by
\begin{equation}
\label{v.2}
{\mathcal S}_{i,II\to I}:=
{\mathcal S}^{\lambda}(x_{\ell,i},x_{r,i+1};(0,y_{+,i}))=
\int_{x_{\ell,i}}^{x_{r,i+1}}
\frac{dx}{(h\circ H_{+}^{-1})\left(H(y_{+,i})- \frac{\lambda}{2} x^2\right)}\,.
\end{equation}
Similarly, for every $i\in\{0,...,m\}$, we denote by $y_-(c_{\kappa,i})<0$, $\kappa=1, 2$, the unique negative value of $y$ for which $\mathcal{H}(0,y_-(c_{\kappa,i}))=c_{\kappa,i}$, $\kappa=1, 2$, and set
\begin{equation}
\label{v.3}
y_{-,i}:= \max\{y_-(c_{1,i}),y_-(c_{1,i+1})\}<0,\qquad i\in\{0,...,m-1\}.
\end{equation}
Arguing as before, the energy level line passing through $(0,y_{-,i})$
intersects the outer boundaries of ${\mathcal A}_{i,IV}$
and ${\mathcal A}_{i+1,III}$ at the points with abscissas
\begin{align*}
   x_{r,i} & :=G^{-1}_{+}( \mu_{i}^{-1}[H(y_-(c_{2,i})) - H(y_{-,i}) ])>0,\\
   x_{\ell,i+1} & :=G^{-1}_{-}( \mu_{i+1}^{-1}[H(y_-(c_{2,i+1})) - H(y_{-,i})])<0,
\end{align*}
respectively. Thus, due to \eqref{iii.8}, the transition time between  the straight lines
$x=x_{r,i}$ and $x=x_{\ell,i+1}$ along $\mathcal{E}(x,y)=\mathcal{E}(0,y_{-,i})$ on the lower half-plane is given by
\begin{equation}
\label{v.4}
{\mathcal S}_{i,IV\to III}:=
{\mathcal S}^{\lambda}(x_{r,i},x_{\ell,i+1};(0,y_{-,i}))=
\int_{x_{\ell,i+1}}^{x_{r,i}}
\frac{dx}{-(h\circ H_{-}^{-1})\left(H(y_{-,i}) - \frac{\lambda}{2} x^2\right)}\,.
\end{equation}
Analogously, for every $i\in\{0,...,m\}$, we denote by $x_+(c_{\kappa,i})>0$, $\kappa=1, 2$, the unique positive value of $x$ for which $\mathcal{H}(x_+(c_{\kappa,i}),0)=c_{\kappa,i}$, $\kappa=1, 2$, set
$$
   x_{+,i}:= \min\{x_+(c_{1,i}),x_+(c_{1,i+1})\}>0, \qquad i\in\{0,...,m-1\},
$$
and consider the  energy level line passing through $(x_{+,i},0)$, as well as  its
intersections with the outer boundaries of ${\mathcal A}_{i,IV}$
and ${\mathcal A}_{i+1,I}$, whose abscissas are
\begin{align*}
  x_{r_1,i} & :=G^{-1}_{+}\left( G(x_+(c_{2,i}))+\frac{\lambda}{2\mu_i}
[x_+^2(c_{2,i}) -x_{+,i}^2 ]\right)>0,\\ x_{r_2,i+1} & :=G^{-1}_{+}\left( G(x_+(c_{2,i+1}))+\frac{\lambda}{2\mu_{i+1}}
[x_+^2(c_{2,i+1}) -x_{+,i}^2 ]\right)>0,
\end{align*}
respectively. Thanks to \eqref{iii.9}, the transition time between the straight lines
$x=x_{r_1,i}$ and  $x=x_{r_2,i+1}$ along the level line
$\mathcal{E}(x,y)=\mathcal{E}(x_{+,i},0)$ on the right half-plane is given by
\begin{equation}
\label{v.5}
\begin{split}
{\mathcal S}_{i,IV\to I}  := &
{\mathcal S}^{\lambda}(x_{r_1,i},x_{r_2,i+1};(x_{+,i},0))\\
=& \int_{x_{+,i}}^{x_{r_2,i+1}} \frac{dx}{(h\circ H_{+}^{-1})\left(\frac{\lambda}{2} (x_{+,i}^2-x^2)\right)}
+\int_{x_{+,i}}^{x_{r_1,i}}
\frac{dx}{-(h\circ H_{-}^{-1})\left(\frac{\lambda}{2} (x_{+,i}^2-x^2)\right)}.
\end{split}
\end{equation}
Finally, for every $i\in\{0,...,m\}$, we denote by $x_-(c_{\kappa,i})>0$, $\kappa=1, 2$, the unique negative value of $x$ for which $\mathcal{H}(x_-(c_{\kappa,i}),0)=c_{\kappa,i}$, $\kappa=1, 2$, set
$$
   x_{-,i}:= \max\{x_-(c_{1,i}),x_-(c_{1,i+1})\}<0, \qquad i\in\{0,...,m-1\},
$$
and consider the  energy level line passing through $(x_{-,i},0)$, and its intersections with the outer boundaries of ${\mathcal A}_{i,II}$ and ${\mathcal A}_{i+1,III}$, whose abscissas are
\begin{align*}
  x_{\ell_1,i} & :=G^{-1}_{-}\left( G(x_-(c_{2,i}))+\frac{\lambda}{2\mu_{i}}
[x_-^2(c_{2,i}) -x_{-,i}^2 ]\right),\\ x_{\ell_2,i+1} & :=G^{-1}_{-}\left( G(x_-(c_{2,i+1}))+\frac{\lambda}{2\mu_{i+1}}
[x_-^2(c_{2,i+1}) -x_{-,i}^2 ]\right),
\end{align*}
respectively. Then, by \eqref{iii.10},
we obtain that the transition time between the straight lines
$x=x_{\ell_1,i}$ and $x=x_{\ell_2,i+1}$ along the integral curve
$\mc{E}(x,y)=\mc{E}(x_{-,i},0)$ on the left half-plane is given by
\begin{equation}
\label{v.6}
\begin{split}
{\mathcal S}_{i,II\to III}:= &
{\mathcal S}^{\lambda}(x_{\ell_1,i},x_{\ell_2,i+1};(x_{-,i},0))\\ =&
\int_{x_{\ell_1,i}}^{x_{-,i}}
\frac{dx}{(h\circ H_{+}^{-1})\left(\frac{\lambda}{2} (x_{-,i}^2-x^2)\right)}
+ \int_{x_{\ell_2,i+1}}^{x_{-,i}}
\frac{dx}{-(h\circ H_{-}^{-1})\left(\frac{\lambda}{2} (x_{-,i}^2-x^2)\right)}.
\end{split}
\end{equation}

\subsection{The case $\l=0$}

The abscissas in the upper half-plane of the intersection points of  the straight line $y=y_{+,i}$ (see \eqref{v.1})   with the outer boundaries of ${\mathcal A}_{i,II}$
and ${\mathcal A}_{i+1,I}$ are
\begin{align*}
  x_{\ell,i} & :=G^{-1}_{-}( \mu_i^{-1}[c_{2,i} - H(y_{+,i}) ])<0,\\
  x_{r,i+1} & :=G^{-1}_{+}( \mu_{i+1}^{-1}[c_{2,i+1} - H(y_{+,i})])>0,
\end{align*}
respectively. Thus, the transition time between $x=x_{\ell,i}$ and $x=x_{r,i+1}$ along $y= y_{+,i}$
is
\begin{equation}
\label{v.7}
{\mathcal S}_{i,II\to I}:=
{\mathcal S}^{0}(x_{\ell,i},x_{r,i+1},(0,y_{+,i})):=\frac{x_{r,i+1}-x_{\ell,i}}{h(y_{+,i})}
<\frac{x_+(c_{2,i+1})-x_-(c_{2,i})}{h(y_{+,i})}.
\end{equation}
Similarly, the abscissas in the lower half-plane of the  intersection points of the straight line $y=y_{-,i}$ (see \eqref{v.3}) with the outer boundaries of ${\mathcal A}_{i,IV}$
and ${\mathcal A}_{i+1,III}$ are
\begin{align*}
  x_{r,i} & :=G^{-1}_{+}( \mu_i^{-1}[c_{2,i} - H(y_{-,i}) ])>0,\\
  x_{\ell,i+1} & :=G^{-1}_{-}( \mu_{i+1}^{-1}[c_{2,i+1} - H(y_{-,i}) ])<0,
\end{align*}
respectively. Hence, the transition time between $x=x_{r,i}$ and $x=x_{\ell,i+1}$ along $y= y_{-,i}$ equals
\begin{equation}
\label{v.8}
{\mathcal S}_{i,IV\to III}:=
{\mathcal S}^{0}(x_{r,i},x_{\ell,i+1},(0,y_{-,i})):=\frac{x_{r,i}-x_{\ell,i+1}}{-h(y_{-,i})}
<\frac{x_+(c_{2,i})-x_-(c_{2,i+1})}{-h(y_{-,i})}\,.
\end{equation}

\subsection{The case $\l>0$}
\label{subsec-l>0}

To carry out our analysis in this case, we must previously guarantee the existence of a closed orbit of $({\mathcal L}_{\lambda})$ crossing the two annuli ${\mathcal A}_i$ and ${\mathcal A}_{i+1}$ as
described in Figure \ref{fig-03}. This  figure shows an admissible configuration of two annuli
${\mathcal A}_{i}$ and ${\mathcal A}_{i+1}$ which are crossed by a periodic orbit
of the $x$-linear system $({\mathcal L}_{\lambda})$ represented by the level line
${\mathcal E}(x,y)=c$. For the particular example generating Figure \ref{fig-03}
we have chosen $h(y)=y^3/(1+y^2)$, which comes from the inverse of a differential operator
behaving like a $p$-Laplacian near the origin that it is asymptotically linear at infinity, $g(x)=x|x|$, $\lambda=1$, $\mu_i=1$ and $\mu_{i+1} =2$. The annular regions are determined by the values of the
parameters
$$
  c_{1,i}=H(6),\quad c_{2,i}=H(7),\quad c_{1,i+1}=H(10),\quad
  c_{2,i+1}=H(11),
$$
and we have taken $c=11/2$.

\begin{figure}[h!]
\centering
\includegraphics[scale=0.3]{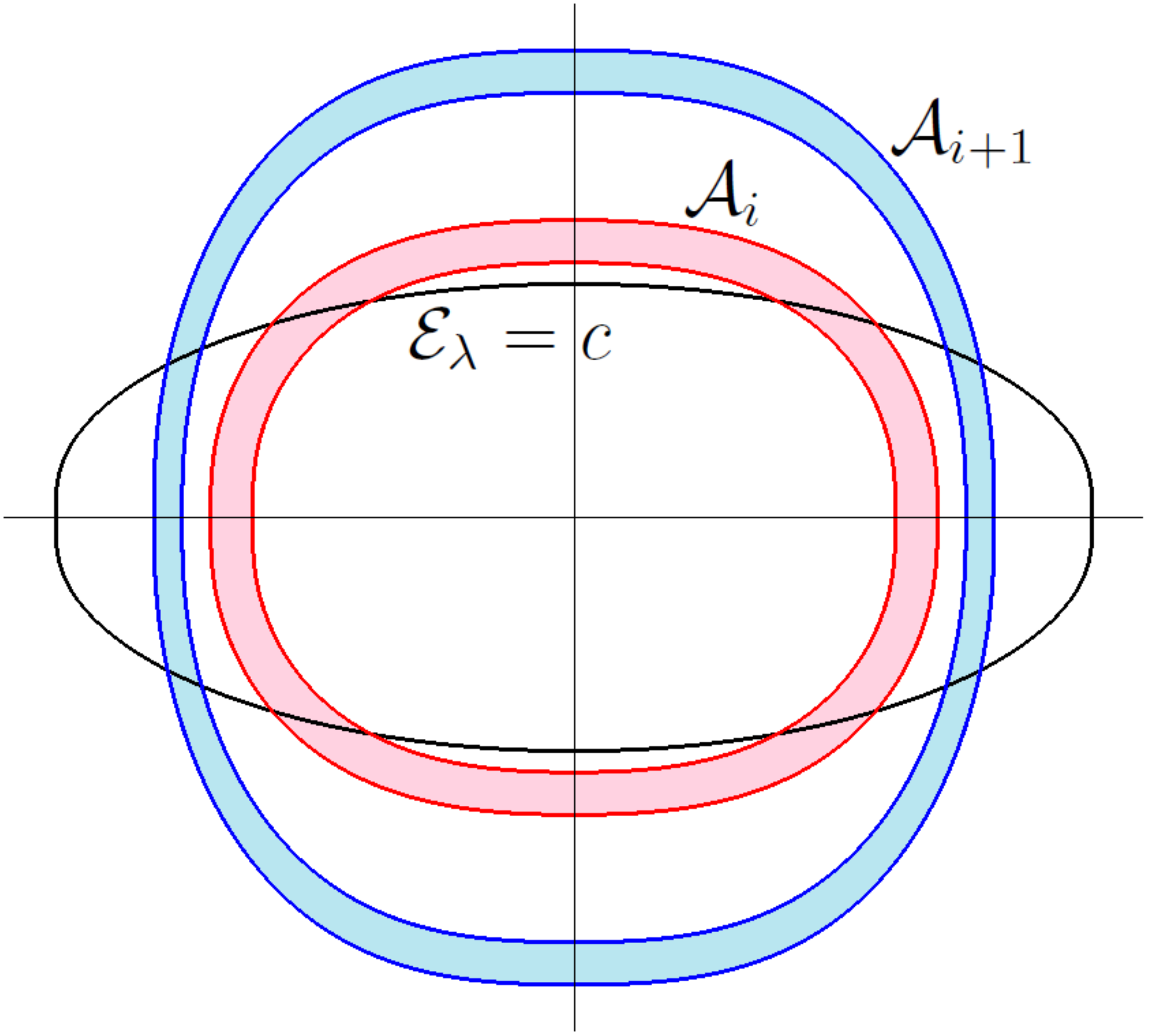}
\caption{A closed orbit of $({\mathcal L}_{\lambda})$ crossing ${\mathcal A}_i$ and ${\mathcal A}_{i+1}$.} \label{fig-03}
\end{figure}

To accomplish such a configuration, let us consider the level line ${\mathcal E}(x,y) =c$
with $c>0$ such that
\begin{equation}
\label{v.9}
0 <c\leq H_{*}:=\min_{j\in\{i,i+1\}}H(y_{\pm}(c_{1,j}))=\min_{j\in\{i,i+1\}}c_{1,j}.
\end{equation}
By the definition of $\mathcal{E}(x,y)$, we have that
$$
   -\sqrt{\frac{2c}{\lambda}}\leq x\leq \sqrt{\frac{2c}{\lambda}}\quad
\hbox{for all} \;\; (x,y)\in \mathcal{E}^{-1}(c).
$$
Thus, assuming that
\begin{equation}
\label{v.10}
\min\left\{G(-\sqrt{2c/\lambda}),G(\sqrt{2c/\lambda})\right\}>
\max_{j\in\{i,i+1\}} \frac{c_{2,j}-c}{\mu_j},
\end{equation}
it becomes apparent that $\mc{E}^{-1}(c)$ intersects the outer boundaries of ${\mathcal A}_{i,II}$
and ${\mathcal A}_{i+1,I}$, as illustrated in Figure \ref{fig-03}.
Note that \eqref{v.10} is equivalent to
\begin{equation}
\label{v.11}
-\sqrt{\frac{2c}{\lambda}} < \min_{j\in\{i,i+1\}}x_-(c_{2,j})<0<\max_{j\in\{i,i+1\}}x_+(c_{2,j})<\sqrt{\frac{2c}{\lambda}}.
\end{equation}
Indeed, since for every $x\in\R$
\begin{align*}
  & xG'(x)=xg(x)>0,\\
  & x\left(\frac{\l}{2}x^2+\mu_jG(x) \right)'=\l x^2+x\mu_j g(x)>0,\\
  & \frac{\lambda}{2}x_{\pm}^2(c_{2,j})+\mu_jG(x_{\pm}(c_{2,j}))=c_{2,j}>c,\qquad\hbox{for}\;j\in\{i,i+1\},
\end{align*}
it becomes apparent that, under condition \eqref{v.10}, the intersections of the level line
${\mathcal E}^{-1}(c)$ with the $x$-axis are outside both annuli ${\mathcal A}_{i}$ and
${\mathcal A}_{i+1}$. Thus, they must across, as claimed above. Moreover, the abscissas of the
intersection points are
\begin{align*}
   x_{\ell,c_{2,i}} & :=G^{-1}_{-}((c_{2,i}-c)/\mu_i)< 0,\\  x_{\ell,c_{1,i}} & :=G^{-1}_{-}((c_{1,i}-c)/\mu_i)\leq 0,\\
  x_{r,c_{2,i+1}} & :=G^{-1}_{+}((c_{2,i+1}-c)/\mu_{i+1})> 0,\\
  x_{r,c_{1,i+1}} & :=G^{-1}_{+}((c_{1,i+1}-c)/\mu_{i+1})\geq 0,
\end{align*}
respectively. Thus, the transition times between the straight lines $x=x_{\ell,c_{2,i}}$ and $x=x_{r,c_{2,i+1}}$, and between $x=x_{\ell,c_{1,i}}$ and  $x=x_{r,c_{1,i+1}}$, along $\mc{E}^{-1}(c)$ in the upper half-plane are given by
\begin{align*}
{\mathcal S}^{c_{2,i\to i+1}}_{II\to I}(c) & :=
{\mathcal S}^{\lambda}(x_{\ell,c_{2,i}},x_{r,c_{2,i+1}},H_{+}^{-1}(c))=
\int_{x_{\ell,c_{2,i}}}^{x_{r,c_{2,i+1}}}
\frac{dx}{(h\circ H_{+}^{-1})\left(c- \frac{\lambda}{2} x^2\right)},\\
{\mathcal S}^{c_{1,i\to i+1}}_{II\to I}(c) & :=
{\mathcal S}^{\lambda}(x_{\ell,c_{1,i}},x_{r,c_{1,i+1}},H_{+}^{-1}(c))=
\int_{x_{\ell,c_{1,i}}}^{x_{r,c_{1,i+1}}}
\frac{dx}{(h\circ H_{+}^{-1})\left(c- \frac{\lambda}{2} x^2\right)},
\end{align*}
respectively. Similarly, the transition time from the straight line $x=x_{\ell,c_{1,i}}$ to the point $(\sqrt{2c/\lambda},0)$ along the curve ${\mathcal E}^{-1}(c)$ in the upper half-plane is given by
\begin{equation}
\label{v.12}
{\mathcal S}^{c_{1,i}\to\sqrt{}}_{II\to I}(c):=
{\mathcal S}^{\lambda}(x_{\ell,c_{1,i}},\sqrt{2c/\lambda},H_{+}^{-1}(c))=
\int_{x_{\ell,c_{1,i}}}^{\sqrt{2c/\lambda}}
\frac{dx}{(h\circ H_{+}^{-1})\left(c- \frac{\lambda}{2} x^2\right)}.
\end{equation}
The same level line intersects
the outer and inner boundaries of ${\mathcal A}_{i,IV}$
and ${\mathcal A}_{i+1,III}$ at the points with abscissas
\begin{align*}
   x_{r,c_{2,i}} & :=G^{-1}_{+}((c_{2,i}-c)/\mu_i)>0,\\
   x_{r,c_{1,i}} & :=G^{-1}_{+}((c_{1,i}-c)/\mu_i)\geq 0,\\
   x_{\ell,c_{2,i+1}} & :=G^{-1}_{-}((c_{2,i+1}-c)/\mu_{i+1})< 0,\\
   x_{\ell,c_{1,i+1}} & :=G^{-1}_{-}((c_{1,i+1}-c)/\mu_{i+1})\leq 0,
\end{align*}
respectively. Hence, the transition times between the straight lines
$x=x_{r,c_{2,i}}$ and $x=x_{\ell,c_{2,i+1}}$, and between
$x=x_{r,c_{1,i}}$ and $x=x_{\ell,c_{1,i+1}}$,  along ${\mathcal E}^{-1}(c)$
in the lower half-plane, are given by
\begin{align*}
{\mathcal S}^{c_{2,i\to i+1}}_{IV\to III}(c) & :=
{\mathcal S}^{\lambda}(x_{r,c_{2,i}},x_{\ell,c_{2,i+1}},H_{-}^{-1}(c))=
\int_{x_{\ell,c_{2,i+1}}}^{x_{r,c_{2,i}}}
\frac{dx}{-(h\circ H_{-}^{-1})\left(c- \frac{\lambda}{2} x^2\right)},\\
{\mathcal S}^{c_{1,i\to i+1}}_{IV\to III}(c) & :=
{\mathcal S}^{\lambda}(x_{r,c_{1,i}},x_{\ell,c_{1,i+1}},H_{-}^{-1}(c))=
\int_{x_{\ell,c_{1,i+1}}}^{x_{r,c_{1,i}}}
\frac{dx}{-(h\circ H_{-}^{-1})\left(c- \frac{\lambda}{2} x^2\right)},
\end{align*}
respectively. Similarly, the transition time from the straight line
$x=x_{r,c_{1,i}}$ to the point $(-\sqrt{2c/\lambda},0)$, along the level line ${\mathcal E}^{-1}(c)$ in the lower half-plane, is given by
\begin{equation}\label{v.13}
{\mathcal S}^{c_{1,i}\to \sqrt{}}_{IV\to III}(c):=
{\mathcal S}^{\lambda}(x_{r,c_{1,i}},-\sqrt{2c/\lambda},H_{-}^{-1}(c))=
\int_{-\sqrt{2c/\lambda}  }^{x_{r,c_{1,i}}}
\frac{dx}{-(h\circ H_{-}^{-1})\left(c- \frac{\lambda}{2} x^2\right)}.
\end{equation}

Finally, we need to introduce two further transition times as follows.
Recall that $x_{\ell,c_{2,i}}$ equals the abscissa of the intersections between $\mc{E}^{-1}(c)$ and the outer boundary of ${\mathcal A}_{i,III}$. From this, it is easily seen that the transition time between
the straight lines $x=x_{\ell,c_{2,i}}$ and $x=x_{r,c_{1,i+1}}$ passing through $(-\sqrt{{2c}/{\lambda}},0)$
along ${\mathcal E}^{-1}(c)$ is given by
\begin{equation}\label{eq-S-II-I>0outin}
{\mathcal S}_{III\to I}(c):=
\int_{-\sqrt{2c/\lambda}}^{x_{\ell,c_{2,i}}}
\frac{dx}{-(h\circ H_{-}^{-1})\left(c- \frac{\lambda}{2} x^2\right)}+
\int_{-\sqrt{{2c}/{\lambda}}}^{x_{r,c_{1,i+1}}}
\frac{dx}{(h\circ H_{+}^{-1})\left(c- \frac{\lambda}{2} x^2\right)}.
\end{equation}
Similarly, since $x_{r,c_{2,i}}$ provides the abscissa of the intersections
between $\mc{E}^{-1}(c)$ and the outer boundary of ${\mathcal A}_{i,I}$, the transition time between the straight lines $x=x_{r,c_{2,i}}$ and $x=x_{\ell,c_{1,i+1}}$ passing
through the point $(\sqrt{{2c}/{\lambda}},0)$ along $\mc{E}^{-1}(c)$ is given by
\begin{equation}\label{eq-S-I-III>0outin}
{\mathcal S}_{I\to III}(c):= \int_{x_{r,c_{2,i}}}^{\sqrt{2c/\lambda}}
\frac{dx}{(h\circ H_{+}^{-1})\left(c- \frac{\lambda}{2}x^2\right)}+
\int_{x_{\ell,c_{1,i+1}}}^{\sqrt{{2c}/{\lambda}}}
\frac{dx}{-(h\circ H_{-}^{-1})\left(c- \frac{\lambda}{2}
x^2\right)}.
\end{equation}
This concludes our analysis in this case.

\section{Linking dynamics in general Sturm--Liouville BVP's}\label{section-3}

In this section we show some applications of the preceding results to the existence
of multiple solutions to \eqref{i.9} satisfying prescribed boundary conditions. In our analysis, we will restrict ourselves to consider regular Sturm--Liouville boundary value problems.
\par
From a geometrical point of view, the homogeneous boundary conditions of Sturm--Liouville type can be described by, e.g.,  assigning two lines $r_0$ and $r_L$ passing through the origin and looking for solutions of \eqref{i.9} such that  $(x(0),y(0))\in r_0$ and $(x(L),y(L))\in r_L$, according to the particular boundary value problem considered. In this setting, also the case of non-homogeneous boundary conditions can be treated by considering
two affine lines not passing through the origin provided they satisfy appropriate compatibility conditions. Namely, the lines $r_0$ and $r_L$  must cross the annuli ${\mathcal A}_0$ and ${\mathcal A}_m$, respectively. More generally, we can also study problems
associated with nonlinear boundary conditions, by taking as $r_0$ and $r_L$
two general curves crossing ${\mathcal A}_0$ and ${\mathcal A}_m$, respectively.
See Papini and Zanolin \cite{PZ-2000} and the references therein for further
details on generalized boundary conditions of Sturm--Liouville type.
\par
More precisely, we will study the boundary value problem
\begin{equation}
\label{vi.1}
\left\{
\begin{array}{ll}
x' = h(y),\\
y' = -\lambda x - a(t)g(x),\\
x(0) \in r_0,\; \; x(L)\in r_L,
\end{array}
\right.
\end{equation}
where $r_0$ is an arc contained in ${\mathcal A}_{0}$ and
$r_L$ is an arc contained in ${\mathcal A}_{m}$,
\textit{both arcs connecting the
inner with the outer boundary of the corresponding annulus}, while the weight function $a(t)$ satisfies the general properties of Section \ref{section-2}  in the interval $[0,L]$, as well as the functions $h=\phi^{-1}: (\varrho_-,\varrho_+)\to {\mathbb R}$ and $g:{\mathbb R}\to {\mathbb R}$. Moreover, we suppose the annular regions considered to satisfy the compatibility conditions with respect to the domain
${\mathbb R} \times (\varrho_-,\varrho_+)$ and, when $\lambda >0$,
with respect to \eqref{v.10}, as illustrated in
Figure \ref{fig-03}.
\par
In order to analyze \eqref{vi.1} under the above assumptions we will proceed as follows. First, they are studied, for every $i\in\{0,1,\ldots,m\}$, the linking dynamics between the Poincaré maps $\Phi_i$ and $\Psi_i$ associated to $(\mathcal{N}_{i,\l})$ and $(\mathcal{L}_{\l})$, respectively. Then, when these linking dynamics hold, by applying a combinatorial scheme, there will be obtained a number of multiplicity results for solutions having a prescribed nodal behavior. As the sign of $\lambda$ changes completely the structure of $(\mathcal{N}_{i,\l})$ and $(\mathcal{L}_{\l})$, as well as  the linking between them, the cases $\l\leq0$ and $\l>0$ will be dealt with throughout separately.

\subsection{The case $\l\leq0$}

We start analyzing the case $\l\leq0$ by delivering a result in the vain of Proposition \ref{pr4.1}. Namely, by using the times of transition calculated in Section \ref{sec-lin} for the linear problem $(\mathcal{L}_{\l})$, linking two non-linear problems $(\mathcal{N}_{j,\l})$ for $j\in\{i,i+1\}$, we get a property on the arc transformation of the composite map $\Psi_i\circ\Phi_i$ as well as the nodal properties of such arcs.

\begin{proposition}
\label{pr6.1}
Given the cyclic clockwise ordering $IV \prec III \prec II \prec I\prec IV$ we fix $\k=I$ or $III$,
pick $d, e\in \{c_{1,i},c_{2,i}\}$ with $d\not=e$, and assume that there are two non-negative integers
$0\leq\alpha_{i} < \beta_{i}$ satisfying the twist condition \eqref{iv.1}. Suppose, in addition, that the transition times satisfy either
\begin{enumerate}
\item[\rm (1)] ${\mc S}_{i,\k-1\to \k} \leq \varsigma_{i}$ and ${\mc S}_{i,\k+1\to \k+2} \leq \varsigma_{i}$, or

\item[\rm (2)] ${\mc S}_{i,\k+1\to \k} \leq \varsigma_{i}$ and ${\mc S}_{i,\k+3\to \k+2} \leq \varsigma_{i}$.
\end{enumerate}
Then,
\begin{equation}
\label{vi.2}
\Psi_i\circ\Phi_i:\mathcal {A}^{\rm energy}_{i,\k}\overset{\b_i-\a_i}\stretchx\,\mathcal {A}^{\rm energy}_{i+1,\k}\cup \mathcal {A}^{\rm energy}_{i+1,\k+2}.
\end{equation}
Furthermore, depending on whether the assumption {\rm (1)} or {\rm (2)} occurs, for every arc $\gamma\subset\mathcal{A}_{i,\k}^{\rm energy}$ joining the opposite sides of $\mathcal{A}_{i,\k}^{-}$, the solutions of $({\mathcal N}_{i,\lambda})$ with initial value on a sub-arc $\gamma_{j,\k}\subset K_{j,\k}$ of $\gamma$ and ending at $\mathcal{A}_{i,\tilde\k}$ have exactly, for every $j=1,2,\ldots,\beta_i-\alpha_i$,

\begin{equation*}
(1)\left\{
\begin{array}{llll}
(a)\;\tilde{\k}=\k&\left\{
\begin{array}{ll}
2(\alpha_i+j)-1&\;\hbox{zeroes in the x-component in}\;\;(0,\tau_i)\equiv(t_i,s_i)\\[2pt]
1&\;\hbox{zeroes in the x-component in}\;\;[\tau_i,\tau_i+\varsigma_i]\equiv[t_i,\varsigma_i]
\\[10pt]
2(\alpha_i+j)&\;\hbox{zeroes in the y-component in}\;\;[0,\tau_i]\equiv[t_i,s_i]\\[2pt]
0&\;\hbox{zeroes in the y-component in}\;\;[\tau_i,\tau_i+\varsigma_i]\equiv[t_i,\varsigma_i]
\end{array}
\right.
\\[5pt]
(b)\;\tilde{\k}=\k+2&\left\{
\begin{array}{ll}
2(\alpha_i+j)&\;\hbox{zeroes in the x-component in}\;\;(0,\tau_i)\equiv(t_i,s_i)\\[2pt]
1&\;\hbox{zeroes in the x-component in}\;\;[\tau_i,\tau_i+\varsigma_i]\equiv[t_i,\varsigma_i]
\\[10pt]
2(\alpha_i+j)+1&\;\hbox{zeroes in the y-component in}\;\;[0,\tau_i]\equiv[t_i,s_i]\\[2pt]
0&\;\hbox{zeroes in the y-component in}\;\;[\tau_i,\tau_i+\varsigma_i]\equiv[t_i,\varsigma_i]
\end{array}
\right.
\end{array}
\right.
\end{equation*}
and
\begin{equation*}
(2)\left\{
\begin{array}{llll}
(a)\;\tilde{\k}=\k&\left\{
\begin{array}{ll}
2(\alpha_i+j)&\;\hbox{zeroes in the x-component in}\;\;(0,\tau_i)\equiv(t_i,s_i)\\[2pt]
0&\;\hbox{zeroes in the x-component in}\;\;[\tau_i,\tau_i+\varsigma_i]\equiv[t_i,\varsigma_i]
\\[10pt]
2(\alpha_i+j)+1&\;\hbox{zeroes in the y-component in}\;\;[0,\tau_i]\equiv[t_i,s_i]\\[2pt]
1&\;\hbox{zeroes in the y-component in}\;\;[\tau_i,\tau_i+\varsigma_i]\equiv[t_i,\varsigma_i]
\end{array}
\right.
\\[5pt]
(b)\;\tilde{\k}=\k+2&\left\{
\begin{array}{ll}
2(\alpha_i+j)-1&\;\hbox{zeroes in the x-component in}\;\;(0,\tau_i)\equiv(t_i,s_i)\\[2pt]
0&\;\hbox{zeroes in the x-component in}\;\;[\tau_i,\tau_i+\varsigma_i]\equiv[t_i,\varsigma_i]
\\[10pt]
2(\alpha_i+j)&\;\hbox{zeroes in the y-component in}\;\;[0,\tau_i]\equiv[t_i,s_i]\\[2pt]
1&\;\hbox{zeroes in the y-component in}\;\;[\tau_i,\tau_i+\varsigma_i]\equiv[t_i,\varsigma_i]
\end{array}
\right.
\end{array}
\right.
\end{equation*}
\end{proposition}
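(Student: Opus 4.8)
The plan is to factor \eqref{vi.2} as a composition of two stretching relations and then to read off the nodal counts by adjoining the zeros created during the linear stage to those already recorded in Lemma \ref{le4.1}. I will argue for $\kappa=I$, the case $\kappa=III$ being entirely analogous and handled by the same angular argument used for $\kappa=III$ in Proposition \ref{pr4.1} and Lemma \ref{le4.1}. By Proposition \ref{pr4.1}, the twist condition \eqref{iv.1} yields
$$
\Phi_i:\mathcal A^{\rm energy}_{i,I}\overset{\beta_i-\alpha_i}\stretchx\,\mathcal A^{\rm axis}_{i,II}
\quad\text{and}\quad
\Phi_i:\mathcal A^{\rm energy}_{i,I}\overset{\beta_i-\alpha_i}\stretchx\,\mathcal A^{\rm axis}_{i,IV},
$$
together with the nodal information of Lemma \ref{le4.1} for the sub-arcs $\gamma_{j,II}\subset K_{j,II}$ and $\gamma_{j,IV}\subset K_{j,IV}$. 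Since the relation $\stretchx$ is preserved under composition of maps (the standard composition property in the sense of \cite{KeYo-2001}), it then suffices to prove, for the linear Poincar\'e map $\Psi_i$, the single-crossing relations $\Psi_i:\mathcal A^{\rm axis}_{i,\tilde\kappa}\stretchx \mathcal A^{\rm energy}_{i+1,\kappa'}$ into the target sectors dictated by hypothesis (1) or (2), and to compose them with the two displays above.

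The core of the argument is this single crossing for $\Psi_i$, which I will extract from the transition-time formulas of Section \ref{sec-lin}. Consider case (1)(a), whose route is $\mathcal A_{i,II}\to\mathcal A_{i+1,I}$ along the positive-energy level lines of $({\mathcal L}_{\lambda})$ in the upper half-plane, where $x'=h(y)>0$ so that $x$ increases strictly. Let $\delta\subset\mathcal A_{i,II}$ be any arc joining the side on $\{x=0\}$ to the side on $\{y=0\}$. Since $\mathcal E(x,y)=H(y)+\tfrac{\lambda}{2}x^2$ is negative on the $\{y=0\}$ side and equals $H(y)\geq c_{1,i}\geq H(y_{+,i})$ on the $\{x=0\}$ side, $\delta$ meets the level line $\mathcal E=\mathcal E(0,y_{+,i})$ at some point $Q$, with abscissa $x_Q\in[x_{\ell,i},0)$ lying to the right of the outer-boundary intercept $x_{\ell,i}$. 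The quantity $\mathcal S_{i,II\to I}$ in \eqref{v.2} is precisely the time needed, along this line, to travel from $x_{\ell,i}$ to the outer boundary intercept $x_{r,i+1}$ of $\mathcal A_{i+1,I}$; hence the hypothesis $\mathcal S_{i,II\to I}\leq\varsigma_i$ forces the trajectory through $Q$ to be carried, in time $\varsigma_i$, beyond the outer boundary $\{\mathcal H=c_{2,i+1}\}$ of $\mathcal A_{i+1,I}$, whereas the $\{y=0\}$ endpoint of $\delta$ flows (since $y'=-\lambda x_Q<0$ there) downward into the third quadrant, remaining short of $\mathcal A_{i+1,I}$. A continuity argument, identical to the covering step in the proof of Proposition \ref{pr4.1}, then selects a sub-arc $\delta'\subset\delta$ whose image $\Psi_i(\delta')$ joins $\{\mathcal H=c_{1,i+1}\}$ to $\{\mathcal H=c_{2,i+1}\}$ inside $\mathcal A_{i+1,I}$, which is the asserted crossing. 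The three remaining routings follow in exactly the same manner: $\mathcal S_{i,IV\to III}\leq\varsigma_i$ gives $\mathcal A_{i,IV}\to\mathcal A_{i+1,III}$ (completing hypothesis (1)), while under hypothesis (2) the negative-energy level lines of \eqref{v.5}--\eqref{v.6} give $\mathcal A_{i,IV}\to\mathcal A_{i+1,I}$ and $\mathcal A_{i,II}\to\mathcal A_{i+1,III}$ from $\mathcal S_{i,IV\to I}\leq\varsigma_i$ and $\mathcal S_{i,II\to III}\leq\varsigma_i$. Composing each with Proposition \ref{pr4.1} yields \eqref{vi.2}.

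It remains to count the zeros, which is pure bookkeeping. For a solution whose first stage lands on $\gamma_{j,II}$ (resp.\ $\gamma_{j,IV}$), Lemma \ref{le4.1} already supplies the zeros of $x$ on $(0,\tau_i)$ and of $y$ on $[0,\tau_i]$. During the linear stage $[\tau_i,\tau_i+\varsigma_i]$ the trajectory runs along a single arc of a level line of $\mathcal E$: on the positive-energy routes of hypothesis (1) the ordinate keeps a constant sign while the abscissa changes sign exactly once (the arc crosses the $y$-axis), contributing one zero of $x$ and none of $y$; on the negative-energy routes of hypothesis (2) the abscissa keeps a constant sign while the ordinate changes sign exactly once (the arc crosses the $x$-axis), contributing one zero of $y$ and none of $x$. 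Summing these contributions with those of Lemma \ref{le4.1} reproduces verbatim the four tables of the statement. The one genuinely delicate point, which I would isolate as a technical lemma, is the single-crossing step for $\Psi_i$: one must combine the transition-time inequalities with the monotonicity of the crossing time with respect to the energy level (a larger energy forces a larger ordinate, hence a larger speed $h(y)$ and a shorter transit) in order to guarantee that the image of the arc is swept uniformly across the full radial width of the far annular sector, so that the covering argument indeed applies.
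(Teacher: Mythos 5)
Your skeleton (Proposition \ref{pr4.1} plus Lemma \ref{le4.1} for the nonlinear stage, a single-crossing property of $\Psi_i$ for the linear stage, then composition of stretching relations and bookkeeping of zeros) is the same as the paper's, and your final zero-counting is correct. The genuine gap is in the one step you yourself flag as ``genuinely delicate'': the single crossing for $\Psi_i$, which is exactly where the paper's real work lies, and your sketch of it fails as written. You anchor the ``inner'' end of the arc at the endpoint of $\delta$ on $\{y=0\}$ and observe only that its image drifts into the third quadrant, ``remaining short of $\mathcal{A}_{i+1,I}$''. That is not the right property: not lying in $\mathcal{A}_{i+1,I}$ is not the same as lying in the inner region $\{\mathcal{H}_{i+1,\lambda}<c_{1,i+1}\}$, and once that endpoint leaves the upper half-plane the image arc $\Psi_i(\delta)$ is no longer confined there. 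It may then reach $\Psi_i(Q)$ (which sits beyond the outer boundary in the first quadrant) by travelling through $\mathcal{A}_{i+1,III}$ or $\mathcal{A}_{i+1,IV}$, or it may enter the first quadrant already outside the level $\{\mathcal{H}_{i+1,\lambda}=c_{2,i+1}\}$; an intermediate-value argument on $\mathcal{H}_{i+1,\lambda}\circ\Psi_i$ then yields a sub-arc crossing the annulus $\mathcal{A}_{i+1}$ \emph{somewhere}, but not a sub-arc contained in the sector $\mathcal{A}_{i+1,I}$ joining its two energy boundaries, which is what $\stretchx$ into $\mathcal{A}^{\rm energy}_{i+1,I}$ requires. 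Nor can the ``continuity argument identical to Proposition \ref{pr4.1}'' be invoked: that argument rests on a winding angle, and for $(\mathcal{L}_{\lambda})$ with $\lambda\leq 0$ the origin is a saddle (or a line of equilibria), so there is no angular covering. Note the failure is real even for $\lambda=0$: the image arc could cross the positive $y$-axis at a height above $y_+(c_{2,i+1})$, i.e.\ outside $\mathcal{A}_{i+1}$ altogether.

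What the paper does, and what is missing from your proof, is a confinement device. One passes to the sub-arc of $\Phi_i(\gamma_{j,II})$ lying between the level lines $\mathcal{E}=0$ and $\mathcal{E}=H(y_{+,i})$, with endpoints $w_1\in\{\mathcal{E}=0\}$ and $w_2\in\{\mathcal{E}=H(y_{+,i})\}$ (your point $Q$ is $w_2$; the point $w_1$ exists on $\delta$ by the same intermediate-value argument you used for $Q$, but you never use it). Since level sets of $\mathcal{E}$ are invariant under $\Psi_i$, the whole image arc stays in the strip $\{0\leq\mathcal{E}\leq H(y_{+,i})\}$ and in $\{y>0\}$: for $\lambda<0$, $w_1$ lies on the stable manifold, so $\Psi_i(w_1)$ moves toward the origin but stays in the second quadrant; for $\lambda=0$, $w_1$ is an equilibrium. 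By \eqref{v.1}, $H(y_{+,i})\leq c_{1,i+1}$, so the strip meets the positive $y$-axis only at points where $\mathcal{H}_{i+1,\lambda}\leq c_{1,i+1}$, i.e.\ inside the inner boundary of $\mathcal{A}_{i+1}$; since the confined image arc must cross the positive $y$-axis to join $\Psi_i(w_1)$ (with $x<0$) to $\Psi_i(w_2)$ (with $x>0$ and $\mathcal{H}_{i+1,\lambda}\geq c_{2,i+1}$), connectedness yields a sub-arc inside $\mathcal{A}_{i+1,I}$ linking $\{\mathcal{H}_{i+1,\lambda}=c_{1,i+1}\}$ to $\{\mathcal{H}_{i+1,\lambda}=c_{2,i+1}\}$. (The paper also handles a point you skip: a possible blow-up of the $y$-component to $\varrho_+$ within time $\varsigma_i$, cured by a further restriction of the sub-arc.) Your proposed substitute for this --- monotonicity of the transit time in the energy level, ``larger energy, larger ordinate, shorter transit'' --- is neither needed nor true in general, and in any case it does not provide the confinement that makes the covering argument legitimate. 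So: statement, routes, and tables are fine, but the technical lemma you defer is the actual content of the proposition, and it must be proved via the invariant strip bounded by $\{\mathcal{E}=0\}$ and $\{\mathcal{E}=H(y_{+,i})\}$ (and its analogues through $(x_{\pm,i},0)$ for your hypothesis (2), which moreover only occurs when $\lambda<0$).
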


By the analysis already done in Section \ref{sec-lin}, it is clear that the case (1) can occurs for $\l\leq 0$ with ${\mc S}_{i,j\to j+1}$ given by \eqref{v.2}, \eqref{v.4}, \eqref{v.6}, or \eqref{v.7}, respectively. However, the case (2) can only occurs for $\l<0$.

\begin{proof}
We focus our efforts in proving the case (1) in the sub-case $\k=I$, since the proofs of the
remaining cases can be completed with a rather similar argument. According to Proposition \ref{pr4.1}, there are $\beta_i-\a_i$ pairwise disjoint compact sets $K_{1,II},\dots,K_{\b_i-\a_i,II}$ such that each arc $\gamma$ in  ${\mathcal A}_{i,I}$ connecting the level lines $\{{\mathcal H}=c_{i,1}\}$ and $\{{\mathcal H}=c_{i,2}\}$ has $\ell_i$ sub-arcs $\gamma_{j,II}\subset K_{j,II}$ such that $\Phi_i(\gamma_{j,II})$ is an arc contained in ${\mathcal A}_{i,II}$ which connects the $x$-axis with the $y$-axis.
\par
We first discuss the case $\lambda < 0$. For each of the arcs $\Phi_i(\gamma_{j,II})$ we consider
a suitable sub-arc $\Phi_i(\gamma_j)$, where $\gamma_{j}$ is a compact sub-arc of $\gamma_{j,II}$ that intersects in the second quadrant at a unique point $w_1$ the level line ${\mathcal E} = 0$ and
at a unique point $w_2$ the level line ${\mathcal E}=  H(y_{+,i})$. Furthermore, we suppose that $\Phi_i(\gamma_{j})$ is contained in the region
$$
  \{(x,y): 0 \leq {\mathcal E}(x,y) \leq H(y_{+,i})\}.
$$
By definition, $w_1$ lies on the component of the stable manifold of the origin of
$({\mathcal L}_{\lambda})$ in the second quadrant. Therefore, $\Psi_i(w_1)$ is moved toward the origin, but it remains in the second quadrant. On the other hand, the condition ${\mc S}_{i,II\to I} \leq \varsigma_{i}$ implies that the point $w_2$ moves under the action of $\Psi_i$  along the level line ${\mathcal E} = H(y_{+,i})$ to the first quadrant and outside the annulus ${\mathcal A}_{i+1}$. To be more precise, one should prevent the possibility that there is a blow up at some time $t\leq \varsigma_i$ in the sense that the $y$-component of the solutions achieves the value $\varrho_{+}$,
where we might abandon the domain of
definition of the differential equation. This, however, is not a difficulty as we are interested only on the pieces of $\Phi_i(\gamma_{j})$ which stay in the annulus ${\mathcal A}_{i+1}$ after applying $\Psi_i$.
For this reason, we can just repeat the same argument by restricting ourselves to a further
sub-arc of the form $\Phi_i(\tilde{\gamma}_j)$ lying between ${\mathcal E}=0$ and ${\mathcal E} = c$ for a suitable $c \in (0,y_{+,i}))$.  Accordingly, from now on, we identify $\tilde{\gamma}_j$ with $\gamma_j$ and assume, without loss of generality, that $\Psi_i$ is well defined on  the arc $\Phi_i(\gamma_j)$. In this manner, we can conclude that $\Psi_i(\Phi_i(\gamma_j))$ is an arc connecting $\Psi_i(w_1)$, which is in the region $\{{\mathcal H}_{i+1,\lambda}< c_{1,{i+1}}\}$ with $x<0$,  to $\Psi_i(w_2)$, which is in the region $\{{\mathcal H}_{i+1,\lambda}\geq c_{2,{i+1}}\}$ with $x>0$. Moreover, all the points of $\Psi_i(\Phi_i(\gamma_j))$ are contained in the set $0\leq {\mathcal E} \leq H(y_{+,i})$ with $y>0$.
\par
Next, we consider the case when  $\lambda = 0$. As before, for each of the arcs $\Phi_i(\gamma_{j,II})$ we consider a suitable sub-arc $\Phi_i(\gamma_j)$, where $\gamma_{j}$ is a compact sub-arc of $\gamma_{j,II}$, that intersects at a unique point $w_1$ the $x$-axis, which is the level line ${\mathcal E} = 0$, and at a unique point $w_2$ the level line ${\mathcal E} = H(y_{+,i})$, which is the line $y =y_{+,i}$. Recalling that $({\mathcal L}_{0})$ reduces to $x'=h(y)$ and $y'=0$, we have that  $\Psi_i(w_1)=w_1$. On the other hand, the condition  ${\mc S}_{i,II\to I} \leq \varsigma_{i}$ implies that $w_2$ moves under the action of $\Psi_i$ along the line $y =y_{+,i}$ to the first quadrant outside the annulus ${\mathcal A}_{i+1}$ (see Figure \ref{fig-02}).  Hence, $\Psi_i(\Phi_i(\gamma_j))$ is an arc connecting $w_1$ to $\Psi_i(w_2)$ (which is in the region $\{{\mathcal H}_{i+1,\lambda}\geq  c_{2,{i+1}}\}$ with $x>0$). Moreover, all the points of
$\Psi_i(\Phi_i(\gamma_j))$ are contained in the set $0\leq y\leq y_{+,i}$.
\par
At this point, we have already proven that, in either case $\lambda<0$ and $\lambda=0$, there is a sub-arc $\gamma_j$ of $\gamma_{j,II}$ such that $\Psi_i(\Phi_i(\gamma_j))$ crosses the annulus ${\mathcal A}_{i+1}$ in the fist quadrant. Passing to a further sub-arc $\hat{\gamma}_j$ of $\gamma_j$ contained in $K'_j$, we have that $\Phi_i(\hat{\gamma}_j)$ is a sub-arc of $\Phi_i(\gamma_j)$ contained in the set
$$
  \mc{A}_{i,II}\cap \{ 0\leq \mc{E}\leq H(y_{+,i}) \}
$$
such that $(\Psi_j\circ  \Phi_j)(\hat{\gamma}_j)$ is an arc contained in ${\mathcal A}_{i+1,I}$ linking the two level lines     $\mc{H}^{-1}(c_{i+1,1})$ and $\mc{H}^{-1}(c_{i+1,2})$. This proves \eqref{vi.2} and shows how the corresponding solutions in the interval $[\tau_i,\tau_i+\varsigma_i]$ have exactly  one (interior) zero in the $x$-component, while the $y$-component remains positive.
\par
The remaining nodal properties of the solutions come directly from Proposition
\ref{pr4.1}. Precisely, for every $j=1,\dots,\beta_i-\alpha_i$, we have the following classification, where we are taking $\k\in\{I,III\}$:
\begin{itemize}
\item[$(1.a)$] \underline{For the transition between ${\mathcal A}_{i,\k}$ and ${\mathcal A}_{i+1,\k}$}:\\[3pt]
There exists a solution with $2(\alpha_i + j)$ zeroes in the $x$-component in the interval $(t_i,t_{i+1})$. Moreover, this solution has $2(\alpha_i + j)$ zeroes in the $x'/y$-component in the interval $[t_i,t_{i+1}]$. Thus, the number of zeroes of the $x$ and $y$ components  of the
solution, indicated over the arrows,  follow the next patterns, respectively:
\[
{\mathcal A}_{i,\k}\overset{2(\alpha_i + j)-1}\longrightarrow {\mathcal A}_{i,\k-1}\overset{1}\longrightarrow{\mathcal A}_{i+1,\k}\quad\hbox{and}\quad {\mathcal A}_{i,\k}\overset{2(\alpha_i + j)}\longrightarrow {\mathcal A}_{i,\k-1}\overset{0}\longrightarrow{\mathcal A}_{i+1,\k}
\]
\item[$(1.b)$]
\underline{For the transition between ${\mathcal A}_{i,\k}$ and ${\mathcal A}_{i+1,\k+2}$}:
\\[3pt]
There exists a solution with $2(\alpha_i + j)+1$ zeroes in the $x$-component in the interval $(t_i,t_{i+1})$, and this solution has $2(\alpha_i + j)+1$ zeroes in the $x'/y$-component in the interval $[t_i,t_{i+1}]$. Thus,  the number of zeroes of the $x$ and $y$ components  of the
solution, indicated over the arrows,  follow the next patterns, respectively:
\[
{\mathcal A}_{i,\k}\overset{2(\alpha_i + j)}\longrightarrow {\mathcal A}_{i,\k+1}\overset{1}\longrightarrow{\mathcal A}_{i+1,\k+2}\quad\hbox{and}\quad {\mathcal A}_{i,\k}\overset{2(\alpha_i + j)+1}\longrightarrow {\mathcal A}_{i,\k+1}\overset{0}\longrightarrow{\mathcal A}_{i+1,\k+2}
\]
\item[$(2.a)$]
\underline{For the transition between ${\mathcal A}_{i,\k}$ and ${\mathcal A}_{i+1,\k}$}:
\\[3pt]
There exists a solution with $2(\alpha_i + j)$ zeroes in the $x$-component in the interval $(t_i,t_{i+1})$. Moreover, this solution can be chosen to have $2(\alpha_i + j)+2$ zeroes in the $x'/y$-component in the interval $[t_i,t_{i+1}]$. Hence, the number of zeroes of the $x$ and $y$ components  of the solution, indicated over the arrows,  follow the next patterns, respectively:
\[
{\mathcal A}_{i,\k}\overset{2(\alpha_i + j)}\longrightarrow {\mathcal A}_{i,\k+1}\overset{0}\longrightarrow{\mathcal A}_{i+1,\k}\quad\hbox{and}\quad {\mathcal A}_{i,\k}\overset{2(\alpha_i + j)+1}\longrightarrow {\mathcal A}_{i,\k+1}\overset{1}\longrightarrow{\mathcal A}_{i+1,\k}
\]
\hspace{2mm}\item[$(2.b)$]
\underline{For the transition between ${\mathcal A}_{i,\k}$ and ${\mathcal A}_{i+1,\k+2}$}:
\\[3pt]
There exists a solution with $2(\alpha_i + j)-1$ zeroes in the $x$-component in the interval $(t_i,t_{i+1})$ with $2(\alpha_i + j)+1$ zeroes in the $x'/y$-component in the interval $[t_i,t_{i+1}]$. Hence, the number of zeroes of the $x$ and $y$ components  of the solution, indicated over the arrows,  follow the next patterns, respectively:
\[
{\mathcal A}_{i,\k}\overset{2(\alpha_i + j)-1}\longrightarrow {\mathcal A}_{i,\k-1}\overset{0}\longrightarrow{\mathcal A}_{i+1,\k+2}\quad\hbox{and}\quad {\mathcal A}_{i,\k}\overset{2(\alpha_i + j)}\longrightarrow {\mathcal A}_{i,\k-1}\overset{1}\longrightarrow{\mathcal A}_{i+1,\k+2}
\]
\end{itemize}
This concludes the proof.
\end{proof}

Figure \ref{fig-04} (left panel)
illustrates the admissible transitions in the context of Proposition \ref{pr6.1} (above) when $\lambda\leq0$ and Propositions \ref{pr6.2} and \ref{pr6.3} (further in this section) when $\lambda>0$.
If $\lambda \leq 0$, the intermediate blocks are given by ${\mathcal
    B}_{i,II}={\mathcal A}_{i,II}$ and ${\mathcal B}_{i,IV}={\mathcal
    A}_{i,IV}$, while, for $\lambda >0$, we have that
$$
{\mathcal B}_{i,II}={\mathcal A}_{i,II}\cup {\mathcal A}_{i,III},\quad
{\mathcal B}_{i,IV}={\mathcal A}_{i,IV}\cup {\mathcal A}_{i,I}.
$$
When $\lambda \geq 0$ we have only two connections available for
$\Psi_i$, namely from ${\mathcal B}_{i,II}$ to ${\mathcal A}_{i+1,I}$
and from ${\mathcal B}_{i,IV}$ to ${\mathcal A}_{i+1,III}$ (the black
arrows), while, for $\lambda <0$ all four connections for $\Psi_i$
are allowed (black and red arrows). In the horizontal and diagonal transitions,  $\b_i-\a_i$ is the associated crossing
number of the linking $\Phi_i$. Concerning the maps $\Psi_i$, they have $1$ as crossing
number, except if $\lambda >0$, where the crossing number
could equal $\kappa_i$ under the assumptions described in Remark \ref{re6.3}.

\begin{figure}[htbp]
\centering
\includegraphics[scale=1]{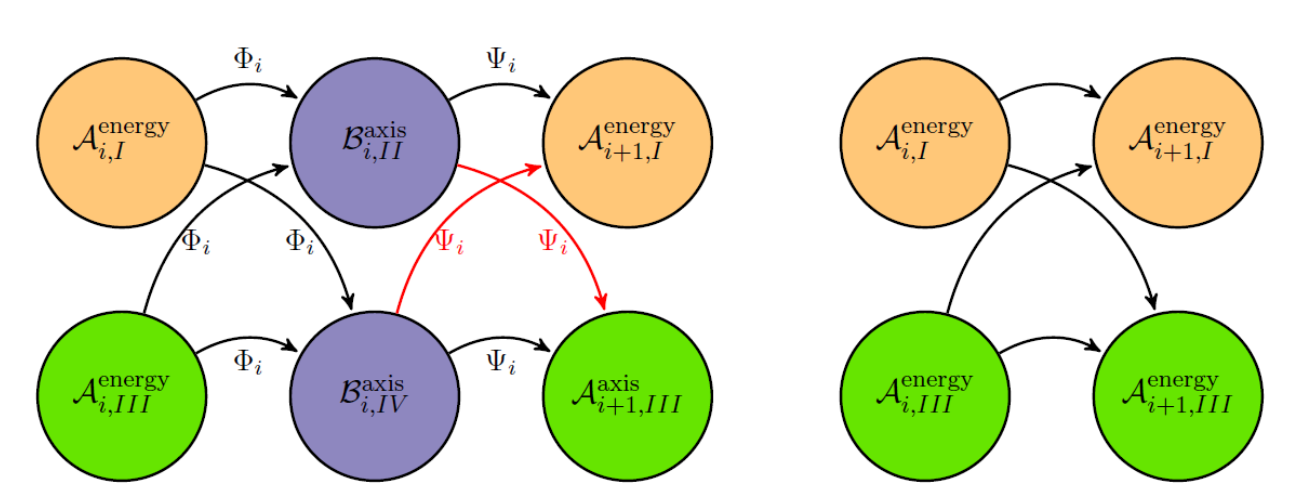}
\caption{\small{Admissible transitions of of $\Psi_i\circ\Phi_i$ (right panel). In the left
panel we have put in evidence the intermediate transitions across the sets
$\mathcal B^{\rm axis}$.}}
\label{fig-04}
\end{figure}

Now, by applying recursively Proposition \ref{pr6.1} we get the next multiplicity results of nodal solutions for the boundary value problem \eqref{vi.1}. By simplicity, the case $\l=0$ is analyzed at the first place. The case $\l<0$ holds in a very similar way.

\begin{theorem}
\label{th6.1} Suppose that $\lambda=0$ and that either $r_0\in {\mathcal A}_{0,I}$, or
$r_0\in {\mathcal A}_{0,III}$ and either $r_L\in {\mathcal A}_{m,II}$ or
$r_L\in {\mathcal A}_{m,IV}$. Assume, in addition, that, for every $i\in\{0,...,m\}$ and $\kappa\in\{I,III\}$, the twist conditions \eqref{iv.1} are satisfied together with
$$
   {\mc S}_{i,\k-1\to \k}\leq \varsigma_i \quad\text{and } \;\;
{\mc S}_{i,\k+1\to \k+2}\leq \varsigma_i\quad\hbox{for all}\;\; i\in\{0,\dots, m-1\}.
$$
Then, for each choice of $r_0$ and $r_L$, the problem \eqref{vi.1} has, at least,
\[
2^{m}\prod_{i=0}^m( \b_i-\a_i)
\]
nodal solutions. Moreover, each of these solutions can be classified by assigning it
a precise counter describing its nodal properties on each of the intervals $(t_i,s_i)$ and $(s_i,t_{i+1})$, according to any admissible itinerary
chosen in the Markov-type diagram of Figure \ref{fig-04} (left panel) at $\lambda=0$.
\end{theorem}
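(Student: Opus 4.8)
The plan is to promote the single-block conclusion of Proposition \ref{pr6.1} to a global statement by iterating the arc-stretching relation along the whole chain $\mathscr{P}_L=\Phi_m\circ\Psi_{m-1}\circ\Phi_{m-1}\circ\cdots\circ\Psi_0\circ\Phi_0$ and then closing it up with a topological crossing argument against the terminal arc $r_L$. Since $\lambda=0$, the linear blocks $\Psi_i$ only produce the horizontal shifts $II\to I$ and $IV\to III$ (crossing number $1$), so only alternative {\rm (1)} of Proposition \ref{pr6.1} is available, and for the quadrant type $\kappa\in\{I,III\}$ carried by the current arc each composite realizes
\[
\Psi_i\circ\Phi_i:\mathcal{A}^{\rm energy}_{i,\kappa}\overset{\beta_i-\alpha_i}\stretchx\,\mathcal{A}^{\rm energy}_{i+1,\kappa}\cup\mathcal{A}^{\rm energy}_{i+1,\kappa+2}.
\]
The decisive structural remarks are that $\kappa+2\in\{I,III\}$ again, so the recursion never leaves the pair $\{I,III\}$, and that the \emph{output} of each block carries the \emph{energy} orientation, which is exactly the orientation demanded as \emph{input} by the next block; this orientation match is what makes the chain composable.

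First I would take $r_0$ itself as the initial arc: by hypothesis it lies in $\mathcal{A}_{0,\kappa_0}$ with $\kappa_0\in\{I,III\}$ fixed and joins the inner and outer boundaries of $\mathcal{A}_0$, hence is an admissible arc for $\mathcal{A}^{\rm energy}_{0,\kappa_0}$. Applying the displayed relation successively for $i=0,1,\dots,m-1$ and using the multiplicativity of the $\stretchx$ relation under composition (the image of a crossing sub-arc is again a crossing arc, to which the next block applies, with crossing numbers multiplying), each transition splits every current arc into the two quadrant branches $\kappa$ and $\kappa+2$, each carrying multiplicity $\beta_i-\alpha_i$. After the $m$ transitions this yields $2^{m}\prod_{i=0}^{m-1}(\beta_i-\alpha_i)$ sub-arcs of $r_0$, indexed by a binary itinerary (the sequence of quadrant choices) together with sub-arc indices $j_i\in\{1,\dots,\beta_i-\alpha_i\}$, whose images under $\Psi_{m-1}\circ\Phi_{m-1}\circ\cdots\circ\Psi_0\circ\Phi_0$ cross $\mathcal{A}^{\rm energy}_{m,\kappa}$ for some $\kappa\in\{I,III\}$.

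Next I would apply the terminal block $\Phi_m$ via Proposition \ref{pr4.1}, which stretches $\mathcal{A}^{\rm energy}_{m,\kappa}\overset{\beta_m-\alpha_m}\stretchx\mathcal{A}^{\rm axis}_{m,\kappa-1}\cup\mathcal{A}^{\rm axis}_{m,\kappa+1}$. Since $\{\kappa-1,\kappa+1\}=\{II,IV\}$ for every $\kappa\in\{I,III\}$, one may always select the branch reaching the fixed quadrant $\kappa'\in\{II,IV\}$ that contains $r_L$; this produces, for each incoming arc, $\beta_m-\alpha_m$ sub-arcs $\hat\gamma\subset r_0$ whose images $\mathscr{P}_L(\hat\gamma)$ cross the rectangle $\mathcal{A}_{m,\kappa'}$ from its $x$-axis side to its $y$-axis side. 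The closing step is purely topological: $r_L$ crosses the same rectangle from its inner to its outer (energy) boundary, so $r_L$ and $\mathscr{P}_L(\hat\gamma)$ join \emph{distinct} pairs of opposite sides of a topological rectangle and must intersect; any intersection point $\mathscr{P}_L(z)\in r_L$ with $z\in\hat\gamma\subset r_0$ is an initial datum whose orbit solves \eqref{vi.1}. Since exactly one branch of $\Phi_m$ hits the fixed $\kappa'$, the terminal block contributes the single factor $\beta_m-\alpha_m$, giving at least $2^{m}\prod_{i=0}^{m-1}(\beta_i-\alpha_i)\cdot(\beta_m-\alpha_m)=2^{m}\prod_{i=0}^{m}(\beta_i-\alpha_i)$ solutions.

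Finally, distinctness follows from the nodal bookkeeping already contained in Proposition \ref{pr6.1} and Lemma \ref{le4.1}: on each interval $(t_i,s_i)$ the number of interior zeros of the $x$-component equals $2(\alpha_i+j_i)-1$ or $2(\alpha_i+j_i)$ according to whether the arc stays in $\kappa$ or switches to $\kappa+2$, while exactly one zero occurs on each degeneracy interval $(s_i,t_{i+1})$. Hence the complete nodal profile recovers both the itinerary and all indices $(j_0,\dots,j_m)$, so distinct (itinerary, index) data yield genuinely distinct solutions, and these are precisely the admissible paths of the Markov-type diagram of Figure \ref{fig-04}. I expect the main obstacle to be the rigorous verification that the composition of the individual stretching relations is again a stretching relation with the product crossing number — that is, the multiplicativity of $\stretchx$ through a \emph{union} of target rectangles together with the orientation matching at each interface — reinforced by the final crossing lemma; once these are established, both the count and the distinctness reduce to the combinatorics of the itineraries.
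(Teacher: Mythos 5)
Your proposal is correct and follows essentially the same route as the paper's proof: the same factorization $\mathscr{P}_L=\Phi_m\circ\mathscr{Q}_m$, the same inductive iteration of Proposition \ref{pr6.1} (case (1), which is the only one available for $\lambda=0$) producing $2^m\prod_{i=0}^{m-1}(\beta_i-\alpha_i)$ crossing sub-arcs of $r_0$, the same terminal application of Proposition \ref{pr4.1} to $\Phi_m$, and the same closing plane-topology crossing argument (the paper cites Lemma 3 of Muldowney and Willett) pitting the axis-oriented image arcs against the energy-oriented arc $r_L$. The only cosmetic difference is that you certify distinctness of the solutions via their nodal profiles (odd vs.\ even counts separating the stay/switch branches), whereas the paper gets it directly from the pairwise disjointness of the compact sets $K_j$ in the stretching definition; and the composition property of $\stretchx$ that you flag as the main obstacle is exactly what the paper's step-by-step induction establishes, so it is not a gap.
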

\begin{proof}
The proof employs a shooting type argument. Its goal is to prove the
existence of points $P_0\in r_0$ such that ${\mathscr P}_L(P_0)\in r_L$, where ${\mathscr P}_L$ denotes the Poincar\'{e} map in $[0,L]$. To do this, we factorize ${\mathscr P}_L$ as
\begin{equation}
\label{vi.3}
{\mathscr P}_L = \Phi_m\circ {\mathscr Q}_{m},\quad \text{where }\;
{\mathscr Q}_{m}:=(\Psi_{m-1}\circ \Phi_{m-1})\circ
\dots \circ (\Psi_{0}\circ \Phi_{0}).
\end{equation}
This follows by applying Proposition \ref{pr6.1} (1), which covers the case $\lambda=0$.
The possible transitions are those illustrated in Figure \ref{fig-04}.
\par
Precisely,  if $\gamma=r_0\in {\mathcal A}_{0,I}$, by \eqref{vi.2}, we can get $\b_0-\a_0$ sub-arcs of $\gamma=r_0$ whose images through $\Psi_{0}\circ \Phi_{0}$ provide us with arcs of ${\mathcal A}_{1,I}$ linking the two components $\mathcal A^{-}_{1,I}$ of $\mathcal A^{\rm energy}_{1,I}$, plus another $\b_0-\a_0$ sub-arcs of $\gamma=r_0$ whose images through $\Psi_{0}\circ \Phi_{0}$
are arcs of ${\mathcal A}_{1,III}$ linking the two components $\mathcal A^{-}_{1,III}$
of $\mathcal A^{\rm energy}_{1,III}$.
\par
Now, using again the same property, it is apparent that each of these arcs in
$\mathcal A^{\rm energy}_{1,I}\cup \mathcal A^{\rm energy}_{1,III}$
generates another $\b_1-\a_1$ sub-arcs whose images through $\Psi_{1}\circ \Phi_{1}$
are arcs of ${\mathcal A}_{2,I}$ linking the two components $\mathcal A^{-}_{2,I}$
of $\mathcal A^{\rm energy}_{2,I}$ as well as another $\b_1-\a_1$ sub-arcs whose  images through $\Psi_{1}\circ \Phi_{1}$ are arcs of  ${\mathcal A}_{2,III}$ linking the two components $\mathcal A^{-}_{2,III}$ of $\mathcal A^{\rm energy}_{2,III}$.
\par
Set
\begin{equation}
\label{vi.4}
M:=(\b_0-\a_0)\prod_{i=1}^{m-1}2(\b_i-\a_i)
\end{equation}
and observe that, arguing inductively, there are $M$ sub-arcs of $r_0$
whose images through ${\mathscr Q}_{m}$ are pairwise disjoint sub-arcs
of ${\mathcal A}_{m-1,I}$ linking the two components $\mathcal A^{-}_{m-1,I}$
of $\mathcal A^{\rm energy}_{m-1,I}$, as well as $M$ sub-arcs of $r_0$
whose images through ${\mathscr Q}_{m}$ are pairwise disjoint sub-arcs of
${\mathcal A}_{m-1,III}$ linking the two components $\mathcal A^{-}_{m-1,III}$
of $\mathcal A^{\rm energy}_{m-1,III}$.
\par
Now, define
$$
   \tilde{\gamma}_{m-1}:={\mathscr Q}_{m}(\tilde{\gamma}_0),
$$
where $\tilde{\gamma}_0$ is anyone of the previous $2M$ sub-arcs of $r_0$. Recall that  $M$ among these  $\tilde{\gamma}_{m-1}$ arcs lie in ${\mathcal A}_{m-1,I}$ and another $M$ are contained in $\mc{A}_{m-1,III}$. As a last step, we apply Proposition \ref{pr4.1} to $\Phi_m$
and observe that, for each of the $M$ arcs $\tilde{\gamma}_{m-1}$ contained in ${\mathcal A}_{m-1,I}$, there are $\b_m-\a_m$ sub-arcs whose images
are arcs of ${\mathcal A}_{m,II}$ linking the two components $\mathcal A^{-}_{m,II}$ of $\mathcal A^{\rm axis}_{m,II}$ as well as another $\b_m-\a_m$ sub-arcs whose images
are arcs of ${\mathcal A}_{m,IV}$ linking the two components $\mathcal A^{-}_{m,IV}$ of $\mathcal A^{\rm axis}_{m,IV}$. And this can be repeated for each of the $M$
arcs $\tilde{\gamma}_{m-1}$ of ${\mathcal A}_{m-1,III}$. Actually, each of these
arcs generates two families of $\b_m-\a_m$ sub-arcs of ${\mathcal A}_{m}$ that cross
the second and fourth quadrants, respectively. Thus, coming back to the first quadrant with
${\mathscr Q}_{m}^{-1}$, we obtain a total of
$$
   M 2(\b_m-\a_m)=(\b_0-\a_0)\prod_{i=1}^{m}2(\b_i-a_i)=2^m\prod_{i=0}^{m}(\b_i-a_i)
$$
sub-arcs
$\hat{\gamma}_0$ of $r_0$, which are simultaneously sub-arcs of the $\tilde{\gamma}_0$'s, whose images through ${\mathscr P}_L$ are pairwise disjoint sub-arcs
of ${\mathcal A}_{m,II}$ linking the two components $\mathcal A^{-}_{m,II}$ of ${\mathcal A}^{\rm axis}_{m,II}$. Naturally, there are another $ M 2(\b_m-\a_m)$ (different) sub-arcs of $r_0$
whose images through ${\mathscr  P}_L$ are pairwise disjoint sub-arcs
of ${\mathcal A}_{m,IV}$ linking the components of $\mc{A}_{m,IV}^-$ of ${\mathcal A}^{\rm axis}_{m,IV}$.
Each arc $\hat{\gamma}_m:={\mathscr P}_L(\hat{\gamma}_0)$ is continuous, it lies in ${\mathcal A}_{m,II}$
(respectively in ${\mathcal A}_{m,IV}$), and it links the two opposite sides of the corresponding
rectangular region. Moreover, also $r_L$ is a continuum of ${\mathcal A}_{m,II}$, or ${\mathcal A}_{m,IV}$, linking the other two opposite sides. Thus, by a well-known result in plane topology
(see, for instance, Lemma 3 of  Muldowney and Willett \cite{MuWi-1974})), we find that, for each
pairwise disjoint $M2(\b_m-\a_m)$ arcs like $\hat{\gamma}_m$, there exists, at least, an intersection point between $\hat{\gamma}_m$ and $r_L$.
\par
Repeating the same argument with $r_0\in {\mathcal A}_{0,III}$, we can find another $M2(\b_m-\a_m)$ intersection points between the arcs defined as $\hat{\gamma}_m$ and $r_L$. Therefore, for every alternative in the choice of $r_0\in {\mathcal A}_{0,I}
\cup {\mathcal A}_{0,III}$ and $r_L\in {\mathcal A}_{0,II}
\cup {\mathcal A}_{0,IV}$, the problem \eqref{vi.1} has at least
$$
2^m\prod_{i=0}^{m}(\b_i-a_i)
$$
solutions.
\par
Finally, by the nodal classification (1.a) and (1.b) done in Proposition \ref{pr6.1}, given any itinerary of admissible transitions
$$
  \Psi_{i}\circ \Phi_{i}:\mathcal A^{\rm energy}_{i,\k_1}\overset{\b_i-\a_i}\stretchx\,\mathcal {A}^{\rm energy}_{i+1,\k_2},
$$
with $\k_1,\k_2\in\{I,III\}$, we can provide, rather similarly,  a precise description of the nodal properties of all the given solutions. This ends the proof.
\end{proof}

\begin{remark}
\label{re6.1}
\rm According to the scheme diagram of Figure \ref{fig-04} (left panel), we can consider some simplified twist
and cross conditions in order to cover only the horizontal transitions. For instance, if
we assume that $r_{0}\in {\mathcal A}_{0,I}$ and $r_{L}\in {\mathcal A}_{m,II}$ and that
$$
  {\mc S}_{i,II\to I}\leq \varsigma_i \quad \hbox{for all}\;\; i\in\{0,\dots, m-1\}
$$
is the only cross condition which is satisfied,
we can just move horizontally in the upper part of the diagram.
In this case we could consider a simplified twist condition as the one
in Remark \ref{re4.1}  to get up to $\prod_{i=0}^m(\b_i-\a_i)$ solutions
with their corresponding nodal properties.
Note that the coefficient $2^m$ appearing in Theorem \ref{th6.1}
comes from the extra choice of having two options at each $i$-th step,
concerning the
transition from ${\mathcal A}_{i,I}$ to either ${\mathcal A}_{i,I}$ or
${\mathcal A}_{i,III}$, as well as the
transition from ${\mathcal A}_{i,III}$ to either ${\mathcal A}_{i,III}$ or
${\mathcal A}_{i,I}$.
\end{remark}

We conclude our analysis in this section by analyzing the case $\lambda <0$.
In this situation, in view of the diagrams of Figure \ref{fig-04}, it is apparent that
the complexity may further increase, because, at each step we have
two more possibilities of choice for the transitions. Indeed, all the available transitions
in the diagram of Figure \ref{fig-04} (left panel) are permitted, so that, at each step,
we can apply, in two different ways, the path-stretching property from
$\mathcal A^{\rm energy}_{i,k_1}$ to
$\mathcal A^{\rm energy}_{i,k_2}$ for each choice
of $k_1,k_2\in\{I,III\}$. As the associated crossing number of the map $\Phi_i$ is
$\b_i-\a_i$, we conclude that
$$
   \Psi\circ\Phi_i:\mathcal A^{\rm energy}_{i,k_1}\overset{2(\b_i-\a_i)}\stretchx\,
\mathcal A^{\rm energy}_{i,k_2}\quad\hbox{for all}\;\; i\in\{0,\dots,m-1\}
$$
for every choice of $k_1, k_2\in\{I,III\}$.  Thus, we can repeat the proof of
Theorem \ref{th6.1}, using the crossing number $2(\b_i-\a_i)$, instead of $\b_i-\a_i$,
at each intermediate transition. For the last step of the proof,
we will apply only $\Phi_i$ (following the same argument of
Theorem \ref{th6.1}). The analysis of the nodal properties of the solutions follow directly from Proposition \ref{pr6.1}. With these preliminary observations, the following result holds.

\begin{theorem}
\label{th6.2} Suppose that $\lambda<0$ and that either $r_0\in {\mathcal A}_{0,I}$,  or
$r_0\in {\mathcal A}_{0,III}$, and either $r_L\in {\mathcal A}_{m,II}$, or
$r_L\in {\mathcal A}_{m,IV}$.    Assume, in addition,  that, for every $i\in\{0,\dots, m\}$ and $\k\in\{I,III\}$,
the twist conditions    \eqref{iv.1} are satisfied  together with
$$
   {\mc S}_{i,\cdot}\leq \varsigma_i\quad \hbox{for all}\;\; i\in\{0, \dots, m-1\},
$$
where $S_{i,\cdot}$ stand for  the transition times defined in    \eqref{v.2}, \eqref{v.4}, \eqref{v.5} and \eqref{v.6}.  Then, for each admissible choice of $r_0$ and $r_L$,
the problem \eqref{vi.1} has, at least,
$$
   4^{m}\prod_{i=0}^{m}(\b_i-\a_i)
$$
nodal solutions. Moreover, each of these solutions can be classified by assigning it a counter
describing its nodal properties on each of the intervals  $(t_i,s_i)$ and $(s_i,t_{i+1})$,
for any admissible itinerary in the  Markov-type of diagram sketched in Figure \ref{fig-04}
(left panel) for  $\l<0$.
\end{theorem}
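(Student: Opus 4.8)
The plan is to mimic, step by step, the shooting argument already developed for Theorem \ref{th6.1}, replacing the crossing number $\b_i-\a_i$ of each intermediate transition by $2(\b_i-\a_i)$ while retaining the branching into the two target sectors $\{I,III\}$ at every stage. As in \eqref{vi.3}, I would factorize the global Poincar\'e map as $\mathscr{P}_L=\Phi_m\circ\mathscr{Q}_m$ with $\mathscr{Q}_m=(\Psi_{m-1}\circ\Phi_{m-1})\circ\dots\circ(\Psi_0\circ\Phi_0)$, and look for initial points $P_0\in r_0$ with $\mathscr{P}_L(P_0)\in r_L$ by tracking how arcs stretch across the successive annuli.

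The first and decisive step is to upgrade the single-transition stretching. Since $\l<0$, all four transition times $\mathcal{S}_{i,II\to I}$, $\mathcal{S}_{i,IV\to III}$, $\mathcal{S}_{i,IV\to I}$ and $\mathcal{S}_{i,II\to III}$, given by \eqref{v.2}, \eqref{v.4}, \eqref{v.5} and \eqref{v.6}, are available, and the hypothesis $\mathcal{S}_{i,\cdot}\leq\varsigma_i$ makes both alternatives {\rm (1)} and {\rm (2)} of Proposition \ref{pr6.1} applicable simultaneously. Fixing $k_1\in\{I,III\}$, a prescribed target $k_2\in\{I,III\}$ can then be reached along two distinct intermediate sectors (one through $\mathcal{A}_{i,k_1-1}$ via a horizontal-type connection, the other through $\mathcal{A}_{i,k_1+1}$ via a diagonal-type connection), each carrying the crossing number $\b_i-\a_i$ inherited from $\Phi_i$ through Proposition \ref{pr4.1}. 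Exactly as in the proof of Proposition \ref{pr6.1}, I would pass to suitable sub-arcs lying between the level lines $\mathcal{E}=0$ and $\mathcal{E}=c$ for small $c>0$, so as to guarantee that $\Psi_i$ is well defined (ruling out escape of the $y$-component to $\varrho_{+}$ within time $\varsigma_i$). This yields $\Psi_i\circ\Phi_i:\mathcal{A}^{\rm energy}_{i,k_1}\overset{2(\b_i-\a_i)}{\stretchx}\mathcal{A}^{\rm energy}_{i,k_2}$ for every choice of $k_1,k_2\in\{I,III\}$.

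With this building block, I would run the inductive bookkeeping of Theorem \ref{th6.1}. Writing $T_i$ for the number of pairwise disjoint sub-arcs of $r_0$ whose image under the first $i$ composite maps links the two energy sides inside $\mathcal{A}_{i,I}\cup\mathcal{A}_{i,III}$, each application of $\Psi_i\circ\Phi_i$ multiplies $T_i$ by $4(\b_i-\a_i)$: a factor $2$ for the choice of target sector and a factor $2(\b_i-\a_i)$ for the crossing to that target. Starting from the single arc $r_0$ (so $T_0=1$), after the $m$ intermediate transitions one obtains $T_m=4^m\prod_{i=0}^{m-1}(\b_i-\a_i)$ disjoint arcs in $\mathcal{A}_{m,I}\cup\mathcal{A}_{m,III}$. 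Applying finally $\Phi_m$, of crossing number $\b_m-\a_m$ by Proposition \ref{pr4.1}, steers each of these into whichever of $\mathcal{A}_{m,II}$, $\mathcal{A}_{m,IV}$ contains $r_L$, producing $4^{m}\prod_{i=0}^{m}(\b_i-\a_i)$ pairwise disjoint arcs, each linking the two opposite sides of the corresponding $\mathcal{A}^{\rm axis}_{m,\cdot}$. Since $r_L$ links the other pair of sides, Lemma 3 of Muldowney and Willett \cite{MuWi-1974} furnishes at least one intersection of each arc with $r_L$; the disjointness of the arcs forces the resulting initial points, hence the solutions, to be pairwise distinct.

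Finally, the nodal labelling is read off from Proposition \ref{pr6.1}: each admissible itinerary in the diagram of Figure \ref{fig-04} selects, at every block, one of the four patterns {\rm (1.a)}, {\rm (1.b)}, {\rm (2.a)}, {\rm (2.b)}, which prescribe the exact number of zeroes of the $x$- and $y$-components on $(t_i,s_i)$ and on $(s_i,t_{i+1})$. I expect the main obstacle to be precisely the combinatorial consistency of this last step: one must verify that distinct itineraries, even when they happen to share the same \emph{total} nodal counts, are carried by genuinely disjoint families of arcs, so that $4^{m}\prod_{i=0}^{m}(\b_i-\a_i)$ counts distinct solutions rather than repetitions. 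This is where the pairwise disjointness of the compact sets $K_j$ delivered by the stretching property, combined with the sector-by-sector classification of Proposition \ref{pr6.1}, has to be invoked with care.
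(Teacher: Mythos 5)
Your proposal is correct and takes essentially the same route as the paper's proof: since $\lambda<0$ makes all four transitions of Figure \ref{fig-04} available, the paper likewise upgrades each intermediate link to $\Psi_i\circ\Phi_i:\mathcal A^{\rm energy}_{i,k_1}\overset{2(\b_i-\a_i)}{\stretchx}\,\mathcal A^{\rm energy}_{i+1,k_2}$ for every $k_1,k_2\in\{I,III\}$ (two routes, through the sectors $k_1-1$ and $k_1+1$, each with crossing number $\b_i-\a_i$), and then reruns the shooting and bookkeeping argument of Theorem \ref{th6.1} with these doubled crossing numbers, applying only $\Phi_m$ at the final step and reading the nodal counters off cases (1.a), (1.b), (2.a), (2.b) of Proposition \ref{pr6.1}. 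Your total count $4^{m}\prod_{i=0}^{m}(\b_i-\a_i)$, your appeal to the Muldowney--Willett intersection lemma, and your reliance on the pairwise disjoint compact sets $K_j$ to keep the solutions distinct all coincide with the paper's argument.
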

\begin{proof}
The proof follows, almost step by step, the proof of Theorem \ref{th6.1}. The main difference being
that in the Markov-type transition diagram of Figure \ref{fig-04}
(right panel) each connection has an associated crossing number $2(\b_i-\a_i)$.
Indeed, if we consider the transition between  ${\mathcal A}_{i,I}$  and ${\mathcal A}_{i+1,I}$, we can move from ${\mathcal A}_{i,I}$ to
${\mathcal A}_{i,II}$ with $\Phi_i$, by using $\b_i-\a_i$ sub-arcs, and then from ${\mathcal A}_{i,II}$ to ${\mathcal A}_{i+1,I}$ with $\Psi_i$, with one zero in $x$ and no zero in $y$, or, alternatively,
we can move from ${\mathcal A}_{i,I}$ to ${\mathcal A}_{i,IV}$ with $\Phi_i$, using $\b_i-\a_i$ sub-arcs, and then from ${\mathcal A}_{i,IV}$ to ${\mathcal A}_{i+1,I}$ with $\Psi_i$, with one zero in $y$ and no zero in $x$.
The remaining transitions of Figure \ref{fig-04} (right panel) can be described in a similar manner.
Setting
$$
  M:=2 (\b_0-\a_0)\prod_{i=1}^{m-1}4(\b_i-\a_i)
$$
and arguing recursively,  at the end of the day we will get $M$ sub-arcs $\tilde{\gamma}_0$ of $r_0$
whose images through ${\mathscr Q}_{m}$ are pairwise disjoint sub-arcs of ${\mathcal A}_{m-1,I}$, either linking the two components of $\mathcal A^{-}_{m-1,I}$ in $\mathcal A^{\rm energy}_{m-1,I}$, or
the two components of ${\mathcal A}^{-}_{m-1,III}$  in $\mathcal A^{\rm energy}_{m-1,III}$.
As according to the last step we are adding $2(\b_m-\a_m)$ arcs to each of the arcs of the previous step, the proof of Theorem \ref{th6.1}, with some minor modifications, can be updated to complete the proof of this one.
\par
Finally, for every $i\in\{0,\ldots,m\}$, the nodal properties of the different transitions between $\mathcal{A}^{\rm energy}_{i,\k_1}$ and  $\mathcal{A}^{\rm energy}_{i+1,\k_2}$, with $\k_1,\k_2\in\{I,III\}$, can be completely explained by following (1.a), (1.b), (2.a) and (2.b) of Proposition \ref{pr6.1}.
\end{proof}

\begin{remark}
\label{re6.2}
\rm To get the results of this section, we have chosen the initial and final arcs, $r_0$ and $r_L$,
to be contained in a certain quadrant. By imposing the  more restrictive twist condition
\eqref{iv.6} for $i\in\{0,m\}$, one can take $r_0$ and $r_L$ in an arbitrary quadrant. The condition
\eqref{iv.6} can be easily adapted according to the quadrant where the arcs $r_0$ and $r_L$ are located.
For instance, if $r_0\in {\mathcal A}_{0,II}$, then \eqref{iv.6} becomes
$$
    {\mathcal T}_{0,I}(d) + \alpha_{0} {\mathcal T}_{0}(d) > \tau_0,
    \qquad {\mathcal T}_{0,II}(e)
    +\beta_{0} {\mathcal T}_{0}(e) <\tau_0.
$$
These conditions entail that the image of $r_0$ through $\Phi_0$ crosses, as a spiral like curve,
at least $\beta_0-\alpha_0$ times both the second, the third and the fourth quadrants.
Then, applying $\Psi_0$ we can move some sub-arcs of $\Phi_0(r_0)$ to the first, or the third, quadrant
when $\lambda \leq 0$ with the technique already described in the previous proofs. As for the
intermediate steps $i\in\{1,\dots,m-1\}$ we should repeat the same arguments as in the
proofs above, producing a sufficiently large number of arcs, like the $\tilde{\gamma}_{m-1}$'s obtained in the proof of Theorem \ref{th6.1}. These arcs lie either in the first, or in the third, quadrant
and cross the  regions ${\mathcal A}_{m,I}$, or ${\mathcal A}_{m,III}$, respectively. A final application of $\Phi_m$ will move these arcs across the annulus ${\mathcal A}_m$ and, according to the position of
$r_L$, we should use a twist condition like \eqref{iv.6}, possibly modified, to make ensure
that the images  $\Phi_m(\tilde{\gamma}_{m-1})$ intersect $r_L$.  An example of how this remark applies will be given in Section \ref{section-4}.
\end{remark}

The next diagram shows all the possible connections, represented by the arrows, for the map $\Psi_i\circ\Phi_i$. The crossing number of these transitions is $\ell_i$, as defined in Proposition \ref{pr4.1}.

\subsection{The case $\l>0$}

This section discusses the case $\lambda>0$. As in this case two
centers are interacting, this is a much more delicate case (see Figure \ref{fig-03}). For this reason,
we will focus our attention into two particular cases; the most useful from the point of view of the applications. Precisely, we will consider separately: (a) the case when the period map of the orbits of
the system $({\mathcal L}_{\lambda})$ presents a certain kind of monotonicity, or, more generally,
there are suitable annular domains with the level lines of ${\mathcal E}$
satisfying a sufficiently strong twist condition on the boundary,
and (b) the case where the center of the system $({\mathcal L}_{\lambda})$
does not produce a sufficiently strong twist, which also covers the case of
isochronous centers. Accordingly, we present the following two results.

\begin{proposition}[Case (a)]
\label{pr6.2}
Given the cyclic clockwise order $IV \prec III \prec II \prec I\prec IV$, fix $\k\in\{I,III\}$ and assume that, for $d, e\in \{c_{1,i},c_{2,i}\}$ with $d\not=e$ and two non-negative integers
$0\leq\alpha_{i} < \beta_{i}$,  the twist condition \eqref{iv.1} is satisfied. Assume, in addition,
that the compatibility condition \eqref{v.10} is satisfied and that there are $\hat{d}>0$ and $\hat{e}>0$,  with   $\hat{d}\not= \hat{e}$, such that
\begin{equation}
\label{vi.5}
    \mc{S}^{c_{2,i\to i+1}}_{\k-1\to \k}(\hat{e}) \leq \varsigma_{i}
    \leq \mc{S}^{c_{1,i\to i+1}}_{\k-1\to \k}(\hat{d}).
\end{equation}
Then
\begin{equation}
\label{vi.6}
    \Psi_i\circ\Phi_i:\mathcal A^{\rm energy}_{i,\k}\overset{\b_i-\a_i}\stretchx\,
    \mathcal A^{\rm energy}_{i+1,\k}.
\end{equation}
Furthermore, for every arc $\gamma\subset\mathcal{A}_{i,\k}^{\rm energy}$ linking the opposite sides of $\mathcal{A}_{i,\k}^{-}$, the solution of $({\mathcal N}_{i,\lambda})$  with initial value
on a sub-arc $\gamma_{j,\k}\subset K_{j,\k}$ of $\gamma$ has exactly, for every $j=1,2,\ldots,\beta_i-\alpha_i$,
\begin{equation*}
    \left\{
    \begin{array}{ll}
    2(\alpha_i+j)-1&\;\hbox{zeroes in the x-component in}\;\;(0,\tau_i)\equiv(t_i,s_i),\\[2pt]
    1&\;\hbox{zero in the x-component in}\;\;[\tau_i,\tau_i+\varsigma_i]\equiv[t_i,\varsigma_i],
    \\[10pt]
    2(\alpha_i+j)&\;\hbox{zeroes in the y-component in}\;\;(0,\tau_i)\equiv(t_i,s_i),\\[2pt]
    0&\;\hbox{zeroes in the y-component in}\;\;[\tau_i,\tau_i+\varsigma_i]\equiv[t_i,\varsigma_i].
    \end{array}
    \right.
\end{equation*}
\end{proposition}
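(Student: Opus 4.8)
The plan is to follow, almost verbatim, the architecture of the proof of Proposition \ref{pr6.1}, replacing the saddle/stable–manifold mechanism (available only for $\lambda\le 0$) by the two–sided twist condition \eqref{vi.5} on the center of $(\mathcal{L}_\lambda)$. I treat $\k=I$ (so that $\k-1=II$), the case $\k=III$ being symmetric. First I would invoke Proposition \ref{pr4.1}: under the twist condition \eqref{iv.1} there are $\b_i-\a_i$ pairwise disjoint compact sets $K_{j,II}$ so that every arc $\gamma\subset\mathcal{A}^{\rm energy}_{i,I}$ joining the two energy sides contains sub-arcs $\gamma_{j,II}\subset K_{j,II}$ whose images $\Phi_i(\gamma_{j,II})$ are arcs in $\mathcal{A}_{i,II}$ joining the negative $x$-axis to the positive $y$-axis, and with the nodal tally of Lemma \ref{le4.1} for $\tilde{\k}=\k-1$ already attached to the $\Phi_i$-phase on $(0,\tau_i)$, namely $2(\a_i+j)-1$ zeros of $x$ and $2(\a_i+j)$ zeros of $y$.

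The core step is to show that $\Psi_i$ stretches each $\Phi_i(\gamma_{j,II})$ across $\mathcal{A}_{i+1,I}$. Here I would use the compatibility condition \eqref{v.10} in its form \eqref{v.11}, together with \eqref{v.9}: for every energy $c\in(0,H_*]$ the level line $\mathcal{E}^{-1}(c)$ meets the $x$-axis to the left of the outer boundary of $\mathcal{A}_{i,II}$ and meets the $y$-axis below its inner boundary, so the piece of $\mathcal{E}^{-1}(c)$ inside $\mathcal{A}_{i,II}$ joins the outer side $\{\mathcal{H}_i=c_{2,i}\}$ to the inner side $\{\mathcal{H}_i=c_{1,i}\}$ of that curvilinear rectangle. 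Since $\Phi_i(\gamma_{j,II})$ joins instead the $x$-axis side to the $y$-axis side, the two arcs cross the two pairs of opposite sides transversally; by the planar crossing lemma (Lemma~3 of Muldowney and Willett \cite{MuWi-1974}) $\Phi_i(\gamma_{j,II})$ meets both level lines $\mathcal{E}=\hat d$ and $\mathcal{E}=\hat e$, at points $Q_d$ and $Q_e$. I then extract the sub-arc $\hat\gamma_j\subset\gamma_{j,II}$ for which $\Phi_i(\hat\gamma_j)$ runs from $Q_d$ to $Q_e$ inside the region of $\mathcal{A}_{i,II}$ bounded by the two level lines.

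Next I would flow by $\Psi_i$ for time $\varsigma_i$ and compare positions. Because $Q_e\in\mathcal{A}_{i,II}$ forces $G(x(Q_e))\le (c_{2,i}-\hat e)/\mu_i=G(x_{\ell,c_{2,i}})$, the monotonicity of $G$ on $(-\infty,0)$ gives $x(Q_e)\ge x_{\ell,c_{2,i}}$, i.e. $Q_e$ sits clockwise-ahead of the outer boundary point on $\mathcal{E}=\hat e$; since $x'=h(y)>0$ is strictly increasing in the upper half-plane, the passage time from $Q_e$ to the outer boundary of $\mathcal{A}_{i+1,I}$ is at most $\mathcal{S}^{c_{2,i\to i+1}}_{II\to I}(\hat e)\le\varsigma_i$, so $\Psi_i(Q_e)$ lands outside $\mathcal{A}_{i+1,I}$. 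Symmetrically, $Q_d\in\mathcal{A}_{i,II}$ yields $x(Q_d)\le x_{\ell,c_{1,i}}$, placing $Q_d$ clockwise-behind the inner boundary point on $\mathcal{E}=\hat d$, whence the passage time of $Q_d$ to the inner boundary of $\mathcal{A}_{i+1,I}$ is at least $\mathcal{S}^{c_{1,i\to i+1}}_{II\to I}(\hat d)\ge\varsigma_i$, so $\Psi_i(Q_d)$ is still inside $\mathcal{A}_{i+1,I}$. As both transition times in \eqref{vi.5} correspond to a single $II\to I$ passage through the upper half-plane, $\varsigma_i$ lies within one such passage and no full revolution occurs, so $y>0$ throughout for the relevant orbits. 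Consequently $\Psi_i(\Phi_i(\hat\gamma_j))$ joins a point inside to a point outside $\mathcal{A}_{i+1,I}$ while staying in the first two quadrants; by the intermediate value property of $\mathcal{H}_{i+1}\circ\Psi_i$ along $\hat\gamma_j$ (passing, if necessary, to a further sub-arc exactly as in Proposition \ref{pr6.1}), a sub-arc has image crossing $\mathcal{A}_{i+1,I}$ from its inner to its outer energy boundary inside the first quadrant, which is precisely \eqref{vi.6}. Along this $\Psi_i$-phase each such solution moves monotonically in $x$ from $x<0$ to $x>0$ with $y>0$, giving exactly one zero of $x$ and none of $y$ on $[\tau_i,\tau_i+\varsigma_i]$, which combined with the Lemma \ref{le4.1} count completes the stated nodal tally.

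The main obstacle is the bookkeeping of this last paragraph: one must guarantee that the two extremal points $Q_e$ and $Q_d$ end on opposite sides of $\mathcal{A}_{i+1,I}$ while the whole relevant sub-arc remains in the upper half-plane, so that no spurious extra zeros of $x$ or $y$ appear and the orbit neither overshoots into the fourth quadrant nor completes a loop. This is exactly what forces a genuinely two-sided inequality in \eqref{vi.5} (rather than a single bound as in Remark \ref{re4.1}) and the comparison of $Q_e,Q_d$ with the boundary abscissas $x_{\ell,c_{2,i}}$ and $x_{\ell,c_{1,i}}$ through the monotonicity of $G$; the remaining topology is a routine adaptation of Proposition \ref{pr6.1}.
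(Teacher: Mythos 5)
Your overall strategy is the same as the paper's: invoke Proposition \ref{pr4.1} to produce the sub-arcs $\gamma_{j,II}$ crossing $\mathcal{A}_{i,II}$, locate on each image $\Phi_i(\gamma_{j,II})$ one point on each of the level lines $\mathcal{E}=\hat d$ and $\mathcal{E}=\hat e$, use the two inequalities in \eqref{vi.5} to push one of these points beyond the outer boundary of $\mathcal{A}_{i+1}$ while holding the other one back, and finish by a continuity argument plus the one-crossing nodal count. Your explicit comparisons via the monotonicity of $G$ (giving $x(Q_e)\ge x_{\ell,c_{2,i}}$ and $x(Q_d)\le x_{\ell,c_{1,i}}$) are exactly the computations the paper leaves implicit, and producing $Q_d,Q_e$ by the Muldowney--Willett crossing lemma is a harmless variant of the paper's choice of a ``suitable sub-arc''.

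The gap is in your final selection step. You run the intermediate value theorem on $\mathcal{H}_{i+1}\circ\Psi_i\circ\Phi_i$, supported by the claims that $\Psi_i(Q_d)$ is ``inside'' $\mathcal{A}_{i+1,I}$ and that ``no full revolution occurs, so $y>0$ throughout for the relevant orbits''. Neither claim follows from \eqref{vi.5}. The inequality $\varsigma_i\le\mathcal{S}^{c_{1,i\to i+1}}_{II\to I}(\hat d)$ only guarantees that $\Psi_i(Q_d)$ lies in the upper half-plane with abscissa at most $x_{r,c_{1,i+1}}$; it may still sit deep in the second quadrant, where $\mathcal{H}_{i+1}=\hat d+\mu_{i+1}G(x)$ can exceed $c_{2,i+1}$ (for instance when $\mu_{i+1}/\mu_i$ is large), so $\mathcal{H}_{i+1}(\Psi_i(Q_d))\le c_{1,i+1}$ can fail. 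Symmetrically, $\mathcal{S}^{c_{2,i\to i+1}}_{II\to I}(\hat e)\le\varsigma_i$ is only a \emph{lower} bound on the progress of $Q_e$: nothing prevents its orbit from running past the positive $x$-axis into the fourth quadrant (or further), where $\mathcal{H}_{i+1}$ decreases again, so $\mathcal{H}_{i+1}(\Psi_i(Q_e))\ge c_{2,i+1}$ can also fail, and ``$y>0$ throughout'' is false in general. Even when both IVT hypotheses happen to hold, a sign change of $\mathcal{H}_{i+1}-c_{j,i+1}$ neither localizes the crossing in the first quadrant (the annulus $\mathcal{A}_{i+1}$ has pieces in all four quadrants on which $\mathcal{H}_{i+1}$ takes the same values) nor excludes that the selected solutions complete extra revolutions, which would destroy the exact zero counts. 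The paper's proof avoids all of this by running the continuity argument on the clockwise angular coordinate $\vartheta(\varsigma_i)$ lifted along the flow, as in the proof of Proposition \ref{pr4.1}: the $Q_d$-endpoint satisfies $\vartheta(\varsigma_i)\le\arctan\bigl(x_{r,c_{1,i+1}}/y_{r,c_{1,i+1}}\bigr)$, the $Q_e$-endpoint satisfies $\vartheta(\varsigma_i)\ge\arctan\bigl(x_{r,c_{2,i+1}}/y_{r,c_{2,i+1}}\bigr)$, and the IVT applied to $\vartheta$ selects a sub-arc whose image lands in the first-passage window of quadrant $I$. Since $\vartheta$ is strictly increasing along orbits of $(\mathcal{L}_\lambda)$ for $\lambda>0$, these solutions have $\vartheta(0)\in(-\pi/2,0)$ and $\vartheta(\varsigma_i)\in(0,\pi/2)$, hence $y>0$ on all of $[\tau_i,\tau_i+\varsigma_i]$ and exactly one zero of $x$ there; this is the step you need to repair.
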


\begin{proof}
Arguing as in Proposition \ref{pr6.1}, from Proposition \ref{pr4.1} we find that there are $\b_i-\a_i$ pairwise disjoint compact sets, $K_{1,II},\dots,K_{\b_i-\a_i,II}$, such that every arc $\gamma$ of
${\mathcal A}_{i,I}$ linking the level lines $\{{\mathcal H}=c_{i,1}\}$ and $\{{\mathcal H}=c_{i,2}\}$
possesses $\b_i-\a_i$ sub-arcs, $\gamma_{j,II}\subset K_{j,II}$, such that $\Phi_i(\gamma_{j,II})$ is an arc
of ${\mathcal A}_{i,II}$ linking the two components of $\mathcal{A}^{-}_{i,II}$ in $\mathcal{A}^{\rm axis}_{i,II}$. To fix ideas, we can suppose that $\hat{d} < \hat{e}$, so that, according to
\eqref{vi.5}, the level line ${\mathcal E}_{\lambda}=\hat{d}$, where the points move at a lower speed, is inside the region enclosed by the level line ${\mathcal E}_{\lambda}=\hat{e}$, where the points move faster.
The case when $\hat{e} < \hat{d}$ can be treated similarly.
\par
For each of the arcs $\Phi_i(\gamma_{j,II})$ we consider  a suitable sub-arc $\Phi_i(\gamma_j)$, where $\gamma_{j}$ is a compact sub-arc of $\gamma_{j,II}$, which intersects, in the second quadrant,
the level line ${\mathcal E}_{\lambda} = \hat{d}$ at a unique point,    $w_1$, and the level line ${\mathcal E}_{\lambda} = \hat{e}$ at a (unique) $w_2$. We can assume that $\Phi_i(\gamma_j)$ is contained in the set
$$
    \{(x,y): \hat{d} \leq {\mathcal E}_{\lambda}(x,y) \leq \hat{e},\, x<0, \,y>0\}.
$$
Observe that the annular region
$$
    {\mathcal E}[\hat{d},\hat{e}]:=\{(x,y): \hat{d} \leq {\mathcal E}(x,y) \leq \hat{e}\}
$$
is invariant under the action of $\Psi_i$, i.e., $(\Psi_i\circ\Phi_i)(\gamma_j)\in {\mathcal E}[\hat{d},\hat{e}]$. Moreover, by the second inequality of \eqref{vi.5}, we know that
$\Psi_i(w_1)$, which is on the level line ${\mathcal E}=\hat{d}$, has an abscissa less or equal than $x_{r,c_{1,i+1}}$, as defined in Section \ref{subsec-l>0}, and it lies in the
region $\{{\mathcal H}\leq c_{1,i+1}\}$. On the other hand, by the first inequality in \eqref{vi.5},
the point    $w_2$ moves under the action of $\Psi_i$ in the clockwise sense along the level line ${\mathcal E}=\hat{e}$, in such a way that $\Psi_i(w_2)$ is beyond the point $(x_{r,c_{2,i+1}},y_{r,c_{2,i+1}})$
of intersection between ${\mathcal E}=\hat{e}$ and ${\mathcal H}=c_{2,i+1}$ in the first quadrant.
More precisely, with respect to a polar coordinates reference system, counting the angles in the
clockwise sense starting at the positive $y$-axis  (see the argument in the proof of Proposition \ref{pr4.1}),  if we denote by $\vartheta(t)$ the angular coordinate associated to the
solution of $({\mathcal L}_{\lambda})$ with initial point at $w_2$, we have that
$-\pi/2<\vartheta(0)<0$ and
$$
   \vartheta(\varsigma_i) \geq \arctan\left(\frac{x_{r,c_{2,i+1}}}{y_{r,c_{2,i+1}}}\right).
$$
Note that $\vartheta(\varsigma_i)$ is the angular coordinate of $\Psi_i(w_2)$. Thus,
using a continuity argument, we can determine a sub-arc $\tilde{\gamma}_j$ of
$\gamma_{j,II}$, contained in $K_{j,II}$, such that $\Phi_i(\tilde{\gamma}_j)$ is a sub-arc
of $\Phi_i(\gamma_j)$ contained in the second quadrant and such that
$\Psi_j(\Phi_i(\tilde{\gamma}_j)$ is an arc lying in the component of the
intersection
$$
  \{c_{1,i+1}\leq{\mathcal H}\leq c_{2,i+1}\}\cap {\mathcal E}[\hat{d},\hat{e}]
$$
contained in the first quadrant.
Moreover, by construction, $(\Psi_j\circ\Phi_i)(\tilde{\gamma}_j)$ connects the
level lines ${\mathcal H}=c_{1,i+1}$ and ${\mathcal H}=c_{2,i+1}$. This proves
\eqref{vi.6}.
\par
Concerning the nodal properties of the solutions with initial value on $\tilde{\gamma}_j$, note that
they are precisely the same as those obtained in case (1.a) of Proposition \ref{pr6.1}. Actually,
as in the previous case, this follows by applying Proposition \ref{pr4.1} and observing
that in the interval    $[0,\varsigma_j]\equiv[s_i,t_{i+1}]$ there is exactly one crossing of the
$y$-axis    with $y>0$.
\end{proof}

\begin{remark}
\label{re6.3}
\rm
One can give a counterpart of Proposition \ref{pr6.2} by replacing the condition \eqref{vi.5} with
\begin{equation}
\label{vi.7}
    \mc{S}^{c_{2,i\to i+1}}_{\k-1\to \k}(\hat{e})+ \xi_i{\mathcal T}_{\lambda}(\hat{e})
    \leq \varsigma_{i}\leq
    \mc{S}^{c_{1,i\to i+1}}_{\k-1\to \k}(\hat{d}) + \zeta_i{\mathcal T}_{\lambda}(\hat{d}),
\end{equation}
for some nonnegative integers $\zeta_i\leq \xi_i$, where we are denoting by
${\mathcal T}_{\lambda}(c)$ the period of the orbit of $({\mathcal L}_{\lambda})$ with
energy ${\mathcal E} = c$.  In this case, instead of \eqref{vi.6}, we find that
\begin{equation}
\label{vi.8}
    \Psi_i\circ\Phi_i:\mathcal A^{\rm energy}_{i,I}\overset{\mathfrak{m}_i(\b_i-\a_i)}\stretchx\,
    \mathcal A^{\rm energy}_{i+1,I} \qquad\text{for } \;
    \mathfrak{m}_i:=\xi_i-\zeta_i+1.
\end{equation}
Moreover, concerning \eqref{vi.8}, for every arc $\gamma\subset {\mathcal A}_{i,I}$ linking
the two components  of ${\mathcal A}_{i,I}^-$, the solution of $({\mathcal N}_{i,\lambda})$
    with initial value on a sub-arc $\gamma_{j,m}\subset K_{j,m}$, for $j=1,\dots,\b_i-\a_i$ and
    $m=0,\dots, \xi_i-\zeta_i$, has exactly
\begin{equation*}
\left\{
\begin{array}{ll}
2(\alpha_i+j)-1&\;\hbox{zeroes in the $x$-component in}\;\;(0,\tau_i)\equiv(t_i,s_i),\\[2pt]
2(\zeta_i+m)+1&\;\hbox{zeroes in the $x$-component in}\;\;[\tau_i,\tau_i+\varsigma_i]\equiv[t_i,\varsigma_i],
\\[10pt]
2(\a_i+j)&\;\hbox{zeroes in the $y$-component in}\;\;(0,\tau_i)\equiv(t_i,s_i),\\[2pt]
2(\zeta_i+m)&\;\hbox{zeroes in the $y$-component in}\;\;[\tau_i,\tau_i+\varsigma_i]\equiv[t_i,\varsigma_i].
\end{array}
\right.
\end{equation*}
\end{remark}

The results established by Proposition \ref{pr6.2} and Remark \ref{re6.3} are closely related
to the theory of \lq\lq Linked Twist Maps\rq\rq \, concerning the composition of two maps which act on
two linked annuli, each map providing a twist effect at the boundary of
an annulus (see Devaney \cite{De-1978}, Wiggins and Ottino \cite{WO-2004}, Margheri et al. \cite{MRZ-2010}, and the references therein).
\par
The next case concerns a situation with two annular domains
linked through an intermediate orbit. The twist condition is required only
on the first of the two composite maps. In the applications, this case
will include the possibility that $({\mathcal L}_{\lambda})$ defines an isochronous center.
This situation appears less investigated
in the literature from the point of view of the theory of dynamical systems. As in the proof of Proposition \ref{pr6.2}, we will denote by $\vartheta_{(x,y)}(t)$ the angular coordinate associated to the solution of $(\mathcal{L}_{\l})$ with initial point $(x,y)$.

\begin{proposition}[Case (b)]
\label{pr6.3}
Given the cyclic clockwise order $IV \prec III \prec II \prec I\prec IV$, fix $\k=\{I,III\}$ and assume that, for $d, e\in \{c_{1,i},c_{2,i}\}$ with $d\not=e$ and two non-negative integers
$0\leq\alpha_{i} < \beta_{i}$, the next twist condition holds
\begin{equation}
\label{vi.9}
{\mathcal T}_{i,\k+1}(d) + \alpha_{i} {\mathcal T}_{i}(d) > \tau_i,
\qquad \beta_{i} {\mathcal T}_{i}(e) <\tau_i.
\end{equation}
Suppose, in addition, that there is $c>0$ satisfying \eqref{v.10} such that
\begin{equation}
\label{vi.10}
    \mc{S}^{c_{2,i\to i+1}}_{\k-1\to \k}(c) < \varsigma_i
    < {\mathcal S}^{c_{1,i\to\sqrt{}}}_{\k-1\to \k}(c),\qquad
    {\mathcal S}_{i,\k+2\to \k}(c) > \varsigma_i,
\end{equation}
and that, for every $(x,y)\in\{\mathcal{H}(x,y)=c_{1,i}\}\cap\{\mathcal{E}(x,y)\leq c\}$,
\begin{equation}
\label{vi.11}
\vartheta_{(x_0,y_0)}(\s_i)<\vartheta_{(x_1,y_1)}(\s_i)\quad\hbox{if}\quad \vartheta_{(x_0,y_0)}(0)<\vartheta_{(x_1,y_1)}(0).
\end{equation}
Then
\begin{equation}
\label{vi.12}
    \Psi_i\circ\Phi_i:\mathcal A^{\rm energy}_{i,\k}\overset{\b_i-\a_i}\stretchx\,
    \mathcal A^{\rm energy}_{i+1,\k}.
\end{equation}
Moreover, concerning \eqref{vi.12}, for every arc $\gamma\subset\mathcal{A}_{i,\k}^{\rm energy}$
linking the opposite sides of $\mathcal{A}_{i,\k}^{-}$, the solution of $({\mathcal N}_{i,\lambda})$
with initial value    on any sub-arc $\gamma_{j,\k}\subset K_{j,\k}$ of $\gamma$ have exactly, for every $j=1,2,\ldots,\beta_i-\alpha_i$,
\begin{equation*}
    \left\{
    \begin{array}{ll}
    2(\alpha_i+j)-1&\;\hbox{zeroes in the x-component in}\;\;(0,\tau_i)\equiv(t_i,s_i),\\[2pt]
    1&\;\hbox{zero in the x-component in}\;\;[\tau_i,\tau_i+\varsigma_i]\equiv[t_i,\varsigma_i],
    \\[10pt]
    2(\alpha_i+j)&\;\hbox{zeroes in the y-component in}\;\;(0,\tau_i)\equiv(t_i,s_i),\\[2pt]
    0&\;\hbox{zeroes in the y-component in}\;\;[\tau_i,\tau_i+\varsigma_i]\equiv[t_i,\varsigma_i].
    \end{array}
    \right.
\end{equation*}
\end{proposition}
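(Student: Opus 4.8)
The plan is to follow the architecture of the proof of Proposition \ref{pr6.2}, decomposing the analysis of $\Psi_i\circ\Phi_i$ into the nonlinear stage governed by $\Phi_i$ on $[0,\tau_i]$ and the linear stage governed by $\Psi_i$ on $[0,\varsigma_i]$, the essential novelty being that the center of $(\mathcal{L}_{\lambda})$ need not generate any twist. First I would dispose of the nonlinear stage, where \eqref{vi.9} is exactly the strong twist already exploited in Corollary \ref{co4.1} and Remark \ref{re4.1}. Tracking the angular coordinate $\vartheta(\tau_i,\cdot)$ as in the proof of Proposition \ref{pr4.1}, the first inequality of \eqref{vi.9} forces $\vartheta(\tau_i,0)<\pi+2\a_i\pi$ at the inner starting point, while the second forces $\vartheta(\tau_i,1)>2\b_i\pi$ at the outer one; hence the angle sweeps $[\pi+2\a_i\pi,2\b_i\pi]$ and covers the quadrant $\k-1$ exactly $\b_i-\a_i$ times. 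This yields $\Phi_i:\mathcal A^{\rm energy}_{i,\k}\overset{\b_i-\a_i}\stretchx\,\mathcal A^{\rm axis}_{i,\k-1}$, together with the pairwise disjoint compact sets $K_{j,\k-1}$ and sub-arcs $\gamma_{j,\k-1}$, and Lemma \ref{le4.1} then supplies the nodal data on $(0,\tau_i)$, namely $2(\a_i+j)-1$ zeroes of the $x$-component and $2(\a_i+j)$ zeroes of the $y$-component.

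The heart of the argument is the linear stage. Since $(\mathcal{L}_{\lambda})$ may be isochronous, the two-orbit mechanism of Proposition \ref{pr6.2}, which compared the slow orbit $\mathcal E=\hat d$ with the fast orbit $\mathcal E=\hat e$, is unavailable; instead I would transport the arc $\Phi_i(\gamma_{j,\k-1})$ along the single level line $\mathcal{E}^{-1}(c)$, which by the compatibility condition \eqref{v.10} (equivalently \eqref{v.11}) is a closed orbit crossing both $\mathcal A_i$ and $\mathcal A_{i+1}$. For each $j$ I would extract a sub-arc $\delta_j\subset\Phi_i(\gamma_{j,\k-1})$ lying in the $\Psi_i$-invariant disk $\{\mathcal E\le c\}$, and track how its endpoints on the two energy boundaries of $\mathcal A_{i,\k-1}$ are displaced by $\Psi_i$. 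The three inequalities of \eqref{vi.10} then play complementary roles: the left one, $\mathcal S^{c_{2,i\to i+1}}_{\k-1\to\k}(c)<\varsigma_i$, pushes the outer endpoint's image past the outer boundary of $\mathcal A_{i+1,\k}$; the right one, $\varsigma_i<\mathcal S^{c_{1,i\to\sqrt{}}}_{\k-1\to\k}(c)$, keeps every relevant point strictly before the apex $(\sqrt{2c/\lambda},0)$, hence in the upper half-plane; and $\mathcal S_{i,\k+2\to\k}(c)>\varsigma_i$ forbids over-winding, so that the positive $y$-axis is crossed exactly once. The order-preservation hypothesis \eqref{vi.11} is precisely what replaces the missing twist: it ensures that the $\Psi_i$-image of the inner-boundary portion of $\delta_j$ is swept monotonically in the angular variable, so the image arc cannot fold back.

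With these ingredients I would conclude by a continuity / plane-topology step as in Proposition \ref{pr6.2}: along $\delta_j$ the image $(\Psi_i\circ\Phi_i)(\cdot)$ runs continuously from a point lying beyond $\{\mathcal H=c_{2,i+1}\}$ to a point still interior to $\{\mathcal H=c_{1,i+1}\}$, so by the intermediate-value principle a sub-arc of $\delta_j$ is carried onto an arc of $\mathcal A_{i+1,\k}$ joining $\{\mathcal H=c_{1,i+1}\}$ to $\{\mathcal H=c_{2,i+1}\}$ within the $\k$-quadrant, which is exactly \eqref{vi.12}. The nodal bookkeeping then merges the two stages: the $(0,\tau_i)$ counts from Lemma \ref{le4.1} with the linear-stage data read off above, namely one zero of the $x$-component (the single crossing of the positive $y$-axis) and no zero of the $y$-component (positivity of $y$ on $[\tau_i,\tau_i+\varsigma_i]$), reproducing the table in the statement.

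The main obstacle is exactly the place where the isochronous case departs from Proposition \ref{pr6.2}. Without period monotonicity one cannot separate a fast and a slow orbit to straddle $\mathcal A_{i+1,\k}$, and a priori the entire segment $\mathcal{E}^{-1}(c)\cap\mathcal A_{i,\k-1}$ might be transported wholly past $\mathcal A_{i+1,\k}$; the delicate point is therefore to prove, using \emph{only} the single-orbit estimates \eqref{vi.10} together with the monotonicity \eqref{vi.11}, that the image arc genuinely crosses $\mathcal A_{i+1,\k}$ instead of overshooting it, and that it winds exactly once so the prescribed nodal counts hold. Checking that \eqref{vi.11} truly excludes folding, so that the intermediate-value step delivers a full arc of $\mathcal A_{i+1,\k}$ rather than a single intersection point, is the technical crux on which I would concentrate the detailed estimates.
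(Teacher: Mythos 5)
Your overall architecture (angular estimates for $\Phi_i$, transport along $\mathcal{E}^{-1}(c)$ for $\Psi_i$, then a continuity argument) is the paper's, but there is a genuine gap at the step you yourself flag as the crux, and it cannot be closed with the arcs you construct. Under \eqref{vi.9} you keep only the conclusion $\Phi_i:\mathcal A^{\rm energy}_{i,\k}\overset{\b_i-\a_i}\stretchx\mathcal A^{\rm axis}_{i,\k-1}$. That is true, but it throws away exactly the extra information that the strengthened first inequality of \eqref{vi.9} (note it is \emph{stronger} than that of \eqref{iv.1}: the term ${\mathcal T}_{i,\k+2}(d)$ is absent) was designed to produce: the swept angular interval $[\pi+2\a_i\pi,\,2\b_i\pi]$ contains the \emph{full} blocks $[\pi+2k\pi,\,2(k+1)\pi]$, so the image arcs cross the union $\mathcal A_{i,\k+2}\cup\mathcal A_{i,\k-1}$, linking the negative $y$-axis to the positive $y$-axis. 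The paper's proof then selects a sub-arc whose two endpoints $w_1,w_2$ lie \emph{on the level line} $\mathcal{E}=c$, with $w_1$ in quadrant $\k+2$ and $w_2$ in quadrant $\k-1$: the first two inequalities of \eqref{vi.10} send $\Psi_i(w_2)$ into quadrant $\k$ beyond the outer boundary of $\mathcal A_{i+1}$ and before the apex, while the third inequality, ${\mathcal S}_{i,\k+2\to\k}(c)>\varsigma_i$ --- a transition time that \emph{starts in quadrant} $\k+2$ --- is precisely what pins $\Psi_i(w_1)$ angularly before the entrance of $\mathcal{E}^{-1}(c)$ into $\mathcal A_{i+1,\k}$; condition \eqref{vi.11} then confines the image of the whole arc to quadrants $I,II,III$, and since the arc lies in $\{\mathcal E\le c\}$, any crossing of the positive $y$-axis occurs at a point with $\mathcal H_{i+1}\le c\le c_{1,i+1}$, which is what makes the plane-topology step deliver a crossing of $\mathcal A_{i+1,\k}$ from inner to outer boundary.

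In your construction this mechanism has nothing to act on. Your $\delta_j$ is cut from an arc that crosses only quadrant $\k-1$, and you place its endpoints ``on the two energy boundaries of $\mathcal A_{i,\k-1}$'' --- which is already inconsistent, since the arcs produced by the nonlinear stage link the two \emph{axis} sides of $\mathcal A_{i,\k-1}$, not the energy sides, and the transition times in \eqref{vi.10} only control points lying on $\mathcal{E}^{-1}(c)$. More importantly, $\delta_j$ has no endpoint on $\mathcal{E}^{-1}(c)$ in quadrant $\k+2$, so ${\mathcal S}_{i,\k+2\to\k}(c)>\varsigma_i$ bounds nothing in your argument; your reading of it as a generic ``no over-winding'' statement is not anchored to any point of your arc. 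Consequently the endpoint you feed to the intermediate-value step as ``still interior to $\{\mathcal H=c_{1,i+1}\}$'' is unsupported: a priori its $\Psi_i$-image can land inside, or even beyond, $\mathcal A_{i+1,\k}$ (comparison via \eqref{vi.11} with the controlled quadrant-$(\k-1)$ points only bounds it by the \emph{apex}, not by the entrance into $\mathcal A_{i+1,\k}$), and then the IVT yields at best a partial penetration of the annulus rather than an arc joining $\{\mathcal H=c_{1,i+1}\}$ to $\{\mathcal H=c_{2,i+1}\}$, i.e.\ not \eqref{vi.12}. The repair is not finer estimates on quadrant-$(\k-1)$ arcs but the paper's different selection: use the $\k+2\,\cup\,\k-1$ spanning arcs guaranteed by \eqref{vi.9} and take both endpoints on $\mathcal{E}^{-1}(c)$, one in each of those quadrants.
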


Although \eqref{vi.10} and \eqref{vi.11} might look a bit artificial,
they hold when considering the isochronous center associated to the linear system $x'=y$, $y'=-\l x$,
which will be considered in Section \ref{section-4}.

\begin{proof}
We focus attention into the case $\k=I$, since the proof for $\k=III$ follows similar patterns.
Arguing as in Proposition \ref{pr4.1} and assuming \eqref{vi.9},  instead of  \eqref{iv.1},
it becomes apparent that there are $\b_i-\a_i$ pairwise disjoint compact sets $K_1,\dots,K_{\b_i-\a_i}$
such that, for every arc $\gamma$ of    ${\mathcal A}_{i,I}$ linking the level lines
$\{{\mathcal  H}=c_{i,1}\}$ and $\{{\mathcal H}=c_{i,2}\}$, there are $\b_i-\a_i$ sub-arcs
$\gamma_{j,II\cup III}\subset K_{j,II\cup III}$ such that $\Phi_i(\gamma_{j,II\cup III})$ is an arc
of ${\mathcal A}_{i,II}\cup {\mathcal A}_{i,III}$ linking the $y$-axis with $y<0$ to the $y$-axis with $y>0$, passing across the    $x$-axis with $x<0$.
\par
With reference to Figure \ref{fig-03}, for each of the arcs $\Phi_i(\gamma_{j,II\cup III})$ we consider a suitable sub-arc $\Phi_i(\gamma_j)$, where $\gamma_{j}$ is a compact sub-arc of $\gamma_{j,II\cup III}$, which intersects the level line ${\mathcal E}= c$ in the third and the second quadrants
at the (unique) points $w_1$ and $w_2$, respectively. Thus, the arc $\Phi_i(\gamma_j)$ lies in
$$
   \{x<0\}\cap\{{\mathcal E} \leq c\} \cap \{c_{i,1}\leq {\mathcal H}\leq c_{i,2}\}
$$
and it intersects the level line ${\mathcal E} =c$ at the points $w_1$ in the third quadrant and $w_2$ in the second quadrant. According to \eqref{vi.10}, the point $w_2$ moves, by the action of $\Psi_i$,
towards a point $\Psi_i(w_2)$ that lies in the first quadrant, on the level curve ${\mathcal E} =c$,
outside the annulus ${\mathcal A}_{i+1}$. Similarly, also by \eqref{vi.10}, the point $w_1$ travels
to a point $\Psi_i(w_1)$ that lies on the level line ${\mathcal E} =c$. Moreover, the angular component of $\Psi_i(w_1)$ is less than the angle associated to $x_{r,c_{1,i+1}}$. Thus, since, due to \eqref{vi.11}, the image trough $\Psi_i$ of all arcs contained in the set
\[
\{(x,y)\,:\,  x<0,\,c_{1,i}\leq\mathcal{H}(x,y)\leq c_{2,i}\;\hbox{and}\;\mathcal{E}(x,y)\leq c\}
\]
is contained in the first, second or third quadrants, $\Psi_i(\gamma_j)$, which does not cross the origin, must intersect the region ${\mathcal A}_{i+1,I}$.  Therefore, repeating once more the
continuity argument already exploited in the    previous proofs, there must exist a sub-arc, $\tilde\gamma_j$, of $\gamma_j$ such that $\Psi_i(\Phi_i)(\tilde\gamma_j)$ is an arc
of ${\mathcal A}_{i+1,I}$ linking the level lines  ${\mathcal H}=c_{1,i+1}$ and ${\mathcal H}=c_{2,i+1}$.
This shows \eqref{vi.12}. The nodal properties of the corresponding solutions follow easily, with some minor changes,  from the construction of the arcs in the proof of Proposition \ref{pr4.1}. On this occasion one should use \eqref{vi.9}, instead on \eqref{iv.1}.
\end{proof}

The proof of Theorem \ref{th6.1}  can be easily adapted to cover the general case when $\lambda>0$. Actually, by using the same argument one can get the following results. The first one is related to the case
analyzed in Proposition \ref{pr6.2}, while the second one is related to Proposition \ref{pr6.3}.

\begin{theorem}[Case (a)]
\label{th6.3}
Suppose that either $r_0\in {\mathcal A}_{0,I}$, or    $r_0\in {\mathcal A}_{0,III}$,  and either $r_L\in {\mathcal A}_{m,II}$, or  $r_L\in {\mathcal A}_{m,IV}$, and that, for every $i\in\{0,\dots, m\}$
and $d, e\in \{c_{1,i},c_{2,i}\}$ with $d\not=e$, there exists a pair of integers
$0\leq \alpha_{i} < \beta_{i}$ satisfying the twist conditions \eqref{iv.1} for $\k=\{I,III\}$. Assume, in addition, that \eqref{vi.5} holds    for all $i\in\{0,\dots, m-1\}$. Then, the same
conclusions of Theorem \ref{th6.1}  hold.
\end{theorem}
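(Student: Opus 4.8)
The plan is to reproduce, almost verbatim, the recursive shooting argument in the proof of Theorem \ref{th6.1}, replacing the role of Proposition \ref{pr6.1} by Proposition \ref{pr6.2}. As there, I factorize the Poincar\'e map as $\mathscr{P}_L=\Phi_m\circ\mathscr{Q}_m$ with $\mathscr{Q}_m:=(\Psi_{m-1}\circ\Phi_{m-1})\circ\cdots\circ(\Psi_0\circ\Phi_0)$, start from the arc $r_0\subset\mathcal{A}_{0,I}\cup\mathcal{A}_{0,III}$, propagate it through the intermediate steps, and close the argument by applying $\Phi_m$ and intersecting the resulting arcs with $r_L$ through the plane-topology lemma of Muldowney and Willett \cite{MuWi-1974}. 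The single new ingredient with respect to Theorem \ref{th6.1} is to certify that, for $\lambda>0$, each composite $\Psi_i\circ\Phi_i$ still stretches an arc of $\mathcal{A}^{\rm energy}_{i,\k}$, $\k\in\{I,III\}$, across \emph{both} $\mathcal{A}^{\rm energy}_{i+1,I}$ and $\mathcal{A}^{\rm energy}_{i+1,III}$, each with crossing number $\b_i-\a_i$. Once this branching is in hand, the combinatorial count is identical to that of Theorem \ref{th6.1} and delivers $2^m\prod_{i=0}^m(\b_i-\a_i)$ solutions.

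First I would establish the four admissible transitions. The two \emph{horizontal} ones,
$$
\Psi_i\circ\Phi_i:\mathcal{A}^{\rm energy}_{i,I}\overset{\b_i-\a_i}\stretchx\,\mathcal{A}^{\rm energy}_{i+1,I},\qquad \Psi_i\circ\Phi_i:\mathcal{A}^{\rm energy}_{i,III}\overset{\b_i-\a_i}\stretchx\,\mathcal{A}^{\rm energy}_{i+1,III},
$$
are exactly \eqref{vi.6} of Proposition \ref{pr6.2} applied with $\k=I$ and $\k=III$, respectively; both are legitimate since the standing compatibility \eqref{v.10} and the two-sided estimate \eqref{vi.5} are assumed for each $\k$ and every $i$. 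For the two \emph{diagonal} transitions I rerun the proof of Proposition \ref{pr6.2} with the \emph{complementary} family of sub-arcs produced by Proposition \ref{pr4.1}: to reach $\mathcal{A}_{i+1,III}$ from $\mathcal{A}_{i,I}$, instead of the sub-arcs $\gamma_{j,II}\subset K_{j,II}$ whose $\Phi_i$-images cross $\mathcal{A}_{i,II}$, I use $\gamma_{j,IV}\subset K_{j,IV}$, whose $\Phi_i$-images cross $\mathcal{A}_{i,IV}$. The subsequent action of $\Psi_i$ on these arcs is governed by exactly the transition times of \eqref{vi.5} for $\k=III$ that already control the horizontal $III\to III$ step, so that hypothesis forces
$$
\Psi_i\circ\Phi_i:\mathcal{A}^{\rm energy}_{i,I}\overset{\b_i-\a_i}\stretchx\,\mathcal{A}^{\rm energy}_{i+1,III}.
$$
Symmetrically, using the $K_{j,II}$ family together with \eqref{vi.5} for $\k=I$, one obtains the transition from $\mathcal{A}^{\rm energy}_{i,III}$ to $\mathcal{A}^{\rm energy}_{i+1,I}$.

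With all four transitions available, the Markov-type diagram of Figure \ref{fig-04} at $\lambda>0$ has, as a branching structure, exactly the shape governing Theorem \ref{th6.1}: from either $\mathcal{A}_{i,I}$ or $\mathcal{A}_{i,III}$ one reaches \emph{both} $\mathcal{A}_{i+1,I}$ and $\mathcal{A}_{i+1,III}$ with crossing number $\b_i-\a_i$. Hence the identical induction of the proof of Theorem \ref{th6.1} applies, with the factor $M:=(\b_0-\a_0)\prod_{i=1}^{m-1}2(\b_i-\a_i)$ accumulated along $\mathscr{Q}_m$ and a final $\Phi_m$ (Proposition \ref{pr4.1}) contributing the last $2(\b_m-\a_m)$; the intersection lemma then yields $M\,2(\b_m-\a_m)=2^m\prod_{i=0}^m(\b_i-\a_i)$ intersection points with $r_L$, for each admissible choice of $r_0$ and $r_L$. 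The nodal counter of each solution is read off from Proposition \ref{pr6.2} along the horizontal legs and from the obvious counterpart of case (1.b) of Proposition \ref{pr6.1} (one additional $x$-zero and one additional $y$-zero on $(t_i,s_i)$, a single $x$-zero and no $y$-zero on $(s_i,t_{i+1})$) along the diagonal legs.

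The delicate point, and the only genuine difference from the cases $\lambda\le0$, is the construction of the diagonal transitions: since the origin of $(\mathcal{L}_\lambda)$ is now a \emph{center}, $\Psi_i$ rotates arcs around the origin rather than pushing them monotonically outward, so I must verify that the complementary-branch arc, after rotating along the level line $\mathcal{E}^{-1}(c)$, lands in the correct quadrant of $\mathcal{A}_{i+1}$ and links its two energy-sides. This is precisely where the two-sided nature of \eqref{vi.5} is used --- its first inequality forces the fast orbit $\mathcal{E}=\hat{e}$ to overshoot $\{\mathcal{H}=c_{2,i+1}\}$, while the second keeps the slow orbit $\mathcal{E}=\hat{d}$ short of $\{\mathcal{H}=c_{1,i+1}\}$ --- together with the crossing configuration secured by \eqref{v.10} and illustrated in Figure \ref{fig-03}; the continuity argument of Proposition \ref{pr6.2} then selects the required sub-arc.
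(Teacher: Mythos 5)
Your proposal is correct and coincides with the paper's own (omitted) proof: the authors state that Theorem \ref{th6.3} follows by a direct repetition of the shooting argument of Theorem \ref{th6.1}, with the transition properties of $\Psi_i$ now supplied by Proposition \ref{pr6.2}, which is exactly your plan, including the final factorization ${\mathscr P}_L=\Phi_m\circ{\mathscr Q}_m$ and the intersection lemma of Muldowney--Willett. Your explicit construction of the diagonal transitions --- reusing the complementary families $K_{j,IV}$, $K_{j,II}$ from Proposition \ref{pr4.1} and observing that they are governed by \eqref{vi.5} for the opposite value of $\k$ --- is precisely the detail the paper delegates to the admissible connections of Figure \ref{fig-04}, and you carry it out correctly.
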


\begin{theorem}
[Case (b)]
\label{th6.4}
Suppose that either $r_0\in {\mathcal A}_{0,I}$, or $r_0\in {\mathcal A}_{0,III}$, and either $r_L\in {\mathcal A}_{m,II}$, or $r_L\in {\mathcal A}_{m,IV}$, and that, for every $i\in\{0,\dots, m\}$ and
$d, e\in \{c_{1,i},c_{2,i}\}$ with $d\not=e$, there are two integers $0\leq \alpha_{i} < \beta_{i}$
satisfying the twist conditions \eqref{vi.9} for $\k\in\{I,III\}$. Assume also that \eqref{vi.10} and \eqref{vi.11} hold true for all  $i\in\{0,\dots, m-1\}$. Then,
the same conclusions of Theorem \ref{th6.1}  hold.
\end{theorem}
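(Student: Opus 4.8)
The plan is to reproduce, essentially verbatim, the shooting argument of Theorem~\ref{th6.1}, the only substantial change being that the $\lambda\le 0$ linking device (Proposition~\ref{pr6.1}) is replaced by its $\lambda>0$ counterpart of case (b), Proposition~\ref{pr6.3}. Accordingly, I would first factorize the global Poincar\'e map as in \eqref{vi.3}, writing $\mathscr{P}_L=\Phi_m\circ\mathscr{Q}_m$ with $\mathscr{Q}_m=(\Psi_{m-1}\circ\Phi_{m-1})\circ\cdots\circ(\Psi_0\circ\Phi_0)$, and reduce the problem to detecting points $P_0\in r_0$ with $\mathscr{P}_L(P_0)\in r_L$, each of which yields a solution of \eqref{vi.1} with prescribed nodal behaviour.

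Next I would feed the initial arc $r_0\subset\mathcal A_{0,\k}$, $\k\in\{I,III\}$, into the chain $\mathscr{Q}_m$. For each $i\in\{0,\dots,m-1\}$, Proposition~\ref{pr6.3} (valid under the twist condition \eqref{vi.9} together with the compatibility \eqref{v.10} and the transition estimates \eqref{vi.10}--\eqref{vi.11}) furnishes the arc-stretching relation \eqref{vi.12}, $\Psi_i\circ\Phi_i:\mathcal A^{\rm energy}_{i,\k}\overset{\beta_i-\alpha_i}\stretchx\mathcal A^{\rm energy}_{i+1,\k}$; the symmetric realization, in which the spiralling image of $\Phi_i$ is driven into $\mathcal A_{i,IV}\cup\mathcal A_{i,I}$ rather than $\mathcal A_{i,II}\cup\mathcal A_{i,III}$, provides the complementary connection $\mathcal A^{\rm energy}_{i,\k}\stretchx\mathcal A^{\rm energy}_{i+1,\k+2}$, so that exactly the two admissible targets of the Markov-type diagram of Figure~\ref{fig-04} are available at each level. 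Since source and target are both oriented by their energy boundaries, these relations compose and the crossing numbers multiply; arguing inductively as in Theorem~\ref{th6.1} I would obtain a controlled family of pairwise disjoint sub-arcs of $r_0$ whose $\mathscr{Q}_m$-images are arcs of $\mathcal A^{\rm energy}_{m,\k}$ linking the two components of $\mathcal A^{-}_{m,\k}$.

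The last interval $[t_m,L]$ carries only $\Phi_m$, to which I would apply Proposition~\ref{pr4.1}: each of the previous arcs in $\mathcal A^{\rm energy}_{m,\k}$ yields $\beta_m-\alpha_m$ sub-arcs whose $\Phi_m$-images link the opposite sides of $\mathcal A^{\rm axis}_{m,\k-1}$ and another $\beta_m-\alpha_m$ sub-arcs landing in $\mathcal A^{\rm axis}_{m,\k+1}$, which is where $r_L$ lives. Because $r_L$ is an arc crossing $\mathcal A_m$ and joining the complementary pair of sides, the elementary plane-topology intersection lemma (Muldowney and Willett \cite{MuWi-1974}) forces each such $\mathscr{P}_L$-image to meet $r_L$ in at least one point, hence one solution per arc. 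Keeping track of the two admissible targets at every level and counting exactly as in Theorem~\ref{th6.1} produces the stated lower bound $2^{m}\prod_{i=0}^{m}(\beta_i-\alpha_i)$, while the nodal counter of each solution on every $(t_i,s_i)$ and $(s_i,t_{i+1})$ is read directly from the nodal data of Proposition~\ref{pr6.3} (one interior $x$-node in each linear block, and $2(\alpha_i+j)-1$ or $2(\alpha_i+j)$ interior $x$-nodes in the nonlinear blocks), exactly as in the classification of Theorem~\ref{th6.1}.

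I expect the genuine difficulty to be concentrated not in this bookkeeping but in guaranteeing that the $\lambda>0$ linking of Proposition~\ref{pr6.3} is simultaneously available along the whole chain. Its hypotheses \eqref{vi.10}--\eqref{vi.11} are delicate precisely because two centres now interact, and the inner center of $({\mathcal L}_\lambda)$ may be isochronous, so that no intrinsic twist can be borrowed from it and the ordering condition \eqref{vi.11} must be imposed by hand. The main obstacle is therefore to choose a single intermediate energy level $c$ and annular data $c_{1,i}<c_{2,i}$ for which \eqref{v.10} and \eqref{vi.10}--\eqref{vi.11} hold for \emph{every} $i\in\{0,\dots,m-1\}$, and to check that the orientations of the composing regions match so that the crossing numbers genuinely multiply; once this is secured, the recursion and the final topological intersection are a faithful transcription of the proof of Theorem~\ref{th6.1}.
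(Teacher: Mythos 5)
Your proposal is correct and is essentially the paper's own proof: the authors explicitly omit the argument as ``repetitive,'' stating that Theorem \ref{th6.4} follows by a direct application of the proof of Theorem \ref{th6.1}, with the transition properties of $\Psi_i$ now supplied by Proposition \ref{pr6.3} and with all admissible connections of the Markov-type diagram of Figure \ref{fig-04} --- precisely your factorization ${\mathscr P}_L=\Phi_m\circ{\mathscr Q}_m$, the recursive arc-stretching along both admissible targets, the final $\Phi_m$ step closed by the Muldowney--Willett intersection lemma, and the count $2^{m}\prod_{i=0}^{m}(\beta_i-\alpha_i)$. Note only that your closing ``main obstacle'' (selecting the level $c$ and the annular data so that \eqref{v.10} and \eqref{vi.10}--\eqref{vi.11} hold along the whole chain) is not part of this proof at all, since those are hypotheses of the theorem; their compatibility is checked only later, in the applications of Section \ref{section-4}.
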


The proof of Theorems \ref{th6.3} and \ref{th6.4} is omitted, by repetitive,
as they only require a direct application of the argument already described in the
proof of Theorem \ref{th6.1}. Actually, they use, once more, all the admissible connections
described in the diagrams of Figure \ref{fig-04}, with the sole difference
that the transition properties of the map $\Psi_i$ are those collected by Proposition \ref{pr6.2}
in the proof of Theorem \ref{th6.3} and by Proposition \ref{pr6.3} for Theorem \ref{th6.4}.

\begin{remark}
\label{re6.4}
\rm
According to these findings, the number of solutions obtained in Theorems \ref{th6.3} and  \ref{th6.4}
can be further expanded by a factor of the form $\prod_{i=0}^{m-1}\mathfrak{m}_i$ by imposing
a stronger twist condition of the form of \eqref{vi.7} to the maps $\Psi_i$'s,  as already
observed in Remark \ref{re6.3}, where it was already explained how to find solutions satisfying
prescribed nodal properties in  $(t_i,s_i)$ and  $(s_i,t_{i+1})$.
\end{remark}

\section{An application}
\label{section-4}

In this section we apply the abstract theory developed in Section 6 to the generalized Sturm--Liouville problem
\begin{equation}
\label{vii.1}
\begin{cases}
-u''= \lambda u + a(t)|u|^{p-1}u\\
(u(0),u'(0))\in \hat{r}_0\\
(u(L),u'(L))\in \hat{r}_L\\
\end{cases}
\end{equation}
where $\hat{r}_0$ and $\hat{r}_L$ are unbounded arcs linking the origin to infinity
and contained in a single quadrant, $a(t)$ satisfies the general assumptions at the beginning of Section 3, and $p>0$, $p\neq 1$.
Thus, we are making the special choices
\begin{equation}
\label{vii.2}
  h(y)=y,\qquad g(x)= |x|^{p-1}x,
\end{equation}
in \eqref{vi.1}.
A phase-plane analysis of the equation in \eqref{vii.1} has been performed in
\cite{LGTZ-2014, Te-2018} in connection with the search of positive solutions for a sign-indefinite weight.
Even if our approach is also related to plane analysis techniques, the different conditions on the weight
produce a complete different geometry.
Since \eqref{ii.5} can be further simplified depending on whether $\l=0$ or not, these two possible choices for $\lambda$ will be discussed separately.

\subsection{The case $\l=0$}

In this case, since
$$
x_+(c)=\left(\frac{(p+1)c}{\mu_i}\right)^{\frac{1}{p+1}}\quad\hbox{and}\quad \int_{0}^{1}
\frac{ds}{\sqrt{1- s^{p+1}}} = \frac{1}{p+1}B(\tfrac{1}{p+1},\tfrac{1}{2}),
$$
where $B$ is the Euler beta-function, the period \eqref{ii.5} can be expressed as
\begin{equation}
\label{vii.3}
\frac{{\mathcal T}_{i}(x_+)}{4} = \mu_i^{-\frac{1}{p+1}}c^{\frac{1-p}{2(p+1)}} C(p),
\qquad C(p)\equiv \frac{B(\tfrac{1}{p+1},\tfrac{1}{2})}{\sqrt{2} (p+1)^{\frac{p}{p+1}}}.
\end{equation}
\eqref{vii.3} provides us with a precise estimate on the growth of the period as a function of $c$.
Thus, for each step $i\in\{0,\dots,m-1\}$, we can construct annular regions ${\mathcal A}_i$ whose
boundaries are the energy levels $c_{1,i}$ and $c_{2,i}>c_{1,i}$ for which
the twist conditions imposed in Section 6.1 are satisfied.
\par
Next, we estimate the transition times \eqref{v.7} and \eqref{v.8}, which coincide by the oddity of $h$ and $g$. It turns out that
\begin{equation}
\label{vii.4}
S_{i,II\to I} = S_{i,IV\to III} <
\frac{ \left(\frac{(p+1)c_{2,i}}{\mu_i}\right)^{\frac{1}{p+1}}+
    \left(\frac{(p+1)c_{2,i+1}}{\mu_{i+1}}\right)^{\frac{1}{p+1}} }
{\sqrt{2\min\{c_{1,i},c_{1,i+1}\}}}.
\end{equation}
The next result is a byproduct of Theorem \ref{th6.1}. It provides us with a multiplicity result for the generalized Sturm--Liouville boundary value problem \eqref{vii.1} for $\l=0$.

\begin{theorem}
\label{th7.1}
Assume that, for every $i\in\{0,\dots, m\}$, there are $d_i>0$ and $e_i>0$, with $d_i\not= e_i$,
satisfying the following twist conditions
\begin{equation}
\label{vii.5}
\begin{split}
    (\alpha_i + \tfrac{1}{2} ) {\mathcal T}_{i}(x_+(d_i)) & > {\tau_i}
    > (\beta_i + \tfrac{1}{2} ) {\mathcal T}_{i}(x_+(e_i)), \quad i\in\{1, \dots, m-1\},\\
    ( \alpha_i + \tfrac{1}{4} ){\mathcal T}_{i}(x_+(d_i)) & > {\tau_i}
    > ( \beta_i + \tfrac{1}{2} ) {\mathcal T}_{i}(x_+(e_i)), \quad i=0, m,
\end{split}
\end{equation}
for some integers $0\leq \alpha_i< \beta_i$.  Suppose, in addition, that, for every $i\in\{0,\dots, m-1\}$,
\begin{equation}
\label{vii.6}
    \varsigma_i \geq     \frac{
        \left(\frac{(p+1)c_{2,i}}{\mu_i}\right)^{\frac{1}{p+1}}+
        \left(\frac{(p+1)c_{2,i+1}}{\mu_{i+1}}\right)^{\frac{1}{p+1}}
    }
    {\sqrt{2\min\{c_{1,i},c_{1,i+1}\}}}
\end{equation}
holds, where $c_{1,j}:= \min\{d_j,e_j\}$ and $c_{2,j}:= \max\{d_j,e_j\}$.
Then, the
problem \eqref{vii.1} has, at least, $2^{m}\prod_{i=0}^{m}(\b_i-\a_i)$ solutions,
and each of these solutions can be classified by assigning it  a counter describing
its nodal properties on each of the intervals
$(t_i,s_i)$ and $(s_i,t_{i+1})$, according to its itinerary in the Markov-type diagram of Figure \ref{fig-04}
(right panel).
\end{theorem}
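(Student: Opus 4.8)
The plan is to recognize \eqref{vii.1} as the particular instance of the general Sturm--Liouville problem \eqref{vi.1} obtained through the choice \eqref{vii.2}, so that the conclusion follows from Theorem \ref{th6.1} once its abstract hypotheses have been checked. Setting $x=u$ and $y=u'$, the equation of \eqref{vii.1} becomes the planar system $(\mathcal{N}_{i,\lambda})$--$(\mathcal{L}_{\lambda})$ with $\lambda=0$, while the boundary requirements $(u(0),u'(0))\in\hat r_0$ and $(u(L),u'(L))\in\hat r_L$ read off as $(x(0),y(0))\in r_0:=\hat r_0$ and $(x(L),y(L))\in r_L:=\hat r_L$. Since $\hat r_0$ and $\hat r_L$ are arcs joining the origin to infinity, and $\mathcal H(0,0)=0<c_{1,0}$ while $\mathcal H\to+\infty$ along them, the intermediate value theorem guarantees that each of them crosses both level lines $\{\mathcal H=c_{1,\ast}\}$ and $\{\mathcal H=c_{2,\ast}\}$, i.e. it connects the inner with the outer boundary of the corresponding annulus $\mathcal A_0$, respectively $\mathcal A_m$, exactly as demanded in \eqref{vi.1}. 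Because $\hat r_0$ and $\hat r_L$ may sit in an \emph{arbitrary} quadrant, the appropriate abstract device is the arbitrary-quadrant form of Theorem \ref{th6.1} explained in Remark \ref{re6.2}, which is precisely why the endpoint twist conditions in \eqref{vii.5} are the stronger ones.

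Next I would exploit that here both $h(y)=y$ and $g(x)=|x|^{p-1}x$ are odd, so that all four quarter-lap times coincide, $\mathcal T_{i,I}=\mathcal T_{i,II}=\mathcal T_{i,III}=\mathcal T_{i,IV}=\tfrac14\mathcal T_i$, the period $\mathcal T_i$ being regarded indistinctly as a function of the energy level or of the turning point $x_+$. Fixing the annuli $\mathcal A_i$ by the energy levels $c_{1,i}:=\min\{d_i,e_i\}$ and $c_{2,i}:=\max\{d_i,e_i\}$, the twist condition \eqref{iv.1} of Proposition \ref{pr4.1} collapses, for every $\kappa$, into
\[
(\alpha_i+\tfrac12)\mathcal T_i(d)>\tau_i>(\beta_i+\tfrac12)\mathcal T_i(e),\qquad d\neq e,\; d,e\in\{c_{1,i},c_{2,i}\},
\]
which is exactly the interior twist hypothesis in \eqref{vii.5}; likewise the stronger twist condition \eqref{iv.6} of Corollary \ref{co4.1}, invoked by Remark \ref{re6.2}, reduces to
\[
(\alpha_i+\tfrac14)\mathcal T_i(d)>\tau_i>(\beta_i+\tfrac12)\mathcal T_i(e),
\]
which is the endpoint hypothesis in \eqref{vii.5} imposed at $i\in\{0,m\}$. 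Thus \eqref{vii.5} supplies precisely the twist inequalities needed all along the chain of intervals.

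It then remains to translate the length condition \eqref{vii.6} into the transition-time inequalities required by Theorem \ref{th6.1}, namely $\mathcal S_{i,II\to I}\le\varsigma_i$ and $\mathcal S_{i,IV\to III}\le\varsigma_i$ for every $i\in\{0,\dots,m-1\}$. By the oddity of $h$ and $g$ these two times coincide, and, inserting $x_+(c)=\big((p+1)c/\mu_i\big)^{1/(p+1)}$, $x_-(c)=-x_+(c)$ and $y_{+,i}=\sqrt{2\min\{c_{1,i},c_{1,i+1}\}}$ into \eqref{v.7}--\eqref{v.8}, I would recover the estimate \eqref{vii.4}, whose right-hand side is exactly the lower bound prescribed for $\varsigma_i$ in \eqref{vii.6}. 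Hence \eqref{vii.6} yields $\mathcal S_{i,\cdot}\le\varsigma_i$, and every hypothesis of the Remark \ref{re6.2} version of Theorem \ref{th6.1} is in force.

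With all hypotheses verified, Theorem \ref{th6.1} delivers at least $2^m\prod_{i=0}^m(\beta_i-\alpha_i)$ solutions, each tagged by its itinerary through the Markov-type diagram of Figure \ref{fig-04}, the nodal counts on the intervals $(t_i,s_i)$ and $(s_i,t_{i+1})$ being read off from cases (1.a)--(1.b) of Proposition \ref{pr6.1}. I expect the only genuinely delicate point to be the endpoint bookkeeping: one must confirm that the quarter-period offset in \eqref{vii.5} for $i\in\{0,m\}$ is indeed what the arbitrary-quadrant argument of Remark \ref{re6.2} demands, and that $\mathcal A_0$ and $\mathcal A_m$ can be simultaneously crossed by $\hat r_0,\hat r_L$ while respecting the domain-compatibility constraint relative to $\mathbb R\times(\varrho_-,\varrho_+)$; the latter is automatic here, since $h(y)=y$ is defined on all of $\mathbb R$ (so $\varrho_\pm=\pm\infty$ and $H^*=+\infty$), ruling out any escape of the $y$-component from the strip.
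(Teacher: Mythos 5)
Your proposal is correct and follows essentially the same route as the paper's own proof: fix the annuli $\mathcal A_i$ by $c_{1,i}=\min\{d_i,e_i\}$, $c_{2,i}=\max\{d_i,e_i\}$, use the oddity of $h(y)=y$ and $g(x)=|x|^{p-1}x$ to collapse the quarter-lap times so that \eqref{vii.5} yields \eqref{iv.1} in the interior and the stronger \eqref{iv.6} at $i=0,m$, deduce $\mathcal S_{i,II\to I}=\mathcal S_{i,IV\to III}\le\varsigma_i$ from \eqref{vii.4} and \eqref{vii.6}, and conclude via Theorem \ref{th6.1} together with Remark \ref{re6.2}. Your additional remarks (the intermediate value argument showing $\hat r_0,\hat r_L$ actually cross the annuli, and the automatic strip compatibility since $\varrho_\pm=\pm\infty$) only make explicit points the paper leaves implicit.
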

\begin{proof}
Fix a sequence ${\mathcal A}_{i}$ of annular regions, for $i\in\{0,\dots,m\}$, with $c_{1,i}:= \min\{d_i,e_i\}$ and $c_{2,i}:= \max\{d_i,e_i\}$, where $d_i$ and $e_i$ are chosen to satisfy \eqref{vii.5}.
Then, the twist condition \eqref{iv.1} holds for $\k_i\in\{I,III\}$. Moreover, by \eqref{vii.4},
the condition $S_{i,II\to I} = S_{i,IV\to III}\leq \varsigma_i$  follows from \eqref{vii.6}.  Finally, for every $\k_0,\k_m\in\{IV,III,II,I\}$, let $r_0$ be a component of the intersection of the curve
$\hat{r}_0$ with ${\mathcal A}_{0,\k_0}$, and  $r_L$ a component of the intersection of $\hat{r}_L$ with ${\mathcal A}_{m,k_m}$. Then, by Theorem \ref{th6.1} and Remark \ref{re6.2}, for every choice of  $r_0$ and $r_L$, the problem \eqref{vii.1} possesses, at least, $2^{m}\prod_{i=0}^{m}(\b_i-\a_i)$ solutions.
\end{proof}

\begin{remark}
\label{re7.1} \rm
Choose an initial set ${\mathcal A}_{0,\k_0}$ and a final set ${\mathcal A}_{m,\k_m}$ corresponding
to the quadrants of $\hat{r}_0$ and $\hat{r}_L$, respectively, as well as an arbitrary itinerary from
${\mathcal A}_{1,\k_1}$ to ${\mathcal A}_{m-1,\k_{m-1}}$, where, at each step $i\in\{1,\dots,m-1\}$,
we can choose $\k_i\in\{I,III\}$. Finally, for every $i\in\{0,\dots,m\}$, let $j_i\in \{1,\dots\ell_i\}$ be an arbitrary integer. Then, by  Proposition \ref{pr6.1},  Theorem \ref{th7.1} guarantees the existence of at least one solution $u(t)$ to problem \eqref{vii.1} with $(u(0),u'(0))\in {\mathcal A}_{0,k_0}$ and
$(u(L),u'(L))\in {\mathcal A}_{m,k_m}$ such that, for each $i\in\{1,\dots,m-1\}$, $u(t)$ has exactly:
\begin{equation*}
    \left\{
    \begin{array}{llll}
    \left.
    \begin{array}{ll}
    2(\alpha_i+j_i)-1&\;\hbox{zeroes in}\;\;(t_i,s_i)\\
    1&\;\hbox{zero with $u'(t)\neq 0$ in}\;\;(s_i,t_{i+1})
    \end{array}
    \right\}
    \;\hbox{if}\;\;\k_i=\k_{i+1}
    \\[10pt]
    \left.
    \begin{array}{ll}
    2(\alpha_i+j_i)&\qquad\hbox{zeroes in}\;\;(t_i,s_i)\\
    1&\qquad\hbox{zero with $u'(t)\neq 0$ in}\;\;(s_i,t_{i+1})
    \end{array}
    \right\}
    \;\hbox{if}\;\;\k_i\neq\k_{i+1}
    \end{array}
    \right.
\end{equation*}
Moreover, $u(t)$ and $u'(t)$ have a unique zero in $(s_0,t_1)$ (see (1.a) in the proof of Proposition \ref{pr6.1}), the number of zeroes of $u(t)$ in $[t_0,s_1)$ depends on $\alpha_0$, $j_0$ and the position of $\hat{r}_0$, while in the last interval $(t_m,s_m]=(t_m,L]$ depends on ${\mathcal A}_{m-1,\k_{m-1}}$, the position of $\hat{r}_L$ and the choice of $\alpha_m$ and $j_m$.
\end{remark}

\begin{remark}
\label{re7.2}    \rm
By the oddity of the nonlinearity, under homogeneous boundary conditions, either Dirichlet, or Neumann, or mixed type, the solutions of \eqref{vii.1} arise by pairs: $(\l,u)$ and $(\l,-u)$. To describe the internal
structure of these solutions, we focus our attention into the Dirichlet problem, where we can take $\hat{r}_{0}$ and $\hat{r}_{L}$ as the positive and the negative $y$-axis, respectively. In this case, by Theorem \ref{th7.1},
there are, at least,  $2^{m}\prod_{i=0}^m(\b_i-\a_i)$ solutions of
\begin{equation}
\label{vii.7}
\begin{cases}
-u''= a(t)|u|^{p-1}u,\\
u(0)=u(L)=0.
\end{cases}
\end{equation}
Moreover, $u'(0)>0$ and $u'(L)>0$ if $\hat{r}_{0}=\hat{r}_{L}$ is the positive $y$-axis, $u'(0)>0$ and $u'(L)<0$ if $\hat{r}_{0}=-\hat{r}_{L}$ is the positive $y$-axis, $u'(0)<0$ and $u'(L)>0$ if $\hat{r}_{0}=-\hat{r}_{L}$ is the negative $y$-axis, and $u'(0)<0$ and $u'(L)<0$ if $\hat{r}_{0}=\hat{r}_{L}$ is the negative $y$-axis. An analogous classification has been given by Kajikiya \cite{Ka-2021} for the classical model of Moore--Nehari \cite{MN-1959}, where $a(t)$ has two positive humps.
\par
Note that, for some special boundary conditions, like, e.g., the Dirichlet problem, the twist assumption can be slightly sharpened at the levels $i=0$ and $i=m$.  For instance,
Theorem \ref{th6.1} can be applied for the Dirichlet problem imposing the first restrictions of \eqref{vii.5} for all $i\in\{0,\dots,m\}$,  because
the half-line $\{(0,y):y>0\}$ can be though as belonging either to the first, or the second, quadrant, as well as the half-line $\{(0,y):y<0\}$, which belongs both to the third and the fourth quadrants.
\end{remark}

Subsequently, we will apply Theorem \ref{th7.1} to the special case when $a(t)$ satisfies
\begin{equation}
\label{vii.8}
\mu_i=\mu >0, \;\; \tau_i=\tau> 0, \;\; \varsigma_i=\varsigma >0, \quad\hbox{for all}\;\; i\in\{0,\dots,m\},
\end{equation}
like in Moore and Nehari \cite{MN-1959}. We will fix  $\alpha_i = 0$ and $\beta_i=\beta\geq 1$, where $\beta$ is a constant.  In this case, the most natural choice is to take the
same annular region at each step and hence, we also take constants $d_i= d > 0$ and $e_i=e\equiv \theta
d> 0$. Under these assumptions, by \eqref{vii.3}, condition \eqref{vii.5} becomes
\begin{equation}
\label{vii.9}
d^{\frac{1-p}{2(p+1)}}C(p) > \tau \mu^{\frac{1}{p+1}} > 2(2\beta
+1) \theta^{\frac{1-p}{2(p+1)}}d^{\frac{1-p}{2(p+1)}}C(p).
\end{equation}
Suppose $p>1$. Then, $\frac{1-p}{2(p+1)}<0$ and hence, for every $\mu>0$ and $\tau>0$, there exist $d_0>0$ and $e_0>d_0$ such that \eqref{vii.9} holds for all $d\in (0,d_0]$ and $\t\geq \t_0:= e_0/d_0$. Similarly, when $p\in (0,1)$, since $\frac{1-p}{2(p+1)}>0$, there are $0<e_0<d_0$ such that \eqref{vii.9} holds for all $d\geq d_0$ and $\t \in (0,\t_0]$, where $\t_0:= e_0/d_0$. On the other hand, the conditions \eqref{vii.6} become
\begin{equation}
\label{vii.10}
\varsigma \mu^{\frac{1}{p+1}} \geq \left\{ \begin{array}{ll}
\sqrt{2}(p+1)^{\frac{1}{p+1}} \theta^{\frac{1}{p+1}} d^{\frac{1-p}{2(p+1)}} &
\qquad \text{if }\; \; p >1,\\[3pt] \sqrt{2}(p+1)^{\frac{1}{p+1}}
\theta^{-\frac{1}{2}} d^{\frac{1-p}{2(p+1)}} & \qquad \text{if }\;\; 0
< p <1.\end{array}\right.
\end{equation}
Based on \eqref{vii.9} and \eqref{vii.10}, one can get some estimates on the ratio
$\varsigma/\tau$ in order to get an arbitrarily large number of nodal solutions for the problem
\eqref{vii.7}. Indeed, suppose  $p>1$. Then, there exists $d_0>0$ such that
\begin{equation}
\label{vii.11}
  d^{\frac{1-p}{2(p+1)}}C(p) > d_0^{\frac{1-p}{2(p+1)}}C(p)= \tau \mu^{\frac{1}{p+1}}\qquad
  \hbox{for all} \;\; d<d_0.
\end{equation}
Thus, for $d=d_0$, the second inequality of \eqref{vii.9} holds if, and only if,
$2(2\beta +1) \theta^{\frac{1-p}{2(p+1)}} < 1$, i.e., if $\theta=e/d_0 > (2(2\beta
+1))^{\frac{2(p+1)}{p-1}}$. Moreover, due to \eqref{vii.11}, \eqref{vii.10} holds if, and only if,
$$
   \varsigma \mu^\frac{1}{p+1} =\frac{\varsigma}{\tau}\tau \mu^\frac{1}{p+1} =
   \frac{\varsigma}{\tau} d_0^{\frac{1-p}{2(p+1)}}C(p)\geq \sqrt{2}(p+1)^{\frac{1}{p+1}} \theta^{\frac{1}{p+1}} d_0^{\frac{1-p}{2(p+1)}}
$$
or, equivalently, taking into account \eqref{vii.3},
\begin{equation}
\label{vii.12}
\frac{\varsigma}{\tau} \geq \Lambda(p,e) := \frac{ \sqrt{2}(p+1)^{\frac{1}{p+1}}
\t^{\frac{1}{p+1}}}{C(p)}= \frac{2(p+1)\t^{\frac{1}{p+1}}}
{B(\tfrac{1}{p+1},\tfrac{1}{2})}.
\end{equation}
Now, suppose that $0 < p < 1$. Then, since $\frac{1-p}{2(p+1)}>0$, there exists $d_0>0$ such that
\begin{equation}
\label{vii.13}
  d^{\frac{1-p}{2(p+1)}}C(p) > d_0^{\frac{1-p}{2(p+1)}}C(p)= \tau \mu^{\frac{1}{p+1}}\qquad
  \hbox{for all} \;\; d>d_0.
\end{equation}
Thus, for $d=d_0$, the second inequality of \eqref{vii.9} holds if and only if $\theta=e/d_0 < (2(2\beta+1))^{\frac{2(p+1)}{p-1}}$.
Moreover, thanks to \eqref{vii.13}, \eqref{vii.10} holds if and only if
$$
   \varsigma \mu^\frac{1}{p+1} =\frac{\varsigma}{\tau}\tau \mu^\frac{1}{p+1} =
   \frac{\varsigma}{\tau} d_0^{\frac{1-p}{2(p+1)}}C(p)\geq \sqrt{2}(p+1)^{\frac{1}{p+1}} \theta^{-\frac{1}{2}} d_0^{\frac{1-p}{2(p+1)}},
$$
or, equivalently,
\begin{equation}
\label{vii.14}
\frac{\varsigma}{\tau} > \Lambda(p,e):=\frac{2(p+1)\t^{-\frac{1}{2}}}
{B(\tfrac{1}{p+1},\tfrac{1}{2})}.
\end{equation}
Note that \eqref{vii.12} holds for sufficiently small $\t$, whereas \eqref{vii.14} holds for
sufficiently large $\t$. Therefore, as a direct consequence from Theorem \ref{th6.1} the following result holds.

\begin{corollary}
\label{co7.1} Suppose $\l=0$,  $p>0$ with $p\not=1$,  \eqref{vii.8}, and
\eqref{vii.12} if $p>1$, or \eqref{vii.14} if $p\in (0,1)$. Then, for any positive integer $\beta \geq 1$
such that $(2(2\beta+1))^{\frac{2(p+1)}{p-1}}<\theta$ if $p>1$, or $(2(2\beta+1))^{\frac{2(p+1)}{p-1}}>\theta$ if $p\in (0,1)$, the problem \eqref{vii.7} has, at least, $2^m\beta^{m+1}$ solutions.  Moreover, these solutions can be classified according to its
nodal properties on each of the intervals $(t_i,s_i)$ and $(s_i,t_{i+1})$,  depending on its itinerary in the Markov-type of diagram of
Figure \ref{fig-04} (right panel).
\end{corollary}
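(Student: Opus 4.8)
The plan is to deduce Corollary \ref{co7.1} from Theorem \ref{th7.1} (itself a byproduct of Theorem \ref{th6.1}) by specializing every ingredient to the constant-coefficient normalization \eqref{vii.8} and checking that, with the uniform choices $\alpha_i\equiv 0$ and $\beta_i\equiv\beta$ on each of the $m+1$ positive humps, all the hypotheses of Theorem \ref{th7.1} collapse to the two scalar conditions already isolated in \eqref{vii.9} and \eqref{vii.10}. Since $\beta_i-\alpha_i=\beta$ for every $i\in\{0,\dots,m\}$, the count furnished by Theorem \ref{th7.1} is
$$
2^{m}\prod_{i=0}^{m}(\beta_i-\alpha_i)=2^{m}\beta^{m+1},
$$
so the combinatorial bookkeeping is immediate once the twist and transition-time requirements have been verified; this verification is the bulk of the argument.

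First I would fix the annular data uniformly, taking $d_i\equiv d$ and $e_i\equiv\theta d$, so that $c_{1,i}=\min\{d,\theta d\}$ and $c_{2,i}=\max\{d,\theta d\}$ are independent of $i$. Using the explicit period \eqref{vii.3}, I would rewrite the twist condition \eqref{vii.5} in the dimensionless form \eqref{vii.9}. The delicate point here is that the boundary indices $i=0,m$ carry the stronger factor $\tfrac14$ in \eqref{vii.5} rather than the interior factor $\tfrac12$; with $\alpha_i=0$ the left member of \eqref{vii.9} is exactly the boundary requirement $\tfrac14\,{\mathcal T}_i(x_+(d))>\tau$, and since this forces the weaker interior inequality, the single scalar inequality \eqref{vii.9} simultaneously encodes all $m+1$ twist conditions. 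I would then select $d=d_0$ realizing equality in the left inequality of \eqref{vii.9}; this is legitimate because $d\mapsto d^{\frac{1-p}{2(p+1)}}$ is strictly monotone and surjective onto $(0,+\infty)$, with the monotonicity direction recorded in \eqref{vii.11} for $p>1$ and in \eqref{vii.13} for $p\in(0,1)$. Strict inequalities are then secured by taking $d$ slightly below (resp. above) $d_0$ and exploiting the strict bound imposed on $\theta$.

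With $d=d_0$ the right inequality of \eqref{vii.9} reduces to $2(2\beta+1)\,\theta^{\frac{1-p}{2(p+1)}}<1$. Solving this for $\theta$ is where the sign of $p-1$ enters decisively: raising to the exponent $\tfrac{2(p+1)}{1-p}$, which is negative when $p>1$ and positive when $p\in(0,1)$, reverses or preserves the inequality accordingly, yielding precisely the standing hypothesis $(2(2\beta+1))^{\frac{2(p+1)}{p-1}}<\theta$ for $p>1$ and the reversed bound for $p\in(0,1)$. I regard this sign-bookkeeping as the only genuinely error-prone step of the proof. For the transition times I would simplify \eqref{vii.6} to \eqref{vii.10}, using that $\theta>1$ when $p>1$ and $\theta<1$ when $p\in(0,1)$ to decide which of $d_0,\theta d_0$ plays the role of $c_1$ and which of $c_2$; dividing \eqref{vii.10} by the identity $\tau\mu^{1/(p+1)}=d_0^{\frac{1-p}{2(p+1)}}C(p)$ then converts it into the dimensionless ratio condition \eqref{vii.12} for $p>1$, or \eqref{vii.14} for $p\in(0,1)$, which is exactly the hypothesis of the corollary.

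Having verified \eqref{vii.5} and \eqref{vii.6} for every $i$, Theorem \ref{th7.1} applies and delivers at least $2^{m}\beta^{m+1}$ solutions of \eqref{vii.7}, and the nodal classification along the admissible itineraries of Figure \ref{fig-04} is inherited verbatim from Remark \ref{re7.1} and Proposition \ref{pr6.1}. The proof requires no new estimate: all the analytic content is already contained in the period formula \eqref{vii.3} and the transition-time bound \eqref{vii.4}, so the remaining task is purely the verification of hypotheses described above.
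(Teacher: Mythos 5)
Your proposal is correct and follows essentially the same route as the paper: it likewise specializes Theorem \ref{th7.1} with $\alpha_i\equiv 0$, $\beta_i\equiv\beta$ and uniform annuli $d_i=d$, $e_i=\theta d$, reduces \eqref{vii.5} to \eqref{vii.9} and \eqref{vii.6} to \eqref{vii.10}, saturates the left inequality of \eqref{vii.9} at $d=d_0$ (via \eqref{vii.11}, \eqref{vii.13}), and reads off the $\theta$-bound $(2(2\beta+1))^{\frac{2(p+1)}{p-1}}$ and the ratio conditions \eqref{vii.12}/\eqref{vii.14}, arriving at the count $2^m\beta^{m+1}$ and the nodal classification through Proposition \ref{pr6.1} and Remark \ref{re7.1}. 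Your explicit perturbation remark (taking $d$ slightly below, resp.\ above, $d_0$ to restore strictness) is only a minor refinement of the paper's informal treatment at $d=d_0$; otherwise the two arguments coincide.
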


In applying Corollary \ref{co7.1} to \eqref{vii.7} under assumptions \eqref{vii.8}, one can take as the initial half-line $\hat{r}_0$ the positive or the negative $y$-axis, and the same choice can be made
for the final line $\hat{r}_L$. Thus, at the end of the day, Corollary \ref{co7.1} provides us with $4\times 2^m\beta^{m+1} = 2(2\beta)^{m+1}$ solutions. As already discussed in Remark \ref{re7.2},
these solutions can be classified into four different classes. For each of these classes, the nodal properties of the solutions follow the rules described in Remark \ref{re7.1}.
\par
We stress the fact that Corollary \ref{co7.1} allows us to consider $\alpha_i=0$
for all $i\in\{0,\dots,m\}$. Thus, we can obtain solutions with prescribed nodal properties
on each of the intervals $(0,s_0)$, $(s_0,t_1)$ and $(t_m,L)$, while in the internal intervals the nodal behavior obeys the patterns described by Remark \ref{re7.1} according to the
itinerary ${\mathcal A}_{1,\k_1},\dots, {\mathcal A}_{m-1,\k_{m-1}}$ with $\kappa_i\in\{I,III\}$ for
all $i\in\{1,\dots,m-1\}$. The same general patterns are respected by the Neumann and the Robin problem,
as well as for general mixed boundary value problems.

\subsection{The case $\l>0$}
When $\l\neq 0$, we will restrict ourselves to consider the simpler situation case when the annular regions
${\mathcal A}_i$ are given by
\begin{equation}
\label{vii.15}
{\mathcal A}:= \{(x,y): c_1 \leq
\tfrac{1}{2} y^2 +\tfrac{\lambda}{2}x^2 + \tfrac{\mu}{p+1} x^{p+1} \leq c_2\},
\end{equation}
for a suitable choice of $0 < c_1 < c_2$.  Moreover, we take $\mu_i= \mu >0$ for all $i\in\{0,...,m\}$.
In the case $\l>0$ applies Theorem \ref{th6.4}. Thus, one needs to check the conditions \eqref{vi.10} and \eqref{vi.11}. According to \eqref{v.1} and \eqref{v.9}, we have that
$y_{+,i}= \sqrt{2c_1}$ for all $i$ and $H_{*}= c_1$. Thus, by choosing $c= c_1$, the condition
\eqref{v.11} holds provided that $x_+(c_2) < \sqrt{2 c_1/\lambda}$, i.e., if
\begin{equation}
\label{vii.16}
\sqrt{2c_1/\lambda} >
\left(\frac{(p+1)(c_2-c_1)}{\mu}\right)^{\frac{1}{p+1}}\equiv x_{\rm com}.
\end{equation}
For any given $\lambda > 0$, \eqref{vii.16} is always
satisfied for sufficiently small $c_2-c_1$, i.e., the annulus $\mc{A}$ is sufficiently narrow,
or sufficiently large $\mu$. Alternatively, for any given $\mc{A}$ and $\mu>0$, \eqref{vii.16}
holds for sufficiently small $\l>0$. Thus, \eqref{vii.16} somehow expresses the fact that the larger is $\l>0$ the poorer is the structure of the set of nodal solutions of the problem
\begin{equation}
\label{vii.17}
\begin{cases}
-u''= \l u + a(t)|u|^{p-1}u,\\
u(0)=u(L)=0,
\end{cases}
\end{equation}
as, whenever $\l$ crosses $\varsigma_\kappa$ for some $\kappa\geq 1$, \eqref{vii.17} looses its solutions with $\kappa-1$ interior nodes.
\par
With the previous choices, we have that
\begin{align*}
& |x_{\ell,c_{2,i}}| = |x_{\ell,c_{2,i+1}}| = x_{r,c_{2,i}} =
x_{r,c_{2,i+1}} = x_{\rm com},\\
& S^{c_{2,i\to i+1}}_{II\to I}(c)=
S^{c_{2,i\to i+1}}_{IV\to III}(c) = 2\int_{0}^{x_{\rm com}}\frac{dx}{\sqrt{2c_1 -\lambda x^2}},
\end{align*}
for all $i\in\{0,...,m-1\}$.
Moreover, we have also that
\begin{align*}
 & x_{\ell,c_{1,i}} = x_{\ell,c_{1,i+1}} = x_{r,c_{1,i}} =
x_{r,c_{1,i+1}} = 0,\\
& S^{c_{1,i\to i+1}}_{II\to I}(c)=
S^{c_{1,i\to i+1}}_{IV\to III}(c) = 0,\\
& S^{c_{1,i}\to\sqrt{}}_{II\to I}(c)=
S^{c_{1,i}\to\sqrt{}}_{IV\to III}(c) =
\int_{0}^{\sqrt{2c_1/\lambda}}\frac{dx}{\sqrt{2c_1 -\lambda x^2}}
=\frac{\pi}{2\sqrt{\lambda}}.
\end{align*}
Finally, by the symmetry of the nonlinearity of \eqref{vii.17}, we also find that
$$
\frac{\pi}{2\sqrt{\lambda}}<S_{III\to I}(c)=
    S_{I\to III}(c)
    <\frac{\pi}{\sqrt{\lambda}}\,.
$$
The next result is a byproduct of Theorem \ref{th6.4} and Remark \ref{re6.2}.

\begin{corollary}
\label{co7.2}
Suppose $\l>0$, $p>1$ and $\tau_i= s_i-t_i=\tau>0,$
$\varsigma_i=t_{i+1}-s_i=\varsigma>0$ and $\mu_i=\mu >0$, for all $i\in\{0,\dots,m\}$. Assume, in addition, that
there exist $0 < c_1 < c_2$ and two integers $0\leq \alpha < \beta$ such that
\begin{equation}
\label{vii.18}
    (\alpha + \tfrac{1}{2} ) {\mathcal T}(c_1) > {\tau}
    > (\beta + \tfrac{1}{2} ) {\mathcal T}(c_2), \quad \text{ for }\; i=1, \dots, m-1,
\end{equation}
\begin{equation}
\label{vii.19}
    ( \alpha + \tfrac{1}{4} ){\mathcal T}(c_1) > {\tau}
    > ( \beta + \tfrac{1}{2} ) {\mathcal T}(c_2), \quad \text{ for }\; i=0,m,
\end{equation}
where ${\mathcal T}(c) := {\mathcal T}_{i}(c)$ as in \eqref{ii.5}. Lastly, assume that
the compatibility condition \eqref{vii.16} holds together with
\begin{equation}
\label{vii.20}
    2\int_{0}^{x_{\rm com}}\frac{dx}{\sqrt{2c_1 -\lambda x^2}}
    +\frac{2\pi\xi}{\sqrt{\lambda}}\leq\varsigma
    \leq \frac{\pi}{2\sqrt{\lambda}} +\frac{2\pi\xi}{\sqrt{\lambda}},
\end{equation}
for some nonnegative integer $\xi.$  Then, the problem \eqref{vii.1} has at least
$2^{m}(\b-\a)^{m+1} $ solutions. Moreover, these solutions can be classified according to its
nodal properties on each of the intervals $(t_i,s_i)$ and $(s_i,t_{i+1})$,  depending on its itinerary in the Markov-type of diagram of
Figure \ref{fig-04} (right panel).
\end{corollary}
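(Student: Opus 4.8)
The plan is to obtain Corollary \ref{co7.2} as a direct application of Theorem \ref{th6.4} (Case (b)) together with Remark \ref{re6.2}, by verifying, under the special choices \eqref{vii.2} and \eqref{vii.8}, every hypothesis of that theorem with the \emph{same} annulus $\mathcal{A}$ of \eqref{vii.15} at each step, i.e. $c_{1,i}=c_1$, $c_{2,i}=c_2$, $\mu_i=\mu$, $\alpha_i=\alpha$ and $\beta_i=\beta$. Two structural facts drive the whole argument. First, for $h(y)=y$ and $\lambda>0$ the linear system $(\mathcal{L}_\lambda)$ is the harmonic oscillator $x'=y$, $y'=-\lambda x$, whose center is \emph{isochronous} of period $2\pi/\sqrt{\lambda}$; this is exactly the situation for which Proposition \ref{pr6.3} and Theorem \ref{th6.4} were designed. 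Second, since both $h$ and $g(x)=|x|^{p-1}x$ are odd, all four quarter-lap times of $(\mathcal{N}_{i,\lambda})$ coincide, $\mathcal{T}_{i,j}(c)=\tfrac14\mathcal{T}(c)$ for $j\in\{I,II,III,IV\}$, with $\mathcal{T}$ given explicitly by \eqref{vii.3}. Because the data are constant in $i$, it suffices to check each hypothesis once.

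First I would translate the twist conditions into the explicit inequalities \eqref{vii.18}--\eqref{vii.19}. For an interior index the requirement on $\Phi_i$ is that the image of an arc in $\mathcal{A}_{i,\kappa}$ spiral, as in Proposition \ref{pr6.3}, across the half-turn from the $y$-axis with $y<0$ through the $x$-axis with $x<0$ to the $y$-axis with $y>0$; using $\mathcal{T}_{i,j}=\tfrac14\mathcal{T}$ and choosing $d=c_1$, $e=c_2$ (legitimate since, by Table \ref{tab1}, $\mathcal{T}$ is \emph{decreasing} in $c$ for $p>1$, so $\mathcal{T}(c_1)>\mathcal{T}(c_2)$), the inner and outer bounds collapse precisely to $(\alpha+\tfrac12)\mathcal{T}(c_1)>\tau>(\beta+\tfrac12)\mathcal{T}(c_2)$, which is \eqref{vii.18}. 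At the endpoints $i\in\{0,m\}$ I would instead invoke the sharper boundary twist \eqref{iv.6} of Remark \ref{re6.2}, which is what allows $\hat r_0,\hat r_L$ to sit on a coordinate half-axis (e.g. the Dirichlet data): it reduces to $(\alpha+\tfrac14)\mathcal{T}(c_1)>\tau>(\beta+\tfrac12)\mathcal{T}(c_2)$, i.e. \eqref{vii.19}. The period formula \eqref{vii.3} guarantees that annuli $c_1<c_2$ realizing these inequalities exist. Next I would record the compatibility condition: \eqref{v.10} (equivalently \eqref{v.11}), asserting that the level line $\mathcal{E}=c$ with $c=c_1$ crosses both copies of $\mathcal{A}$, becomes exactly \eqref{vii.16} after substituting $x_+(c_2)=x_{\rm com}$. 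Finally, the $\Psi_i$-transition times are the ones computed just before the statement, namely $\mathcal{S}^{c_2}_{II\to I}(c)=2\int_0^{x_{\rm com}}(2c_1-\lambda x^2)^{-1/2}\,dx$, $\mathcal{S}^{c_1\to\sqrt{}}_{II\to I}(c)=\pi/(2\sqrt{\lambda})$ and $\mathcal{S}_{III\to I}(c)\in(\pi/(2\sqrt\lambda),\,\pi/\sqrt\lambda)$. Inserting these into \eqref{vi.10} and allowing $\xi$ complete revolutions of the isochronous center (the refinement of Remark \ref{re6.3} with $\mathfrak{m}_i=1$, which shifts both bounds by $\xi\cdot 2\pi/\sqrt\lambda$) turns the two inequalities of \eqref{vi.10} into \eqref{vii.20}, the upper bound $\varsigma\le \pi/(2\sqrt\lambda)+2\pi\xi/\sqrt\lambda$ simultaneously securing $\mathcal{S}_{III\to I}>\varsigma$.

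The one remaining hypothesis, the angular order-preservation \eqref{vi.11} of $\Psi_i$, is where I would be most careful, yet here it is immediate: for $h(y)=y$ the flow of $(\mathcal{L}_\lambda)$ is a rigid elliptic rotation, so the angular coordinate measured clockwise from the positive $y$-axis is strictly increasing at a rate independent of the orbit, whence the initial ordering of angles persists for all $t$ and \eqref{vi.11} holds verbatim. With all the hypotheses of Theorem \ref{th6.4} in force, and Remark \ref{re6.2} placing $r_0,r_L$ as the components of $\hat r_0\cap\mathcal{A}_{0}$ and $\hat r_L\cap\mathcal{A}_{m}$ inside whichever quadrant contains $\hat r_0,\hat r_L$, the theorem produces at least $2^{m}\prod_{i=0}^{m}(\beta_i-\alpha_i)=2^{m}(\beta-\alpha)^{m+1}$ solutions of \eqref{vii.1}, each carrying an admissible itinerary in the diagram of Figure \ref{fig-04} and hence the prescribed numbers of zeroes on the intervals $(t_i,s_i)$ and $(s_i,t_{i+1})$, as read off from Propositions \ref{pr6.1} and \ref{pr6.3}. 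The genuine obstacle I anticipate is the \emph{upper} bound on $\varsigma$ coming from the second inequality of \eqref{vi.10}: unlike the case $\lambda\le 0$, here $\varsigma$ is trapped in a bounded window, and as $\lambda$ grows this window tightens and ultimately obstructs the highly oscillatory nodal solutions — precisely the loss of solutions when $\lambda$ crosses $\sigma_\kappa$ already visible in the semilinear analysis of Section 2.
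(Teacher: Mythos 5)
Your proposal is correct and follows essentially the same route as the paper's proof: a direct application of Theorem \ref{th6.4} together with Remark \ref{re6.2}, matching \eqref{vii.18}--\eqref{vii.19} to the twist conditions through the quarter-lap symmetry $\mathcal{T}_{i,j}=\tfrac14\mathcal{T}$, matching \eqref{vii.16} to the compatibility condition \eqref{v.10}, matching \eqref{vii.20} to \eqref{vi.10} (with $\xi$ full revolutions of the isochronous center), and verifying the angular order-preservation \eqref{vi.11} from the fact that the angular velocity of $(\mathcal{L}_{\lambda})$ depends only on the angle and not on the orbit --- the paper does exactly this via the polar equation $-\vartheta'=\lambda \cos^2(\vartheta)+\sin^2(\vartheta)$. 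One caveat, which you share with (rather than deviate from) the paper's own proof: with equal quarter-laps, the twist condition \eqref{vi.9} required by Proposition \ref{pr6.3} reads $(\alpha+\tfrac14)\mathcal{T}(c_1)>\tau>\beta\,\mathcal{T}(c_2)$, so your assertion that the Proposition \ref{pr6.3} bounds ``collapse precisely to'' \eqref{vii.18} (and the paper's assertion that \eqref{vii.18} implies \eqref{vi.9}) is loose, since the inner bound $(\alpha+\tfrac12)\mathcal{T}(c_1)>\tau$ of \eqref{vii.18} is strictly weaker than $(\alpha+\tfrac14)\mathcal{T}(c_1)>\tau$; what \eqref{vii.18} literally reproduces is \eqref{iv.1}, while only the endpoint condition \eqref{vii.19} (i.e.\ \eqref{iv.6}) delivers \eqref{vi.9} outright.
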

\begin{proof}
It is a direct application of Theorem \ref{th6.4}. More precisely, by \eqref{ii.5}, \eqref{vii.18}
implies the validity of the twist condition \eqref{vi.9} for $i\in\{1,\dots,m-1\}$, with $d=c_1$ and
$e=c_2$. Moreover, \eqref{vii.19} provides us with \eqref{iv.6} for $i=0, m$, with the same choices of
$d$ and $e$. Thus, by  Remark \ref{re6.2}, $\hat{r}_0$ and $\hat{r}_L$ do not need to be fixed. On the other hand, \eqref{vii.20} implies \eqref{vi.10}, while \eqref{vi.10} holds from the second inequality of \eqref{vii.20}, by the symmetry of the equation. Finally, the monotonicity of the angular function
\eqref{vi.1} can be checked by a direct argument on the equation $-{\vartheta}'=\lambda \cos^2({\vartheta}) + \sin^2({\vartheta})$,  which comes from $({\mathcal L}_{\lambda})$ in polar coordinates.
Therefore, all the hypotheses of Theorem \ref{th6.4} are    satisfied.
\end{proof}

\begin{remark}
\label{re7.3}
\rm With some obvious modifications in the proof, a similar result holds
for the    sublinear case when $0< p <1$. One should take into account that,
for the autonomous system in the intervals where $a(t)\equiv \mu$, the smaller
orbits are faster than the larger ones. Accordingly, the twist conditions
\eqref{vii.18} and \eqref{vii.19} should be modified to
\begin{align*}
    & (\alpha + \tfrac{1}{2} ) {\mathcal T}(c_2) > {\tau}
    > (\beta + \tfrac{1}{2} ) {\mathcal T}(c_1), \quad \text{ for }\; i=1, \dots, m-1,\\
    &   ( \alpha + \tfrac{1}{4} ){\mathcal T}(c_2) > {\tau}
    > ( \beta + \tfrac{1}{2} ) {\mathcal T}(c_1), \quad \text{ for }\; i=0,m,
\end{align*}
 respectively. The rest of the statement remains unchanged.
\end{remark}

\begin{remark}\label{re7.4}
\rm Concerning the nodal properties of the solutions, Remark \ref{re7.1} applies. Thus,
the solutions oscillate $\alpha + j_i$ times in each interval $(t_i,s_i)$, for $j_i=1,\dots,\b_i-\a_i$.
On the other hand, in the intervals $(s_i,t_{i+1})$ the system is linear with an isochronous center
and, therefore, the condition \eqref{vii.20} guarantees the existence of $2\xi +1$ zeroes for $u(t)$
in $(s_i,t_{i+1})$.
\end{remark}

As the assumption \eqref{vii.20} seems delicate, as it imposes two
constraints on the length $\varsigma$ of the components of $a^{-1}(0)$, to
apply Corollary \ref{co7.2} to some concrete example, we need to
find out the solution $x_+(c)$ in \eqref{ii.4} as well as the period ${\mathcal T}(x_+(c))$.
Once fixed the parameters $c$, $\lambda$ and $\mu_i=\mu$, some numerical estimates of
$x_+(c)$ and $T(x_+(c))$ can be given.

\begin{example}
\label{ex-4.1}\rm
Consider \eqref{vii.1} with $\lambda=1$ and $p=3$.  As in Corollary
\ref{co7.2} we assume that $\tau_i= s_i-t_i=\tau>0$,  $\varsigma_i=t_{i+1}-s_i=\varsigma>0$ and $\mu_i=\mu >0$, for all $i\in\{0,\dots,m\}$.  As a first choice, we take $\mu= 20$, $c_1=1$ and $c_2=5$.  Then,
$x_{\rm comp}\thickapprox 0.94 < \sqrt{2}= (2c_1/\lambda)^{1/2}$. Thus,
\eqref{vii.16} is satisfied. Moreover,
$$
  2\int_{0}^{x_{\rm comp}}\frac{dx}{\sqrt{2c_1 -\lambda x^2}}
    \thickapprox 1.46 < \frac{\pi}{2\sqrt{\lambda}}    =\frac{\pi}{2},
$$
$x_+^2(c_1)= \frac{2}{5}$, and $x_+^2(c_2) = \frac{\sqrt{401} -1}{20} \thickapprox 0.95$. Thus,
${\mathcal T}(x_+(c_1)) \thickapprox 2.41$ and ${\mathcal T}(x_+(c_2)) \thickapprox 1.63$. Hence,
${\mathcal T}(x_+(c_1))> {\mathcal T}(x_+(c_2))$, though  ${\mathcal T}(x_+(c_1))\not> 2{\mathcal T}(x_+(c_2))$. Therefore, for a suitable choice of $\alpha$, $\beta$, $\tau$ and $\varsigma$ we can apply Corollary \ref{co7.2} with \eqref{vii.18} satisfied for $i\in\{0,\dots,m\}$ and
$\hat{r}_{0}\in {\mathcal A}_I$, or $\hat{r}_{0}\in {\mathcal A}_{III}$,  and
$\hat{r}_{L}\in {\mathcal A}_{II}$, or $\hat{r}_{L}\in {\mathcal A}_{IV}$. However, one cannot
apply Remark \eqref{re6.2}    because \eqref{vii.19} is not satisfied.
\par
As a second choice, we take $\mu= 130$, $c_1=0.8$ and $c_2=20$.  Then,
$x_{\rm comp}\thickapprox 0.87 <  \sqrt{2c_1/\lambda} =\sqrt{2}$, and hence \eqref{vii.16} is satisfied. Moreover,
$$
   2\int_{0}^{x_{\rm comp}}\frac{dx}{\sqrt{2c_1 -\lambda x^2}}
    \thickapprox 1.53 < \frac{\pi}{2\sqrt{\lambda}}
    =\frac{\pi}{2},
$$
$x_+^2(c_1) \thickapprox 0.15$ and $x_+^2(c_2) \thickapprox 0.77$. Thus,
${\mathcal T}(x_+(c_1))) \thickapprox 1.62$ and ${\mathcal T}(x_+(c_2)) \thickapprox 0.73$.
Hence, in this case, ${\mathcal T}(x_+(c_1))> 2{\mathcal T}(x_+(c_2))$ and Corollary \ref{co7.2}
can be fully applied for a suitable choice of  $\alpha$, $\beta$, $\tau$ and $\varsigma$.
\end{example}

\subsection{The case $\l<0$}

Since we are considering $\mc{A}_i=\mc{A}$ as defined in \eqref{vii.15}, $\mu_i=\mu$ and $h$ and $g$ are odd, it follows that the abscissas and the ordinates calculated for the transition times in Section \ref{sec-lin} coincide for every $i\in\{0,...,m\}$. Thus,  $|x_{\ell,i}|=|x_{\ell}|=x_{r}=x_{r,i}$,  $|x_{-,i}|=|x_{-}|=x_{+,i}=x_{+}$, and $|y_{-,i}|=|y_{-}|=y_{+,i}=y_{+}$. Hence, by \eqref{v.2} and \eqref{v.4}, the transition times are
\begin{equation*}
{\mc S}_{II\to I} = 2\int_{0}^{x_{r}}
\frac{ds}{\sqrt{y_{+}^2 + |\lambda|s^2}}= 2\int_{0}^{x_{r}}
\frac{ds}{\sqrt{y_{-}^2 + |\lambda|s^2}}={\mc S}_{IV\to III}.
\end{equation*}
Similarly, \eqref{v.5} and \eqref{v.6}, we have that
\begin{equation*}
{\mc S}_{IV\to I} = 2\int_{x_{+}}^{x_{r}}
\frac{ds}{\sqrt{|\lambda|(s^2- x_{+}^2)}}=2\int_{|x_{-}|}^{|x_{\ell}|}
\frac{ds}{\sqrt{|\lambda|(s^2- x_{-}^2)}}={\mc S}_{II\to III}.
\end{equation*}
As in the case $\l>0$ we should verify conditions \eqref{vii.18} and \eqref{vii.19} to apply Theorem \ref{th6.1}  and Remark \ref{re6.2}. In this case, more favorable than the case $\l>0$, we can consider $\a_i=0$ for all $i\in\{0,\ldots,m\}$ to obtain any possible choice of nodal properties.

\begin{corollary}
\label{co7.3}
Suppose $\lambda<0$, $p>0$ with $p\not=1$, and $\tau_i= s_i-t_i=\tau>0$ and
$\varsigma_i=t_{i+1}-s_i=\varsigma>0$ for all $i\in\{0,\dots,m\}$. Then, for any positive integer $\beta \geq 1,$ there exists a constant  $\Lambda(p,\beta,\tau) >0$ such that \eqref{vii.1} has at least
$4^{m}\beta^{m+1}$ solutions for every  $\varsigma > \Lambda(\lambda,p,\beta,\tau)$.
 Moreover, these solutions can be classified according to its
nodal properties on each of the intervals $(t_i,s_i)$ and $(s_i,t_{i+1})$,  depending on its itinerary in the Markov-type of diagram of
Figure \ref{fig-04} (right panel).
\end{corollary}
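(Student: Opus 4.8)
The plan is to derive Corollary \ref{co7.3} as a direct application of Theorem \ref{th6.2}, exactly in the spirit of Corollaries \ref{co7.1} and \ref{co7.2}. I would fix a single annular region $\mathcal{A}$ of the form \eqref{vii.15} at every step, take $\mu_i=\mu$, and set $\alpha_i=0$, $\beta_i=\beta$ for all $i\in\{0,\dots,m\}$. With these choices Theorem \ref{th6.2} yields precisely $4^{m}\prod_{i=0}^{m}(\beta_i-\alpha_i)=4^{m}\beta^{m+1}$ nodal solutions, each classified through an admissible itinerary in Figure \ref{fig-04}, \emph{provided} that (i) the twist conditions \eqref{iv.1} hold at the interior steps together with the reinforced condition \eqref{iv.6} at $i=0,m$ (so that, via Remark \ref{re6.2}, $\hat r_0$ and $\hat r_L$ may be placed in arbitrary quadrants), and (ii) all four transition times \eqref{v.2}, \eqref{v.4}, \eqref{v.5}, \eqref{v.6} do not exceed $\varsigma$. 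The task reduces to exhibiting a choice of $c_1<c_2$ realizing (i) and then choosing $\Lambda$ to force (ii).

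First I would use the oddity of $h(y)=y$ and $g(x)=|x|^{p-1}x$, which forces the four quarter-lap times to coincide, $\mathcal{T}_{i,I}=\mathcal{T}_{i,II}=\mathcal{T}_{i,III}=\mathcal{T}_{i,IV}=\tfrac14\mathcal{T}$. Thus \eqref{iv.1} with $\alpha_i=0$ collapses to the single two-sided inequality $(\beta+\tfrac12)\mathcal{T}(e)<\tau<\tfrac12\mathcal{T}(d)$, while \eqref{iv.6} sharpens the right bound to $\tau<\tfrac14\mathcal{T}(d)$. Hence it suffices to produce two energy levels $d\neq e$ in $\{c_1,c_2\}$ with $\mathcal{T}(d)>4\tau$ and $\mathcal{T}(e)<\tfrac{2\tau}{2\beta+1}$. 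This is precisely where the behavior of the period map for $\lambda<0$ recorded in Table \ref{tab1} enters: $\mathcal{T}$ is strictly monotone in $x_+$ and sweeps out the whole interval $(0,+\infty)$, blowing up as the orbit approaches the separatrix (the zero-energy homoclinic when $p>1$, the heteroclinic level $c^*$ of \eqref{ii.8} when $p\in(0,1)$) and collapsing to $0$ at the opposite end. Taking $d$ to be the level whose orbit sits near the separatrix and $e$ the one far from it — $d=c_1$, $e=c_2$ for $p>1$, and $d=c_2$, $e=c_1$ for $p\in(0,1)$ — both inequalities can be met for the prescribed $\tau$ and $\beta$; in the sublinear case one must keep $c_2<c^*$ so that $\mathcal{A}$ remains a genuine annulus of orbits encircling the origin.

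Once $c_1<c_2$ are fixed by the previous step, the four transition times, which by the symmetry noted at the beginning of this subsection reduce to the two integrals $\mathcal{S}_{II\to I}=\mathcal{S}_{IV\to III}$ and $\mathcal{S}_{IV\to I}=\mathcal{S}_{II\to III}$, become finite constants depending only on $\lambda,p,\mu,c_1,c_2$. I would then set $\Lambda(\lambda,p,\beta,\tau)$ to be the maximum of these two values, so that every $\varsigma>\Lambda$ satisfies $\mathcal{S}_{i,\cdot}\leq\varsigma_i$ for all $i$ and all four connections of Figure \ref{fig-04} are available. With both families of hypotheses verified, Theorem \ref{th6.2} together with Remark \ref{re6.2} delivers the asserted $4^{m}\beta^{m+1}$ solutions, and because $\alpha_i=0$, Remark \ref{re7.1} supplies full control of the nodal counts on every subinterval, including the extreme ones $(0,s_0)$ and $(t_m,L)$.

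The main obstacle I anticipate is the simultaneous validity of the two twist inequalities for the \emph{given, fixed} $\tau$: one must guarantee that a single annulus $\mathcal{A}$ carries one orbit of period exceeding $4\tau$ and another of period below $\tfrac{2\tau}{2\beta+1}$, that is, a period ratio across $\mathcal{A}$ larger than $2(2\beta+1)$. This is exactly the feature distinguishing $\lambda<0$ from $\lambda\geq0$: the unbounded growth of the period near the saddle separatrix (Table \ref{tab1}) makes arbitrarily large ratios attainable, which is also why this case admits all four transitions of Figure \ref{fig-04} and hence the larger multiplicity factor $4^{m}$ rather than $2^{m}$. Everything else is routine continuity and the combinatorial bookkeeping already carried out in the proof of Theorem \ref{th6.1}.
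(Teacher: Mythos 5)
Your proposal is correct and follows essentially the same route as the paper's own proof: apply Theorem \ref{th6.2} together with Remark \ref{re6.2}, fix a single annulus with $\alpha_i=0$ and $\beta_i=\beta$, secure the twist conditions $\mathcal{T}(d)>4\tau$ and $\mathcal{T}(e)<\tfrac{2\tau}{2\beta+1}$ via the blow-up of the period near the separatrix (homoclinic for $p>1$, heteroclinic level for $p\in(0,1)$, exactly as in Table \ref{tab1}), and take $\Lambda$ to be the maximum of the transition times. The only difference is one of explicitness, not of method: the paper works with the rescaled period ${\mathscr T}_1(\theta)=\tfrac{\sqrt{|\lambda|}}{4}\mathcal{T}$ and produces closed-form $\mathrm{arcsinh}/\mathrm{arccosh}$ bounds for the transition times, whereas you only invoke their finiteness, which suffices.
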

\begin{proof}
In this proof, we set $\theta=\theta(x):=x/x^*$ with $x^*$ as in \eqref{ii.7}, and
\begin{equation}
\label{vii.21}
    {\mathscr T}_1(\theta):=\int_{0}^{1}
    \frac{ds}{\sqrt{-(1-s^2) + \theta^{p-1}(1- s^{p+1})}}.
\end{equation}
Our interest in ${\mathscr T}_1(\theta)$ coming from the identity
\begin{equation}
\label{vii.22}
    {\mathscr T}_1(\theta(x_+(c)))=\frac{\mc{T}(x_+(c))}{4}\sqrt{|\l|}
\end{equation}
where $\mc{T}(x_+(c))$ is given by \eqref{ii.5}.
The map $s\in[0,1] \mapsto q(s):=(1-s^{p+1})/(1-s^2)$ is increasing if $p>1$ and decreasing if $0<p<1$.
Thus, $q(1)=(p+1)/2$ provides us with its maximum if $p>1$, and its minimum if $p\in (0,1)$. Hence,
whenever $p>1$,  the denominator in the integrand of ${\mathscr T}_1(\theta)$ is positive for all
$s\in (0,1)$ if $\theta >1$. Moreover, ${\mathscr T}_1(\theta)$ is decreasing in $\t\in (1,+\infty)$ and
$$
   \lim_{\theta\da 1} {\mathscr T}_1(\theta)\to +\infty, \quad
   \lim_{\theta\ua +\infty} {\mathscr T}_1(\theta)=0.
$$
Whenever $0< p <1$ the positivity of the denominator occurs for
\begin{equation*}
    0< \theta < \theta^*:=\left(\frac{p+1}{2}\right)^{\frac{1}{1-p}}=\frac{\rm \o_+}{x^*}.
\end{equation*}
In this case,  ${\mathscr T}_1(\theta)$ is increasing in $(0,\theta^*)$ and
$$
   \lim_{\theta\da 0} {\mathscr T}_1(\theta)= 0,\qquad
     \lim_{\theta\ua  \theta^*} {\mathscr T}_1(\theta)= +\infty.
$$
Suppose $p>1$. Then, the parameters $c_1$ and $c_2$ defining ${\mathcal A}$ c an be chosen as
follows. First, we fix two constants $x_1 = \theta_1 x^*$ and $x_2 = \theta_2 x^*$,  with  $1<\theta_1 <\theta_2$, such that
\begin{equation}
\label{vii.23}
    {\mathscr T}_1(\theta_1) > \tau \sqrt{|\lambda|} > \frac{\tau \sqrt{|\lambda|}}{2(2\beta +1)}>{\mathscr T}_1(\theta_2).
\end{equation}
By the properties of ${\mathscr T}_1$, for any given $\tau$, the estimates \eqref{vii.23} are satisfied
for $\theta_1$ sufficiently close to $1$ and sufficiently large $\theta_2$. Thus, setting
$$
  c_1:= {\mathcal H}(x_1,0)>0,\qquad c_2:= {\mathcal H}_{\lambda}(x_2,0)>0,
$$
\eqref{vii.23} becomes
$$
    \mc{T}(c_1)>4\tau>2(2\b+1)\mc{T}(c_2).
$$
Thus, the stronger form of the twist condition \eqref{vii.19} holds. This implies \eqref{vii.18}. Hence, we are within the setting of Remark \ref{re6.2}, so that the Sturm--Liouville lines $\hat{r}_0$ and $\hat{r}_L$
can be chosen in any quadrant. Once chosen $x_1$, $x_2$, $c_1$ and $c_2$, we can estimate the transition times ${\mc S}$. As we have that  $x_+=x_1$, $y_+=\sqrt{2c_1}$ and $x_r \leq x_2$, it is apparent that
\begin{align*}
    S_{II\to I} & = S_{IV\to III} \leq 2\int_{0}^{x_2}
    \frac{ds}{\sqrt{y_+^2 + |\lambda|s^2}}
    = \frac{2}{\sqrt{|\lambda|}}{\rm arcsinh}(x_2\sqrt{|\lambda|}/y_+)=:\frac{2}{\sqrt{|\lambda|}}\Lambda_1,\\
    S_{IV\to I} & = S_{II\to III} \leq 2 \int_{x_1}^{x_2}
    \frac{ds}{\sqrt{|\lambda|(s^2- x_1^2)}} =
    \frac{2}{\sqrt{|\lambda|}}{\rm arccosh}(\theta_2/\theta_1)=:\frac{2}{\sqrt{|\lambda|}} \Lambda_2.
\end{align*}
$\Lambda_1$ can be as well expressed in terms of the parameters $\theta_1$ and $\theta_2$ as follows
\begin{equation*}
    \Lambda_1=
    {\rm arcsinh}\left(\frac{\theta_2}{\theta_1 ( \theta_1^{p-1} -1)^{1/2}}\right).
\end{equation*}
In particular, Theorem \ref{th6.2} can be applied provided
\begin{equation}
\label{vii.24}
    \Lambda^*:=\frac{2}{\sqrt{|\lambda|}} \max\{\Lambda_1,\Lambda_2\}\leq\varsigma.
\end{equation}
This shows the corollary with $\Lambda(\lambda,p,\beta,\tau)= \Lambda^*$. Observe that $\Lambda_1$
and $\Lambda_2$ depend  on the coefficients $\theta_1$ and $\theta_2$, and that these
coefficients are closely related to the values of $\tau\sqrt{|\lambda|}$ and $\beta$.
\par
The same argument can be adapted to deal with the case when $0 < p < 1$, except for the crucial difference
that small orbits around the origin have a small period while large orbits approximating
the heteroclinics have arbitrarily large periods. Suppose $p\in (0,1)$. Then, the values of $c_1$ and $c_2$ defining ${\mathcal A}$ are chosen as follows. First, we choose two constants $\theta_1=x_1/x^*$ and $\theta_2 =x_2/x^*$ such that $0 < \theta_1<\theta_2 < \theta^*$ and
\begin{equation}
\label{vii.25}
    {\mathscr T}_1(\theta_2) > \tau \sqrt{|\lambda|} > \frac{\tau \sqrt{|\lambda|}}{2(2\beta +1)}>{\mathscr T}_1(\theta_1).
\end{equation}
For any given $\tau$, \eqref{vii.25} holds by taking $\theta_2$ sufficiently close to $\theta^*$, i.e., $x_2$ sufficiently close to $\o_+$, and sufficiently small $\theta_1$. In this case, by \eqref{vii.22} and fixing $c_1:= {\mathcal H}(x_1,0)>0$ and $0<c_2:= {\mathcal H}(x_2,0)< C^*$ (see \eqref{ii.8}), it follows that
$$
    \mc{T}(c_1)>4\tau>2(2\b+1)\mc{T}(c_2),
$$
satisfying the stronger form of the twist condition \eqref{vii.19}, which implies \eqref{vii.18} and, thus, entering again in the setting of Remark \ref{re6.2}. Once fixed  $x_1$, $x_2$, $c_1$ and $c_2$,  the computation of the coefficients for the  integrals involving the transition times ${\mc S}$ follows
the same patterns as when $p>1$, and hence it is omitted here. The result follows for sufficiently large $\s$ satisfying \eqref{vii.24}.
\end{proof}

\begin{remark}
\rm Regarding the nodal properties of the solutions, the analysis of the transitions $\mc{A}_{i,II}\rightarrow \mc{A}_{i,I}$ and $\mc{A}_{i,IV}\rightarrow \mc{A}_{i,III}$ follows identical
patterns as in Remark \ref{re7.1}, while the analysis of $\mc{A}_{i,IV}\rightarrow \mc{A}_{i,I}$ and $\mc{A}_{i,II}\rightarrow \mc{A}_{i,III}$  follows the scheme of Case 2 of
Proposition \ref{pr6.1}.
\end{remark}

The next examples illustrate this situation, by considering
an application of Corollary \ref{co7.3} for each of the cases $p>1$ and $0<p<1$.

\begin{example}
\label{ex-4.2} \rm
Consider \eqref{vii.1} with $\lambda<0$ and $p=3$. As in Corollary
\ref{co7.3} we take $a(t)$ with $\tau_i= s_i-t_i=\tau>0$, $\varsigma_i=t_{i+1}-s_i=\varsigma>0$ and
$\mu_i=\mu >0$ for all $i\in\{0,\dots,m\}$. Then, we have that $x^* = \sqrt{2|\lambda|/\mu}$. To find out
some range of values of the constants for which Corollary \ref{co7.3} can be applied,
let us suppose that $\tau\sqrt{|\lambda|}=1$ and take $\theta_1=3/2$. Then, ${\mathscr T}_1(\theta_1) \thickapprox 1.07 > \tau\sqrt{|\lambda|}$. Next, we need to choose $\theta_2>\theta_1$ such that
${\mathscr T}_1(\theta_2) < \frac{\tau\sqrt{|\lambda|}}{2(2\beta +1)}$. For instance, take $\beta=2$
 and $\theta_2=14$. Then, ${\mathscr T}_1(\theta_2) \thickapprox 0.09 < \tau\sqrt{|\lambda|}/10$.
Moreover, $(\theta_1^{p-1} - 1)^{-1/2}=\sqrt{4/5} \thickapprox 0.89$, $a_2/a_1 = 28/3 \thickapprox 9.33$.
Therefore, $\Lambda_1 \thickapprox 2.82$, $\Lambda_2 \thickapprox 2.92$ and
$\Lambda^*\thickapprox 2.92$. Consequently, as soon as
$\varsigma\sqrt{|\lambda|} \geq 2\Lambda^* \thickapprox 5.85$, Corollary \ref{co7.3} applies with $\beta=2$.
\end{example}

%---Bibliography
%\nocite{*}
\bibliographystyle{plain}
\bibliography{LG-MH-Z_4-biblio}

\end{document}